\documentclass[11pt,a4paper,reqno]{amsart}
\usepackage{amssymb,amsmath,graphics,verbatim}
\usepackage{latexsym}
\usepackage{eucal}
\usepackage{a4wide}
\usepackage[colorlinks]{hyperref}
\usepackage[normalem]{ulem}
\usepackage{soul}
\usepackage{cancel}
\newcommand{\stkout}[1]{\ifmmode\text{\sout{\ensuremath{#1}}}\else\sout{#1}\fi}


\newcommand{\p}{\partial}


\newcommand{\cjd}{\rangle}
\newcommand{\cjg}{\langle}

\newcommand\R{\rr}

\newcommand\NN{\mathbb{N}}
\newcommand\ZZ{\mathbb{Z}}

\newcommand\Id{\operatorname{Id}}

\newcommand\Sym{\mathrm{Sym}}

\newcommand\xin{\overline{\xi}_0}

\newcommand{\pl}{\partial}
\newcommand{\la}{\lambda}
\newcommand{\mc}{\mathcal}
\newcommand{\rr}{\mathbb{R}}

\newcommand{\zz}{\mathbb{Z}}

\newcommand{\eps}{\epsilon}

\newcommand{\x}{\times}

\newcommand{\til}{\widetilde}
\newcommand{\bbar}{\overline}
\newcommand{\vol}{\mathrm{v}}

\newtheorem{theorem}{Theorem}
\newtheorem{lemma}[theorem]{Lemma}
\newtheorem{proposition}[theorem]{Proposition}
\newtheorem{corollary}[theorem]{Corollary}

\theoremstyle{definition}
\newtheorem{definition}[theorem]{Definition}
\newtheorem{remark}[theorem]{Remark}

\numberwithin{equation}{section}
\numberwithin{theorem}{section}


\DeclareMathOperator \Vol {Vol}

\title[Asymptotically Euclidean metrics without conjugate points are flat]{Asymptotically Euclidean  metrics\\ without conjugate points are flat}

\author{Colin Guillarmou}
\address{Colin Guillarmou\newline\indent CNRS, Universit\'e Paris-Sud, D\'epartement de Math\'ematiques, 91400
Orsay, France}
\email{cguillar@math.cnrs.fr}

\author{Marco Mazzucchelli}
\address{Marco Mazzucchelli\newline\indent CNRS, \'Ecole Normale Sup\'erieure de Lyon, UMPA, 69364 Lyon Cedex 07, France}
\email{marco.mazzucchelli@ens-lyon.fr}

\author{Leo Tzou}
\address{Leo Tzou\newline\indent School of Mathematics and Statistics, University of Sydney, NSW 2006, Australia}
\email{leo@maths.usyd.edu.au}

\date{September 4, 2019}
\subjclass[2000]{53C22, 58E10}
\keywords{Asymptotically Euclidean metrics, conjugate points, geodesics}

\begin{document}
\begin{abstract}
We prove that any asymptotically Euclidean metric on $\R^n$ with no conjugate points must be isometric to the Euclidean metric.
\end{abstract}

\maketitle

\section{Introduction}

The study of manifolds without conjugate points is a deep subject in Riemannian geometry and dynamical systems. A celebrated theorem due to Hopf \cite{Hopf:1948aa}, which extended previous work of Morse-Hedlund \cite{Morse:1942}, implies that the only Riemannian metrics with no conjugate points on a 2-dimensional torus are the flat ones. The analogous statement for higher dimensional tori had been a long-standing conjecture, finally proved by Burago-Ivanov \cite{Burago:1994aa}. 
Similarly, the flat metrics are rigid within a large class of Riemannian manifolds without conjugate points that are flat outside a compact set and have a ``simple'' topology. More specifically, any compact perturbation of the Euclidean metric on $\R^n$ must be either flat or have pairs of conjugate points, as was shown by Green-Gulliver \cite{Green:1985} in dimension $2$ and Croke \cite{Croke:1991aa} in higher dimension.
In this paper, we extend this rigidity result to the class of asymptotically Euclidean metrics on $\R^n$ that are short-range perturbations of the Euclidean metric.

We denote by $x_1,\dots,x_n$ the Cartesian coordinates on $\R^n$, and by $g_0=dx_1^2+...+dx_n^2$ the Euclidean metric. 
For an integer $m\geq 2$, we will say that a Riemannian metric $g$ on $\R^n$ is \emph{asymptotically Euclidean to order $m\geq1$} if, outside a compact neighborhood of the origin, it has the form
\begin{equation}\label{g-g_0orderm}
 g = g_0 + \frac{1}{|x|^m}\sum_{i,j=1}^n a_{ij}\Big(\frac{1}{|x|},\frac{x}{|x|}\Big)dx_idx_j
 \end{equation}
for some smooth functions $a_{ij}\in C^\infty ([0,1]\x \mathbb{S}^{n-1})$. This condition is actually independent of the choice of Cartesian coordinates $x_i$, and is formulated in Section \ref{s:geometricback} in terms of Melrose's scattering bundle. 
Our main result is the following rigidity theorem.
\begin{theorem}
\label{t:main}
Let $g$ be a Riemannian metric on $\R^n$ that is asymptotically Euclidean to order $m>2$ and without conjugate points. Then, there exists a diffeomorphism $\psi:\R^n\to\R^n$ such that $\psi^*g$ is the Euclidean metric.
\end{theorem}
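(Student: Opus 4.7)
The strategy I would follow combines a dynamical study of the geodesic flow in the asymptotic region with a global integral identity of Hopf--Green type adapted to the scattering setting. The plan has four stages: establish non-trapping of the flow, use it to show the exponential maps are diffeomorphisms, derive a global Riccati-type divergence identity forcing the curvature to vanish, and appeal to the Killing--Hopf theorem to produce the diffeomorphism $\psi$.

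For the first stage, I would show that every maximal geodesic escapes every compact set as $t \to \pm \infty$. In the asymptotic region $|x| \gg 1$ the geodesic flow is a short-range perturbation of the Euclidean one because $g - g_0 = O(|x|^{-m})$ with $m>2$, so standard scattering arguments give escape for geodesics with sufficient outgoing radial velocity. To exclude trapped orbits in the interior, I would argue that a trapped or non-wandering geodesic would, via a limit-of-Jacobi-fields argument, produce a pair of conjugate points, contradicting the hypothesis. Together with the absence of conjugate points this implies that for each $p \in \R^n$ the exponential map $\exp_p: T_p \R^n \to \R^n$ is a local diffeomorphism and proper, and hence by the simple connectedness of $\R^n$ a global diffeomorphism, with every geodesic globally length-minimizing.

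The core of the proof would be a global integral identity. Along each geodesic, the no-conjugate-points condition yields symmetric stable and unstable Riccati solutions $U_\pm$, endomorphisms of the orthogonal complement of $\gamma'(t)$ satisfying $\dot U + U^2 + R = 0$, where $R$ is the curvature operator transverse to $\gamma'$. One always has $U_+ \geq U_-$, with equality forcing the sectional curvatures along $\gamma$ to vanish. I would then integrate a trace-Riccati divergence identity against the Liouville measure on the sphere bundle $SM$. Classically (Hopf, Burago--Ivanov, Croke) the flow-invariance of the Liouville measure turns the divergence term into a boundary contribution which vanishes in the compact case; here the ``boundary'' lies at spatial infinity, and the decay rate $m>2$ should be precisely what forces this contribution to vanish after working in Melrose's scattering calculus (see Section \ref{s:geometricback}). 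The outcome should be an identity $\int_{SM} I(x,v)\,d\mu = 0$ with $I \geq 0$ pointwise and $I(x,v) = 0$ only when the curvature tensor vanishes at $(x,v)$, thereby forcing $g$ to be flat.

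A simply connected complete flat Riemannian manifold is isometric to Euclidean space by the Killing--Hopf theorem, which yields the required diffeomorphism $\psi$. The decisive obstacle is the integral identity in the third stage: one must identify the correct divergence identity on $SM$, prove that $U_\pm$, $R$, and the Liouville measure combine to give a convergent integral, and control the asymptotics near the scattering face at infinity so that the boundary terms actually vanish. The condition $m>2$ is presumably the threshold at which this scattering-theoretic boundary analysis succeeds, and one can also expect the non-trapping step to require some care, since standard curvature-comparison methods are unavailable under only the hypothesis of no conjugate points.
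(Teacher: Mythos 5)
Your stage 1 (non-trapping) and stage 2 (global diffeomorphy of the exponential map) are correct and are used implicitly in the paper, but they are not the hard part: the absence of conjugate points on a simply connected manifold already gives both directly. The decisive obstacle is exactly where you locate it, in stage 3, and there the proposal has a genuine gap.

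In dimension two the trace-Riccati identity closes because of Gauss--Bonnet: after Stokes, the boundary terms tend to zero and $\int_{SB_j} K_g\,\alpha\wedge d\alpha \to 0$, so $\int u^2 \le 0$ forces $u\equiv 0$ and hence flatness. This is precisely the argument the paper gives in Proposition~\ref{rigidity_dim2}, but it is confined to $n=2$. In dimension $n>2$ the same integration of the matrix Riccati equation $\dot U + U^2 + R = 0$, even granting that the boundary contributions at the scattering face vanish, produces
\[
\int_{S\R^n} \operatorname{Tr}(U^2)\, d\mu = -\int_{S\R^n} \operatorname{Ric}(v,v)\, d\mu = -c_n\int_{\R^n} \operatorname{Scal}_g \, d\operatorname{vol}_g,
\]
and the left-hand side is $\ge 0$. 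There is no Gauss--Bonnet replacement that forces the total scalar curvature to vanish or be nonpositive under the hypotheses of Theorem~\ref{t:main}; one would have to \emph{assume} $\int_{\R^n}\operatorname{Scal}_g\le 0$. This is exactly the extra hypothesis required in Innami's theorem, which the introduction explicitly contrasts with the present result: the point of the paper is to avoid that assumption. So your stage 3 identity is not, in higher dimensions, an identity with a sign-definite integrand; it is an inequality that only becomes a rigidity statement once the integrated scalar curvature has already been controlled.

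The paper's actual route is quite different. For $n\ge 3$ it first shows (Section~\ref{s:scattering}) that the absence of conjugate points forces the scattering map of $g$ to agree with that of $g_0$, and that the geodesic flows are conjugate by a diffeomorphism converging to the identity at infinity at rate $|x|^{-m+1}$. This conjugacy is then used to compare asymptotic volumes of geodesic cylinders (Proposition~\ref{PropvolgFR}). The flatness then comes from a Jacobi-tensor argument in the style of Berger's proof of the Blaschke conjecture and Croke's compact-perturbation theorem: writing the volume of $F_R$ as $\int \det A_\eta\, dt\, d\eta$ and applying stability in H\"older's and Jensen's inequalities, the volume matching forces $A_\eta\equiv\operatorname{Id}$, hence $\mathcal R_\eta\equiv 0$. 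Crucially this step needs $m>n+1$, not merely $m>2$, so there is a further bootstrapping section (Section~7, Theorem~\ref{jetdeterm}) using perturbation theory of the rescaled flow and the Funk transform on $\mathbb{S}^{n-1}$ to promote decay to order $m\ge 3$ to decay to all orders. Your proposal omits both this jet-determination step and the need for it, and substitutes for the volume/Jacobi-tensor step a Riccati identity that does not close in $n>2$.
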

Actually, the proof of Theorem~\ref{t:main} only needs the coefficients $a_{ij}$ to be $C^k$ for some $k>n+1-m$ that can be computed explicitly,  i.e. the Riemannian metric $g$ needs to have an asymptotic expansion in integer powers of $|x|^{-1}$ up to  $o(|x|^{-n-1})$.

We stress that the assertion of Theorem~\ref{t:main} is not true if we only assume $g$ to be without conjugate points and with sectional curvature converging to $0$ at infinity at speed $\mc{O}(1/|x|^2)$. Indeed, one can construct non-flat non-positively curved metrics on $\R^n$ asymptotic to a metric cone $dr^2+r^2h$ (in polar coordinates), where $h$ is a metric with constant curvature $\kappa \in (0,1)$ on $\mathbb{S}^{n-1}$, see \cite[Section 2.3]{Guillarmou:2019aa}.

The proof of Theorem \ref{t:main} will be carried over in four steps:

\vspace{5pt}

\noindent\emph{Step 1}. We show that an asymptotically Euclidean metric $g$ to order $m>2$ with no conjugate points must have the same scattering map and travel time as the Euclidean metric. Namely, if we denote by $S\R^n$ and $S^{0}\R^n=\R^n\times\mathbb{S}^{n-1}$ the unit tangent bundles of $g$ and $g_0$ respectively, the geodesic flows of $g$ and $g_0$ are conjugated by a diffeomorphism $\theta:S\R^n\to S^0\R^n$ tending to the identity at infinity, with an order of convergence comparable to $|x|^{-m+1}$.

\vspace{5pt}

\noindent\emph{Step 2}. The conjugacy $\theta$ of the geodesic flows allows us to compare the asymptotic volumes of Riemannian balls for the two metrics: if we denote by $B_{g}(x_0,R)$ and $B_{g_0}(x_0,R)$ the Riemannian balls of center $x_0$ and radius $R$ for $g$ and $g_0$ respectively, then for each $x_0\in\R^n$ we have
\begin{equation}\label{volestintro}
{\rm Vol}_g(B_{g}(x_0,R))={\rm Vol}_{g_0}(B_{g_0}(0,R))+\mc{O}(R^{n-m+1})
\quad\mbox{as }R\to\infty.
\end{equation} 

\vspace{5pt}

\noindent\emph{Step 3}. By employing Jacobi tensors in a similar way as in Croke's \cite{Croke:1991aa} and in Berger's proof of the Blaschke conjecture for $\mathbb{S}^n$ \cite{Besse:1978aa}, the asymptotics \eqref{volestintro} and the absence of conjugate points force $g$ to be flat provided it is asymptotically Euclidean to order $m>n+1$.

\vspace{5pt}

\noindent\emph{Step 4}. The particular structure of the Riemannian metric $g$ near infinity allows us to describe very precisely the behavior of the $g$-geodesics that do not enter the regions $|x|>R$ for large $R$: if we compactify $\R^n$ radially by adding a sphere $\mathbb{S}^{n-1}$ at infinity, we see that as $R\to\infty$ those geodesics converge as non-parametrised curves to geodesics of length $\pi$ on the unit round sphere $\mathbb{S}^{n-1}$ (namely, half great circles). Since $g$ and $g_0$ have the same scattering map, we show that the tensors in the asymptotic expansion of $g$ in powers of $1/|x|$ satisfy suitable equations. Such equations, together with suitable properties of the Funk transform (also known as the X-ray transform), imply that all the terms in the asymptotic expansion of $g$ must vanish, and therefore showing that $g$ is asymptotically Euclidean to any order $m$. This, together with Step 3, completes the proof of Theorem~\ref{t:main}.

\vspace{5pt}

Steps 1 and 3 build on the work of Croke \cite{Croke:1991aa}. As for Step 4, the behavior of the geodesics near infinity was studied by Melrose-Zworski in their work \cite{Melrose:1996} on the quantum scattering operator for the Laplacian of asymptotically Euclidean metrics. The recovery of the asymptotic tensors is inspired by the work of Joshi-S\'a Barreto \cite{Joshi:1999aa} on the quantum scattering operator for the Laplacian; nevertheless, our arguments are quite different from the ones of Joshi-S\'a Barreto, as we cannot employ the amplitudes that arise in the Fourier integral representation of the quantum scattering operator, a key tool in \cite{Joshi:1999aa} for the recovery of the terms in the asymptotic expansion of the Riemannian metric.

We conclude the introduction by mentioning some related rigidity results in the literature. A remarkable theorem of Croke \cite{Croke:1992} states that, on the universal cover $(M,g)$ of any closed Riemannian manifold without conjugate points, the volume of the Riemannian balls satisfy
\[
\liminf_{R\to \infty}\frac{{\rm Vol}_g(B_g(x_0,R))}{{\rm Vol}_{g_0}(B_{g_0}(0,R))}\geq 1,
\qquad
\forall x_0\in M,
\]
where, as above, $g_0$ is the Euclidean metric on $\R^n$; moreover, if this limit is equal to $1$ for some $x_0$, then $g$ is flat. Building on the  work of Hopf \cite{Hopf:1948aa}, Bangert-Emmerich \cite{Bangert:2013aa} showed that the same assertion holds in dimension $2$ under the only assumption that $M$ is a complete surface without conjugate points. In the case of non-simply connected manifolds, beside the already mentioned results of Hopf \cite{Hopf:1948aa} and Burago-Ivanov \cite{Burago:1994aa}, Burns-Knieper \cite{Burns:1991aa} proved that every Riemannian metric on the $2$-dimensional cylinder $\R\x\mathbb{S}^1$ that is without conjugate points, bounded cross-section, and curvature bounded from below must be flat; Croke-Kleiner \cite{Croke:1998ab} proved that compact perturbations of  complete flat Riemannian metrics on any non-compact manifold must have conjugate points or be flat.
As for the case of non-compact perturbations of the Euclidean metric, we mention the rigidity result of Innami \cite{Innami:1986} for Riemannian metrics on $\R^n$ with no conjugate points, integrable Ricci curvature, and vanishing integral scalar curvature. Compared to these results, our Theorem~\ref{t:main} has a rather strong assumption on the geometry at infinity, but on the other hand does not require the invariance of the Riemannian metric under a group action, nor global curvature assumptions, nor the fact that the Riemannian metric is a compactly supported perturbation of a good model.

\subsection*{Acknowledgements.} This project has received funding from the European Research Council (ERC) under the European Union’s Horizon 2020 research and innovation programme (grant agreement No. 725967). This work was written while the first and second authors were visiting the University of Sydney, we thank the institution for its support and hospitality.  
We finally thank Gerhard Knieper and Gabriel Paternain for helpful discussions and references.

\section{Geometric background}\label{s:geometricback} 
  
\subsection{Asymptotically Euclidean metrics}
We denote by $g_0$ the Euclidean metric on $\R^n$, and by $(r,y)$
the polar coordinates on $\R^n$, where $r=|x|=|x|_{g_0}$,  $y=x/r\in \mathbb{S}^{n-1}$, and $x=(x_1,\dots,x_n)$ are the Cartesian coordinates on $\R^n$. The Euclidean metric $g_0=dx_1^2+...+dx_n^2$ can be written in polar coordinates as $g_0=dr^2+r^2h_0$, where $h_0$ is the Riemannian metric of constant curvature $+1$ on $\mathbb{S}^{n-1}$. We set $\rho_0:=r^{-1}$ in the region $r\geq 1$ and extend it smoothly as a positive function on $\R^n$.
We compactify $\R^n$ radially by adding an $(n-1)$-sphere at infinity, i.e. at $\rho_0=0$. 
We denote by $\bbar{\R^n}$ this compactification, and equip it with the smooth structure that extends the one of $\R^n$ and is such that $(\rho_0,y)$ are smooth coordinates near $\pl \bbar{\R^n}=\mathbb{S}^{n-1}$. The Euclidean metric $g_0$ can  be written on $\R^n\setminus\{0\}$ as 
\begin{equation}\label{defg0} 
g_0=\frac{d\rho_0^2}{\rho_0^4}+\frac{h_0}{\rho_0^2}.
\end{equation}
Following Melrose \cite{Melrose:1994aa} (see also \cite[Chapter 2]{Melrose:1996ij} about boundary fibration structures), there is a smooth bundle, called the \emph{scattering tangent bundle} and denoted by ${^{\rm sc}T}\bbar{\R^n}\to\bbar{\R^n}$, whose space of smooth sections is identified with the space of smooth vector fields of the form
$\rho_0 V$, where $V$ are smooth vector fields on $\bbar{\R^n}$ tangent to the boundary 
$\pl\bbar{\R^n}$. By means of this identification, in local coordinates $(\rho_0,y_1,\dots,y_{n-1})$ near $\pl\bbar{\R^n}$, a local frame of ${^{\rm sc}T}\bbar{\R^n}\to\bbar{\R^n}$
is given by $\rho_0^2\pl_{\rho_0},\rho_0\pl_{y_1},\dots,\rho_0\pl_{y_{n-1}}$. 
Using polar coordinates, it is straightforward to check that the vector fields $\pl_{x_i}$ are smooth sections of ${^{\rm sc}T}\bbar{\R^n}$. Equivalently,  smooth scattering vector field $W$ are those vector fields on $\R^n$ that can be written outside a compact set as 
\[W=\sum_{j=1}^na_j\Big(\frac{1}{|x|},\frac{x}{|x|}\Big)\pl_{x_j}.\]
for some $a_j\in C^\infty([0,1)\x \mathbb{S}^{n-1})$.
The vector bundle dual to ${^{\rm sc}T}\bbar{\R^n}$ is denoted by ${^{\rm sc}T}^*\bbar{\R^n}$. Near $\partial\bbar{\R^n}$, it admits the smooth frame $\rho_0^{-2}d\rho_0$ and 
$\rho_0^{-1}\omega$, where $\omega$ is any smooth $1$-form on $\bbar{\R^n}$ such that $\omega(\pl_{\rho_0})|_{\pl\bbar{\R^n}}=0$. As before, the 1-forms $dx_i$  extend as smooth sections of ${^{\rm sc}T}^*\bbar{\R^n}$ on the whole $\bbar{\R^n}$. 
Therefore, the Euclidean metric $g_0$ is a smooth section of the symmetric tensor bundle $S^2({^{\rm sc}T}^*\bbar{\R^n})$, i.e. $g_0\in C^{\infty}(\bbar{\R^n};S^2({^{\rm sc}T}^*\bbar{\R^n}))$.

\begin{definition}\label{defAE} 
A Riemannian metric $g$ on $\R^n$ is \emph{asymptotically Euclidean} to order $m\geq 1$
when $g-g_0\in \rho_0^mC^{\infty}(\bbar{\R^n};S^2({^{\rm sc}T}^*\bbar{\R^n}))$ and $\rho_0^{-2}|d\rho_0|_{g}=1+\mc{O}(\rho_0^2)$.
When $m=1$, we simply say that $(\R^n,g)$ is asymptotically Euclidean.
\end{definition}
This condition implies that, near $\pl\bbar{\R^n}$, we have
\begin{equation}\label{g-g0}
g-g_0=\rho_0^m\Big(a\frac{d\rho_0^2}{\rho_0^4}+\frac{\sum_j b_jd\rho_0\, dy_j}{\rho_0^3}+
\frac{\sum_{ij}k_{ij}dy_i\,dy_j}{\rho_0^2}\Big)
\end{equation} 
where $a,b_j,k_{ij}\in C^{\infty}(\bbar{\R^n})$ with
$\rho_0^ma=\mc{O}(\rho_0^{\max(2,m)})$. In Cartesian coordinates, the condition $g-g_0\in \rho_0^mC^{\infty}(\bbar{\R^n};S^2({^{\rm sc}T}^*\bbar{\R^n}))$ means that for $|x|>1$
\[ g_{ij}(x)=\delta_{ij}+\frac{1}{|x|^m}\sum_{i,j=1}^na_{ij}\Big(\frac{1}{|x|},\frac{x}{|x|}\Big)dx_idx_j\]
for some $a_{ij}\in C^\infty([0,1]\x \mathbb{S}^{n-1})$, and 
this further implies that, for each multi-index $\beta\in \NN^{n}$, 
\begin{align*}
|\pl_x^{\beta}(g-g_0)_{ij}|\leq C_{\beta}|x|^{-m-|\beta|},
\qquad
\forall x\in \R^n\mbox{ with }|x|\geq 1
\end{align*}
for some $C_\beta>0$. The condition $\rho_0^{-2}|d\rho_0|_{g}=1+\mc{O}(\rho_0^2)$ is only relevant if $m=1$ (as otherwise automatically satisfied) and corresponds to 
the condition $\sum_{ij}a_{ij}x_ix_j=\mc{O}(1)$ as $|x|\to \infty$.
It was proved by Joshi-Sa Barreto \cite{Joshi:1999aa} that an asymptotically Euclidean Riemannian metric admits an approximate normal form near the boundary $\pl\bbar{\R^n}$. In fact, it also admits an exact normal form. We recall the following lemma which can be viewed as a sort 
of polar coordinates representation of $g$ and is extracted from \cite[Lemma 2.2.]{Guillarmou:2019aa}.
\begin{lemma}[]\label{normalform}
Let $g$ be asymptotically Euclidean to order $m\geq 1$. 
There exists a boundary defining function $\rho\in C^\infty(\bbar{\R^n})$  that satisfies 
\begin{equation}
\label{|drho|=1}
\frac{|\nabla^g\rho|_g}{\rho^2}=1,
\qquad
\rho=\rho_0(1+\mc{O}(\rho_0^m)).
\end{equation}
If $m\geq 2$, the function $\rho$ is uniquely determined near $\pl\bbar{\R^n}$ by \eqref{|drho|=1}. 
If $m=1$, the function $\rho$ is not unique: for each $\omega_0\in C^\infty(\pl\bbar{\R^n})$ there exists a function $\rho=\rho_0+\omega_0\rho_0^2+\mc{O}(\rho_0^3)$ such that 
$\rho^{-2}|\nabla^g\rho|_g=1$.

For every such boundary defining function $\rho$, there is a smooth diffeomorphism \[\psi:[0,\eps)_s \x \pl\bbar{\R^n}\to U\subset \bbar{\R^n}\] onto a collar neighborhood $U$ of $\pl\bbar{\R^n}$ such that $\psi(0,\cdot)|_{\partial\bbar{\R^n}}=\Id$, $\psi^*\rho=s$ and 
\[\psi^*g=\frac{ds^2}{s^4}+\frac{h_s}{s^2},\] 
where $h_s$ is a smooth family of Riemannian metrics on $\pl\bbar{\R^n}$ satisfying 
\[h_s-h_0\in s^m C^\infty([0,\eps)\x\pl\bbar{\R^n};S^2(T^*\pl\bbar{\R^n})).\] 
The diffeomorphism $\psi$ is uniquely determined by $\rho$ by means of the formula $\psi(s,y)=e^{sZ_\rho}(y)$, where $Z_\rho:=\rho^{-4}\nabla^g\rho\in C^\infty(\bbar{\R^n};T\bbar{\R^n})$.
\end{lemma}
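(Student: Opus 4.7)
The plan is to split the proof into three parts: (i) solve the eikonal equation $|\nabla^g\rho|_g^2=\rho^4$ with the required asymptotics, (ii) construct $\psi$ as the flow of $Z_\rho=\rho^{-4}\nabla^g\rho$ and verify that $\psi^*g$ has the claimed warped-product form, and (iii) read off the asymptotic expansion of $h_s$.

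For (i), a direct computation in the scattering frame using Definition~\ref{defAE} shows that $\rho_0$ is already an approximate solution: $|d\rho_0|_g^2=\rho_0^4(1+\mc{O}(\rho_0^{\max(2,m)}))$. Writing the ansatz $\rho=\rho_0 e^{\phi}$ and linearizing, the eikonal equation reduces to a transport equation
\[W\phi-\rho_0\phi=\mc{F}(\phi),\qquad W:=\rho_0^{-2}\nabla^g\rho_0,\]
where $W$ extends to a smooth vector field on $\bbar{\R^n}$ with $W|_{\pl\bbar{\R^n}}=-\pl_r$ (transverse to the boundary, with characteristics sweeping a collar of infinity inwards) and $\mc{F}$ is a nonlinear remainder of order $\mc{O}(\rho_0^{m+2})$ when $\phi=\mc{O}(\rho_0^m)$. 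Integrating along characteristics of $W$ and applying a Picard iteration in a suitable weighted H\"older space produces a genuine solution $\phi$ of the nonlinear problem. For $m\geq 2$ the zero boundary datum on $\pl\bbar{\R^n}$ selects a unique $\phi=\mc{O}(\rho_0^m)$, whereas for $m=1$ the leading-order transport equation leaves a one-parameter family of solutions with prescribed boundary value $\omega_0=\phi/\rho_0|_{\pl\bbar{\R^n}}\in C^{\infty}(\pl\bbar{\R^n})$, producing the family claimed in the statement. Uniqueness for $m\geq 2$ follows by subtracting two solutions $\rho_1,\rho_2$: their difference $u=\rho_1-\rho_2=\mc{O}(\rho_0^{m+1})$ solves a homogeneous linear transport equation $g(\nabla(\rho_1+\rho_2),\nabla u)=(\rho_1^2+\rho_2^2)(\rho_1+\rho_2)u$, whose only solution vanishing at this rate at $\pl\bbar{\R^n}$ is $u\equiv 0$.

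For (ii), the vector field $Z_\rho=\rho^{-4}\nabla^g\rho$ extends smoothly to $\bbar{\R^n}$ with leading behavior $\pl_{\rho_0}$ at the boundary, and is therefore transverse to $\pl\bbar{\R^n}$ and nowhere vanishing near it. By construction $Z_\rho\rho=\rho^{-4}|\nabla^g\rho|_g^2=1$, so its flow $\psi(s,y):=e^{sZ_\rho}(y)$ defines, for some $\eps>0$, a diffeomorphism from $[0,\eps)\x\pl\bbar{\R^n}$ onto a collar of $\pl\bbar{\R^n}$ satisfying $\psi(0,\cdot)=\Id$ and $\psi^*\rho=s$. Writing $\psi^*g=A\,ds^2+2\sum_j B_j\,ds\,dy_j+\sum_{ij}C_{ij}\,dy_idy_j$, the radial coefficient is $A=g(Z_\rho,Z_\rho)=\rho^{-4}=s^{-4}$, and differentiating $\psi^*\rho=s$ in $y_j$ yields the Gauss-lemma identity $0=d\rho(\psi_*\pl_{y_j})=\rho^4 g(Z_\rho,\psi_*\pl_{y_j})$, forcing $B_j=0$. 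Hence $\psi^*g=ds^2/s^4+\tilde h_s$ for a smooth family $\tilde h_s$ of metrics on $\pl\bbar{\R^n}$, and setting $h_s:=s^2\tilde h_s$ yields the desired normal form.

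Finally, for (iii), the change of coordinates $(\rho_0,y)\mapsto(s,y)=(\rho,y)$ is, by step (i), of the form $s=\rho_0(1+\mc{O}(\rho_0^m))$, so pulling back the expansion \eqref{g-g0} of $g-g_0$ under $\psi$ preserves the $\rho_0^m$-smallness of its $dy_idy_j$-part and yields $h_s-h_0\in s^m C^{\infty}([0,\eps)\x\pl\bbar{\R^n};S^2(T^*\pl\bbar{\R^n}))$. The main obstacle I anticipate is step (i): setting up the transport equation in an appropriate weighted function space so that the Picard iteration converges to a smooth solution, and keeping careful track of the marginal loss in decay between the $m=1$ and $m\geq 2$ regimes so that the boundary freedom parametrized by $\omega_0$ is captured cleanly.
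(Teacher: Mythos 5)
The paper itself does not prove this lemma: it quotes it verbatim from \cite[Lemma 2.2]{Guillarmou:2019aa}, so there is no in-paper argument to compare against. Your three-step plan (solve the eikonal equation for a boundary defining function, flow out along $Z_\rho=\rho^{-4}\nabla^g\rho$, then read off the decay of $h_s-h_0$) is the standard route for boundary normal forms in scattering geometry, and the outline is sound; your accounting of the $m=1$ versus $m\geq 2$ dichotomy, and the subtraction argument for uniqueness when $m\geq 2$, are correct.

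One point of exposition needs fixing. You claim that $W:=\rho_0^{-2}\nabla^g\rho_0$ ``extends to a smooth vector field on $\bbar{\R^n}$ with $W|_{\pl\bbar{\R^n}}=-\pl_r$ (transverse to the boundary).'' Since $\nabla^{g_0}\rho_0=\rho_0^4\pl_{\rho_0}$, we have $W\approx\rho_0^2\pl_{\rho_0}=-\pl_r$, which \emph{vanishes to second order} at $\{\rho_0=0\}$: this vector field is tangent to $\pl\bbar{\R^n}$, not transverse, and its integral curves reach the boundary only in infinite time. Consequently the equation $W\phi-\rho_0\phi=\mathcal F(\phi)$ is not a regular transport equation along a transverse flow but a Fuchsian (regular-singular) ODE in $\rho_0$, of the model form $\rho_0\pl_{\rho_0}\phi-\phi=\mathcal O(\rho_0^{\max(2,m)})$ after normalizing. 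The indicial root at $1$ is precisely what produces the $C^\infty(\pl\bbar{\R^n})$-worth of homogeneous solutions $\omega_0\rho_0$ in the $m=1$ case and the uniqueness of the $\mathcal O(\rho_0^m)$ solution when $m\geq 2$; any Picard iteration or Borel-summation scheme must be set up in weighted spaces that respect this indicial structure. With this correction steps (ii) and (iii) go through as you describe: $Z_\rho$ (unlike $W$) is genuinely transverse, $Z_\rho\rho=1$ gives $\psi^*\rho=s$, and the cross terms vanish because $d\rho(\psi_*\pl_{y_j})=\pl_{y_j}s=0$ (a chain-rule identity; no geodesics are actually needed, so calling it the Gauss lemma is a slight overstatement but harmless).
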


Let us consider a diffeomorphism $\psi$ as in the previous lemma, and the associated function $\rho$. We extend $\rho$ arbitrarily to a non-vanishing smooth function on the whole $\R^n$. We consider the corresponding diffeomorphism $\psi_0: [0,\infty)\x \mathbb{S}^{n-1}\to \overline{\R^n}\setminus \{0\}$, $\psi_0(\rho_0,y)= y/\rho_0$ associated with the Euclidean metric $g_0$. Up to pulling back $g$ by any diffeomorphism which is equal to 
$\psi\circ \psi_0^{-1}$ outside a compact subset of $\R^n$, we can always assume that $g$ and $g_0$ are in the same normal form near $\pl\bbar{\R^n}$. Namely, $\rho(x)=\rho_0(x)=|x|^{-1}$ outside a compact subset of $\R^n$.

\begin{lemma}\label{curvdecay}
Let $g$ be an asymptotically Euclidean Riemannian metric to order $m$. Its Christoffel symbols $\Gamma_{ij}^k$ with respect to the Cartesian coordinates on $\R^n$ satisfy
\begin{equation}
\label{Gammaijk}
\begin{split}
\Gamma_{ij}^k(x) & =\mc{O}(\rho(x)^{m+1}),\\
\max_{|v|_g=1} d\Gamma_{ij}^k(x)v & =\mc{O}(\rho(x)^{m+2}),
\end{split} 
\end{equation}
and its Riemann tensor $R$ satisfies
\begin{equation}\label{curvtensor}
\max_{|v_1|_g=...=|v_4|_g=1} R_x(v_1,v_2,v_3,v_4)=\mc{O}(\rho(x)^{m+2}).
\end{equation} 
\begin{proof}
The vector fields $\pl_{x_k}$ are smooth sections of $^{\rm sc}T\bbar{\R^n}$. Using Koszul formula and writing $g=g_0+\rho^{m}q$ for some $q\in C^\infty(\bbar{\R^n};S^2({^{\rm sc}T}^*\bbar{\R^n}))$, we get
\begin{align*}
2\Gamma_{ij}^k & = \pl_{x_i}g(\pl_{x_j},\pl_{x_k})+\pl_{x_j}g(\pl_{x_i},\pl_{x_k})-
\pl_{x_k}g(\pl_{x_i},\pl_{x_j})\\
 & = \pl_{x_i}(\rho^mq(\pl_{x_j},\pl_{x_k}))+\pl_{x_j}(\rho^mq(\pl_{x_i},\pl_{x_k}))-
\pl_{x_k}(\rho^mg(\pl_{x_i},\pl_{x_j})).
\end{align*}
This, together with $\rho^m q(\pl_{x_j},\pl_{x_k})\in \rho^m C^\infty(\bbar{\R^n})$ for all $j,k$, implies the decays in \eqref{Gammaijk}. As for \eqref{curvtensor}, we proceed similarly by writing 
$R^g(\pl_{x_i},\pl_{x_j},\pl_{x_k},\pl_{x_\ell})$ in terms of the the Christoffel symbols and of the second derivatives of $g$.   
\end{proof}

\end{lemma}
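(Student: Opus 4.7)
\medskip

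\noindent\textbf{Proof proposal.} The plan is to reduce every assertion to the single observation that, in Cartesian coordinates, the vector fields $\pl_{x_i}$ are smooth sections of the scattering tangent bundle ${}^{\mathrm{sc}}T\bbar{\R^n}$, and hence when viewed as vector fields on $\bbar{\R^n}$ they \emph{vanish at $\pl\bbar{\R^n}$}. Concretely, in boundary coordinates they lie in $\rho_0^2 C^\infty \cdot \pl_{\rho_0} + \rho_0 C^\infty\cdot \pl_{y_j}$, so that for any $f\in C^\infty(\bbar{\R^n})$ one has $\pl_{x_i} f \in \rho_0 C^\infty(\bbar{\R^n})$, and moreover $\pl_{x_i}\rho_0 \in \rho_0^2 C^\infty(\bbar{\R^n})$ (equivalently, $\pl_{x_i}\rho\in \rho^2 C^\infty(\bbar{\R^n})$). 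Every time a partial derivative $\pl_{x_i}$ hits a factor coming from $C^\infty(\bbar{\R^n})$ we therefore gain a power of $\rho$, while a derivative hitting $\rho^m$ also produces $m\rho^{m-1}\cdot\pl_{x_i}\rho=O(\rho^{m+1})$, gaining one power of $\rho$ as well.

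First I would apply this to the metric. Writing $g-g_0=\rho^m q$ with $q\in C^\infty(\bbar{\R^n};S^2({}^{\mathrm{sc}}T^*\bbar{\R^n}))$, the Cartesian components $q_{jk}=q(\pl_{x_j},\pl_{x_k})$ lie in $C^\infty(\bbar{\R^n})$, so the Leibniz rule gives
\[
\pl_{x_i}g_{jk}=\pl_{x_i}(\rho^m q_{jk})=m\rho^{m-1}(\pl_{x_i}\rho)\,q_{jk}+\rho^m\,\pl_{x_i}q_{jk}\;\in\;\rho^{m+1}C^\infty(\bbar{\R^n}).
\]
The Koszul formula then yields $\Gamma_{ijk}=\tfrac12(\pl_i g_{jk}+\pl_j g_{ik}-\pl_k g_{ij})\in\rho^{m+1}C^\infty(\bbar{\R^n})$, and since $g^{kl}=\delta^{kl}+O(\rho^m)$ is bounded, $\Gamma_{ij}^k=g^{kl}\Gamma_{ijl}\in\rho^{m+1}C^\infty(\bbar{\R^n})$, giving the first line of \eqref{Gammaijk}.

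Next I would differentiate once more. Writing $\Gamma_{ij}^k=\rho^{m+1}F$ with $F\in C^\infty(\bbar{\R^n})$, the same two mechanisms give $\pl_{x_l}\Gamma_{ij}^k\in \rho^{m+2}C^\infty(\bbar{\R^n})$. For a unit $g$-vector $v$ at $x$, the Cartesian components $v^l$ are $O(1)$ (since $g_{ij}=\delta_{ij}+O(\rho^m)$ so that $g$ and $g_0$ are uniformly comparable outside a compact set), hence $d\Gamma_{ij}^k(x)v=\sum_l v^l\pl_{x_l}\Gamma_{ij}^k(x)=O(\rho^{m+2})$. This is the second line of \eqref{Gammaijk}.

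For the curvature I would use the standard formula
\[
R_{ijk}^{\phantom{ijk}l}=\pl_{x_i}\Gamma_{jk}^l-\pl_{x_j}\Gamma_{ik}^l+\Gamma_{jk}^m\Gamma_{im}^l-\Gamma_{ik}^m\Gamma_{jm}^l,
\]
whose first two terms are $O(\rho^{m+2})$ by the previous step, and whose last two terms are $O(\rho^{2(m+1)})\subset O(\rho^{m+2})$ since $m\geq 1$. Lowering the index by $g_{lp}=O(1)$ gives $R_{ijkp}=O(\rho^{m+2})$, and contracting with four unit $g$-vectors (whose Cartesian components are again $O(1)$) yields \eqref{curvtensor}. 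I do not anticipate any serious obstacle here: the only subtle point is keeping track of the one-power-of-$\rho$ gain produced each time a Cartesian partial derivative acts on an element of $C^\infty(\bbar{\R^n})$, which is precisely the statement that $\pl_{x_i}$ is a scattering vector field.
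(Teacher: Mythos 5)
Your proposal is correct and follows essentially the same route as the paper: both rest on the observation that $\pl_{x_i}$ are smooth scattering vector fields, so that each Cartesian derivative acting on a smooth function of $\bbar{\R^n}$ (or on $\rho^m$) gains one power of $\rho$, which together with Koszul and the standard curvature formula yields the stated decays. The paper's proof simply omits the explicit bookkeeping that you carry out (the gain $\pl_{x_i}f\in\rho\,C^\infty(\bbar{\R^n})$, $\pl_{x_i}\rho\in\rho^2 C^\infty(\bbar{\R^n})$, the uniform comparability of $g$ and $g_0$), so your version is a faithful expansion of it rather than an alternative argument.
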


\subsection{Rescaled geodesic flow} 
\label{s:rescaled_geodesic_flow}
Let $g$ be an asymptotically Euclidean Riemannian metric to order $m\geq 1$ on $\R^n$ without conjugate points.
The geodesic flow associated with $g$ on the unit cotangent bundle 
$S^*\R^n=\big\{(x,\xi)\in T^*\R^n\ \big|\ |\xi|_g=1\big\}$ 
is the flow of the its geodesic vector field $X$, which is the Hamiltonian vector field of the Hamiltonian $H(x,\xi)=\frac{1}{2}|\xi|^2_g$. It is shown in \cite[Section 2.4]{Guillarmou:2019aa} that $X$ can be better described on a non-compact manifold with boundary, denoted by $\bbar{S^*\R^n}$, whose interior is $S^*\R^n$ and whose boundary consists of two connected components diffeomorphic to $T^*\pl\bbar{\R^n}$. The manifold $\bbar{S^*\R^n}$ is defined as 
follows. The unit scattering tangent bundle is an $(n-1)$-sphere bundle over $\bbar{\R^n}$ whose total space is the compact smooth manifold with boundary   
\[ 
{^{\rm sc}S}^*\bbar{\R^n}:=\big\{ (x,\xi)\in {^{\rm sc}T}^*\bbar{\R^n}\ \big|\ |\xi|_{g_x}=1\big\}.
\]
The coordinates $(\rho,y)$ on $\bbar{\R^n}$ induce local coordinates $(\rho,y,\bbar{\xi}_0,\bbar{\eta})$ on ${^{\rm sc}T}^*\bbar{\R^n}$ as follows: every covector $\xi\in{^{\rm sc}T}^*\bbar{\R^n}$ can be written as 
\begin{equation}\label{scatxi}
\xi=\bbar{\xi}_0\frac{d\rho}{\rho^2}+\sum_{i=1}^{n-1}\bbar{\eta}_i\frac{dy_i}{\rho}.
\end{equation} 
Therefore, $\xi\in{^{\rm sc}S}^*\bbar{\R^n}$ if and only if $\bbar{\xi}_0^2+|\bbar{\eta}|_{h_\rho}^2=1$. We consider the (radial) blow-up 
\[ [{^{\rm sc}S}^*\bbar{\R^n}; L_\pm] \]
of ${^{\rm sc}S}^*\bbar{\R^n}$ at the submanifolds $L_\pm:=\{ \bbar{\xi}_0=\pm 1, \rho=0\}\subset {^{\rm sc}S}^*\bbar{\R^n}$
obtained by removing $L_\pm$ and gluing in the inward pointing spherical normal bundle to $L_\pm$; see \cite[Chapter 5]{Melrose:1996ij} for details about blow-ups and \cite[Section 2.4]{Guillarmou:2019aa} for this particular case. The smooth structure on the blow-up is the one that makes the polar coordinates around $L_\pm$ smooth. 
The space $[{^{\rm sc}S}^*\bbar{\R^n}; L_\pm]$ is a manifold with codimension $2$ corners. The new boundary faces obtained from the blow-up are half-sphere bundles whose interior is denoted by $\pl_\pm S^*\R^n$, and are isomorphic to $T^*\pl\bbar{\R^n}$: using that in the region $\pm\xi_0>0$, 
$L_\pm=\{\rho=0,\bbar{\eta}=0\}$, we can use,
in a neighborhood of the interior of these new boundary faces, the projective coordinates    
\begin{equation}\label{projcoord} 
\rho, y,\eta:=\bbar{\eta}/\rho 
\end{equation}
are smooth coordinates and $\pl_\pm S^*\R^n=\{\rho=0\}$ in this neighborhood using the projective coordinate. Notice that $(y,\eta)$ restricted to $\pl_\pm S^*\R^n$ provide a diffeomorphism with $T^*\pl \R^n$. The variable $\bbar{\xi}_0$ is determined by $(\rho,y,\eta)$ near $\pl_\pm S^*\R^n$ by the equation
\begin{equation}\label{bbarS^*M} 
\bbar{\xi}_0^2+ \rho^2 |\eta|_{h_\rho}^2=1.
\end{equation}
The other boundary hypersurface of $[{^{\rm sc}S}^*\bbar{\R^n}; L_\pm]$ 
corresponds to pull-back of $\{\rho=0,\bbar{\eta}\not=0\}$ to $[{^{\rm sc}S}^*\bbar{\R^n}; L_\pm]$ by the blow-down map $\beta:[{^{\rm sc}S}^*\bbar{\R^n}; L_\pm]\to {^{\rm sc}S}^*\bbar{\R^n}$ and is denoted by ${\rm bf}$. 
We then define the non-compact manifold with boundary
\[\bbar{S^*\R^n}:= [{^{\rm sc}S}^*\bbar{\R^n}; L_\pm] \setminus {\rm bf}. \]
Note that the coordinates $(\rho,y,\xi_0,\eta)$ satisfying   
the condition \eqref{bbarS^*M} provide well-defined smooth coordinates on $\bbar{S^*\R^n}$, since the only region where $\eta=\bbar{\eta}/\rho$ was not defined on $[{^{\rm sc}S}^*\bbar{\R^n}; L_\pm]$ is ${\rm bf}$. We also note that $\rho$ is a smooth boundary defining function of $\pl_\pm S^*\R^n$
in $\bbar{S^*\R^n}$.
More informaly, this blown-up manifold leading to $\bbar{S^*\R^n}$ can be considered as the following process: using Lemma \ref{normalform}, when $g$ is asymptotically Euclidean to order $m\geq 2$, the coordinate $\rho$  uniquely defined by \eqref{|drho|=1} gives a decomposition  
$\bbar{\R^n}\setminus K\simeq [0,\eps)_\eps\x \pl\bbar{\R^n}$ for some compact set $K\subset \R^n$, inducing a decomposition of $T^*\bbar{\R^n}$ over $\bbar{\R^n}\setminus K$ under the form (using \eqref{scatxi})
\begin{equation}\label{defmcM}
T^*_{\bbar{\R^n}\setminus K}\bbar{\R^n}\simeq \mc{M}:=[0,\eps)_\rho \x \rr_{\bbar{\xi}_0}\x (T^*\pl\bbar{\R^n})_{y,\eta},
\end{equation}
and $\bbar{S^*\R^n}$ is just the fiber bundle given by $S^*\R^n$ over $K$, while $\bbar{S^*\R^n}\setminus S_K^*\R^n$ is diffeomorphic outside $K$ to the subset
\[\{(\rho,\bbar{\xi}_0,y,\eta)\in \mc{M}\ |\ \bbar{\xi}_0^2+\rho^2|\eta|_{h_\rho}^2=1\}.\]
The blown-up picture is an invariant way (not depending on $\rho$) to define this smooth manifold.
In the case $m=1$, $\rho$ is not uniquely defined by \eqref{|drho|=1}, and two such functions $\rho$ and $\hat\rho$ are related by $\hat{\rho}=\rho+\omega_0\rho^2+\mc{O}(\rho^3)$ for some $\omega_0\in C^\infty(\pl\bbar{\R^n})$; the induced coordinates $(\hat{\rho},\hat{\bbar{\xi}}_0,\hat{y},\hat{\eta})$ are related to $(\rho,y,\bbar{\xi}_0,\eta)$ by (for $\eta$ in a compact set)
\begin{equation}\label{changeofcoord}
\hat{y}=y+\mc{O}(\rho), \quad \hat{\bbar{\xi}}_0=\bbar{\xi}_0+\mc{O}(\rho),\quad \hat{\eta}=\eta+d\omega_0+\mc{O}(\rho^2).
\end{equation}
In \cite[Lemma 2.4]{Guillarmou:2019aa}, it is shown that the rescaled geodesic vector field 
\begin{align}
\label{Xbar}
\bbar{X}=:\rho^{-2}X 
\end{align}
extends as a smooth vector field on the whole $\bbar{S^*\R^n}$ that is transverse to the boundary $\pl_\pm S^*\R^n$ of $\bbar{S^*\R^n}$. Its restriction to $\pl_\pm S^*\R^n$ is given by
\begin{align}
\label{e:bbarX_local_coordinates}
 \bbar{X}|_{\pl_\pm S^*\R^n}=\mp \pl_{\rho}+Y,
\end{align}
where $Y$ is the Hamiltonian flow of $\frac{1}{2}|\eta|^2_{h_0}$ on $T^*\pl\bbar{\R^n}$. Moreover, $d\rho(\bbar{X})=\pm 1+\mc{O}(\rho)$ near $\pl_{\mp}S^*\R^n$, which implies that on compact sets $\Omega$ of $\bbar{S^*\R^n}$, $\bbar{X}$ is transverse to the hypersurfaces $\{\rho=\eps\}$ for each $\eps>0$ small enough depending on $\Omega$.
In the local coordinates $(\rho,y,\bbar{\xi}_0,\eta)$ (subject to the condition $\bbar{\xi}_0^2+\rho^2|\eta|^2_{h_\rho}=1$),   
\begin{equation}\label{formbarX}
\bbar{X} =    \xin  \p_\rho   +  \sum_{i,j}h^{ij}_\rho\eta_i
\p_{y_j} -  \big(|\eta|^2 + \frac{1}{2}\rho \p_\rho |\eta|^2_{h_\rho}\big) \rho\p_{\xin} -
\frac 12  \sum_{j}\p_{y_j}(|\eta|^2_{h_\rho}) \p_{\eta_j}.
\end{equation}
We denote by $\bbar{\varphi}_\tau$ the flow of 
$\bbar{X}$, which is smooth on $\bbar{S^*\R^n}$. Its flow lines are mapped by the base projection $\pi_0:T^*\R^n\to \R^n$ to the geodesics of $g$. By \eqref{formbarX}, for each $(y,\eta)\in \pl_-S^*\R^n\simeq T^*\pl\bbar{\R^n}$ the geodesic $\bbar{\gamma}(\tau)=\pi_0(\bbar{\varphi}_\tau(y,\eta))=(\rho(\tau),y(\tau))$ is parametrized so that
\begin{equation}\label{behavbargamma}
\rho(\tau)=\tau+\mc{O}(\tau^3),  \quad y(\tau)=y+\tau \eta^\sharp +\mc{O}(\tau^2),\quad\mbox{as }\tau\to0
\end{equation}
locally uniformly in $\eta$. Here, $\eta^\sharp$ is the tangent vector to $\pl\bbar{\R^n}=\mathbb{S}^{n-1}$ so that $\eta=h_0(\eta^\sharp,\cdot)$, where $h_0$ is the round Riemannian metric on the unit sphere $\mathbb{S}^{n-1}$.

The following lemma follows from \cite[Section 2.7]{Guillarmou:2019aa}.
\begin{lemma}\label{behavgeod}
For each unit-speed $g$-geodesic $\gamma:\R\to \R^n$, there exist $y_\pm\in \pl\bbar{\R^n}$ and $\eta_\pm\in T^*_{y_\pm}\pl\bbar{\R^n}$ such that  
\[ \lim_{t\to \pm\infty}(\gamma(t),\dot{\gamma}(t)^\flat)= \Big(y_\pm, \mp \frac{d\rho}{\rho^2}+ \eta_\pm\Big)\in \pl_\pm S^*\R^n,\]
where $\dot{\gamma}(t)^\flat=g(\dot\gamma,\cdot)$.
In particular, $\lim_{t\to \pm\infty}\gamma(t)= y_\pm$ in $\bbar{\R^n}$. 
\end{lemma}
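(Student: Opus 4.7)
The plan is to reparametrize the lifted geodesic $(\gamma(t),\dot\gamma(t)^\flat)\in S^*\R^n$ as a flow line of the rescaled vector field $\bar X=\rho^{-2}X$, which extends smoothly to $\bbar{S^*\R^n}$ and is transverse to the boundary hypersurfaces $\pl_\pm S^*\R^n$, and then to show that this orbit reaches the two boundary components in finite rescaled time as $t\to\pm\infty$. The argument hinges on three ingredients: non-trapping of $\gamma$ in $\R^n$, finiteness of the rescaled time variable, and the local structure of $\bar X$ near $\pl_\pm S^*\R^n$.

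For non-trapping, we use that on the simply connected complete manifold $(\R^n,g)$ without conjugate points the exponential map $\exp_p\colon T_p\R^n\to\R^n$ is a global diffeomorphism (the Cartan--Hadamard-type theorem for manifolds without conjugate points). Injectivity of $\exp_p$ forces every geodesic issued from $p$ to be globally length-minimizing: if $\sigma$ were a geodesic from $p$ to $\gamma(t)$ of length $s<|t|$, the identity $\exp_p(s\sigma'(0))=\gamma(t)=\exp_p(t\dot\gamma(0))$ would contradict injectivity. Hence $d_g(p,\gamma(t))=|t|$, and since the asymptotic decay $g-g_0\in\rho_0^m C^\infty(\bbar{\R^n};S^2({^{\rm sc}T}^*\bbar{\R^n}))$ makes $g$ quasi-isometric to $g_0$, we deduce $|\gamma(t)|_{g_0}\sim|t|$ as $t\to\pm\infty$; in particular $\rho(\gamma(t))=\mc{O}(1/|t|)$.

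The reparametrization $\tau(t):=\int_0^t \rho(\gamma(s))^2\,ds$ converts $(\gamma(t),\dot\gamma(t)^\flat)$ into an orbit of $\bar X$, and the decay $\rho(\gamma(t))=\mc{O}(1/|t|)$ makes $\tau_\pm:=\lim_{t\to\pm\infty}\tau(t)$ finite. For $t$ sufficiently large, the orbit lies in a collar neighborhood of $\pl_+S^*\R^n$ in $\bbar{S^*\R^n}$ where $\bar X$ is smooth and transverse to the boundary with $d\rho(\bar X)=-1+\mc{O}(\rho)$; a flowbox argument then shows the orbit extends continuously to $\tau=\tau_+$ with limit on $\pl_+S^*\R^n$, taking the form $(y_+,-d\rho/\rho^2+\eta_+)$. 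The convergence $\lim_{t\to\infty}\gamma(t)=y_+$ in $\bbar{\R^n}$ follows by base projection, and the case $t\to-\infty$ is handled symmetrically using $\bar X|_{\pl_-S^*\R^n}=\pl_\rho+Y$.

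The principal obstacle is the non-trapping step: the remaining arguments are formal consequences of the smooth scattering extension of $\bar X$ together with its transversality to the boundary, but the fact that unit-speed geodesics cannot remain in a bounded set of $\R^n$ genuinely relies on the no-conjugate-points hypothesis via the Cartan--Hadamard diffeomorphism argument, and would fail for generic asymptotically Euclidean metrics without this constraint.
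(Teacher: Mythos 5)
Your non-trapping step is correct and your quasi-isometry argument for the decay rate $\rho(\gamma(t))=\mc{O}(1/|t|)$ is a clean variant on the paper's reasoning: the paper only observes $\rho(\gamma(t))\to 0$ (without rate) and then deduces finiteness of $\tau^+=\lim_{t\to+\infty}\tau(t)$ from compactness considerations, whereas your route gives finiteness of $\tau^+$ directly from the integrability of $\rho^2(\gamma(t))=\mc{O}(t^{-2})$. Both are valid, and yours is arguably more self-contained for this particular step.

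The gap is in your final paragraph: the ``flowbox argument'' does not follow from smoothness and transversality of $\bar X$ alone. The collar neighborhood of $\pl_+S^*\R^n$ in $\bbar{S^*\R^n}$ is \emph{not} compact, since the fibers are $T^*\pl\bbar{\R^n}$; the constraint $\bbar\xi_0^2+\rho^2|\eta|^2_{h_\rho}=1$ only gives $|\eta|_{h_0}\lesssim 1/\rho$, which blows up as $\rho\to 0$. The equation for $\dot\eta$ in \eqref{formbarX} is quadratic in $\eta$, so a priori the orbit could escape to infinity in the $\eta$-direction as $\tau\to\tau^+$, in which case there is no limit point on $\pl_+S^*\R^n$ and the statement fails. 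Establishing the bound $\sup_{\tau\in[0,\tau^+)}|\eta(\tau)|_{h_0}<\infty$ is the nontrivial dynamical input, and the paper supplies it by citing \cite[Lemma 2.6]{Guillarmou:2019aa} (which is Lemma~\ref{lemmeGLT} here); once the orbit is confined to a compact subset of $\bbar{S^*\R^n}$, the transversality argument goes through. Your remark that ``the remaining arguments are formal consequences of the smooth scattering extension'' therefore misplaces the difficulty: the $\eta$-bound is a genuine ingredient and needs its own argument.
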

\begin{proof}
Since $g$ has no-conjugate points, its exponential map at any $x_0\in \R^n$ is a diffeomorphism. This readily implies that $d(x_0,\gamma(t))\to \infty$ as $t\to \pm \infty$, and therefore $\rho(\gamma(t))\to 0$ as $t\to \pm\infty$. Let $T>0$ be large enough so that, for all $t\geq T$, $\gamma(t)$ is contained a neighborhood of $\bbar{\R^n}$ where $(\rho,y)$ are local coordinates. Up to further increasing $T$, we have by \eqref{e:bbarX_local_coordinates} that $d\rho(\gamma(t))\dot{\gamma}(t)<0$ for all $t\geq T$. Since $\bbar{X}=\rho^{-2}X$, there exists a diffeomorphism $t:[0,\tau_+)\to[T,\infty)$ such that, if we set $\bbar{\gamma}(\tau):=\gamma(t(\tau))$, then  $(\bbar{\gamma}(\tau),\rho^2(\bbar{\gamma}(\tau))\dot{\bbar{\gamma}}(\tau)^\flat)$ is an integral curve of $\bbar{X}$ with $d\rho(\dot{\bbar{\gamma}}(\tau))<0$ for all $\tau\geq0$. We write this flow line of $\bbar X$ in the local coordinates $(\rho,y,\bbar{\xi}_0,\eta)$ as $(\rho(\tau),y(\tau),\bbar{\xi}_0(\tau),\eta(\tau))$. By  \cite[Lemma 2.6]{Guillarmou:2019aa}, we have that $|\eta(\tau)|_{h_0}\leq C|\eta_0|_{h_0}$ 
for some $C>0$ independent of  $\tau\in [0,\tau^+)$. This means that the trajectories stay in a compact subset of $\bbar{S^*\R^n}$. Since $\bbar{X}$ is transverse to the boundary $\pl_+S^*\R^n$ and non-vanishing, then $\tau^+<\infty$ and there is $(y_+,\eta_+)\in T^*\pl\bbar{\R^n}$ so that
\begin{equation}\label{limtau-} 
\lim_{\tau\to \tau^+}(\rho(\tau),y(\tau),\bbar{\xi}_0(\tau),\eta(\tau))=\bbar{\varphi}_{\tau^+}(\gamma(0),\dot{\gamma}(0)^\flat)=(0,y_+,-1,\eta_+).
\end{equation}
The argument for $t\to-\infty$ is analogous.
\end{proof} 
Since $\bbar X=\rho^{-2}X$, for any $g$-geodesic $\gamma(t)$ with 
\[ \lim_{t\to \pm\infty}(\gamma(t),\dot{\gamma}(t)^\flat)= \Big(y_\pm, \mp \frac{d\rho}{\rho^2}+ \eta_\pm\Big)\in \pl_\pm S^*\R^n,\]
the curves $\bbar{\gamma}(\tau):=\pi_0(\varphi_{\tau} (y_-,\eta_-))$ satisfies
\begin{equation}\label{formulataut}
\gamma(t) = \bbar\gamma(\tau(t)),\ {\rm where }\ \tau(t)=\int_{-\infty}^t\rho^2(\gamma(s))ds.
\end{equation}
The inverse function of $\tau(t)$ is given by 
\begin{eqnarray}
\label{formulattau}
t(\tau)=\int_{\tau(0)}^\tau \rho^{-2}(\bbar{\gamma}(s))ds.
\end{eqnarray}

\begin{corollary}\label{limitgeod}
For each unit-speed $g$-geodesic $\gamma$, in the projective coordinates~\eqref{projcoord} we have
$\rho(\gamma(t))=1/|t|+\mc{O}(|t|^{-2})$  as $t\to\pm \infty$, and there exist Euclidean geodesics $\gamma_0^\pm$  such that
\begin{eqnarray}
\label{geodesics at the two limits}
\lim_{t\to\pm\infty}
d_{g_0}(\gamma(t),\gamma^\pm_0(t))= 0, \quad \lim_{t\to\pm\infty}\big(\dot{\gamma}(t)-\dot{\gamma}_0^\pm(t)\big)= 0.
\end{eqnarray}
If we denote by $S^0\R^n$ and $S\R^n$ the unit tangent bundles associated with $g_0$ and $g$, there exists a diffeomorphism $\theta: S^0\R^n \to S\R^n$ satisfying
\begin{align*}
{ \theta(\gamma_0^-(t),\dot\gamma_0^-(t))=(\gamma(t),\dot\gamma(t)),}
\end{align*}
\end{corollary}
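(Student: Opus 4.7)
The plan is to split the corollary into three parts: the leading asymptotics of $\rho(\gamma(t))$, the construction of matching Euclidean geodesics $\gamma_0^\pm$, and the assembly of a global diffeomorphism $\theta$. All three parts rely on the smooth extension of the rescaled flow $\bbar{\varphi}_\tau$ to $\bbar{S^*\R^n}$ together with the explicit structure of $\bbar{X}$ near $\pl_\pm S^*\R^n$ recorded in \eqref{formbarX} and \eqref{bbarS^*M}.

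For the first part, I treat $t\to+\infty$ (the other direction being analogous by time reversal). From the constraint $\xin^2+\rho^2|\eta|^2_{h_\rho}=1$ and $\xin<0$ near $\pl_+S^*\R^n$, I get $d\rho(\bbar{X})=\xin=-1+\mathcal{O}(\rho^2)$ along the flow. Setting $u:=\tau^+-\tau$ and integrating backward from $u=0$ (where $\rho=0$) yields $\rho(\bbar{\gamma}(\tau))=u+\mathcal{O}(u^3)$. Substituting into \eqref{formulattau} then gives $t(\tau)=1/u+C+\mathcal{O}(u)$ for a constant $C$ depending on $\gamma$, and inverting produces $u=1/t+C/t^2+\mathcal{O}(1/t^3)$, hence $\rho(\gamma(t))=1/t+\mathcal{O}(1/t^2)$.

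For the second part, I refine the above by extracting the next-order angular behavior from \eqref{behavbargamma} (adapted to $\tau\to\tau^+$), which gives $y(\bbar{\gamma}(\tau))=y_+-(\tau^+-\tau)\eta_+^\sharp+\mathcal{O}((\tau^+-\tau)^2)$. Combining this with the $\rho$-expansion yields in polar coordinates
\[
r(\gamma(t))=t+a^++\mathcal{O}(1/t), \qquad y(\gamma(t))=y_+-\eta_+^\sharp/t+\mathcal{O}(1/t^2),
\]
with $a^+:=-C$. Since outside a compact set $\rho=\rho_0=1/|x|$, one can convert to Cartesian coordinates: $\gamma(t)=r(t)y(t)=(t+a^+)y_+-\eta_+^\sharp+\mathcal{O}(1/t)$. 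This identifies the straight line $\gamma_0^+(t):=(t+a^+)y_+-\eta_+^\sharp$ as the desired asymptotic geodesic, and $|\dot\gamma_0^+|_{g_0}=|y_+|=1$. The convergence $\dot\gamma(t)-\dot\gamma_0^+(t)\to0$ follows from $\dot r=1+\mathcal{O}(1/t^2)$ and $r\dot y=\eta_+^\sharp/t+\mathcal{O}(1/t^2)$. The analogous computation at $t\to-\infty$ yields $\gamma_0^-(t)=(a^--t)y_-+\eta_-^\sharp$.

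For the third part, given $(x_0,v_0)\in S^0\R^n$, the Euclidean straight line $t\mapsto x_0+tv_0$ has backward asymptotic data $y_-=-v_0$, $\eta_-^\sharp=x_0-(x_0\cdot v_0)v_0$, and shift $a^-=-x_0\cdot v_0$, read off directly from the formula for $\gamma_0^-$ in Step~2. By Lemma~\ref{behavgeod} the pair $(y_-,\eta_-)$ determines a unique $\bbar{X}$-trajectory, and by Step~2 there is a unique unit-speed parameterization $\gamma(t)$ of this trajectory such that $\gamma(t)-(x_0+tv_0)\to0$ as $t\to-\infty$. Setting $\theta(x_0,v_0):=(\gamma(0),\dot\gamma(0))$ produces the required map; smoothness of $\theta$ and of its inverse follows from smoothness of $\bbar{\varphi}_\tau$ on $\bbar{S^*\R^n}$, smooth dependence of the incoming data and the shift $a^-$ on $(x_0,v_0)$, and smoothness of the $\tau\leftrightarrow t$ reparameterization. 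The hard part will be the careful propagation of the subleading terms through Step~2, ensuring that the integration constants defining $a^\pm$ are well-defined and depend smoothly on the initial conditions; once this bookkeeping is in place, both the asymptotic matching and the construction of $\theta$ follow formally.
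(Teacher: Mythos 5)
Your argument follows essentially the same route as the paper: compute the asymptotics of the rescaled geodesic near $\pl_\pm S^*\R^n$ using \eqref{behavbargamma} and \eqref{formbarX}, propagate them through the reparametrization $t\leftrightarrow\tau$ from \eqref{formulataut}--\eqref{formulattau}, identify the asymptotic Euclidean line by the resulting shift constant, and then use this identification to define $\theta$. Your presentation is somewhat more direct --- you write the expansion out in polar/Cartesian coordinates near $\pl_+S^*\R^n$ and read off $\gamma_0^+(t)=(t+a^+)y_+-\eta_+^\sharp$ explicitly, whereas the paper sets up the comparison by running a rescaled \emph{Euclidean} geodesic $\bbar{\gamma}_0$ with the same boundary data $(y_-,\eta_-)$ and matching rescaled times --- but these are bookkeeping variants of the same computation, not genuinely different arguments.

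The one place where your proposal stops short is exactly where the paper puts in real work: showing that $\theta$ is actually a diffeomorphism. You correctly flag it as ``the hard part'' and describe what must be checked (that $a^\pm$ is well defined and depends smoothly on the initial data), but you do not carry it out, and you only assert smoothness of the inverse rather than arguing it. The paper handles this by showing the map $(\gamma(0),\dot\gamma(0))\mapsto(y_-,\eta_-)$ is smooth via the implicit function theorem (transversality of $\bbar X$ to $\pl_-S^*\R^n$), deducing that the shift $t_0=-c_0/2$ is a smooth function of $(\gamma(0),\dot\gamma(0))$, and then giving separate arguments for injectivity, surjectivity, and smoothness of the inverse (the last by reading off data from the Euclidean line and flowing forward, exactly as you sketch, but verifying each smoothness claim). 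To close your proof you would need to spell out that the constant $a^-$ produced by the integral in \eqref{formulattau} is a smooth function of $(y_-,\eta_-)$ (which requires that the integrand's expansion around $\tau=0$ depends smoothly on the flow data --- true because $\bbar\varphi_\tau$ is smooth down to the boundary) and then give the two-sided smoothness argument for $\theta$, not merely assert it.
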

\begin{proof}
Let $X_0$ be the geodesic vector field on the unit tangent bundle  $S^0\R^n$ associated with the Euclidean metric $g_0$. We define the rescaled Euclidean geodesic vector field by $\bbar X_0:=\rho^{-2} X_0$, and its flow by $\varphi_t^0$. We fix a $g$-geodesic $\gamma(t)$ parametrized by unit-speed, and consider  $(y_-,\eta_-)\in \pl_-S^*\R^n$ given by Lemma \ref{behavgeod}. Using the implicit function theorem ($\bbar{X}$ is transverse to $\pl_-S^*\R^n$), we deduce that the map $(\gamma(0), \dot \gamma(0))\mapsto (y_-,\eta_-)$ is smooth. 
Consider also the rescaled Euclidean geodesic 
\[\bbar{\gamma}_0(\tau):=\pi_0(\bbar{\varphi}_\tau^{0}(y_-,\eta_-))=
(\rho_0(\tau),y_0(\tau)),\] 
where $\pi_0:S^0\R^n\to \R^n$, and similarly the rescaled $g$-geodesic 
\[\bbar{\gamma}(\tau):=\pi_0(\bbar{\varphi}_\tau(y_-,\eta_-))=(\rho(\tau),y(\tau)).\] 
By \eqref{behavbargamma} applied with the metric $g$ and $g_0$, for all sufficiently small $\tau>0$ we have
\begin{equation}\label{formrhoy} 
\rho(\tau)=\tau(1+\mc{O}(\tau^2)), 
\qquad 
y(\tau)=y_-+\tau \eta_{-}^\sharp +\mc{O}(\tau^2).
\end{equation}
The analogous relations holds for $\rho_0(\tau)$ and $y_0(\tau)$.
Define $\tau(t,\eta)$ and $t(\tau,\eta)$ by \eqref{formulataut} and \eqref{formulattau} respectively. By \eqref{behavbargamma}, we have that $t(\tau)=\tau^{-1}(-1+f(\tau))$ with $f\in C^{\infty}$  satisfying $f(\tau)=c_0\tau+\mc{O}(\tau^2)$ as $\tau\to 0$ for some $c_0\in \rr$, therefore
\begin{equation}\label{rote}
\tau(t)=\frac{1}{t}+\frac{c_0}{t^2}+\mc{O}(t^{-3})
\quad \mbox{as }t\to -\infty.
\end{equation}
We also note that $c_0$ is smooth as a function of $(y_-,\eta_-)$, thus as a function of $(\gamma(0),\dot\gamma(0))$. We recall that $\rho=1/|x|$ near $\pl\bbar{\R^n}$. We set 
\[\tau_0(t):=\int_{-\infty}^t \rho^2(\gamma_0^-(s))ds\] 
where $\gamma_0^-(s)$ is a unit-speed Euclidean geodesic equal as a curve to $\bbar{\gamma}_0$.  In particular there exists a unique $v_0\in \R^n$, $|v_0|_{g_0}  = 1$ and $x_0\in \R^n$ with $g_0(x_0,v_0) = 0$ such that 
$$ \gamma_0^-(t) = x_0 + t v_0 + t_0 v_0$$
for some $t_0\in \R$. The choice of $x_0$ and $v_0$ depends smoothly on $(y_-,\eta_-)$ which in turn depends smoothly on $(\gamma(0),\dot \gamma(0))$. We now show that there is a unique choice of $t_0$ depending smoothly on $(y_-,\eta_-)$ such that the limits in the statement of this Corollary is satisfied.

Since $\gamma_0^-(s)$ belongs to the region where $\rho=|x|^{-1}$ if $|s|$ is large enough, we get the asymptotic as $t\to -\infty$
\[\tau_0(t)=\int_{-\infty}^t |x_0+sv_0 + t_0 v_0|^{-2}ds=\frac{1}{t}-\frac{2t_0}{t^2}+\mc{O}(t^{-3})
\]
and we can choose $t_0=-c_0/2$. Note that, by construction, $t_0$ then depends smoothly on $(y_-,\eta_-)$ and thus on $(\gamma(0),\dot \gamma(0))$.
Making this choice of $t_0$ implies that
$\tau(t)=\tau_0(t)(1+\mc{O}(t^{-2}))$.
The Euclidean distance between $\gamma_0^-(t)=\bbar{\gamma}_0(\tau_0(t))$ and $\gamma(t)=\bbar{\gamma}(\tau(t))$ is thus 
\[ \Big| \frac{y_0(\tau_0(t))}{\rho_0(\tau_0(t))}-\frac{y(\tau(t))}{\rho(\tau(t))}\Big|=\mc{O}(1/|t|).\]
where we used \eqref{formrhoy} for $g$ and $g_0$. This proves the first limit for $\gamma_0^-$.
We also have $\dot{\gamma}(t)=\xi_0(\tau(t))\rho(\tau(t))^2\pl_{\rho}+\rho(\tau(t))^2\eta^\sharp(\tau(t))$ with $\eta^\sharp=h_\rho(\eta,\cdot)$, and similarly for $\dot{\gamma}_0^{-}(t)$. Since $\tau(t)=\tau_0(t)(1+\mc{O}(t^{-2}))$ and  
\begin{align*}
\lim_{\tau\to0} \xi(\tau)=\lim_{\tau_0\to0} \xi_0(\tau_0),
\qquad
\lim_{\tau\to0} \eta(\tau)=\lim_{\tau_0\to0} \eta_0(\tau_0),
\end{align*}
we conclude that $\dot{\gamma}(t)-\dot{\gamma}_0^-(t)\to 0$ as $t\to -\infty$.
The same argument applies as $t\to +\infty$ for another Euclidean geodesic $\gamma_0^+$. 
We notice that the unit-length parametrized Euclidean geodesic $\gamma_0^-(t)$ satisfying \eqref{geodesics at the two limits} is unique. Furthermore, we saw in our construction that all parameters determining $\gamma_0^-(t)$ depends smoothly on the value of $(\gamma(0),\dot\gamma(0))$, thus the map 
\begin{eqnarray}
\label{theta inverse}
(\gamma(0),\dot\gamma(0)) \mapsto (\gamma_0^-(0), \dot \gamma_0^-(0))
\end{eqnarray}
is smooth from $S\R^n \to S^0\R^n$. 
This map is also clearly injective by construction: two points mapping to the same image must be on the same $g$-geodesic $\gamma$, and $\theta$ maps the unit-length parametrized $\gamma$ to the unit-length parametrized $\gamma_0^-$. 
It is surjective since for $(\gamma_0^-(0),\dot\gamma_0^-(0))\in S^0\R^n$, the Euclidean geodesic 
$\cup_{t\in\R} (\gamma_0^-(t),\dot{\gamma}_0^-(t))$ is the image of the unique geodesic 
$\cup_{t\in\R}(\gamma(t),\dot{\gamma}(t))$ having the same backward endpoint at $\pl_-S^*\R^n$ for the rescaled flows. Let us show that the inverse is smooth: for $(\gamma_0^-(0),\dot\gamma_0^-(0))\in S^0\R^n$, one can find $x_0\in \R^n$, $|v_0|_{g_0} = 1$, and $g_0(x_0,v_0) = 0$ such that $\gamma_0^-(t + t_0) = x_0 + tv_0$ for some $t_0\in \R$, and all of $x_0$, $v_0$, and $t_0$  depends smoothly on $(\gamma_0^-(0),\dot\gamma_0^-(0))$. This geodesic has the same trajectory as $\pi_0(\bbar \varphi_\tau^0(y_-, \eta_-))$ for some $(y_-,\eta_-) \in T^*\partial \R^n$. Let $\epsilon>0$ be small, then the unit-speed $g$-geodesic $\gamma(t) := \pi_0(\varphi_t(\bbar\varphi_{\epsilon}(y_-,\eta_-)))$ depends smoothly on $(x_0,v_0)$. By the above argument showing the smoothness of the map \eqref{theta inverse}, there exists a unique $t_1\in \R$ depending smoothly on $(\gamma_0^-(0),\dot\gamma_0^-(0))$ such that $(\gamma(t),\dot\gamma(t))$ gets mapped to $(x_0 + (t+ t_1) v_0, v_0)$ by \eqref{theta inverse}. Therefore, 
$$\theta : (\gamma_0^-(0),\dot\gamma_0^-(0))\mapsto (\gamma(-t_1 - t_0), \dot \gamma(-t_1-t_0))$$
is precisely the inverse map of \eqref{theta inverse} and is smooth.
\end{proof}
The following estimate  was given in \cite[Lemma 2.6]{Guillarmou:2019aa}.
\begin{lemma}\label{lemmeGLT}
There is $C>1$ such that for all $\eps>0$ small enough and all unit-speed $g$-geodesics $\gamma$ satisfying $\rho(\gamma(0))=\eps$ and $d\rho(\dot{\gamma}(0))\leq 0$, we have 
\begin{equation}\label{firstboundrho}
\frac{\eps}{1+\eps t}\leq \rho(\gamma(t))\leq \frac{C\eps}{1+\eps t},
\qquad
e^{-C\eps}\leq \frac{|\eta(\gamma(t))|_{h_0}}{|\eta(\gamma(0))|_{h_0}}\leq e^{C\eps},
\qquad
\forall t\geq0.
\end{equation}
The analogous estimates hold for $t\leq 0$ if $d\rho(\dot{\gamma}(0))\geq 0$. 
\hfill\qed
\end{lemma}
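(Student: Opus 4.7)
The natural reformulation is to track the single radial quantity $u(t):=1/\rho(\gamma(t))$. From $\bar X=\rho^{-2}X$ and $\bar X\rho=\xin$, we get $\dot\rho=\rho^2\xin$ and hence $\dot u=-\xin$. Differentiating once more using \eqref{formbarX} and the unit-speed identity \eqref{bbarS^*M} $\xin^2+\rho^2|\eta|^2_{h_\rho}=1$, I obtain
\[
\ddot u=-X(\xin)=\rho^3|\eta|^2_{h_\rho}\bigl(1+\tfrac{1}{2}\rho\,\partial_\rho\log|\eta|^2_{h_\rho}\bigr)=\frac{1-\dot u^2}{u}\,\bigl(1+r(t)\bigr),
\]
where $r(t)=\mc O(\rho^m)=\mc O(u^{-m})$ by the asymptotic expansion $h_\rho=h_0+\mc O(\rho^m)$ coming from Definition~\ref{defAE}. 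The hypothesis $d\rho(\dot\gamma(0))\le 0$ translates to $\xin(0)\le 0$, so $\dot u(0)\in[0,1]$; then $\ddot u\ge 0$ and $u$ is monotone non-decreasing on $[0,\infty)$, giving $u(t)\ge 1/\eps$ and $|r(t)|\le C\eps^m$.

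\textbf{Estimate on $\rho$.} In the Euclidean model $r\equiv 0$, the quantity $b^2:=u^2(1-\dot u^2)$ (which along the flow equals $|\eta|^2_{h_\rho}$) is conserved, the energy equation $\dot u=\sqrt{1-b^2/u^2}$ integrates explicitly to $u_0(t)=\sqrt{t^2+2c_0 t+\eps^{-2}}$ with $c_0=u(0)\dot u(0)\in[0,1/\eps]$, and the elementary inequality $\sqrt{1+\eps^2 t^2}\le 1+\eps t\le \sqrt 2\sqrt{1+\eps^2 t^2}$ yields precisely $\eps/(1+\eps t)\le \rho_0(t)\le \sqrt 2\,\eps/(1+\eps t)$. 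In the perturbed setting
\[
\frac{d}{dt}\bigl(u^2(1-\dot u^2)\bigr)=-2u\dot u(1-\dot u^2)\,r(t)=\mc O\bigl(u^{1-m}\dot u\bigr),
\]
so integrating in $t$ via the change of variable $du=\dot u\,dt$ gives $|b(t)^2-b(0)^2|\le C\eps^{m-2}$ (with obvious modification if $m=2$). A Gr\"onwall-type comparison with $u_0$ then transfers the Euclidean bounds to $u$ with a constant $C$ marginally larger than $\sqrt 2$, provided $\eps$ is small.

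\textbf{Estimate on $|\eta|_{h_0}$.} The crucial structural point is that by \eqref{e:bbarX_local_coordinates} the vector field $Y$ is the Hamiltonian flow of $\tfrac12|\eta|^2_{h_0}$ on $T^*\partial\bbar{\R^n}$ and thus preserves $|\eta|^2_{h_0}$. Combined with the smoothness of $\bar X$ up to the boundary and the expansion $h_\rho=h_0+\mc O(\rho^m)$, a direct computation yields
\[
\bar X(|\eta|^2_{h_0})=\mc O\bigl(\rho^m|\eta|^3_{h_0}\bigr),\qquad \frac{d}{dt}|\eta|^2_{h_0}=\rho^2\bar X(|\eta|^2_{h_0})=\mc O(\rho^{m+2}|\eta|^3_{h_0}).
\]
The coarse bound $|\eta|_{h_0}\le C/\rho$ coming from unit speed then gives $|\tfrac{d}{dt}\log|\eta|_{h_0}|\le C\rho^{m+1}$, and inserting the bound on $\rho$ from the previous step,
\[
\Bigl|\log\frac{|\eta(\gamma(t))|_{h_0}}{|\eta(\gamma(0))|_{h_0}}\Bigr|\le C\int_0^\infty\rho(\gamma(s))^{m+1}\,ds\le C'\int_0^\infty\Big(\frac{\eps}{1+\eps s}\Big)^{m+1}ds\le C''\eps^m\le C\eps
\]
for $\eps$ small and $m\ge 1$, which is the second estimate.

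\textbf{Main obstacle.} The bounds on $\rho$ and on $|\eta|_{h_0}$ are coupled, because the error $r(t)$ in the ODE for $u$ is controlled by $|\eta|_{h_\rho}$ through the identity $\rho^2|\eta|^2_{h_\rho}=1-\dot u^2$, while the estimate on $|\eta|_{h_0}$ requires integrability of $\rho^{m+1}$. I would resolve this circularity by a bootstrap argument: first use only the unit-speed bound $|\eta|_{h_\rho}\le 1/\rho$ to establish the $\rho$ estimate with a somewhat enlarged constant, then refine the $\eta$ estimate, then tighten the $\rho$ constant back to the stated form. The case $t\le 0$ with $d\rho(\dot\gamma(0))\ge 0$ is identical by time-reversal.
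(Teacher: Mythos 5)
The paper does not actually prove Lemma~\ref{lemmeGLT}; it cites it verbatim from \cite[Lemma 2.6]{Guillarmou:2019aa}, so there is no in-text argument to compare against. Evaluating your sketch on its own terms, the overall strategy --- reduce to the scalar ODE for $u=1/\rho$ via $\dot u=-\bbar\xi_0$, use the energy constraint $\bbar\xi_0^2+\rho^2|\eta|^2_{h_\rho}=1$ to write $\ddot u=(1-\dot u^2)(1+r)/u$ with $r=\mc O(\rho^m)$, then integrate the log-derivative of $|\eta|^2_{h_0}$ --- is essentially the right one, and the $\eta$-estimate paragraph is correct as written.

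There are, however, three points worth flagging. First, the lower bound $\rho(\gamma(t))\ge\eps/(1+\eps t)$ comes with coefficient exactly $1$, and a Gr\"onwall-type comparison with $u_0$ can only deliver it up to a factor $1-\mc O(\eps^m)$; as stated your sketch does not recover the sharp constant. The clean route is to observe that $|\dot u|=|\bbar\xi_0|\le 1$, hence $u(t)\le u(0)+t=1/\eps+t$ directly, giving the lower bound with no comparison at all. Your Gr\"onwall machinery is only needed for the upper bound (where $C>\sqrt2$ is allowed), and there the cleanest version is to note $\tfrac{d^2}{dt^2}u^2=2+2r(1-\dot u^2)$, so $|\tfrac{d^2}{dt^2}u^2-2|\le C\eps^m$ uniformly once $\rho\le\eps$ is known, and integrating twice gives $u^2(t)\ge(1-C\eps^m)(1/\eps^2+t^2)\ge\tfrac12(1-C\eps^m)(1/\eps+t)^2$.

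Second, the circularity you flag in the ``Main obstacle'' paragraph is not actually there. The quantity $r=\tfrac12\rho\,\partial_\rho\log|\eta|^2_{h_\rho}=\tfrac{\rho\,\eta_i\eta_j\,\partial_\rho h^{ij}_\rho}{2\,\eta_i\eta_j h^{ij}_\rho}$ satisfies $|r|\le C\rho^m$ \emph{uniformly in $\eta$}, because the $|\eta|^2_{h_\rho}$ factor cancels in the ratio; the only input needed to control $r$ is the crude bound $\rho(t)\le\eps$, which follows from $\ddot u\ge0$ and $\dot u(0)\ge0$. So the $\rho$-estimate can be established first and in a self-contained way, with the $\eta$-estimate following afterwards --- no bootstrap is required. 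Third, a small lapse: the additive bound $|b^2(t)-b^2(0)|\le C\eps^{m-2}$ is both too weak when $m$ is small and of the wrong type, since the conclusion on $|\eta|_{h_0}$ is multiplicative; you should instead estimate $(\log b^2)'=-2r\dot u/u$, giving $|\log b^2(t)-\log b^2(0)|\le C\int_{1/\eps}^\infty u^{-1-m}\,du=C\eps^m/m\le C\eps$ for all $m\ge1$, which is what the lemma asserts.
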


\begin{corollary}\label{rholeq1/R}
There is $C>0$ and $R>0$ such that for all unit-speed $g$-geodesics $\gamma$ such that
\[ 
\lim_{t\to -\infty}(\gamma(t),\dot{\gamma}(t)^\flat)
= 
(y,\eta)\in \pl_-S^*\R^n\simeq T^*\pl\bbar{\R^n},\]
with $|\eta|_{h_0}\geq R$, we have
\begin{equation}
\label{uniformboundrho}
\rho(\gamma(t)) \leq \frac{C }{|t|+|\eta|_{h_0}},
\qquad \forall t\in\R.
\end{equation} 
\end{corollary}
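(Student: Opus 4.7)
The plan is to reduce to a bound at the closest approach to the origin and then to propagate it using Lemma~\ref{lemmeGLT}. Since $\rho(\gamma(t))\to 0$ as $t\to\pm\infty$ by Corollary~\ref{limitgeod}, the function $\rho\circ\gamma$ attains its supremum $\rho_{\max}$ at some finite time, and I normalize the unit-speed parametrization so that this happens at $t=0$. Then $d\rho(\dot\gamma(0))=0$, which in the scattering coordinates of Section~\ref{s:rescaled_geodesic_flow} and in view of \eqref{formbarX} forces $\bbar{\xi}_0(\gamma(0))=0$, and the unit-cosphere relation \eqref{bbarS^*M} gives $\rho_{\max}\,|\eta(\gamma(0))|_{h_{\rho_{\max}}}=1$. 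The proof thus splits into (i) bounding $|\eta(\gamma(0))|_{h_{\rho_{\max}}}$ from below by $c|\eta|_{h_0}$, and (ii) upgrading the resulting bound $\rho_{\max}\lesssim|\eta|_{h_0}^{-1}$ into a bound with decay in $|t|$.

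For (i) I close a bootstrap. Fix once and for all $\eps_0>0$ small enough that Lemma~\ref{lemmeGLT} applies with $\eps\leq\eps_0$ and that $|\cdot|_{h_\rho}\geq\tfrac12|\cdot|_{h_0}$ for all $\rho\in[0,\eps_0]$ (possible since $h_\rho\to h_0$ smoothly). Set $T^*:=\sup\{T\in\R : \rho(\gamma(t))\leq\eps_0 \text{ for all }t\leq T\}$; then $T^*>-\infty$. If $T^*<\infty$, continuity gives $\rho(\gamma(T^*))=\eps_0$ and $d\rho(\dot\gamma(T^*))\geq 0$, so the backward half of Lemma~\ref{lemmeGLT} applied at $t=T^*$ (letting $t\to-\infty$) yields
\[ |\eta(\gamma(T^*))|_{h_0}\geq e^{-C\eps_0}|\eta|_{h_0}\geq\tfrac12|\eta|_{h_0}. \]
On the other hand, the unit-cosphere identity $\bbar{\xi}_0^2+\rho^2|\eta|_{h_\rho}^2=1$ at $T^*$ combined with the norm comparison $|\cdot|_{h_{\eps_0}}\geq\tfrac12|\cdot|_{h_0}$ gives $|\eta(\gamma(T^*))|_{h_0}\leq 2/\eps_0$. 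Together, $|\eta|_{h_0}\leq 4/\eps_0$, contradicting $|\eta|_{h_0}\geq R$ for any $R>4/\eps_0$. Hence $T^*=\infty$, i.e.\ $\rho(\gamma(t))\leq\eps_0$ for all $t$. Re-applying the backward half of Lemma~\ref{lemmeGLT} at $t=0$ gives $|\eta(\gamma(0))|_{h_0}\geq\tfrac12|\eta|_{h_0}$, so $|\eta(\gamma(0))|_{h_{\rho_{\max}}}\geq\tfrac14|\eta|_{h_0}$, and therefore $\rho_{\max}\leq 4/|\eta|_{h_0}$.

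For (ii) I apply Lemma~\ref{lemmeGLT} at $t=0$ with $\eps=\rho_{\max}$: the equality $d\rho(\dot\gamma(0))=0$ lies in both sign regimes of the lemma, so both its forward and backward halves apply and give
\[ \rho(\gamma(t))\leq\frac{C\rho_{\max}}{1+\rho_{\max}|t|}=\frac{C}{\rho_{\max}^{-1}+|t|}\leq\frac{4C}{|\eta|_{h_0}+|t|}\qquad\forall t\in\R, \]
which is the desired estimate. The main obstacle is closing the bootstrap in (i): Lemma~\ref{lemmeGLT} only controls the variation of $|\eta|_{h_0}$ once $\rho$ is already small, which is precisely what I am trying to establish, and the stopping-time argument resolves this circularity by coupling the quasi-preservation of $|\eta|_{h_0}$ with the independent upper bound on $|\eta|_{h_\rho}$ forced by the unit-cosphere constraint at the threshold $\rho=\eps_0$.
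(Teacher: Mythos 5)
Your proof is correct and follows essentially the same route as the paper's: a stopping-time argument to show $\rho\circ\gamma$ stays below $\eps_0$, then normalization so that the maximum of $\rho\circ\gamma$ occurs at $t=0$ where $\bbar\xi_0=0$, the cosphere constraint plus the second estimate of Lemma~\ref{lemmeGLT} to get $\rho_{\max}\lesssim|\eta|_{h_0}^{-1}$, and finally the first estimate of Lemma~\ref{lemmeGLT} (forward and backward) to propagate. The only superficial difference is that you normalize the parametrization before the bootstrap while the paper does so after; the substance is identical.
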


\begin{proof}
Let $\epsilon_0>0$ be small enough so that the sublevel set $\{\rho\leq \epsilon_0\}$ is in the domain where~\eqref{projcoord} are well defined coordinates. Assume that $\gamma(t)$ is such that $\rho(\gamma(t))=\eps_0$ for some $t$ and let $t_0=\inf\{t\in\R\ |\ \rho(\gamma(t))=\eps_0\}$. 
Then the second inequality in \eqref{firstboundrho} implies that if $\eps_0$ is small enough, for each $t\leq t_0$
\[ \rho(\gamma(t_0))\leq |\eta(\gamma(t_0))|_{h_\rho}^{-1}\leq 
2|\eta(\gamma(t))|_{h_\rho}^{-1}.\]
Taking the limit as $t\to -\infty$, we get $\rho(\gamma(t_0))\leq 2/R$, thus we must have $2/R\geq \eps_0$. Then if $R$ is large enough, $\rho(\gamma(t))<\eps_0$ for all $t$ and the same argument as  above tells us that $\rho(\gamma(t))\leq 2/|\eta|_{h_0}$ for all $t\in\R$. 
It is easy to see that for $R$ large enough, and up to changing the parametrization $t\to t+t_0$ for some $t_0$, 
$\max_{t\in\R}\rho(\gamma(t))$ is attained at $t=0$ so that $d\rho(\dot{\gamma}(0))=0$. This means that $\rho(\gamma(0))=|\eta(\gamma(0))|_{h_\rho}^{-1}$ and the second inequality of \eqref{firstboundrho} gives that $\rho(\gamma(0))\geq \lim_{t\to -\infty}|\eta(\gamma(t))|_{h_\rho}/2\geq |\eta|_{h_0}/2$. The first inequality of  \eqref{firstboundrho} then yields \eqref{uniformboundrho}.
\end{proof}

\subsection{Scattering map}
Let $g$ be an asymptotically Euclidean Riemannian metric to order $m\geq 2$ on $\R^n$ with no conjugate points. We consider its rescaled geodesic vector field $\bbar{X}$ on $\bbar{S^*\R^n}$, which we defined in~\eqref{Xbar}, and we denote by $\bbar{\varphi}_\tau$ its flow on $\bbar{S^*\R^n}$. By Lemma \ref{behavgeod}, for each $(y,\eta)\in \pl_-S^*\R^n$ there exists $\tau^+(y,\eta)\in(0,\infty)$ such that $\bbar{\varphi}_{\tau}(y,\eta)\in S^*\R^n$ for all $\tau\in(0,\tau^+(y,\eta))$, and $\bbar{\varphi}_{\tau^+(y,\eta)}(y,\eta)\in \pl_+S^*\R^n$. The \emph{scattering map} of $(\R^n,g)$ is the $C^2$ map 
\begin{align*}
S_g:\pl_-S^*\R^n\to \pl_+S^*\R^n,
\qquad
S_g(y,\eta)=\bbar{\varphi}_{\tau^+(y,\eta)}(y,\eta).
\end{align*}
Since $m\geq 2$, we can identify $\pl_\pm S^*\R^n$ with $T^*\pl\bbar{\R^n}$ using the decomposition \eqref{defmcM} and the choice of $\rho=\rho_0+\mc{O}(\rho_0^2)$ giving the normal form of Lemma \ref{normalform}. Therefore, the scattering map can be seen as a map of the form $S_g:T^*\pl\bbar{\R^n}\to T^*\pl\bbar{\R^n}$. However, if we only had $m=1$, $\rho$ would not be uniquely defined by $\rho_0$, and given another boundary defining function $\hat{\rho}=\rho+\omega_0\rho^2+\mc{O}(\rho^3)$, its scattering map $\hat{S}_g:T^*\pl\bbar{\R^n}\to T^*\pl\bbar{\R^n}$ would only be conjugate to $S_g$ via the diffeomorphism $(y,\eta)\mapsto (y,\eta+d\omega_0(y))$.

The next lemma describes the scattering map of the Euclidean metric $g_0$ on $\R^n$.

\begin{lemma}\label{scatR^n}
The scattering map of the Euclidean space $(\R^n,g_0)$ is given by
\[ 
S_{g_0}:T^*\mathbb{S}^{n-1}\to T^*\mathbb{S}^{n-1},
\qquad
S_{g_0}(y,\eta)=(\Theta(y),(d\Theta(y)^{-1})^{T}\eta),\]
where $\mathbb{S}^{n-1}=\big\{y\in \R^n\ \big|\ |y|=1\big\}$ and $\Theta:\mathbb{S}^{n-1}\to \mathbb{S}^{n-1}$ is the antipodal map $\Theta(y)=-y$.
\end{lemma}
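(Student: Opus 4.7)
The plan is to compute $S_{g_0}$ by following a generic Euclidean straight line through the rescaled flow and matching the endpoint data against formula \eqref{behavbargamma}. Take the unit-speed parametrization $\gamma(t) = x_0 + tv$ with $|v|_{g_0} = 1$ and $g_0(x_0, v) = 0$. Since $r(t) := |\gamma(t)|_{g_0} = \sqrt{t^2 + |x_0|^2}$ and $y(t) := \gamma(t)/r(t) \to \mp v$ as $t \to \mp\infty$, the base components of the boundary points are $y_- = -v$ and $y_+ = v$, which already gives $y_+ = \Theta(y_-)$.

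For the covectors $\eta_\pm \in T^*_{y_\pm}\mathbb{S}^{n-1}$, I use the rescaled time \eqref{formulataut}. Using $\rho_0 = 1/r$ for $r \geq 1$ and direct integration,
\begin{align*}
\tau(t) = -\tfrac{1}{t} + \mc{O}(1/t^3) \text{ as } t \to -\infty, \qquad \tau^+ - \tau(t) = \tfrac{1}{t} + \mc{O}(1/t^3) \text{ as } t \to +\infty,
\end{align*}
where $\tau^+ := \lim_{t \to +\infty}\tau(t)$. (The arbitrary smooth extension of $\rho_0$ inside a compact set shifts $\tau$ only by a constant for $|t|$ sufficiently large and does not affect these expansions.) Expanding $y(t) = (x_0 + tv)/r(t) = \pm v + x_0/t + \mc{O}(1/t^2)$ as $t \to \pm\infty$ then yields
\begin{align*}
y(\tau) = -v + \tau\, x_0 + \mc{O}(\tau^2) \text{ as } \tau \to 0, \qquad y(\tau) = v + (\tau^+-\tau)\, x_0 + \mc{O}((\tau^+-\tau)^2) \text{ as } \tau \to \tau^+.
\end{align*}

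Matching the first expansion against \eqref{behavbargamma} identifies $\eta_-^\sharp = x_0 \in v^\perp = T_{y_-}\mathbb{S}^{n-1}$. For the outgoing boundary, \eqref{e:bbarX_local_coordinates} gives $\bbar{X}|_{\pl_+ S^*\R^n} = -\pl_\rho + Y$ with $Y$ the Hamiltonian flow of $|\eta|^2_{h_0}/2$, so the analogue of \eqref{behavbargamma} near $\tau = \tau^+$ reads $y(\tau) = y_+ + (\tau - \tau^+)\eta_+^\sharp + \mc{O}((\tau-\tau^+)^2)$, and matching against the second expansion above yields $\eta_+^\sharp = -x_0$. Finally, $\Theta$ is an isometry of $(\mathbb{S}^{n-1}, h_0)$ with $d\Theta(y_-) = -\Id$ when both $T_{y_-}\mathbb{S}^{n-1}$ and $T_{\Theta(y_-)}\mathbb{S}^{n-1}$ are viewed as $v^\perp \subset \R^n$, so $(d\Theta(y_-)^{-1})^T$ acts as $-\Id$ on the corresponding cotangent spaces, and $\eta_+ = -\eta_- = (d\Theta(y_-)^{-1})^T\eta_-$, which is exactly the formula asserted. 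The only subtle point in the argument is the sign of $\pl_\rho$ in $\bbar{X}$ at the outgoing face, handled directly by \eqref{e:bbarX_local_coordinates}.
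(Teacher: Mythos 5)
Your proof is correct and reaches the paper's conclusion, but via a somewhat different route. The paper works on the cotangent side: it writes the fixed covector $\dot\gamma(t)^\flat=\sum v_i\,dx_i$ in the scattering coordinates $(\rho_0,y,\bbar\xi_0,\bbar\eta)$ as $-(v\cdot y(t))\frac{d\rho_0}{\rho_0^2}+\sum\frac{v_i\,dy_i}{\rho_0}$ and passes directly to the limit in $\bbar{S^*\R^n}$, using the constraint $\sum y_i\,dy_i=0$ to extract $\eta_\pm$. You instead work on the base side: you pass to the rescaled time $\tau$ via \eqref{formulataut}, expand the base curve $y(\tau)$ near $\tau=0$ and $\tau=\tau^+$, and read off $\eta_\pm^\sharp$ by matching against \eqref{behavbargamma} and its outgoing analogue coming from \eqref{e:bbarX_local_coordinates}. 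Both are elementary computations on explicit Euclidean lines; your version has the advantage that, after the normalization $g_0(x_0,v)=0$ (harmless, since one may always shift the time origin), the incoming and outgoing covectors come out unambiguously as $\eta_-^\sharp=x_0$ and $\eta_+^\sharp=-x_0$, so that the relation $\eta_+=(d\Theta(y_-)^{-1})^T\eta_-$ is transparent. (In the paper's version the two limits are stated with the same sign, so the role of the antipodal push-forward is a bit more implicit.)

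One small slip to fix: the displayed intermediate expansion ``$y(t)=\pm v+x_0/t+\mc{O}(1/t^2)$ as $t\to\pm\infty$'' is incorrect for $t\to-\infty$; since $r(t)=|t|(1+\mc O(1/t^2))$ one gets $y(t)=-v+x_0/|t|+\mc O(1/t^2)=-v-x_0/t+\mc O(1/t^2)$ there. However, after composing with $\tau(t)=-1/t+\mc O(1/t^3)$ your stated expansions $y(\tau)=-v+\tau x_0+\mc O(\tau^2)$ near $\tau=0$ and $y(\tau)=v+(\tau^+-\tau)x_0+\mc O((\tau^+-\tau)^2)$ near $\tau^+$ are both correct, so the conclusion $\eta_-^\sharp=x_0$, $\eta_+^\sharp=-x_0$ stands; just rewrite the intermediate line as $y(t)=\pm v+x_0/|t|+\mc{O}(1/t^2)$ for clarity. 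The parenthetical remark on why the arbitrary smooth extension of $\rho_0$ on a compact set does not affect the asymptotics of $\tau(t)$ and $\tau^+-\tau(t)$ is a good point to make explicit, and it is handled correctly.
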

\begin{proof}
We denote by $\rho_0=|x|^{-1}$ and $y=x/|x|\in \pl\bbar{\R^n}$ the coordinates near the boundary at infinity $\pl\bbar{\R^n}$, where $\R^n$. Any unit-speed Euclidean geodesic $\gamma(t)=x_0+tv$, with $x_0\in \R^n$ and $v\in \mathbb{S}^{n-1}$,  satisfies 
\[ 
(\rho_0(\gamma(t)), y(\gamma(t)))=\Big(\frac{1}{|t|}+\mc{O}(t^{-2}),
v\frac{t}{|t|}+\frac{x_0}{|t|}-\frac{v(x_0.v)}{|t|}+\mc{O}(t^{-2}))
\Big)
\quad
\mbox{as }t\to\pm\infty.\]
This shows that $y(\gamma(t))\to \pm v$ as $t\to \pm\infty$. Now 
$\dot{\gamma}(t)^\flat=\sum_{i=1}^n v_idx_i$ if $v=(v_1,\dots,v_n)$, and in the $(\rho_0,y)$ coordinates (writing $\rho_0(t)=\rho_0(\gamma(t))$ and $y(t)=y(\gamma(t))$) we have as $t\to \pm \infty$
\[\begin{split}
\dot{\gamma}(t)^\flat= \sum_{i=1}^n v_idx_i=-(v.y(t))\frac{d\rho_0}{\rho_0(t)^2}+\sum_{i=1}^n\frac{v_idy_i}{\rho_0(t)}
\to \mp \frac{d\rho_0}{\rho_0^2}+\sum_{i=1}^n(-x_0+(x_0.v)v)_idy_i
\end{split}\]
using that $\sum_{i=1}^n y_i(t)dy_i=0$, where the limit is understood in the manifold $\bbar{S^*\R^n}$. This shows that $S_{g_0}(v,\eta)=(\Theta(v),(d\Theta(v)^{-1})^{T}\eta)$ if 
$\eta=\sum_{i=1}^n(-x_0+(x_0.v)v)_idy_i$. Since each $(v,\eta)\in T^*\mathbb{S}^{n-1}$ can be obtained this way, this achieves the proof.
\end{proof}

\section{Rigidity of the Euclidean plane}

In dimension 2, a result stronger than our Theorem \ref{t:main} holds. The 2-dimensional case is special, as it can be treated by means of Hopf's celebrated argument \cite{Hopf:1948aa} that shows that $2$-dimensional Riemannian tori 
 without conjugate points are flat. The statement that we obtain is probably well-known to the specialists, and seems weaker than an analogous rigidity theorem of Bangert-Emmerich \cite{Bangert:2013aa}. Since the argument is short and self-contained, we nevertheless include a full proof here.

\begin{proposition}
\label{rigidity_dim2}
Let $(\R^2,g)$ be a complete simply connected Riemannian plane without conjugate points, and $B_1\subset B_2\subset B_3\subset ...$ a sequence of compact subsets of $\R^2$ whose union is the whole $\R^2$, such that the following conditions hold:
\begin{itemize}
\item[(i)] Each $B_j$ is strictly geodesically convex.
\item[(ii)] The Gaussian curvature $K_g$ decays at infinity as follows
\begin{align*}
\lim_{j\to\infty} j^2 \,\, \max_{\partial B_j} |K_g| =0.
\end{align*}
\item[(iii)] The boundary of $B_j$ has length $\ell_j=\mathcal O(j)$ as $j\to\infty$ and, if we parametrize it with arc-length, its geodesic curvature $k_j:\rr/\ell_j\zz\to\rr$ satisfies
\begin{align*}
\lim_{R\to\infty} \int_0^{\ell_j} k_j(t)\,dt = 2\pi.
\end{align*}
\end{itemize}
Then $(\R^2,g)$ is isometric to the Euclidean $(\R^2,g_0)$.
\end{proposition}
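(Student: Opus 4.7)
The plan is to adapt Hopf's classical Riccati-integration argument \cite{Hopf:1948aa} for 2-tori to this non-compact setting, using the convex exhaustion and the curvature decay to replace compactness. Since $(\R^2,g)$ is complete, simply connected, and without conjugate points, the exponential map $\exp_p$ is a diffeomorphism for every $p\in\R^2$ (a Cartan--Hadamard-type conclusion that uses only the no-conjugate-points hypothesis, via the fact that a local diffeomorphism from a complete to a simply connected target is a global diffeomorphism). Consequently, along each unit-speed geodesic the forward-stable Jacobi field is well defined, and its logarithmic derivative at $t=0$ defines a function $u^+:S\R^2\to\R$ satisfying the scalar Riccati equation
\[
Xu^+ + (u^+)^2 + K_g = 0
\]
along the geodesic vector field $X$ on $S\R^2$.

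The first step is Gauss--Bonnet on each $B_j$: strict convexity from (i) ensures $\chi(B_j)=1$, so $\int_{B_j}K_g\,dA + \int_{\partial B_j}k_j\,ds = 2\pi$, and (iii) forces $\int_{B_j}K_g\,dA\to 0$. The second step is to integrate the Riccati equation on the unit disk bundle $SB_j\subset S\R^2$ against Liouville measure $d\mu_L$. Using Liouville invariance and the divergence theorem,
\[
\int_{SB_j}(u^+)^2\,d\mu_L \;=\; -\,2\pi\!\int_{B_j}K_g\,dA \;-\!\int_{S\partial B_j}u^+(p,v)\,\langle v,\nu_{\partial B_j}\rangle\,ds\,d\theta,
\]
where $\nu_{\partial B_j}$ is the outward normal. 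The first term on the right goes to $0$ by the Gauss--Bonnet step.

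The task is then to control the boundary integral. A naive bound $|u^+|\le C$ gives only $O(\ell_j)=O(j)$, which is too weak. Here assumption (ii) enters: the $o(j^{-2})$ decay of $|K_g|$ on $\partial B_j$, combined with the Green-type Riccati estimate bounding $|u^+(p,v)|$ in terms of the supremum of $|K_g|$ along the forward trajectory of $(p,v)$, and with the strict convexity of the $B_k$ for $k\ge j$ (forcing geodesics emanating from $S\partial B_j$ to probe the curvature decay at the correct scale), should yield $\sup_{S\partial B_j}|u^+| = o(j^{-1})$. Combined with $\ell_j = O(j)$ from (iii), the boundary term is then $o(1)$. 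Hence $\int_{SB_j}(u^+)^2\,d\mu_L\to 0$; since $SB_j$ eventually contains any given compact subset of $S\R^2$ and $u^+$ is continuous, this forces $u^+\equiv 0$. The Riccati equation then gives $K_g\equiv 0$, so $(\R^2,g)$ is flat and, being complete and simply connected, is isometric to the Euclidean plane $(\R^2,g_0)$.

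The main obstacle is the boundary estimate just sketched: translating the pointwise decay $|K_g|=o(j^{-2})$ on $\partial B_j$ into the sharper bound $\sup_{S\partial B_j}|u^+|=o(j^{-1})$. The difficulty is that $u^+$ at a boundary point depends on the curvature along the entire forward geodesic, not just at the base point, whereas (ii) controls $K_g$ only on the surfaces $\partial B_k$. Exploiting strict convexity of the $B_k$ and the precise rate in (ii) to balance the $O(j)$ length growth of $\partial B_j$ against the curvature decay is the delicate step that closes the argument.
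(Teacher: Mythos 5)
Your overall strategy coincides with the paper's: exhaust by strictly convex sets, integrate the Riccati equation for the forward-stable Riccati function $u$ against Liouville measure on $SB_j$, use Gauss--Bonnet to kill the curvature term, and control the Stokes boundary integral. The gap is in the boundary estimate, which you correctly flag as ``the delicate step,'' but the two-sided bound $\sup_{S\partial B_j}|u^+|=o(j^{-1})$ that you propose is in fact false and cannot be rescued. For an inward-pointing $(p,v)\in\partial_-SB_j$, the forward geodesic of $(p,v)$ first crosses $B_j$, where the curvature is entirely uncontrolled by hypotheses (i)--(iii); since $u^+$ is determined by the curvature along the entire forward ray, a negative-curvature region inside $B_j$ can make $u^+(p,v)$ arbitrarily negative (and symmetrically, a bump inside $B_j$ can make $u^+$ arbitrarily positive at outward vectors via $u^+\le u^-$). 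The Sturm comparison only yields one-sided bounds, and only on the ``right'' half of the boundary: $u\geq -c_j^{1/2}$ on $\partial_+SB_j$ (where the forward ray stays outside $B_j$ by convexity), and $u\leq c_j^{1/2}$ on $\partial_-SB_j$ (by applying the same comparison to the backward-stable solution $u^-$, whose backward ray stays outside, and using $u^+\leq u^-$).

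The point you are missing is that a one-sided bound is all that is needed, because the flux $\langle v,\nu\rangle$ (equivalently the $2$-form $d\alpha$ restricted to $\partial SB_j$) has the matching sign: it is positive on $\partial_+SB_j$ and negative on $\partial_-SB_j$. Multiplying the one-sided inequalities by $d\alpha$ of the corresponding sign yields
\begin{align*}
\int_{\partial SB_j} u\,d\alpha \geq -c_j^{1/2}\int_{\partial_+SB_j}d\alpha + c_j^{1/2}\int_{\partial_-SB_j}d\alpha = -4\,\ell_j\,c_j^{1/2},
\end{align*}
which is $o(1)$ under (ii) and (iii). Since this controls $\int_{SB_j}Xu\,\alpha\wedge d\alpha$ only from below, one then uses the Riccati identity together with $\int_{SB_j}u^2\,\alpha\wedge d\alpha\geq 0$ to close the argument: $\limsup_j\int_{SB_j}u^2 = -\lim_j\int_{SB_j}K_g - \liminf_j\int_{SB_j}Xu \leq 0$, forcing $u\equiv 0$ and then $K_g\equiv 0$ from the Riccati equation. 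In short: do not try to show $u$ is small on all of $S\partial B_j$; instead use the one-sided Sturmian bound on each half $\partial_\pm SB_j$ and exploit the sign of the boundary form.
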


\begin{proof}
We denote by $S\R^2$ the unit tangent bundle of $(\R^2,g)$.
For each $(x,v)\in S\R^2$, we denote by $\gamma_{x,v}:\rr\to \R^2$ the unit-speed $g$-geodesic with initial condition $\gamma(0)=x$ and $\dot\gamma(0)=v$. For each $T\in\rr$, we denote by $y_{x,v,T}:\rr\to\rr$ the smooth function satisfying the Jacobi equation with boundary conditions
\begin{align*}
\left\{
  \begin{array}{l}
    \ddot y_{x,v,T}(t) + K_g(\gamma_{x,v}(t))y_{x,v,T}(t)=0, \\ 
  	y_{x,v,T}(0)=1,\\
	y_{x,v,T}(T)=0.
  \end{array}
\right.
\end{align*}
Namely, if $j_{x,v}$ denotes the unit normal vector field to $\gamma_{x,v}$, the vector field $Y_{x,v,T}:= y_{x,v,T} j_{x,v}$ is a Jacobi vector field satisfying $Y_{x,v,T}(0)=j_{x,v}(0)$ and $Y_{x,v,T}(T)=0$. Since $(\R^2,g)$ is without conjugate points, a classical argument due to Hopf \cite[page~48]{Hopf:1948aa} implies that the limit
\begin{align*}
u(x,v) := \lim\limits_{T\to\infty} \frac{\dot y_{x,v,T}(0)}{y_{x,v,T}(0)}
\end{align*}
exists, and defines a smooth solution $u:S\R^2\to\rr$ of the Riccati equation
\begin{align}
\label{e:Riccati}
Xu + u^2 + K_g=0,
\end{align}
where $X$ denotes the geodesic vector field on $S\R^2$.

By assumption (i), each $B_j$ is geodesically convex. This, together with the fact that $(\R^2,g)$ has no conjugate points, implies that the geodesics leaving $B_j$ do not come back to it, i.e.
\begin{align*}
\gamma_{x,v}(\pm t)\not\in B_j,
\qquad
\forall (x,v)\in\partial_{\pm}SB_j,\ t>0,
\end{align*}
where $\pl_\pm SB_j:=\{(x,v)\in SB_j\ |\ \pm g(v,\nu)>0\}$ and $\nu$ is the outward unit normal to $\pl B_j$.
If we set
\[
c_j:=-\min\left\{0,\inf_{\R^2\setminus B_j} K_g\right\}\geq0,
\]
a well-known Sturmian argument \cite[pages~49,50]{Hopf:1948aa} implies that
\begin{align}
\label{e:Sturm}
\inf u|_{\partial_+SB_j} \geq - c_j^{1/2},
\qquad
\sup u|_{\partial_-SB_j} \leq  c_j^{1/2},
\end{align}

We denote by $\alpha$ the Liouville contact form on $S\R^2$, which is defined as 
\[\alpha_{x,v}(w)=g_x(v,d\pi(x,v)w),\] 
where $\pi:S\R^2\to \R^2$ is the base projection.
The exterior product $\alpha\wedge d\alpha$ is a volume form on $S\R^2$, which defines the orientation of $S\R^2$. The measure associated to $\alpha\wedge d\alpha$ disintegrates locally as the product of the Riemannian measure $\vol_g$ on $\R^2$ and of the Lebesgue measure on the unit circle $\mathbb{S}^1$. This, together with Gauss-Bonnet's theorem and assumption~(ii), implies
\begin{equation}
\label{e:integral_curvature}
\begin{split}
\lim_{j\to \infty}\int_{SB_j} K_g\,\alpha\wedge d\alpha
=
\lim_{j\to\infty} 2\pi\int_{B_j} \!\!\! K_g\, d\vol_g
=
4\pi^2 - 2\pi\lim_{j\to\infty} \int_{0}^{\ell_j} k_j(t)\,dt =0. 
\end{split}
\end{equation}
The geodesic vector field $X$ is the Reeb vector field associated to the contact form $\alpha$, i.e.\ $\alpha(X)\equiv1$ and $d\alpha(X,\cdot)\equiv0$. The geodesic vector field $X$ points toward the interior of $SB_j$ along $\partial_-SB_j$, and towards the exterior of $SB_j$ along $\partial_+SB_j$. Therefore, the 2-form $d\alpha=i_X(\alpha\wedge d\alpha)$ restricts to a volume form on $\partial_{\pm}SB_j$ 
which is positive on $\partial_+SB_j$ and negative on $\partial_-SB_j$. More precisely, let us denote by $\rho_\theta:S\R^2\to S\R^2$ the positive rotation of angle $\theta$ in the fibers of $S\R^2$, and by $\gamma_j:\rr/\ell_j\zz\to \partial B_j$ an arc-length parametrization of the boundary $\partial B_j$ oriented in such a way that $\rho_{\pi/2}(\dot\gamma_j(t))$ points outside $B_j$. For each $f\in C^{\infty}(\partial SB_j)$, we have
\begin{align*}
\int_{\partial SB_j}\!\!\!\!\!\! f\,d\alpha = \int_0^{\ell_j}  \!\!\int_{0}^{2\pi} f(\gamma_j(t),\rho_\theta(\dot\gamma_j(t)))\,\cos(\theta)\,d\theta\,dt.
\end{align*}
This, together with Stokes theorem and~\eqref{e:Sturm}, implies
\begin{align*}
\int_{SB_j}\!\!\! Xu\,\alpha\wedge d\alpha
& =
\int_{\partial SB_j}\!\!\! u\, d\alpha 
 \geq c_j^{1/2} \left( \int_{\partial_- SB_j}\!\!\! d\alpha  - \int_{\partial_+ SB_j}\!\!\! d\alpha \right)
 =- 4\ell_j c_j^{1/2}.
\end{align*}
Since $c_j=o(j^{-2})$ and $\ell_j=\mathcal O(j)$ as $j\to\infty$, we infer
\begin{align}
\label{e:integral_Xu}
\liminf_{j\to\infty} \int_{SB_j}\!\!\! Xu\,\alpha\wedge d\alpha \geq 0.
\end{align}
By~\eqref{e:Riccati}, \eqref{e:integral_curvature}, and \eqref{e:integral_Xu}, we infer
\begin{align*}
\limsup_{j\to\infty} \int_{SB_j}\!\!\! u^2 \alpha\wedge d\alpha
=-\lim_{j\to \infty}\int_{SB_j}\!\!\! K_g\,\alpha\wedge d\alpha - \liminf_{j\to\infty} \int_{SB_j}\!\!\! Xu\,\alpha\wedge d\alpha \leq 0,
\end{align*}
and thus $u\equiv0$. By plugging this into the Riccati equation~\eqref{e:Riccati}, we conclude that $K_g\equiv0$, and thus $(\R^2,g)$ is isometric to the Euclidean space $(\R^2,g_0)$.
\end{proof}

\begin{corollary}
\label{c:rigidity_dim2} 
Any asymptotically Euclidean Riemannian metric on $\R^2$ without conjugate points is isometric to the Euclidean metric.
\end{corollary}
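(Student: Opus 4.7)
The plan is to apply Proposition~\ref{rigidity_dim2} with the compact exhaustion by Euclidean balls $B_j := \{x\in\R^2 : |x|\leq j\}$. All three structural hypotheses will follow from short asymptotic computations based on the defining condition $g - g_0\in \rho^m C^\infty(\bbar{\R^n};S^2({}^{\mathrm{sc}}T^*\bbar{\R^n}))$ of Definition~\ref{defAE} and on the curvature/Christoffel decay supplied by Lemma~\ref{curvdecay}.

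Condition~(ii) is immediate: since $m\geq 1$, Lemma~\ref{curvdecay} gives $|K_g| = \mc{O}(|x|^{-m-2})$ and hence $j^2\max_{\partial B_j}|K_g| = \mc{O}(j^{-m})\to 0$. For condition~(iii) I would use that $g_{ij}(x) = \delta_{ij} + \mc{O}(|x|^{-m})$ in Cartesian coordinates. Along the Euclidean circle $\gamma_j(\theta) = j(\cos\theta,\sin\theta)$ this yields the $g$-length element $j(1+\mc{O}(j^{-m}))\,d\theta$, so $\ell_j = 2\pi j + \mc{O}(j^{1-m}) = \mc{O}(j)$. Combining the estimate $\Gamma^k_{ij}(g) = \mc{O}(|x|^{-m-1})$ from Lemma~\ref{curvdecay} (in Cartesian coordinates, where the Euclidean Christoffels vanish) with the fact that the Euclidean geodesic curvature of $\partial B_j$ is identically $1/j$, one obtains $k_j(t) = 1/j + \mc{O}(j^{-m-1})$ and therefore $\int_0^{\ell_j} k_j(t)\,dt = 2\pi + \mc{O}(j^{-m})\to 2\pi$. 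Via Gauss-Bonnet this last assertion is equivalent to $\lim_{j\to\infty}\int_{B_j} K_g\,d\vol_g = 0$, i.e.\ to the vanishing of the total Gauss curvature, which is arguably the main non-trivial content of condition~(iii).

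Condition~(i), the strict geodesic convexity of $B_j$, is the point that requires slightly more care. I would compute the $g$-Hessian of $r(x) = |x|$ in Cartesian coordinates,
\[
(\mathrm{Hess}_g r)_{ij} = \frac{1}{r}\Bigl(\delta_{ij} - \frac{x_ix_j}{r^2}\Bigr) - \Gamma^k_{ij}(g)\frac{x_k}{r} = \frac{1}{r}\Bigl(\delta_{ij} - \frac{x_ix_j}{r^2}\Bigr) + \mc{O}(|x|^{-m-1}),
\]
so that $\mathrm{Hess}_g(r)(v,v) = 1/j + \mc{O}(j^{-m-1}) > 0$ for every $v\in T\partial B_j$ with $|v|_g = 1$ once $j$ is large, which is strict local convexity of $\partial B_j$. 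The only global property actually used inside the proof of Proposition~\ref{rigidity_dim2}, namely that a unit-speed $g$-geodesic exiting $B_j$ cannot re-enter it, then follows by a direct maximum-principle argument: if $r(\gamma(0)) = r(\gamma(t_1)) = j$ with $r(\gamma(t)) > j$ on $(0,t_1)$, then $t\mapsto r(\gamma(t))$ attains an interior maximum $t^*\in(0,t_1)$ where $dr(\dot\gamma(t^*)) = 0$ and $\tfrac{d^2}{dt^2}\big|_{t^*} r(\gamma(t)) \leq 0$, contradicting $\mathrm{Hess}_g(r)(\dot\gamma(t^*),\dot\gamma(t^*)) > 0$. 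Once (i)-(iii) are verified, Proposition~\ref{rigidity_dim2} applies directly and delivers the corollary.
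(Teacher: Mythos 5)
Your proof is correct and follows essentially the same strategy as the paper: verify hypotheses (i)--(iii) of Proposition~\ref{rigidity_dim2} by means of the Christoffel/curvature decay of Lemma~\ref{curvdecay} and then invoke it. The only (cosmetic) difference is that you exhaust by Euclidean balls $\{|x|\le j\}$ where the paper uses the normal-form level sets $\{\rho\ge j^{-1}\}$, and you spell out the Hessian/maximum-principle argument for (i) that the paper dismisses with ``we readily see''; both choices work for the same asymptotic reasons.
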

\begin{proof}
Writing the asymptotically Euclidean metric in normal form associated to some boundary defining function $\rho>0$, we readily see that $B_j:=\{x\in \R^n\ |\ \rho(x)\geq j^{-1}\}$ is strictly geodesically convex for all $j>0$ large enough. Lemma \ref{curvdecay} gives 
that $\sup_{\pl B_j}|K_g|=\mc{O}(j^{-3})$. If we parametrize $\pl B_j$ by arc-length, and denote by $\ell_j$ its length and by $k_j:\R/\ell_j\ZZ\to\R$ its curvature, a straightforward computation also gives $\ell_j=\mc{O}(j)$ and $\int_{0}^{\ell_j} k_j(t) \to 2\pi$ as $j\to\infty$. Therefore Theorem \ref{rigidity_dim2} implies the result.
\end{proof}

\begin{remark}
The proof of Theorem \ref{rigidity_dim2} also shows (with a slight modification) that an asymptotically conic metric on $\R^2$ with asymptotic behavior 
given by $g= d\rho^2/\rho^4+ h_\rho/\rho^2$ on $(0,\eps)_\rho\x (\R/\ell \ZZ)_\theta$ with $h_\rho$ a smooth family of metrics on $\R/\ell \ZZ$ for some $\ell<2\pi$ and $h_0=d\theta^2$, then there must be conjugate points. This case corresponds to cone ends with angle less than the Euclidean one. On the other hand, asymptotically conic metrics on $\R^2$ with asymptotic cone angle $\ell>2\pi$ possess many metrics with negative curvature, thus have no conjugate points, see \cite[Section 2.3]{Guillarmou:2019aa}. \hfill\qed
\end{remark}

\section{Determination of the scattering map}
\label{s:scattering}

In this section we shall prove the following statement.
\begin{proposition}\label{samescat}
Any asymptotically Euclidean Riemannian metric to order $m>2$ on $\R^n$ with no conjugate points has the same scattering map as the Euclidean metric.
\end{proposition}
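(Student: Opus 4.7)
By Corollary \ref{limitgeod}, each unit-speed $g$-geodesic $\gamma$ is approximated at $t\to\pm\infty$ by Euclidean geodesics $\gamma_0^\pm$. Since Lemma \ref{scatR^n} computes $S_{g_0}$ in terms of the asymptotic data of Euclidean lines, the equality $S_g = S_{g_0}$ is equivalent to the statement that, for every $g$-geodesic $\gamma$, the two Euclidean lines $\gamma_0^-$ and $\gamma_0^+$ coincide as oriented parametrized curves. The overall plan is to establish this first in an asymptotic regime and then to extend it to all scattering data by exploiting the absence of conjugate points, following the strategy of Croke \cite{Croke:1991aa}.

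First, I would handle the large-impact-parameter regime. By Corollary \ref{rholeq1/R}, any $g$-geodesic with $|\eta_-|_{h_0}\geq R$ satisfies $\rho(\gamma(t))\leq C/(|t|+|\eta_-|_{h_0})$ throughout its trajectory. In this regime, the rescaled vector fields $\bbar{X}$ and $\bbar{X}_0$ differ only by $\mc{O}(\rho^m)$, as can be read off from \eqref{formbarX} together with the decay $h_\rho - h_0 \in \rho^m C^\infty$. The total rescaled travel time is bounded (comparable to $\pi/|\eta_-|_{h_0}$ for Euclidean lines), so a Gronwall comparison between the integral curves of $\bbar{X}$ and $\bbar{X}_0$ starting from the same boundary point $(y_-,\eta_-)\in\pl_-S^*\R^n$ shows that $|S_g(y_-,\eta_-)-S_{g_0}(y_-,\eta_-)| \to 0$ as $|\eta_-|_{h_0}\to\infty$.

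The heart of the argument is to upgrade this approximate equality to exact equality on all scattering data. Suppose for contradiction that $S_g\neq S_{g_0}$ at some datum $(y_-^0,\eta_-^0)$. By the smoothness of the scattering map and the vanishing at infinity from the previous step, one obtains a $g$-geodesic $\gamma$ whose Euclidean approximators $\gamma_0^-$ and $\gamma_0^+$ differ either by an angular deflection or by a lateral translation. Following Croke's strategy, I would then study the Jacobi field along $\gamma$ generated by varying the backward scattering data $(y_-,\eta_-)$. The mismatch between $\gamma_0^-$ and $\gamma_0^+$ forces this Jacobi field to exhibit a specific non-trivial asymptotic behavior at $+\infty$, while the no-conjugate-points condition, combined with the Riccati equation and the curvature decay of Lemma \ref{curvdecay}, constrains its growth enough to force it to vanish identically, yielding a contradiction.

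The principal difficulty lies in this final step. In Croke's original compactly supported setting, geodesics are exactly Euclidean outside a compact set, which enables a clean geodesic-bigon or direct conjugate-point argument. In the asymptotic setting, Euclidean behavior holds only approximately, so one must combine Croke's Jacobi/Riccati arguments with quantitative estimates for the $\mc{O}(\rho^m)$ perturbation. The assumption $m>2$ is likely needed precisely to ensure that the integrated error terms along entire geodesic trajectories converge, so that the global rigidity scheme goes through.
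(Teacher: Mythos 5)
Your proposal contains a genuine gap, and it diverges substantially from the paper's route. The first step (a Gronwall comparison between the rescaled flows $\bbar{X}$ and $\bbar{X}_0$ at large impact parameter, giving $|S_g(y,\eta)-S_{g_0}(y,\eta)|\to 0$ as $|\eta|_{h_0}\to\infty$) is a correct observation, but it only gives an asymptotic, not exact, agreement and plays no role in the paper's proof of Proposition \ref{samescat}. (A close relative of it does appear much later, in Section 6, but there the hypothesis $S_g=S_{g_0}$ is already assumed, and the perturbation expansion is used to recover the jet of $h_\rho$, not to prove the scattering equality.)

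The gap is in your second step. You assert that a discrepancy between $\gamma_0^-$ and $\gamma_0^+$ would force a Jacobi field along $\gamma$ to vanish, but no mechanism for this is given, and I do not see one. The no-conjugate-points hypothesis constrains Jacobi fields (e.g.\ a bounded Jacobi field vanishing at a point must vanish identically), but a mismatch in the endpoint data of $\gamma_0^\pm$ does not by itself produce a Jacobi field with those features. Your attribution to Croke is also off: Croke's argument (in the compactly supported case) for the analogous scattering statement is a length-minimization argument, not a Riccati/Jacobi one; Jacobi tensors enter Croke's work only after the scattering map is controlled, in the volume-comparison step.

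The paper's proof is entirely metric. First (Lemma \ref{l:asymptotic_slope}) it shows $v_+=v_-$ by contradiction: if $v_+= -v_-$, the chord $\til\zeta_T$ between $\gamma_0^+(T)$ and $\gamma_0^-(-T)$ has bounded $g_0$-length, hence bounded $g$-length, forcing $d_g(\gamma(T),\gamma(-T))$ to stay bounded while $\ell_g(\gamma|_{[-T,T]})\to\infty$; if $v_+\neq v_-$ are linearly independent, a quantitative triangle inequality shows the straight chord $\zeta_T$ is strictly $g$-shorter than the geodesic segment, contradicting minimality of $g$-geodesics in the absence of conjugate points. Then (Lemma \ref{same_scattering}) the lateral offset $a$ and time shift $t_0$ are killed using the quantitative estimate $|d_g(p,q)-d_{g_0}(p,q)|\leq CR^{-m+1}$ for segments avoiding $B_R$ (Lemma \ref{asymp of dist function}) together with the closeness of $\gamma$ to $\gamma_0^\pm$ (Lemma \ref{l:dist_from_asymptotic_Euclidean_geodesics}), by routing a comparison path through a far-away point $z$; the condition $m>2$ is precisely what makes the resulting error $\mc{O}(R^{-m+2})$ vanish as $R\to\infty$. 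To repair your proposal you would need to replace the second step by this distance-comparison scheme, or find a concrete Jacobi-field identity that the mismatch of $\gamma_0^\pm$ would violate — the latter is not supplied and I do not believe it exists in any direct form.
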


Let $g$ be an asymptotically Euclidean metric to order $m>2$ on $\R^n$, so that
\begin{align}
\label{e:g-g_0}
g - g_0 = \rho^{m-2} h_\rho,
\end{align} 
where $h_\rho$ is a smooth family of symmetric tensors on $\pl\bbar{\R^n}$. Given an absolutely continuous curve $\gamma:[a,b]\to\R^n$, we denote its $g$-length and its $g_0$-length respectively by
\begin{align*}
\ell_g(\gamma)=\int_a^b |\dot\gamma(t)|_g\,dt,
\qquad
\ell_{g_0}(\gamma)=\int_a^b |\dot\gamma(t)|_{g_0}\,dt.
\end{align*}
Corollary \ref{limitgeod} tells us that for each $g$-geodesic $\gamma:\R\to\R^n$ with $|\dot\gamma|_g\equiv1$ there exist two Euclidean geodesics $\gamma_0^\pm(t)=x_{\pm}+tv_{\pm}$ with $|v_\pm|_{g_0}=1$, such that
\begin{equation}\label{limitinggeod}
\lim_{t\to\pm\infty} d_{g_0}(\gamma(t),\gamma_0^\pm(t))=0,\qquad
\lim_{t\to\pm\infty} \dot\gamma(t)=v_\pm.
\end{equation}
In order to prove Proposition \ref{samescat},  it suffices to show that $x_+=x_-$ and $v_+=v_-$, so that $\gamma_0^-=\gamma_0^+$. This will be done in Lemma~\ref{same_scattering} using an argument inspired by Croke \cite[Section~6]{Croke:1991aa}. We will infer as a consequence that the Riemannian distance $d_g$ converges with a suitable rate to the Euclidean one $d_{g_0}$ at infinity (Lemma~\ref{dg0=dg_at_infinity}), a useful fact for the next section.
 
\begin{lemma}
\label{l:asymptotic_slope}
The asymptotic slopes at $t\to\infty$ and $t\to-\infty$ of any $g$-geodesic $\gamma$ are the same, i.e. $v_+=v_-$. In other words,  $\gamma$ has the same endpoints at infinity
as the Euclidean geodesic $\gamma_0^-(t)$ defined in Corollary \ref{limitgeod}. 
\end{lemma}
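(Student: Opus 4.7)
The plan is to argue by contradiction: assume $v_+\neq v_-$ and exhibit, for $T$ large enough, a curve from $\gamma(-T)$ to $\gamma(T)$ whose $g$-length is strictly less than $2T$, contradicting the minimality of $\gamma|_{[-T,T]}$. Indeed, since $g$ is complete (being uniformly equivalent to $g_0$) and has no conjugate points on the simply connected manifold $\R^n$, the $g$-exponential map at any base point is a global diffeomorphism, so $d_g(\gamma(-T),\gamma(T))=\ell_g(\gamma|_{[-T,T]})=2T$.

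By~\eqref{limitinggeod}, we have $\gamma(T)=x_++Tv_++o(1)$ and $\gamma(-T)=x_--Tv_-+o(1)$ as $T\to+\infty$. Let $\sigma_T$ be the Euclidean line segment from $\gamma(-T)$ to $\gamma(T)$; its $g_0$-length equals
\[
\ell_{g_0}(\sigma_T)=|T(v_++v_-)+(x_+-x_-)+o(1)|_{g_0}\leq|v_++v_-|\,T+O(1).
\]
Since $|v_\pm|_{g_0}=1$ and $v_+\neq v_-$, the strict triangle inequality on unit Euclidean vectors yields $|v_++v_-|<2$.

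The next step is to compare $\ell_g(\sigma_T)$ with $\ell_{g_0}(\sigma_T)$. A direct quadratic minimization along the segment shows that the Euclidean distance from the origin to $\sigma_T$ is at least $\tfrac{T}{2}|v_+-v_-|+O(1)$, attained near the midpoint; in particular $|x|\gtrsim T$ uniformly on $\sigma_T$. Using~\eqref{e:g-g_0} (so that for a $g_0$-unit speed curve $(g-g_0)(\dot\sigma,\dot\sigma)=O(|x|^{-m})$) together with $m>2$, parametrizing $\sigma_T$ by $g_0$-arc length gives
\[
\ell_g(\sigma_T)=\int_0^{\ell_{g_0}(\sigma_T)}\sqrt{1+O(|x(s)|^{-m})}\,ds=\ell_{g_0}(\sigma_T)+O(T^{1-m}).
\]
Hence $\ell_g(\sigma_T)\leq|v_++v_-|\,T+O(1)$, and since $d_g(\gamma(-T),\gamma(T))\leq\ell_g(\sigma_T)$ we conclude $2T\leq|v_++v_-|\,T+O(1)$, a contradiction for $T$ sufficiently large.

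The only potentially subtle case is the antipodal one $v_+=-v_-$, where $|v_++v_-|=0$ and the above bound degenerates to $\ell_{g_0}(\sigma_T)=O(1)$. However, in that case $\sigma_T$ is a bounded segment entirely contained in the region $|x|\asymp T$, so $\ell_g(\sigma_T)=O(1)$ as well, which already contradicts $d_g(\gamma(-T),\gamma(T))=2T$. Thus the main quantitative input is the assumption $m>2$, which ensures that the perturbative error $O(T^{1-m})$ in the $g$ versus $g_0$ length comparison is negligible; everything else is a clean triangle-inequality argument in the spirit of Croke~\cite[Section~6]{Croke:1991aa}.
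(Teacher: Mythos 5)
Your proof is correct, and it is a genuinely more streamlined variant of the paper's argument. The paper splits into two cases ($v_+=-v_-$ versus $v_+,v_-$ linearly independent) and, in the second case, runs a three-step argument: Step~1 shows the chord $\zeta_T$ eventually avoids any fixed compact set; Step~2 establishes the two $O(1)$ comparisons $\ell_g(\zeta_T)\leq\ell_{g_0}(\zeta_T)+C$ and $\ell_{g_0}(\gamma|_{[-T,T]})\leq\ell_g(\gamma|_{[-T,T]})+C$; Step~3 uses a planar angle estimate to show $\ell_{g_0}(\gamma|_{[-T,T]})-\ell_{g_0}(\zeta_T)\to+\infty$. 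You instead compare $d_g(\gamma(-T),\gamma(T))=2T$ directly with an explicit bound $\ell_{g_0}(\sigma_T)\leq|v_++v_-|T+O(1)$ on the chord's Euclidean length, combined with the quadratic estimate $\mathrm{dist}_{g_0}(0,\sigma_T)\geq\tfrac{T}{2}|v_+-v_-|-O(1)$ to control the $g$-versus-$g_0$ length discrepancy on the chord by $O(T^{1-m})$. This buys two simplifications: you never need the paper's Step~2(b) (the $O(1)$ bound on $\ell_{g_0}$ of the geodesic in terms of its $g$-length), and you replace the paper's geometric cosine argument by the closed-form formula $|v_++v_-|<2$ for distinct unit vectors. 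As you note, both the antipodal and transverse cases are covered by the same chord estimate, so the case split is really just a sanity check rather than a logical necessity. The one cosmetic point is that the antipodal case is not actually "subtle" under your estimate, since $|v_+-v_-|=2$ makes the distance-to-origin bound even better; but that does not affect the validity of the argument.
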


\begin{proof}
Let us assume by contradiction that $v_+=-v_-$. This implies that   
\begin{equation}\label{dg0gamma0^pm}
d_{g_0}(\gamma_0^+(T),\gamma_0^-(-T))= d_{g_0}(x_+,x_-)
\end{equation}
is independent of $T\in\R$. 
Since the Euclidean geodesic 
\[\til{\zeta}_T:[0,1]\to\R^n,\qquad \til\zeta_T(t)=t\gamma_0^+(T)+(1-t)\gamma_0^-(-T)\] escapes uniformly to infinity as $T\to\infty$ and has Euclidean length $\ell_{g_0}(\til{\zeta}_T)=d_{g_0}(x_+,x_-)$
independent of~$T$, we have using \eqref{e:g-g_0}
\begin{equation}\label{limzetaT}
\lim_{T\to\infty}\ell_{g}(\til{\zeta}_T)-\ell_{g_0}(\til\zeta_T)=0.
\end{equation}
On the other hand, 
\begin{align*}
\infty &=\lim_{T\to\infty} \ell_g(\gamma|_{[-T,T]})=
\lim_{T\to\infty} d_g(\gamma(T),\gamma(-T)) \\
&\leq
\lim_{T\to\infty}
d_g(\gamma(T),\gamma_0^+(T)) 
+
d_g(\gamma_0^+(T),\gamma_0^-(-T)) 
+
d_g(\gamma_0^-(-T),\gamma(-T))<\infty,
\end{align*}
which gives a contradiction by using \eqref{limzetaT} and \eqref{dg0gamma0^pm}.

Let us now assume by contradiction that $v_+\neq v_-$, so that $v_+$ and $v_-$ must be linearly independent. We break the proof into three steps.

\vspace{5pt}

\noindent\emph{Step 1.} For each $T>0$, we consider the Euclidean geodesic $\zeta_T:[0,1]\to\R^n$, $\zeta_T(t)=t\gamma(T)+(1-t)\gamma(-T)$. We claim that if $T>0$ is large enough, then $\zeta_T$ does not intersect the Riemannian ball $B_g(R)=\{x\in\R^n\ |\ d_g(0,x)<R\}$ for each $R>0$ fixed. In order to prove this, we first consider the Euclidean geodesic
\[
\til\zeta_T:[0,1]\to\R^n,
\quad
\til\zeta_T(t)=t\gamma_0^+(T)+(1-t)\gamma_0^-(-T)=T \underbrace{(t v_+ +(1-t)v_-)}_{=:v_t} + tx_++(1-t)x_-.\]
Since $v_+$ and $v_-$ are linearly independent, every vector $v_t$ is non-zero. This proves that $|\til\zeta_T(t)|\to\infty$ uniformly in $t\in[0,1]$ as $T\to\infty$ for all $t\in[0,1]$, and in particular $\til\zeta_T$ will not intersect the compact ball $B_g(R)$. Since $d_{g_0}(\gamma_0^-(-T),\gamma(-T))\to0$ and $d_{g_0}(\gamma_0^+(T),\gamma(T))\to 0$ as $T\to\infty$, we have that 
$d_{g_0}(\zeta_T(t),\til\zeta_T(t))\to 0$ uniformly in $t\in[0,1]$ as $T\to\infty$, and in particular $\zeta_T$ will not intersect $B_g(R)$ for all large enough $T>0$.

\vspace{5pt}

\noindent\emph{Step 2.} We claim that there is $C>0$ such that for each $T>0$, we have 
\[\ell_g(\zeta_T)\leq\ell_{g_0}(\zeta_T)+C,
\qquad
\ell_{g_0}(\gamma|_{[-T,T]})\leq\ell_{g}(\gamma|_{[-T,T]})+C.
\] 
Indeed, \eqref{e:g-g_0} implies
\begin{align*}
\ell_g(\gamma_{[-T,T]}) = \ell_{g_0}(\gamma_{[-T,T]}) +  \int_{-T}^T \Big(\sqrt{|\dot \gamma(t)|_{g_0}^2 + \rho^{m-2}(\gamma(t))  |\dot \gamma(t)|_{h}^2} - |\dot \gamma(t)|_{g_0} \Big) dt.
\end{align*}
We see that it suffices to show that for any fixed $T_0\in (0,T)$,
\begin{eqnarray}
\label{difference}
 \left |\int_{T_0 \leq |t| \leq T} \Big(\sqrt{|\dot \gamma(t)|_{g_0}^2 + \rho^{m-2}(\gamma(t))  |\dot \gamma(t)|_{h}^2} - |\dot \gamma(t)|_{g_0} \Big)  dt \right|.
\end{eqnarray}
is uniformly bounded in $T$. Observe that $\dot \gamma(t) =\bbar{\xi}_0(t)\rho^2(t) \partial_\rho +\rho^2(t) \sum_i\eta_i(t)\partial_{y_i}$ and, by Lemma \ref{behavgeod}, for all $\epsilon >0$ we may choose $T_0 >0$ such that, $\rho(t) \leq \epsilon$, $|\bbar\xi_0(t)| \geq 1/2$ for all $|t|\geq T_0$. By Lemma \ref{lemmeGLT}, we have $|\eta(t)|_h \leq C$ for all $|t|\geq T_0$. With these observations, \eqref{difference} can be estimated using Corollary \ref{limitgeod} 
by 
\[C\int_{T_0\leq |t|} \rho(t)^m dt \leq C\int_{T_0 \leq |t|} t^{-m} dt\leq C_{T_0}.\]
This proves the desired estimate for the $g_0$-length of $\gamma$. The same argument done by reversing $g$ and $g_0$ shows the desired estimate for the $g$-length of $\zeta_T$.

\vspace{5pt}

\noindent\emph{Step 3.} We claim that $\ell_{g_0}(\gamma|_{[-T,T]})-\ell_{g_0}(\zeta_T)\to+\infty$ as $T\to\infty$. Indeed, using \eqref{limitinggeod} there exists $c>0$ such that 
$d_{g_0}(\gamma(t), tv_\pm)\leq c$ for all $\pm t\geq 0$ and $\ell_{g_0}(\zeta_T) - d_{g_0}(Tv_+,-Tv_-)\leq 2c$ for all $T$. 
On the plane $P:=\mathrm{span}\{v_-,v_+\}$ the Euclidean line $[Tv_+,-Tv_-]$ intersects both lines 
$\R v_\pm$ with an angle $\alpha\in(0,\pi/2)$ independent of $T$. Therefore
\begin{align*}
T|v_++v_-|=d_{g_0}(Tv_+,-Tv_-) 
&\leq
\cos(\alpha) \big( d_{g_0}(0,Tv_-)+d_{g_0}(0,Tv_+) \big)\\
&\leq \cos(\alpha) \big( 2c + \ell_{g_0}(\gamma|_{[-T,0]})  + 2c+ \ell_{g_0}(\gamma|_{[0,T]}) \big)\\
&\leq \cos(\alpha)4c + \cos(\alpha)\ell_{g_0}(\gamma|_{[-T,T]}).
\end{align*}
By combining these estimates,
\begin{align*}
\ell_{g_0}(\gamma|_{[-T,T]}) - \ell_{g_0}(\zeta_T)
& \geq
\ell_{g_0}(\gamma|_{[-T,T]}) - d_{g_0}(Tv_+,-Tv_-) - 2c \\
& \geq (1-\cos(\alpha)) \ell_{g_0}(\gamma|_{[-T,T]}) -4c\cos(\alpha)-2c.
\end{align*}
Since $1-\cos(\alpha)>0$ and $\ell_{g_0}(\gamma|_{[-T,T]})\to\infty$ as $T\to\infty$, we infer that $\ell_{g_0}(\gamma|_{[-T,T]}) - \ell_{g_0}(\zeta_T)\to\infty$ as $T\to\infty$. 

\vspace{5pt}

We now have all the ingredients to conclude the proof of the Lemma as follows. The $g$-length of the curve $\zeta_T$ can be bounded from above as
\begin{align*}
\ell_g(\zeta_T) & \leq \ell_{g_0}(\zeta_T) + C &\mbox{(by Step 2)}\\
& = \ell_{g_0}(\gamma|_{[-T,T]}) + \big(\ell_{g_0}(\zeta_T)- \ell_{g_0}(\gamma|_{[-T,T]})\big) + C
\\
& \leq \ell_{g}(\gamma|_{[-T,T]}) + \big(\ell_{g_0}(\zeta_T)- \ell_{g_0}(\gamma|_{[-T,T]})\big) + C & \mbox{(by Step 2)}
\end{align*}
By Step 3, $\ell_{g_0}(\zeta_T)- \ell_{g_0}(\gamma|_{[-T,T]})\to-\infty$ as $T\to\infty$. Therefore, for all $T>0$ large enough, we have that $\ell_g(\zeta_T)<\ell_{g}(\gamma|_{[-T,T]})$. This is impossible, since $\gamma$ is a geodesic, and on any complete Riemannian manifold without conjugate points a geodesic segment is the (unique) shortest curve joining its endpoints.
\end{proof}

We define the compact subsets
\begin{align}
\label{B_R}
B_R:=\big\{x\in \R^n\ \big|\ \rho(x)\geq 1/R\big\},
\end{align} 
which are strictly convex with respect to both $g$ and $g_0$ for all $R>0$ large enough.

\begin{lemma}
\label{asymp of dist function}
There exists an $R_0>0$ and $C>0$ such that for all $R\geq R_0$, 
\[|d_g(p,q) - d_{g_0}(p,q)| \leq CR^{-m+1}\]
for all 
$p,q\in \rr^n$ such that the Euclidean line segment $[p,q]$ does not intersect $B_R$.
\end{lemma}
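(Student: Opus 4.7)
The key pointwise input from~\eqref{e:g-g_0} is that, in Cartesian coordinates, $(g-g_0)_{ij}(x) = \mc{O}(|x|^{-m})$; hence for every tangent vector $v$ at a point $x$ with $|x|$ large,
$\bigl||v|_g - |v|_{g_0}\bigr| \leq C|x|^{-m}|v|_{g_0}$.
The strategy is to integrate this pointwise comparison along a test curve from $p$ to $q$. The key observation is the convergent one-dimensional integral
\begin{align*}
\int_{-\infty}^\infty (R^2 + u^2)^{-m/2}\,du = C_m R^{1-m} \qquad (m>1),
\end{align*}
which shows that the total length discrepancy between $g$ and $g_0$ integrated along any Euclidean segment lying outside $B_R$ is bounded by $CR^{1-m}$, uniformly in the length of the segment. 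This is the decisive estimate in both directions.

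For the upper bound, I use the Euclidean segment $\zeta\colon[0,L]\to\R^n$ with $L=d_{g_0}(p,q)$, parametrized by $g_0$-arc length. Because $[p,q] \cap B_R = \emptyset$, one has $|\zeta(s)|^2 \geq R^2 + (s-s_0)^2$ for some $s_0$, so
\begin{align*}
d_g(p,q) \leq \ell_g(\zeta) = \int_0^L \sqrt{1 + (g-g_0)(\dot\zeta,\dot\zeta)}\,ds \leq L + C \int_{\R}(R^2+u^2)^{-m/2}\,du \leq d_{g_0}(p,q) + C' R^{1-m}.
\end{align*}
For the lower bound I would like to run the same comparison on the unique $g$-minimizing geodesic $\gamma\colon[0,T]\to\R^n$ from $p$ to $q$ (which exists because $g$ has no conjugate points). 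Writing $|\dot\gamma|_g \equiv 1$ and using $\bigl||\dot\gamma|_g - |\dot\gamma|_{g_0}\bigr| \leq C|\gamma(t)|^{-m}$ at points where $|\gamma|$ is large, one has
\begin{align*}
d_g(p,q) = \int_0^T |\dot\gamma|_g\,dt \geq \ell_{g_0}(\gamma) - \int_0^T \bigl||\dot\gamma|_g - |\dot\gamma|_{g_0}\bigr|\,dt \geq d_{g_0}(p,q) - C\int_0^T |\gamma(t)|^{-m}\,dt,
\end{align*}
reducing the task to $\int_0^T |\gamma(t)|^{-m}\,dt \leq CR^{1-m}$.

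The main obstacle is therefore to establish the geometric control of $\gamma$ needed for this last integral: $\gamma$ must stay at Euclidean distance $\gtrsim R$ from the origin and be close enough to a straight line that the one-dimensional decay integral applies. For $R$ large enough, $\partial B_R$ is strictly $g$-convex (as used in the proof of Corollary~\ref{c:rigidity_dim2}), and, since $g$ has no conjugate points, a $g$-geodesic that exits $B_R$ cannot re-enter. Combining this with the already-established upper bound $\ell_g(\gamma) \leq d_{g_0}(p,q) + CR^{1-m}$ and the uniform comparison $g \geq c_\ast g_0$ rules out deep excursions of $\gamma$ into the compact region near the origin when $d_{g_0}(p,q)$ is not too large compared with $R$. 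For the complementary regime of large $d_{g_0}(p,q)$, the decay of Christoffel symbols $\Gamma^k_{ij} = \mc{O}(|x|^{-m-1})$ from Lemma~\ref{curvdecay} feeds into the geodesic ODE to show that $\gamma$ deviates from its Euclidean companion at a rate $\mc{O}(|\gamma|^{-m-1})$, keeping $\gamma$ in a thin tube around the Euclidean segment. Either way, the resulting geometric picture is sufficient to apply the same integral estimate as in the upper bound, and thus to close the proof.
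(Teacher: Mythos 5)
Your upper bound is correct and is the same argument the paper gives: integrate the pointwise comparison $\bigl||v|_g - |v|_{g_0}\bigr| = \mc{O}(|x|^{-m})|v|_{g_0}$ along the Euclidean segment $[p,q]$, using $|\zeta(s)|^2 \geq R^2 + (s-s_0)^2$ (with $s_0$ the parameter of the closest point of $[p,q]$ to the origin; since $|\zeta(s)|^2$ is an exact quadratic with leading coefficient $1$, this inequality does hold on $[0,L]$). The paper writes this integral in scattering coordinates using Lemma~\ref{lemmeGLT}, but that is the same computation in different notation, and your Cartesian version is clean.

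Your lower bound, however, is not complete, and you have identified but not closed the crux. The route you take, integrating along the $g$-minimizing geodesic $\gamma$ from $p$ to $q$ and using $\ell_{g_0}(\gamma)\geq d_{g_0}(p,q)$, is exactly what the paper means by ``reversing the role of $g$ and $g_0$''. The whole problem is then reduced to the uniform bound $\max_t\rho(\gamma(t)) \leq C/R$, after which Lemma~\ref{lemmeGLT} applied to the $g$-geodesic $\gamma$ gives the required decay of $\rho\circ\gamma$ and boundedness of $\eta$ so that $\int_0^T |\gamma(t)|^{-m}\,dt \leq CR^{1-m}$. Neither of your two proposed patches establishes this bound: the convexity-plus-upper-bound comparison only rules out deep excursions when $d_{g_0}(p,q)$ is at most of order $R$ (you say as much), and the ``tube around the Euclidean companion'' claim is essentially Lemma~\ref{l:dist_from_asymptotic_Euclidean_geodesics}, whose hypothesis is precisely that the geodesic ray already avoids $B_R$, so invoking it here is circular; moreover, at this stage of the paper $S_g=S_{g_0}$ has not yet been proved, so the Euclidean companion $\gamma_0^\pm$ of $\gamma$ need not be the line through $p$ and $q$ at all. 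What is needed is a direct argument that the closest approach of the $g$-minimizing segment to the origin is $\mc{O}(1/R)$ --- for instance, parametrize $\gamma$ at the unique local maximum of $\rho\circ\gamma$ (unique by the strict $g$-convexity of the balls $B_r$), let $\eps_0$ be the maximal value, use the lower bound $\rho(\gamma(t))\geq \eps_0/(1+\eps_0|t|)$ from Lemma~\ref{lemmeGLT} to bound from below the time $\gamma$ spends near the origin, and combine with the already established inequality $d_g(p,q)\leq d_{g_0}(p,q)+CR^{1-m}$ --- and this must be carried out uniformly over all admissible $p,q$. The paper itself compresses this direction into a single sentence, so your instinct that the obstacle sits here is right; but the sketch as written does not supply the missing estimate.
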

\begin{proof}
Let $\gamma(t)$ be the complete unit-speed $g$-geodesic joining $p$ and $q$, with boundary data $(y_-,\eta)\in \pl_-S^*\R^n$ as $t\to -\infty$ (obtained using Lemma \ref{behavgeod}). 
We then have that
\begin{equation}
\label{d_g(p,q) < length[p,q]}
d_g(p,q) \leq \ell_g([p,q]) = \int_{t_0}^{t_0+d_{g_0}(p,q)} \sqrt{g(\dot\gamma_0(t),\dot \gamma_0(t))}dt
\end{equation}
where $\gamma_0(t)$ is the complete Euclidean unit-speed geodesic containing the points $p$ and $q$ and $[p,q]$ the Euclidean segment joining $p,q$.  
Using the coordinates \eqref{projcoord}, we can write 
\[\dot \gamma_0(t) = \bbar{\xi}_0(t) \rho^2(t) \partial_\rho + \rho^2(t)\eta(t)^\sharp\]
with $\eta(t)^\sharp$ the dual of the covector $\eta(t)\in T^*S^{n-1}$ with respect to $h_0$, satisfying the condition
$\bbar{\xi}_0(t)^2 + \rho(t)^2 |\eta(t)|_{h_0}^2 = 1$. Due to the assumption on the Riemannian metric $g$ we have that
\begin{equation} \label{g of dot gamma_e}
g(\dot\gamma_0(t), \dot \gamma_0(t)) = 1 + \rho^{2+m}(t) h_{\rho(t)}(\eta(t)^\sharp, \eta(t)^\sharp).
\end{equation}
Using \eqref{g of dot gamma_e} we can expand
\[\sqrt{g(\dot\gamma_0(t), \dot \gamma_0(t))} = 1 + \frac{1}{2}\rho^{2+m}(t) h_{\rho(t)} (\eta(t)^\sharp,\eta(t)^\sharp) + \mc{O}(\rho^{4+2m}(t)|\eta(t)|_{h_0}^4)\]
Substituting this into \eqref{d_g(p,q) < length[p,q]} there is $C>0$ depending only on $g$ such that
\begin{equation}
\label{dg-de}
d_g(p,q) -   d_{g_0}(p,q) \leq C\int_{t_0 }^{t_0 + d_{g_0}(p,q)} 
\rho(t)^{2+m}|\eta(t)|_{h_0}^2 dt.
\end{equation}
Suppose the complete geodesic $\gamma_0$ intersects $B_R$, we choose a parametrization of $\gamma_0$ such that $\gamma_0(0)\in \partial B_R$ and $[p,q] \subset \{t\geq 0\}$ (then $\rho(\gamma(t))<1/R$ for all $t>0$). For $t>0$, 
since $\rho(t)|\eta(t)|_{h_0}\leq 1$ we have $|\eta(0)|_{h_0} \leq R$, and 
by Lemma \ref{lemmeGLT}, we deduce that there is $C>0$ such that $|\eta(t)|_{h_0} \leq CR$ for all $t \geq 0$. 
By \eqref{firstboundrho}, $\rho(t) \leq \frac{C}{R + t}$ for all $t\geq 0$ for some $C>0$ depending only on $g$. Inserting these into \eqref{dg-de}, there is $C>0$ such that
\begin{equation}\label{diffofdist}
d_g(p,q) -   d_{g_0}(p,q) \leq C R^2\int_{0 }^{\infty} \left (\frac{1}{R + t} \right)^{2+m}dt \leq CR^{-m+1}.
\end{equation}
Suppose that the complete geodesic $\gamma_0$ never intersect $B_R$. Then we can find a unit-speed parametrization for $[p,q]$ such that $\gamma_0(0) \in [p,q]$ with $\rho(\gamma_0(0)) = \epsilon \leq R^{-1}$ and $|\eta(0)| = \epsilon^{-1}$. By Lemma \ref{lemmeGLT}, there is $C>0$ depending only on $g$ such that
\[\rho(\gamma_0(t)) \leq \frac{C\epsilon}{1 + \epsilon |t|},\ \ |\eta(\gamma_0(t))| \leq C\epsilon^{-1}.\]
Inserting these estimates into \eqref{dg-de} we obtain again the estimate \eqref{diffofdist}.
Combining these two cases shows $d_g(p,q) - d_{g_0}(p,q) \leq CR^{-m+1}$ with $C$ uniform amongst points $p$, $q$ such that the segment $[p,q]$ does not intersect $B_R$. Reversing the role of $g$ and $g_0$ in the above argument shows the estimate $d_{g_0}(p,q)-d_g(p,q)\leq CR^{-m+1}$ for $R>0$ sufficiently large and the proof is complete.
\end{proof}

\begin{lemma}
\label{dg0leqdg}
There are constants $R_0>0$ and $C>0$ such that
\begin{align*}
d_{g_0}(x,y) \leq d_g(x,y) + CR^{-m+1},
\qquad\forall R\geq R_0,\  x,y\not\in B_R.
\end{align*}
\end{lemma}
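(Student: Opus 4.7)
The plan is to bound the difference $\ell_{g_0}(\gamma)-\ell_g(\gamma)$ along the minimizing $g$-geodesic $\gamma$ joining $x$ and $y$. Parametrizing $\gamma$ by $g$-arc length so that $\gamma(t_x)=x$, $\gamma(t_y)=y$ and $d_g(x,y)=t_y-t_x$, the Euclidean triangle inequality yields $|x-y|=d_{g_0}(x,y)\leq\ell_{g_0}(\gamma)$, so the lemma reduces to the estimate
\[
\int_{t_x}^{t_y}\bigl(|\dot\gamma|_{g_0}-1\bigr)\,dt\leq CR^{-m+1}.
\]
From the normal form of Lemma~\ref{normalform}, the perturbation $g-g_0$ is supported in the angular directions with size $\rho^{m-2}$, and combined with $|\dot\gamma|_g=1$ this gives, by the same calculation leading to \eqref{g of dot gamma_e}, the pointwise bound $\bigl||\dot\gamma|_{g_0}-1\bigr|\leq C\rho(\gamma(t))^{m+2}|\eta(\gamma(t))|_{h_0}^2$.

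The argument then splits into two cases according to the limiting datum $(y_-,\eta_-)\in\pl_-S^*\R^n$. When $|\eta_-|_{h_0}\geq R/C$, Corollary~\ref{rholeq1/R} shows that the complete extension of $\gamma$ stays in the region $\{\rho\leq c/R\}$ and provides $\rho(\gamma(t))\leq C/(|t|+|\eta_-|_{h_0})$, while Lemma~\ref{lemmeGLT} controls $|\eta(\gamma(t))|_{h_0}$ by a fixed multiple of $|\eta_-|_{h_0}$; plugging into the integral and integrating, exactly as at the end of the proof of Lemma~\ref{asymp of dist function}, produces the desired $O(R^{1-m})$ bound. When $|\eta_-|_{h_0}<R/C$ the geodesic enters $B_R$ deeply, and the direct integral bound deteriorates to order $(\max_\gamma\rho)^{m-1}$, which may be of order unity; this is the main obstacle.

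For the second case I would invoke the asymptotic Euclidean geodesic $\gamma_0$ furnished by Corollary~\ref{limitgeod}, using the direction-agreement $v_+=v_-$ of Lemma~\ref{l:asymptotic_slope} (so that $\gamma_0^-$ and $\gamma_0^+$ are Euclidean straight lines of the same direction), and apply the Euclidean triangle inequality
\[
|x-y|\leq|\gamma(t_x)-\gamma_0(t_x)|+(t_y-t_x)+|\gamma(t_y)-\gamma_0(t_y)|=d_g(x,y)+|\gamma(t_x)-\gamma_0(t_x)|+|\gamma(t_y)-\gamma_0(t_y)|.
\]
This reduces the task to a sharpened asymptotic estimate $|\gamma(t)-\gamma_0(t)|=O(\rho(\gamma(t))^{m-1})$, improving the $O(\rho)$ bound implicit in Corollary~\ref{limitgeod}. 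Such a refinement should follow from the fact that $\bbar X_g-\bbar X_{g_0}=O(\rho^m)$ near $\pl_\pm S^*\R^n$ (since $h_\rho-h_0=O(\rho^m)$): a Gronwall argument on the rescaled flows starting at a common point of $\pl_-S^*\R^n$ yields an $O(\tau^{m+1})$ discrepancy in the blown-up $(\rho,y)$ coordinates, which translates through the change of variables $x=y/\rho$ into the required $O(\rho^{m-1})$ in Cartesian coordinates. The delicate aspect is reconciling the a priori possibly distinct asymptotic Euclidean lines $\gamma_0^-$ and $\gamma_0^+$; this parallel offset should also be controlled by the same refined perturbative analysis.
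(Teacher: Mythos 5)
Your Case~1 estimate (when $|\eta_-|_{h_0}\gtrsim R$, so the minimizing $g$-geodesic stays in $\{\rho\lesssim 1/R\}$ by Corollary~\ref{rholeq1/R}) is sound and closely parallels the computation in the proof of Lemma~\ref{asymp of dist function}. The gap is in Case~2. The Gronwall-type estimate you invoke, built on $\bbar X_g-\bbar X_{g_0}=\mc O(\rho^m)$, does yield $|\gamma(t)-\gamma_0^-(t)|=\mc O(R^{-m+1})$ for $t\to-\infty$ and the analogous statement with $\gamma_0^+$ for $t\to+\infty$ (this is essentially Lemma~\ref{l:dist_from_asymptotic_Euclidean_geodesics}), but it cannot relate $\gamma_0^-$ to $\gamma_0^+$ when $|\eta_-|_{h_0}$ is bounded, because the geodesic then traverses a region where $\rho\sim 1$ and $\bbar X_g-\bbar X_{g_0}$ is not small; the Gronwall integral accumulates an $\mc O(1)$ error there. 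The spatial offset $x_+-x_-$ between the two asymptotic Euclidean lines is therefore not a perturbative quantity: without the no-conjugate-points hypothesis it can genuinely be of order one (think of a compactly supported perturbation acting as a lens). Proving that this offset vanishes is precisely the content of Lemma~\ref{same_scattering}, whose proof invokes Lemma~\ref{dg0leqdg}, so the route you sketch is either relying on a false perturbative claim or is circular. In your triangle inequality, with $\gamma_0=\gamma_0^-$, the term $|\gamma(t_y)-\gamma_0(t_y)|$ is exactly $|\text{(small)}+(x_+-x_-)|$, and nothing you have proved so far controls $x_+-x_-$.

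The paper's proof avoids the interior altogether. Given $x,y\notin B_R$, it picks $w\perp(y-x)$ with $|w|_{g_0}$ so large that the Euclidean line $\zeta(t)=w+t(y-x)$ (and, for $t$ large, the segments $[\zeta(-t),x]$ and $[y,\zeta(t)]$) stay outside $B_R$, and uses the elementary limit $\sqrt{a^2+b^2}-a\to 0$ to write
\begin{align*}
d_{g_0}(x,y)=\lim_{t\to\infty}\Big(d_{g_0}(\zeta(-t),\zeta(t))-d_{g_0}(\zeta(-t),x)-d_{g_0}(y,\zeta(t))\Big).
\end{align*}
Each term is then exchanged for its $d_g$ counterpart at a cost $\mc O(R^{-m+1})$ using Lemma~\ref{asymp of dist function} on segments that miss $B_R$, and the triangle inequality for $d_g$ finishes the argument. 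The idea you are missing is that $d_{g_0}(x,y)$ can be expressed purely in terms of Euclidean distances between points whose joining segments avoid $B_R$, which makes the perturbative comparison of Lemma~\ref{asymp of dist function} sufficient and dispenses with any need to track $g$-geodesics through the compact region.
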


\begin{proof}
Let us consider $R\geq R_0$, two distinct points $x,y\not\in B_R$, and an auxiliary vector $w\in\R^{n}$ such that $g_0(w,y-x)=0$ and whose norm $|w|_{g_0}$ is large enough so that the Euclidean geodesic $\zeta(t)=w+(y-x)t$ will not intersect $B_R$ for any $t\in\R$. We are going to use the elementary fact that 
\begin{align*}
\lim_{a\to+\infty} \Big( \sqrt{a^2+b^2}-a\Big) = 0.
\end{align*}
This, together with Lemma~\ref{asymp of dist function}, implies that, for some constants $R_0>0$ and $C>0$, and for all $R\geq R_0$, we have
\begin{align*}
d_{g_0}(x,y) & = \lim_{t\to\infty}  \Big( d_{g_0}(\zeta(-t),\zeta(t))-d_{g_0}(\zeta(-t),x)-d_{g_0}(y,\zeta(t)) \Big)\\
& \leq
\lim_{t\to\infty}  \Big( d_{g}(\zeta(-t),\zeta(t))-d_{g}(\zeta(-t),x)-d_{g}(y,\zeta(t))\Big) + CR^{-m+1}\\
& \leq d_g(x,y) + CR^{-m+1}.
\end{align*}
Here we used that $[\zeta(-t),x]\cap B_R=\varnothing$ and 
$[\zeta(t),y]\cap B_R=\varnothing$ for all $t>0$ large enough.
\end{proof}

\begin{lemma}
\label{l:dist_from_asymptotic_Euclidean_geodesics}
There exists a constant $C>0$ such that, for all $R>0$ large enough and for all unit-speed $g$-geodesics $\gamma:(-\infty,0]\to \R^n\setminus B_R$ asymptotic as $t\to-\infty$ to the Euclidean geodesic $\gamma_0^-$ $($as in~\eqref{limitinggeod}$)$, we have
\begin{align*}
 \sup_{t\in(-\infty,0]} \Big( d_{g_0}(\gamma(t),\gamma_0^-(t)) + |\dot\gamma(t)-\dot\gamma_0^-(t)|_{g_0} \Big)\leq CR^{-m+1}.
\end{align*}
\end{lemma}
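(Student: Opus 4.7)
The plan is to compare $\gamma$ and $\gamma_0^-$ directly in Cartesian coordinates as solutions to second-order ODEs, and to obtain the stated bounds by integrating the Christoffel-symbol decay twice against vanishing data at $t=-\infty$. Concretely, viewing both curves as maps to $\R^n$, set $J(t):=\gamma(t)-\gamma_0^-(t)$. Since $\gamma_0^-$ is affine in Cartesian coordinates, $\ddot\gamma_0^-\equiv 0$, while the $g$-geodesic equation in the same coordinates reads $\ddot\gamma^k=-\Gamma_{ij}^k(\gamma)\dot\gamma^i\dot\gamma^j$, so
\[\ddot J^k(t)=-\Gamma_{ij}^k(\gamma(t))\dot\gamma^i(t)\dot\gamma^j(t).\]
As $|\dot\gamma|_g\equiv 1$ and $(g-g_0)_{ij}=\mc O(\rho^m)$, the Euclidean norm $|\dot\gamma|_{g_0}$ is uniformly bounded on $\R^n$; together with Lemma~\ref{curvdecay} this gives
\[|\ddot J(t)|_{g_0}\leq C\rho(\gamma(t))^{m+1}.\]
Corollary~\ref{limitgeod} ensures that $J(t)\to 0$ and $\dot J(t)\to 0$ as $t\to-\infty$, allowing us to recover $\dot J$ and $J$ by integrating twice from $-\infty$.

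The main obstacle is the pointwise decay estimate
\[\rho(\gamma(t))\leq \frac{C}{R+|t|},\qquad\forall t\in(-\infty,0],\]
whose proof requires a short case analysis, as $\gamma$ need not attain the value $\rho=1/R$ exactly at $t=0$. Extend $\gamma$ maximally to all of $\R$. If $\gamma$ eventually enters $B_R$, let $t_1\geq 0$ be the first entry time; then $\rho(\gamma(t_1))=1/R$ with $d\rho(\dot\gamma(t_1))\geq 0$, and the backward-in-time form of Lemma~\ref{lemmeGLT} applied at $t_1$ with $\varepsilon=1/R$ yields $\rho(\gamma(t))\leq (C/R)/(1+(t_1-t)/R)\leq C/(R+|t|)$ for every $t\leq 0\leq t_1$, using $t_1-t\geq -t=|t|$. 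Otherwise $\gamma$ remains in $\R^n\setminus B_R$ for all $t\in\R$; at the global maximum of $\rho$ along $\gamma$ one has $d\rho(\dot\gamma)=\rho^2\bbar{\xi}_0=0$, so the constraint $\bbar{\xi}_0^2+\rho^2|\eta|^2_{h_\rho}=1$ forces $\rho_{\max}|\eta|_{h_\rho}=1$, hence $|\eta|_{h_0}\geq cR$ at that point; the near-conservation of $|\eta|_{h_0}$ in Lemma~\ref{lemmeGLT} transfers this lower bound to the incoming data $|\eta_-|_{h_0}$, and Corollary~\ref{rholeq1/R} (applicable for $R$ large) gives $\rho(\gamma(t))\leq C/(|\eta_-|_{h_0}+|t|)\leq C/(R+|t|)$.

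With this decay in hand, the remainder is routine integration. For $t\in(-\infty,0]$,
\[|\dot J(t)|_{g_0}\leq \int_{-\infty}^t C\rho(\gamma(s))^{m+1}\,ds\leq C\int_{-\infty}^t\frac{ds}{(R+|s|)^{m+1}}\leq \frac{C}{(R+|t|)^m},\]
and a second integration gives
\[|J(t)|_{g_0}\leq \int_{-\infty}^t |\dot J(s)|_{g_0}\,ds\leq \frac{C}{(R+|t|)^{m-1}}\leq \frac{C}{R^{m-1}}.\]
Since the Euclidean segment from $\gamma(t)$ to $\gamma_0^-(t)$ realises their $g_0$-distance, $d_{g_0}(\gamma(t),\gamma_0^-(t))\leq |J(t)|_{g_0}$, while $|\dot\gamma(t)-\dot\gamma_0^-(t)|_{g_0}=|\dot J(t)|_{g_0}\leq CR^{-m}\leq CR^{-m+1}$ because $m>2>1$. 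Summing the two bounds produces the claimed $\mc O(R^{-m+1})$ estimate uniformly in $t\in(-\infty,0]$, completing the proof.
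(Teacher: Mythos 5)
Your proof takes essentially the same approach as the paper: rewrite the geodesic equations in Cartesian coordinates, estimate $\ddot J=\ddot\gamma-\ddot\gamma_0^-$ via the Christoffel decay $\Gamma=\mathcal O(\rho^{m+1})$ from Lemma~\ref{curvdecay}, and integrate twice from $t=-\infty$ using the vanishing asymptotics of $J$ and $\dot J$. The one place you go beyond the paper is the careful case analysis establishing the pointwise decay $\rho(\gamma(t))\leq C/(R+|t|)$ for $t\leq 0$ (depending on whether the complete extension of $\gamma$ enters $B_R$ or not), which the paper asserts directly from Lemma~\ref{lemmeGLT} without spelling out how the reference point with $\rho=\varepsilon$ and the correct sign of $d\rho(\dot\gamma)$ is chosen; your version also uses the correct $(R+|t|)$ where the paper's displayed formula writes $(R+t)$ for $t\leq 0$, which is a minor typo there.
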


\begin{proof}
Consider the two geodesics $\gamma$ and $\gamma_0^-$ as in the statement, they satisfy the equations
\begin{equation}\label{Gammagamma}
\ddot\gamma(t)=\sum_{i,j=1}^n \Gamma_{ij}(\gamma(t))\dot\gamma_i(t)\dot\gamma_j(t)=:\Gamma_{\gamma(t)}(\dot\gamma(t),\dot\gamma(t)),
\qquad
\ddot\gamma_0^-(t)=0.
\end{equation}
Since $g$ is asymptotically Euclidean, there exists a constant $k\geq 1$ such that
\[k^{-1}|\cdot|_{g_0}\leq |\cdot|_g\leq k|\cdot|_{g_0}.\] Since the geodesic ray $\gamma$ does not intersect $B_R$, Lemmas~\ref{curvdecay} and~\ref{lemmeGLT} imply that
\begin{align*}
|\Gamma_{\gamma(t)}(\dot\gamma(t),\dot\gamma(t))|_{g_0} \leq \|\Gamma_{\gamma(t)}\|_{g_0} |\dot\gamma(t)|_{g_0}^2 \leq k^2 \rho(\gamma(t))^{m+1}
\leq
\frac{C}{(R+t)^{m+1}},
\qquad
\forall t\in(-\infty,0]
.
\end{align*}
for some uniform $C>0$.
Since $d_{g_0}(\gamma(t),\gamma_0^-(t))\to0$ and $\dot\gamma(t)-\dot\gamma_0^-(t)\to0$ as $t\to-\infty$, by integrating the geodesic equations we have that, for all $t\leq0$,
\[\begin{gathered}
|\dot\gamma(t)-\dot\gamma_0^-(t)|_{g_0}
=
\bigg| \int_{-\infty}^t
\Gamma_{\gamma(s)}(\dot\gamma(s),\dot\gamma(s))\,ds
\bigg|_{g_0} \leq
C \int_{-\infty}^t (R+s)^{-m-1} ds
\leq
C (R+t)^{-m},\\
|\gamma(t)-\gamma_0^-(t)|_{g_0}
\leq
\int_{-\infty}^t
|\dot\gamma(s)-\dot\gamma_0^-(s)|_{g_0}ds
\leq
C (R+t)^{-m+1}
\end{gathered}\]
and the proof is complete.
\end{proof}

Proposition~\ref{samescat} is a direct consequence of the following lemma.
\begin{lemma}
\label{same_scattering}
Let $g$ be an asymptotically Euclidean metric $g$ to order $m>2$ on $\R^n$ with no conjugate points.
For any unit-speed $g$-geodesic $\gamma:\R\to\R^n$ there exist unique $x_0,v_0\in\R^n$ with $|v_0|_{g_0}=1$, which define the Euclidean geodesic $\gamma_0(t)=x_0+tv_0$, such that
\begin{align*}
\lim_{t\to\pm\infty} d_{g_0}(\gamma(t),\gamma_0(t))=0,
\qquad
\lim_{t\to\pm\infty} \dot\gamma(t)=v_0.
\end{align*} 
\end{lemma}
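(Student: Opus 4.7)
My plan is first to use Lemma~\ref{l:asymptotic_slope} to reduce to the case $v_+ = v_- =: v_0$, and to decompose $x_+ - x_- = \lambda v_0 + w$ with $g_0(w, v_0) = 0$; the lemma is then equivalent to $\lambda = 0$ and $w = 0$. For $T$ large, the tails of $\gamma$ lie outside $B_R$ with $R$ of order $T$ (since $\rho(\gamma(\pm T)) \sim 1/T$), so Lemma~\ref{l:dist_from_asymptotic_Euclidean_geodesics} gives the sharp rate $|\gamma(\pm T) - \gamma_0^\pm(\pm T)|_{g_0} = O(T^{-(m-1)})$. A Taylor expansion of the square root then yields
\[
d_{g_0}(\gamma(-T),\gamma(T)) = \sqrt{(2T+\lambda)^2 + |w|^2} + O(T^{-(m-1)}) = 2T + \lambda + \frac{|w|^2}{4T} + O(T^{-2} + T^{-(m-1)}).
\]

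Next, because $(\R^n, g)$ is complete, simply connected, and free of conjugate points, $\exp_p$ is a diffeomorphism for every $p$; hence $\gamma|_{[-T,T]}$ is the unique minimizing $g$-geodesic between its endpoints, and $d_g(\gamma(-T),\gamma(T)) = 2T$. Combined with Lemma~\ref{dg0leqdg} applied at radius $R$ of order $T$, this gives $d_{g_0}(\gamma(-T),\gamma(T)) \leq 2T + CT^{-(m-1)}$. Setting this against the expansion above, multiplying by $T$, and passing to the limit $T \to \infty$ (using $m > 2$):
\[
\lambda T + \tfrac14 |w|^2 \;\leq\; CT^{-(m-2)} + O(T^{-1}) \;\longrightarrow\; 0.
\]
This forces $\lambda \leq 0$ and, moreover, yields $w = 0$ in the case $\lambda = 0$.

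The main obstacle is the complementary inequality $\lambda \geq 0$. Lemma~\ref{dg0leqdg} is one-sided, and the Euclidean segment $[\gamma(-T),\gamma(T)]$ in general remains within bounded distance of the origin, so Lemma~\ref{asymp of dist function} cannot be applied to it directly to extract a matching upper bound on $d_g$. Following the strategy of Croke~\cite[Section~6]{Croke:1991aa}, I would aim to construct a comparison curve from $\gamma(-T)$ to $\gamma(T)$---by concatenating short corrections near the endpoints with a path traversing the asymptotic region along or close to the Euclidean asymptotes---whose $g$-length is $d_{g_0}(\gamma(-T),\gamma(T)) + o(1)$. Minimality of $\gamma$ together with such a bound then would yield $\lambda \geq 0$. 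Combined with the previous half this gives $\lambda = 0$ and $w = 0$, so $\gamma_0 := \gamma_0^- = \gamma_0^+$ is the Euclidean geodesic asserted by the lemma; uniqueness of $(x_0, v_0)$ is immediate from the two limit conditions.
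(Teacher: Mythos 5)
Your first half is correct and uses essentially the same ingredients as the paper: Lemma~\ref{l:asymptotic_slope} for $v_+=v_-$, Lemma~\ref{l:dist_from_asymptotic_Euclidean_geodesics} for the $O(T^{-(m-1)})$ approximation of $\gamma(\pm T)$ by its asymptotes, and Lemma~\ref{dg0leqdg} to bound $d_{g_0}$ by $d_g$ from above. The Taylor expansion and the conclusion $\lambda\leq 0$ (and $w=0$ conditional on $\lambda=0$) are sound. But the step you yourself flag as ``the main obstacle'' --- producing a competitor from $\gamma(-T)$ to $\gamma(T)$ of $g$-length $d_{g_0}(\gamma(-T),\gamma(T))+o(1)$ --- is exactly where the substance of the proof lies, and you do not construct it. The tension is real: to apply Lemma~\ref{asymp of dist function} with error $o(1)$ you need the competitor to avoid $B_{R}$ with $R\to\infty$, but a detour through a point at perpendicular Euclidean distance $d$ costs roughly $d^{2}/T$ in added length, so $d$ and $R$ cannot both go to infinity along with $T$ unless a second independent scale is introduced. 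Your one-parameter argument in $T$ alone cannot supply it.

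The paper resolves this differently, and in the opposite order: it proves $a=w=0$ directly (without first knowing $t_0=\lambda=0$), and only afterwards deduces $t_0=0$. The key move is a double limit with decoupled scales. Fix $R$, let $x_R=\gamma(s_R)$ and $y_R=\gamma(t_R)$ be the entry and exit of $\gamma$ on $\partial B_R$, and choose an auxiliary point $z=\gamma_0^-(s_R)+w'$ with $w'$ parallel to $a$ and $|w'|_{g_0}$ large enough (depending only on $R$) that the line $\{\gamma_0^-(t)+w'\}$ misses $B_R$. The triangle inequality through $z$, namely $d_g(x_R,y_R)\leq d_g(\gamma(-t),z)+d_g(z,\gamma(t))-d_g(\gamma(-t),x_R)-d_g(y_R,\gamma(t))$, is then estimated via Lemmas~\ref{asymp of dist function} and~\ref{l:dist_from_asymptotic_Euclidean_geodesics}; sending $t\to\infty$ with $R$ fixed produces the sub-Pythagorean bound $d_g(x_R,y_R)\leq\big(d_{g_0}(x_R,y_R)^2-|a|_{g_0}^2\big)^{1/2}+O(R^{-m+1})$. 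Together with Lemma~\ref{dg0leqdg} this yields $|a|_{g_0}^2\leq C(d_{g_0}(x_R,y_R)^{-2}+R^{-m+2})$, and letting $R\to\infty$ gives $a=0$. Only then does comparing $d_g(x_R,y_R)=t_R-s_R$ with $d_{g_0}(\gamma_0^+(t_R),\gamma_0^-(s_R))=t_R-s_R+t_0$ force $t_0=0$. So your sketch points at the right strategy (Croke's), but the decoupled scales and the explicit perpendicular offset $z$ are the missing core of the argument, not a finishing detail.
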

\begin{proof}
That $x_0, v_0$ are uniquely determined is clear so we only prove existence. Let $\gamma$ be a complete unit-speed 
$g$-geodesic, and consider its two asymptotic Euclidean geodesics $\gamma_0^\pm(t)=x_\pm+tv_\pm$. According to Lemma~\ref{l:asymptotic_slope}, we have $v_0:=v_-=v_+$. Namely, there exists $a\in\R^n$ with $g_0(a,v_-)=0$ and $t_0\in\R$ such that
\begin{align*}
\gamma_0^+(t)=\gamma_0^-(t+t_0)+a.
\end{align*}
Let $R>0$ be large enough so that $\partial B_R$ intersects $\gamma$ in two points $x_R=\gamma(s_R)$ and $y_R=\gamma(t_R)$, for some $s_R<t_R$. 
We fix a vector $w\in\R^{n}$ such that $a\in\mathrm{span}\{w\}$ and $|w|_{g_0}$ is large enough so that 
\[
\big\{\gamma_0^-(t)+w\ \big|\ t\in\R\big\}\cap B_R=\varnothing.
\] 
We set $z:=\gamma_0^-(s_R)+w$. For $t>0$ large enough, both the Euclidean geodesic joining $\gamma(-t)$ and $z$ and the one joining $z$ and $\gamma(t)$ will not intersect $B_R$. Therefore, by Lemmas~\ref{asymp of dist function} and~\ref{l:dist_from_asymptotic_Euclidean_geodesics}, if $R>0$ is large enough we have
\[\begin{split}
d_g(x_R,y_R)
=\, & 
d_g(\gamma(-t),\gamma(t))-d_{g}(\gamma(-t),x_R)-d_g(y_R,\gamma(t))\\
\leq\, & 
d_g(\gamma(-t),z)+d_g(z,\gamma(t))-d_{g}(\gamma(-t),x_R)-d_g(y_R,\gamma(t))\\
\leq\, & 
d_{g_0}(\gamma(-t),z)+d_{g_0}(z,\gamma(t))-d_{g_0}(\gamma(-t),x_R) -d_{g_0}(y_R,\gamma(t)) +\mc{O}(R^{-m+1})\\
 \leq\, & 
d_{g_0}(\gamma_0^-(-t),z)+d_{g_0}(z,\gamma_0^+(t))-d_{g_0}(\gamma_0^-(-t),\gamma_0^-(s_R))-d_{g_0}(\gamma_0^+(t_R),\gamma_0^+(t))\\
 &
 +2d_{g_0}(\gamma_0^-(-t),\gamma(-t)) +2d_{g_0}(\gamma_0^+(t),\gamma(t))+\mc{O}(R^{-m+1})
\end{split}\]
As $t\to\infty$, the sum in the last two lines converges to 
\[(d_{g_0}(\gamma_0^+(t_R),\gamma_0^-(s_R))^2-|a|_{g_0}^2)^{1/2}+\mc{O}(R^{-m+1}).\] 
Summing up and using Lemma \ref{l:dist_from_asymptotic_Euclidean_geodesics} we get
\[\begin{split}
d_{g}(x_R,y_R) &\leq (d_{g_0}(\gamma_0^+(t_R),\gamma_0^-(s_R))^2-|a|_{g_0}^2)^{1/2}+\mc{O}(R^{-m+1})\\
&
\leq 
d_{g_0}(x_R,y_R) \Big(1-\frac{|a|_{g_0}^2}{2d_{g_0}(\gamma_0^+(t_R),\gamma_0^-(s_R))^2} +\mc{O}(\frac{1}{d_{g_0}\big(\gamma_0^+(t_R),\gamma_0^-(s_R))^4}\big)\Big)\\
& \quad+\mc{O}(R^{-m+1}).
\end{split}\]
By choosing $R>0$ large enough so that $d_{g_0}(x_R,y_R)\geq1$, this inequality and Lemma~\ref{dg0leqdg} imply that
\[|a|^2_{g_0}\leq C(d_{g_0}(x_R,y_R)^{-2}+R^{-m+2}).\] 
Since $m>2$ and $d_{g_0}(x_R,y_R)\to \infty$ as $R\to \infty$, we conclude $a=0$, and thus
\begin{equation}
\label{e:equivalent_distances}
d_{g}(x_R,y_R)\leq d_{g_0}(x_R,y_R)+\mc{O}(R^{-m+1}).
\end{equation}
Lemma~\ref{l:asymptotic_slope} implies that $|v_0|_{g_0}=1$. This, together with~\eqref{e:equivalent_distances} and Lemma~\ref{l:dist_from_asymptotic_Euclidean_geodesics}, implies that
\begin{align*}
t_R-s_R 
& = d_g(x_R,y_R)
\leq  d_{g_0}(x_R,y_R) + \mc{O}(R^{-m+1})
\leq d_{g_0}(\gamma_0^+(t_R),\gamma_0^-(s_R)) + \mc{O}(R^{-m+1})\\
& = d_{g_0}(\gamma_0^-(t_0 + t_R),\gamma_0^-(s_R)) + \mc{O}(R^{-m+1}) = t_R-s_R + t_0 + \mc{O}(R^{-m+1}).
\end{align*}
By taking the limit for $R\to\infty$, we conclude that $t_0=0$. This proves that $\gamma$ is asymptotic to the same Euclidean geodesic $\gamma_0(t):=\gamma_0^-(t)=\gamma_0^+(t)$ for both $t\to\infty$ and $t\to-\infty$. By Corollary~\ref{limitgeod}, this also implies that $\dot\gamma(t)\to v_0$ for $t\to\pm\infty$.
\end{proof}

\begin{lemma}
\label{dg0=dg_at_infinity}
There are constants $R_0>0$ and $C>0$ such that
\begin{align*}
d_{g}(x,y) \leq d_{g_0}(x,y) + CR^{-m+1},
\qquad\forall R\geq R_0,\  x,y\not\in B_R.
\end{align*}
\end{lemma}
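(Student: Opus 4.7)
I compare $d_g(x,y)$ and $d_{g_0}(x,y)$ by passing through the asymptotic Euclidean geodesic of the unique $g$-geodesic joining $x$ and $y$. The mirror of the auxiliary-line argument of Lemma~\ref{dg0leqdg} is not directly available, since the triangle inequality $d_g(x,y)\le d_g(x,\zeta(-t))+d_g(\zeta(-t),\zeta(t))+d_g(\zeta(t),y)$ would only produce an upper bound that tends to $+\infty$ with~$t$.

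Since $g$ has no conjugate points, there is a unique unit-speed complete $g$-geodesic $\gamma:\R\to\R^n$ with $\gamma(T_x)=x$, $\gamma(T_y)=y$ and $|T_x-T_y|=d_g(x,y)$. By Corollary~\ref{limitgeod} combined with Lemma~\ref{same_scattering}, there is a single unit-speed Euclidean geodesic $\gamma_0:\R\to\R^n$ such that $\gamma(t)-\gamma_0(t)\to 0$ as $t\to\pm\infty$. Exactly as in the first paragraph of the proof of Lemma~\ref{same_scattering}, for $R$ large enough $\partial B_R$ meets $\gamma$ in at most two points and $\gamma^{-1}(B_R)$ is a single (possibly empty) interval $[s_R,t_R]$, necessarily disjoint from $\{T_x,T_y\}$ since $x,y\notin B_R$.

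A short case analysis according to the positions of $T_x$ and $T_y$ relative to $[s_R,t_R]$ shows that at least one of the following holds: some half-ray $(-\infty,T_*]$ or $[T_*,+\infty)$ contains both $T_x$ and $T_y$ and is mapped by $\gamma$ into $\R^n\setminus B_R$; or else $T_x$ and $T_y$ lie on opposite sides of $[s_R,t_R]$, in which case the backward tail $\gamma|_{(-\infty,\min\{T_x,T_y\}]}$ and the forward tail $\gamma|_{[\max\{T_x,T_y\},\infty)}$ both avoid $B_R$. Applying Lemma~\ref{l:dist_from_asymptotic_Euclidean_geodesics} (or its time-reversed version) to each such half-ray, and using that by Lemma~\ref{same_scattering} the two Euclidean asymptotes of $\gamma$ coincide with the single $\gamma_0$, one obtains
\[
d_{g_0}(x,\gamma_0(T_x))\le CR^{-m+1},\qquad d_{g_0}(y,\gamma_0(T_y))\le CR^{-m+1}.
\]

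Since $\gamma_0$ is parametrized with $g_0$-unit speed, $d_{g_0}(\gamma_0(T_x),\gamma_0(T_y))=|T_x-T_y|=d_g(x,y)$, so the Euclidean triangle inequality yields
\[
d_g(x,y)=d_{g_0}(\gamma_0(T_x),\gamma_0(T_y))\le d_{g_0}(\gamma_0(T_x),x)+d_{g_0}(x,y)+d_{g_0}(y,\gamma_0(T_y))\le d_{g_0}(x,y)+2CR^{-m+1},
\]
which is the desired estimate. The delicate point I anticipate as the main obstacle is the structural statement that, for $R$ large, $\gamma^{-1}(B_R)$ is a connected interval (so that at least one side of $\gamma$ out of $x$ or $y$ escapes to infinity without re-entering $B_R$); this is already implicit in the proof of Lemma~\ref{same_scattering}, and should be justified by using the sharp decay estimates of Lemma~\ref{lemmeGLT} applied at the maximum of $t\mapsto\rho(\gamma(t))$ to preclude interior local minima of that function.
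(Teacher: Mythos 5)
Your proof is correct, and it takes a genuinely different route from the paper's. The paper fixes an auxiliary point $z=w+\gamma_0(0)$ on a parallel Euclidean line far from $B_R$, writes $d_g(x,y)$ as a subtractive combination $d_g(\gamma(-t),\gamma(t))-d_g(\gamma(-t),x)-d_g(y,\gamma(t))$, replaces $d_g(\gamma(-t),\gamma(t))$ by $d_g(\gamma(-t),z)+d_g(z,\gamma(t))$, converts everything to $g_0$-distances via Lemma~\ref{asymp of dist function} and Lemma~\ref{l:dist_from_asymptotic_Euclidean_geodesics}, and then passes to the limit $t\to\infty$. You instead exploit the identity $d_g(x,y)=|T_x-T_y|=d_{g_0}(\gamma_0(T_x),\gamma_0(T_y))$, valid because $\gamma$ is a minimizing unit-speed $g$-geodesic and $\gamma_0$ is unit-speed Euclidean with matching parametrization from Lemma~\ref{same_scattering}, and then a single Euclidean triangle inequality together with Lemma~\ref{l:dist_from_asymptotic_Euclidean_geodesics} finishes the proof with no limit and no use of Lemma~\ref{asymp of dist function}. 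This is shorter and more transparent. The cost is the case analysis over the relative position of $T_x,T_y$ and the interval $\gamma^{-1}(B_R)$; this requires $\gamma^{-1}(B_R)$ to be a (possibly empty) interval, which you flag as the delicate point, but this is immediate from the strict $g$-geodesic convexity of $B_R$ that the paper records right after its definition in \eqref{B_R} — no appeal to Lemma~\ref{lemmeGLT} is needed for that. One further remark: your argument actually produces the slightly stronger pointwise comparison $d_{g_0}(x,\gamma_0(T_x))\le CR^{-m+1}$, $d_{g_0}(y,\gamma_0(T_y))\le CR^{-m+1}$, which is of independent use.
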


\begin{proof}
We consider $R>0$ large enough so that the assertion of Lemma~\ref{l:dist_from_asymptotic_Euclidean_geodesics} holds.
For each pair of distinct points $x,y\not\in B_R$, we consider the unique complete unit-speed $g$-geodesic $\gamma$ such that $\gamma(0)=x$ and $\gamma(\tau)=y$ for $\tau:=d_g(x,y)$. By Lemma~\ref{same_scattering}, there exists an Euclidean geodesic $\gamma_0(t):=x_0+t v_0$ that is asymptotic to $\gamma$ at $t\to\pm\infty$. As in the proof of Lemma~\ref{same_scattering}, we fix $w\in\R^n$ such that the Euclidean geodesic $w+\gamma_0$ does not intersect $B_R$, and set $z:=w+\gamma_0(0)$. By Lemmas~\ref{asymp of dist function} and~\ref{l:dist_from_asymptotic_Euclidean_geodesics}, for all $t>0$ large enough we have
\begin{align*}
d_g(x,y) & = d_g(\gamma(-t),\gamma(t)) - d_g(\gamma(-t),\gamma(0)) - d_g(\gamma(\tau),\gamma(t))\\
& \leq d_g(\gamma(-t),z)+ d_g(z,\gamma(t)) - d_g(\gamma(-t),\gamma(0)) - d_g(\gamma(\tau),\gamma(t))\\
& \leq d_{g_0}(\gamma_0(-t),z)+ d_{g_0}(z,\gamma_0(t)) - d_{g_0}(\gamma_0(-t),\gamma_0(0)) - d_{g_0}(\gamma_0(\tau),\gamma_0(t)) + CR^{-m+1}.
\end{align*}
As $t\to\infty$, the sum in the latter line converges to 
\[\tau +CR^{-m+1}=d_{g_0}(\gamma_0(0),\gamma_0(\tau)) +CR^{-m+1},\] 
and thus
\begin{align*}
& d_g(x,y) \leq  d_{g_0}(\gamma_0(0),\gamma_0(\tau)) +CR^{-m+1} \leq d_{g_0}(x,y) + 2C R^{-m+1}.
\qedhere
\end{align*}
\end{proof}

\section{Comparison of asymptotic volumes}

Let $g$ be a Riemannian metric on $\R^n$ that is asymptotically Euclidean to order $m>2$ and  without conjugate points. As before, we denote by $g_0$ the Euclidean metric on $\R^n$, and by $\varphi_t^{g}$ and $\varphi_t^{g_0}$ the geodesic flows of $g$ and $g_0$ respectively on the whole tangent bundle $T\R^n=\R^n\times\R^n$.  By Lemma~\ref{same_scattering}, for every unit-speed $g$-geodesic $\gamma:\R\to\R^n$ there exists a unit-speed Euclidean geodesic $\gamma_0(t)=x_0+tv_0$ such that $| \gamma(t) - \gamma_0(t)|_{g_0} \to 0 $ and $|\dot \gamma(t) - \dot \gamma_0(t)|_{g_0} \to 0$ as $t\to \pm\infty$. 

We extend the map $\theta : S^0\R^n \to S\R^n$ defined in Corollary \ref{limitgeod} to $T\R^n\setminus\{0\mbox{-section}\}$ as follows:
\begin{align*}
\theta:T\R^n\setminus\{0\mbox{-section}\}\to T\R^n\setminus\{0\mbox{-section}\},
\qquad
\theta(x_0,v_0) = (x,v),
\end{align*}
where $x_0=\gamma_0(0)$, $v_0=\lambda\dot\gamma_0(0)$, $x=\gamma(0)$, $v=\lambda\dot\gamma(0)$, $\lambda>0$, and $\gamma$ is a unit-speed $g$-geodesic asymptotic to the Euclidean one $\gamma_0$ as above. {In Corollary~\ref{limitgeod}, we showed that it is a diffeomorphism. }
By its very definition, $\theta$ conjugates the geodesic flows $\varphi_t^{g}$ and $\varphi_t^{g_0}$ as
\begin{eqnarray}
\label{flow conjugation}
\theta^{-1}\circ\varphi_t^{g} =  \varphi_t^{g_0} \circ \theta^{-1}.
\end{eqnarray}
Moreover, since the corresponding geodesics $\gamma$ and $\gamma_0$ as above are asymptotic for $t\to\pm\infty$, we have that $\theta(x_0,v_0)=(x,v)$ if and only if $\theta(x_0,-v_0)=(x,-v)$. We now show that $\theta$ converges to the identity in the $C^1$-topology at infinity. 

\begin{lemma}
\label{l:theta}
For each compact subset $K\subset\R^n\setminus\{0\}$ there exists $C_K>0$ such that, for all $R>0$ large enough, we have
\begin{align*}
\| \theta - \Id \|_{C^1((\R^n\setminus B_R)\times K)} \leq C_K R^{-m+1}.
\end{align*}
Here, the $C^1$ norm is computed with respect to the Euclidean metric $G_0$ on $\R^n\times K\subset T\R^n$.
\end{lemma}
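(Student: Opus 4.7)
The plan is to deduce both the $C^0$ and $C^1$ bounds from Lemma~\ref{l:dist_from_asymptotic_Euclidean_geodesics} and the Christoffel/curvature decay of Lemma~\ref{curvdecay}, applied respectively to the geodesic equation and to its linearization, the Jacobi equation.

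\textbf{$C^0$ bound.} Fix $(x_0,v_0)\in(\R^n\setminus B_R)\times K$, set $\lambda:=|v_0|_{g_0}$, $w_0:=v_0/\lambda$, and let $\gamma_0(t)=x_0+tw_0$ and $\gamma$ be the unit-speed $g$-geodesic asymptotic to $\gamma_0$, so that $\theta(x_0,v_0)=(\gamma(0),\lambda\dot\gamma(0))$. By the symmetry $\theta(x_0,-v_0)=(\gamma(0),-\lambda\dot\gamma(0))$ noted before the statement, one may assume $g_0(w_0,x_0)\leq 0$, and then $|\gamma_0(t)|_{g_0}\geq|x_0|_{g_0}$ for $t\leq 0$, so $\gamma_0((-\infty,0])\subset\R^n\setminus B_R$. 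A bootstrap argument, combining $\rho(\gamma(t))\to 0$ as $t\to-\infty$ from Corollary~\ref{limitgeod} with a Gronwall comparison of $\gamma$ and $\gamma_0$ using the $O(\rho^{m+1})$ decay of the Christoffel symbols from Lemma~\ref{curvdecay}, shows that $\gamma((-\infty,0])\subset\R^n\setminus B_{R/2}$ for all sufficiently large $R$, uniformly for $v_0\in K$. Lemma~\ref{l:dist_from_asymptotic_Euclidean_geodesics} applied with $R/2$ in place of $R$, evaluated at $t=0$, then gives $|\gamma(0)-x_0|_{g_0}+|\dot\gamma(0)-w_0|_{g_0}\leq CR^{-m+1}$, and multiplying the second term by the uniformly bounded $\lambda$ yields the $C^0$ bound.

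\textbf{$C^1$ bound.} For the derivative, take a direction $(a,b)\in\R^n\times\R^n$, and form the one-parameter family $(x_0^s,v_0^s)=(x_0+sa,v_0+sb)$, with $\gamma_0^s$ and $\gamma^s$ the corresponding unit-speed Euclidean and $g$-geodesics. Set $J_{g_0}(t):=\partial_s\gamma_0^s(t)|_{s=0}$, $J_g(t):=\partial_s\gamma^s(t)|_{s=0}$, and $E:=J_g-J_{g_0}$. Then $J_{g_0}$ is affine in $t$ with $|J_{g_0}(t)|+(1+|t|)\,|\dot J_{g_0}|=O((1+|t|)(|a|+|b|))$, while $J_g$ solves the Jacobi equation along $\gamma$. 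Differentiating the matching $d_{g_0}(\gamma^s(t),\gamma_0^s(t))\to 0$ as $t\to-\infty$ forces $E(t),\dot E(t)\to0$ at $-\infty$. In Cartesian coordinates one finds
\begin{equation*}
\ddot E^k(t)=-(\partial_l\Gamma^k_{ij})(\gamma(t))\,J_g^l\,\dot\gamma^i\dot\gamma^j-2\,\Gamma^k_{ij}(\gamma(t))\,\dot J_g^i\,\dot\gamma^j,
\end{equation*}
since $\ddot J_{g_0}^k=0$. Combining Lemma~\ref{curvdecay} with $\rho(\gamma(t))=O((R+|t|)^{-1})$ from Corollary~\ref{rholeq1/R} (applicable since, by the $C^0$ step, $\gamma$ stays outside $B_{R/2}$), a short Gronwall bootstrap gives the a priori bound $|J_g(t)|=O((1+|t|)(|a|+|b|))$ and $|\dot J_g(t)|=O(|a|+|b|)$. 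Substituting back yields $|\ddot E(t)|=O((R+|t|)^{-(m+1)})(|a|+|b|)$, and integrating twice from $-\infty$ to $0$ produces $|E(0)|+|\dot E(0)|\leq CR^{-m+1}(|a|+|b|)$. Since the differential $D\theta(x_0,v_0)(a,b)-(a,b)$ equals $(E(0),\dot\lambda^0(\dot\gamma(0)-w_0)+\lambda\dot E(0))$ with $\dot\lambda^0=g_0(v_0,b)/\lambda$ bounded and $|\dot\gamma(0)-w_0|_{g_0}\leq CR^{-m+1}$ by the $C^0$ step, the desired $C^1$ estimate follows.

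\textbf{Main obstacle.} The principal difficulty is the bootstrap: one must show simultaneously that $\gamma$ does not re-enter $B_{R/2}$ in the past and that the Jacobi field $J_g$ grows at most linearly along the non-compact interval, so that the decays in Lemma~\ref{curvdecay} are effective along $\gamma$. Both are handled uniformly on $(\R^n\setminus B_R)\times K$ by combining Corollaries~\ref{limitgeod} and~\ref{rholeq1/R} with Gronwall-type comparisons based on Lemma~\ref{curvdecay}; once this is in place, the integrals producing the sharp rate $R^{-m+1}$ are straightforward.
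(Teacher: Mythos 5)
Your argument is correct and close in spirit to the paper's, but it takes a slightly different and arguably more self-contained route for the $C^1$ bound. The paper also reduces to the coordinate geodesic equation $\dot\gamma_s(t)-\dot\gamma_{0,s}(t)=\int_{-\infty}^t\Gamma_{\gamma_s(r)}(\dot\gamma_s(r),\dot\gamma_s(r))\,dr$ and differentiates it in $s$, but to control the variation fields it cites the Jacobi-field growth estimate \eqref{e:growth_Jacobi_fields} from \cite{Guillarmou:2019aa}, which bounds $J(t)$ in terms of $(J(0),\dot J(0))$; since $(J(0),\dot J(0))=d\theta(x_0',v_0')$, this gives a bound on $d\theta^{-1}-\Id$, and a separate invertibility step is then needed to convert it to a bound on $d\theta-\Id$. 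You instead derive the growth of $J_g$ by a weighted Gronwall argument anchored at $t=-\infty$, where $J_g$ is asymptotic to the explicit affine Euclidean field $J_{g_0}$, so you obtain the bound directly in terms of $|(a,b)|$ and avoid the inversion. This is a genuine simplification, modulo two points you should spell out. First, the claim ``differentiating the matching forces $E,\dot E\to0$'' is cleanest if, as in the paper, you work with the integral identity above and differentiate it in $s$, rather than differentiating the limit statement directly. Second, the appeal to Corollary~\ref{rholeq1/R} is misplaced: that corollary's hypothesis is a lower bound on $|\eta_-|_{h_0}$, not on $|x_0|$, and for nearly radial $v_0$ one has $|\eta_-|$ small even with $|x_0|\geq R$. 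The needed estimate $\rho(\gamma(t))\leq C/(R+|t|)$ for $t\leq 0$ follows instead from $|\gamma_0(t)|^2=|x_0|^2-2|t|\,g_0(x_0,w_0)+t^2\geq(|x_0|^2+t^2)$ once you fix the sign so that $g_0(x_0,w_0)\leq0$, together with the $C^0$ closeness of $\gamma$ to $\gamma_0$; alternatively Lemma~\ref{lemmeGLT} applied at the turning point gives the same bound. Finally, your parametrization by unit-speed families and the resulting $\dot\lambda$-bookkeeping is a cosmetic variant of the paper's choice to use homogeneity of $\theta$ on the ray $\{\lambda v_0\}$; both yield the same identity $D\theta(a,b)-(a,b)=(E(0),\dot\lambda^0(\dot\gamma(0)-w_0)+\lambda\dot E(0))$.
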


\begin{proof}
Notice that, if $\theta(x_0,v_0)=(x,v)$, then $\theta(x_0,\lambda v_0)=(x,\lambda v)$ for all $\lambda\in\R$. This, together with the compactness of $K$ and Lemma~\ref{l:dist_from_asymptotic_Euclidean_geodesics}, implies that $\| \theta - \Id \|_{C^0((\R^n\setminus B_R)\times K)} \leq C_K R^{-m+1}$. 

It is proved in~\cite[Lemma 3.6]{Guillarmou:2019aa} that there exists a constant $C>0$ such that, for each Jacobi field $J$ over a geodesic $\gamma$ for the metric $g$ and for all $t\in\R$,
\begin{equation}
\label{e:growth_Jacobi_fields}
\begin{split}
|\dot J(t)|_{g_0} & \leq C|(J(0),\dot{J}(0))|_{G_0},\\
|J(t)|_{g_0} & \leq C(|J(0)|_{g_0} + |(J(0),\dot{J}(0))|_{G_0}|t|).
\end{split}\end{equation}
We consider $R>0$ large enough so that $B_R$ is convex for $g$ and $\theta(\R^n\setminus B_R)\cap B_{R/2}=\varnothing$. We consider arbitrary $(x_0,v_0)\in(\R^n\setminus B_R)\times K$ and $w_0:=(x_0',v_0')\in \R^n\times\R^n\simeq T_{(x_0,v_0)}(T\R^n)$. Since $K$ is compact, $|v_0|_{g_0}$ is uniformly bounded when $v_0\in K$. We need to show 
\begin{align*}
|d\theta_{(x_0,v_0)}w_0-w_0|_{G_0}\leq C_K R^{-m+1} |w_0|_{G_0},
\end{align*}
for some uniform constant $C_K\geq 0$. We set $\gamma_{0,s}(t):=x_0+sx_0'+t(v_0+sv_0')$ and $(\gamma_s(t),  \dot \gamma_s(t)):=\theta(\gamma_{0,s}(t), \dot \gamma_{0,s}(t))$. To simplify indices we also write $\gamma : = \gamma_s|_{s= 0}$. Up to replacing $v_0$ with $-v_0$, we can assume that $(\gamma_s(t))\not\in B_{R/2}$ for all $t\leq0$. We set 
\[w=(x',v'):=d\theta_{(x_0,v_0)}(x_0',v_0') = \pl_s[\theta(x_0 + s x_0' , v_0 + sv_0')]|_{s = 0}  = \pl_s(\gamma_s(0), \dot \gamma_s(0))|_{s=0}\]
where $  (\gamma_s(0), \dot \gamma_s(0))$.
Notice that $J_0(t):=\partial_s\gamma_{0,s}(t)|_{s=0}$ and $J(t):=\partial_s\gamma_{s}(t)|_{s=0}$ are Jacobi fields for $g_0$ and $g$ respectively such that $(J_0(0),\dot{J}_0(0))=(x'_0,v'_0)$ and $(J(0),\dot{J}(0))=(x',v')$. The curves $\gamma_{s}(t)$ and $\gamma_{0,s}(t)$ are asymptotic for $t\to-\infty$ together with their first derivatives. 
Using \eqref{Gammagamma} and the notation used there, we can write
\begin{align}
\label{e:difference_geodesics}
\dot\gamma_s(t)-\dot\gamma_{0,s}(t) =  \int_{-\infty}^t \Gamma_{\gamma_s(r)}(\dot\gamma_s(r),\dot\gamma_s(r))dr.
\end{align}
We have $\dot{J}(t)=(\nabla_{\dot \gamma}J)(t)= (\nabla_{J} \dot \gamma)(t)= 
\pl_s \dot{\gamma}_s(t)|_{s=0} + \Gamma_{\gamma(t)}(\dot{\gamma}(t),J(t))$.
Combining this expression with \eqref{e:growth_Jacobi_fields}, Lemma \ref{curvdecay}, and Corollary~\ref{rholeq1/R}, we see that
\begin{equation}
\label{partial_s dot gamma}
|\pl_s \dot \gamma_s(t)|_{s = 0}| \leq C|(J(0),\dot{J}(0))|_{G_0}
\end{equation}
By taking the derivatives with respect to the parameter $s$ in~\eqref{e:difference_geodesics}, and using~\eqref{partial_s dot gamma}, Lemma~\ref{curvdecay}, and Corollary~\ref{rholeq1/R}, 
we obtain the estimate 
\begin{align*}
|\partial_s (\dot\gamma_s(t)-\dot\gamma_{0,s}(t))|_{s = 0}|_{g_0} 
& \leq 
\int_{-\infty}^t
\Big|
d \Gamma_{\gamma(r)}(J(r))(\dot\gamma(r),\dot\gamma(r))
+
2\Gamma_{\gamma(r)}(\pl_{s}\dot{\gamma}_s(r)|_{s=0},\dot\gamma(r))
\Big|
dr\\
& \leq
C|w|_{G_0}\int_{-\infty}^t
\Big(
(R+|r|)^{-m-2} (1+r)
+
(R+|r|)^{-m-1}
\Big)
dr \\
& \leq
C |w|_{G_0}
\int_{-\infty}^t
(R+|r|)^{-m-1}
dr
\end{align*}
for all $t \leq 0$. Integrating once more,
\[
| \pl_{s}(\gamma_s(0) - \gamma_{0,s}(0))|_{s=0}|_{g_0} 
\leq \int_{-\infty}^0
 |\pl_s (\dot\gamma_s(t)-\dot\gamma_{0,s}(t))|_{s=0}|_{g_0}
 dt
\leq C R^{-m+1} |w|_{G_0}.
\]
Therefore
\[
|d\theta^{-1}_{(x,v)}w-w|_{G_0} \leq C R^{-m+1} |w|_{G_0}.
\]
This implies in particular that, for some $c_0>0$, we have
\begin{align*}
|d\theta^{-1}_{(x,v)}w|_{G_0}\geq c_0 |w|_{G_0},
\qquad\forall (x,v)\in\theta(\R^n\setminus B_R\times K),\ w\in T_{(x,v)}T\R^n,
\end{align*}
and therefore
\[\begin{split}
|d\theta_{(x_0,v_0)}w_0-w_0|_{G_0} \leq & C R^{-m+1} |d\theta_{(x_0,v_0)}w_0|_{G_0}
\leq C R^{-m+1} \|d\theta_{(x_0,v_0)}\|_{G_0}|w_0|_{G_0} \\
\leq & C R^{-m+1}c_0^{-1} |w_0|_{G_0},
\end{split}\]
concluding the proof.
\end{proof}
We now construct geodesic cylinders for the metric $g$ and $g_0$. We choose Cartesian coordinates $\{x_1,\dots, x_n\}$ and $H:=\{x_n=0\}$.
By Lemma \ref{behavgeod} there exist $y_\pm\in \partial \R^n$ such that every Euclidean unit geodesic $\gamma^0(t)$ satisfying $\gamma^0(0)\in H$ and $\dot{\gamma}^0(t)=\pl_{x_n}$  will have limit 
\begin{equation}\label{limgamma0}
\lim_{t\to -\infty}(\gamma^0(t),{\dot{\gamma}^0(t)}^\flat)=(y_-,\tfrac{d\rho}{\rho^2}+ \eta)\in \pl_-S^*\R^n\end{equation}
for some $\eta\in T_{y_-}^*\mathbb{S}^{n-1}$. We shall now denote 
$\gamma_\eta^0(t)$ and 
for the Euclidean geodesic satisfying $\gamma_\eta^0(0)\in H$ and \eqref{limgamma0}.
Now define 
\begin{equation}
\label{FR0 and F_R}
\begin{gathered}
F_R^0 := \big\{ \gamma_\eta^0(t)\in \R^n\ \big|\ |t|<R, |\eta|< R \big\},\quad
\ F_R := \pi_0 \circ\theta\left( \{ (x,\partial_{x_n})\in T\R^n\ |\  x\in F_R^0 \}\right).
\end{gathered}\end{equation}

\begin{proposition}\label{PropvolgFR}
Let $g$ be a Riemannian metric on $\R^n$ that is asymptotically Euclidean to order $m>2$ and  without conjugate points.
Then the following estimate holds as $R\to\infty$
\[|\Vol_g(F_R) - \Vol_{g_0}(F_R^0)|\leq\mc{O}(R^{n-m+1}).\] 
\end{proposition}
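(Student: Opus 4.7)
The approach is to realize $F_R = \Psi(F_R^0)$ for the smooth map $\Psi : \R^n \to \R^n$ defined by $\Psi(x) := \pi_0(\theta(x, \partial_{x_n}))$, and then to compare the two volumes via a change of variables. Applying Lemma~\ref{l:theta} with the one-point compact set $K = \{\partial_{x_n}\}$ yields the pointwise $C^1$-estimate
\[ |\Psi(x) - x|_{g_0} + \| d\Psi(x) - \mathrm{Id}\|_{g_0} = \mathcal{O}(|x|_{g_0}^{-m+1}) \quad\text{as } |x|_{g_0} \to \infty. \]
A standard inverse-function-theorem argument then shows that, for some $R_0 > 0$ depending only on $g$, the map $\Psi$ is a diffeomorphism from $\R^n \setminus B_{R_0}$ onto its image.

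On $F_R^0 \setminus B_{R_0}$ I would apply the change of variables formula to obtain
\[ \Vol_g\bigl( \Psi(F_R^0 \setminus B_{R_0}) \bigr) = \int_{F_R^0 \setminus B_{R_0}} \sqrt{\det g(\Psi(x))}\; |\det d\Psi(x)|\; dx_1 \cdots dx_n. \]
Combining the asymptotic Euclideanness $\sqrt{\det g(y)} = 1 + \mathcal{O}(|y|_{g_0}^{-m})$ coming from~\eqref{g-g0}, the estimate $|\det d\Psi(x) - 1| = \mathcal{O}(|x|_{g_0}^{-m+1})$ obtained from Lemma~\ref{l:theta}, and the comparability of $|x|_{g_0}$ and $|\Psi(x)|_{g_0}$ for $|x|_{g_0}$ large, the integrand satisfies
\[ \sqrt{\det g(\Psi(x))}\; |\det d\Psi(x)| - 1 = \mathcal{O}(|x|_{g_0}^{-m+1}) \quad\text{on } \R^n \setminus B_{R_0}. \]

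To finish I would decompose
\[ \Vol_g(F_R) - \Vol_{g_0}(F_R^0) = \int_{F_R^0 \setminus B_{R_0}} \bigl( \sqrt{\det g(\Psi(x))} |\det d\Psi(x)| - 1 \bigr)\, dx + E_R, \]
where $E_R$ gathers contributions from the bounded region $F_R^0 \cap B_{R_0}$ and its image under $\Psi$, and is bounded independently of $R$. Since every $x \in F_R^0$ satisfies $|x|_{g_0} \leq \sqrt{2}\, R$, a polar-coordinate estimate yields
\[ \int_{F_R^0 \setminus B_{R_0}} |x|_{g_0}^{-m+1}\, dx \leq C \int_{R_0}^{\sqrt{2}\, R} r^{n-m}\, dr = \mathcal{O}(R^{n-m+1}), \]
establishing the proposition.

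The main technical obstacle is verifying that $\Psi$ is a diffeomorphism on the unbounded region. This follows from the $C^1$-closeness of $\Psi$ to the identity at infinity via a small-perturbation argument: two distinct points $x_1, x_2$ with $|x_1|_{g_0}, |x_2|_{g_0}$ large and $\Psi(x_1) = \Psi(x_2)$ would force $|x_1 - x_2|_{g_0} \to 0$, while $d\Psi$ being near the identity along the segment $[x_1, x_2]$ contradicts $\Psi(x_1) = \Psi(x_2)$. Note that $\Psi$ need not be globally injective (two $g$-geodesics in the family may cross in the bulk even though asymptotically parallel), but this complication only affects a bounded region and is absorbed into $E_R$.
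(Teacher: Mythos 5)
Your approach---comparing $\Vol_g(F_R)$ and $\Vol_{g_0}(F_R^0)$ directly via the map $\Psi(x)=\pi_0(\theta(x,\partial_{x_n}))$ and the change-of-variables formula---is a genuinely more elementary route than the paper's, which works on the sphere bundle and compares the contact volume forms $\alpha_t\wedge (d\alpha_t)^{n-1}$ for the interpolation $\alpha_t=(1-t)\alpha_0+t\,\theta^*\alpha$ à la Croke--Kleiner. (As an aside, $\Psi$ is in fact \emph{globally} injective: the geodesics $\gamma_\eta$ with fixed backward endpoint $p_-$ foliate $\R^n$ by Lemma~\ref{foliationbygamma}, and a geodesic on a simply connected manifold without conjugate points cannot self-intersect; so the complication you flag at the end does not actually arise.)

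There is, however, a genuine gap at the final step. After passing to absolute values the integrand, the integral
\[
\int_{F_R^0\setminus B_{R_0}} |x|_{g_0}^{-m+1}\,dx \,\leq\, C\int_{R_0}^{\sqrt2 R} r^{\,n-m}\,dr
\]
equals $\mc{O}(R^{n-m+1})$ only when $m<n+1$; for $m=n+1$ it is $\mc{O}(\log R)$, and for $m>n+1$ it converges to a nonzero constant, i.e.\ it is $\mc{O}(1)$, \emph{not} $\mc{O}(R^{n-m+1})$. Likewise, once $B_{R_0}\subset F_R^0$ the term $E_R$ stabilizes to the fixed constant $\Vol_g(\Psi(B_{R_0}))-\Vol_{g_0}(B_{R_0})$, again an $\mc{O}(1)$ contribution. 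Since the paper applies this Proposition precisely in the regime $m>n+1$, where the claimed bound $\mc{O}(R^{n-m+1})$ decays to $0$, your estimate is not sufficient. What the triangle-inequality bound misses is cancellation inside the bulk integral; the paper captures it by writing the difference of volume forms as an exact form and invoking Stokes' theorem, so that the comparison is reduced to a boundary integral over $\partial F_R^0$ (and, on the sphere-bundle level, over $\partial SF_R$), where $|x|\sim R$ and hence the $C^1$-closeness of the conjugacy to the identity is uniformly $\mc{O}(R^{-m+1})$, yielding $\mc{O}(R^{-m+1})\cdot R\cdot R^{n-1}=\mc{O}(R^{n-m+1})$. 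To repair your argument you would need this Stokes step---e.g.\ write $\Vol_g(\Psi(V))-\Vol_{g_0}(V)=\int_{\partial V}(\Psi^*\beta-\beta_0)$ for $(n-1)$-form primitives $\beta,\beta_0$ of ${\rm dvol}_g,{\rm dvol}_{g_0}$---rather than estimating the bulk integrand in absolute value.
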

\begin{proof}
In order to shorten the notation, we set $\|\cdot\|_{\infty,R}:=\|\cdot\|_{L^\infty(\partial SF_R)}$. When this norm will be applied to 1-forms, it will be the one corresponding to the Euclidean metric $g_0$. We consider the Liouville forms $\lambda_0$ and $\lambda$ on $T\R^n$ associated with $g_0$ and $g$, i.e.
\begin{align*}
\lambda_0(w)=g_0(v,d\pi(x,v)w),\qquad
\lambda(w)=g(v,d\pi(x,v)w),\qquad
\forall w\in T_{(x,v)}T\R^n.
\end{align*}
Their restrictions $\alpha_0=\lambda_0|_{S^0\R^n}$ and $\alpha=\lambda|_{S\R^n}$ are the contact forms on the unit tangent bundles associated with $g_0$ and $g$ respectively. It readily follows from their definition and from the fact that $g$ is asymptotically Euclidean to the order $m$ that
\begin{align}
\label{e:lambda-lambda_0}
\|(\lambda-\lambda_0)|_{S^0\R^n}\|_{\infty,R}
+
\|(\lambda-\lambda_0)|_{S\R^n}\|_{\infty,R}
\leq 
\mc{O}(R^{-m})\quad \mbox{as }R\to\infty.
\end{align}
We set $\alpha_1:=\theta^*\alpha=\theta^*\lambda|_{S^0\R^n}$, which is a contact form on $S^0\R^n$. The bound in~\eqref{e:lambda-lambda_0}, together with Lemma~\ref{l:theta}, implies 
\begin{align}
\label{e:alpha1-alpha0}
\|\alpha_1-\alpha_0\|_{\infty,R}
\leq
\|\theta^*\lambda-\lambda\|_{\infty,R} + \|\lambda-\lambda_0\|_{\infty,R}
\leq
\mc{O}(R^{-m+1})\quad\mbox{as }R\to\infty.
\end{align}
Moreover 
\begin{equation}
\label{e:det_dtheta}
\begin{split}
\Vol_{G_0}(\theta^{-1}(\partial SF_R))
& \leq \Vol_{G_0}(\partial SF_R) +
\|\det d\theta^{-1}-1\|_{\infty,R} \Vol_{G_0}(\partial SF_R)\\
& \leq R^{n-1}(1+\mc{O}(R^{-m+1})) \quad\mbox{as }R\to\infty. 
\end{split}
\end{equation}

For all $R>0$ large enough, the map $\kappa:F_R^0\to F_R$, $\kappa(x)=\pi_0\circ\theta(x,\partial_{x_n})$ restricts to a diffeomorphism of a neighborhood of the boundary $\partial F_R^0$ onto a neighborhood of the boundary $\partial F_R$. Lemma~\ref{l:theta} readily implies that $\|\kappa-\Id\|_{C^1(\partial F_R^0)}\leq\mc{O}(R^{-m+1})$. This, together with Stokes Theorem, implies
\begin{equation}
\label{e:volume_estimate_1}
\begin{split}
\big|\Vol_{g_0}(F_R)-\Vol_{g_0}(F_R^0)\big|
& =
\Big|
\int_{F_R} dx_1\wedge...\wedge dx_n - \int_{F_R^0} dx_1\wedge...\wedge dx_n \Big|\\
&=
\Big| \int_{\partial F_R^0} \kappa^*(x_1\,dx_2\wedge...\wedge dx_n) - \int_{\partial F_R^0} x_1\,dx_2\wedge...\wedge dx_n \Big|\\
& \leq
 C \|\kappa-\Id\|_{C^1(\partial F_R^0)} (1+R)\,\Vol_{g_0}(\partial F_R^0)\\
& \leq \mc{O}(R^{-m+1}) (1+R)R^{n-1} \leq \mc{O}(R^{n-m+1})\quad\mbox{as }R\to\infty.
\end{split}
\end{equation}
For each $R>0$ large enough, the map 
$\chi:S^0F_R\to\theta^{-1}(SF_R)$,
$\chi(x,v)=\theta^{-1}(x,\|v\|_{g}^{-1}v)$
is a diffeomorphism. Since $g$ is asymptotically Euclidean to the order $m$, Lemma~\ref{l:theta}  implies that
$\|\chi-\Id\|_{\infty,R}+\|d\chi-\Id\|_{\infty,R}\leq\mc{O}(R^{-m+1})$.
Therefore, by proceeding as in the last estimate,
\begin{align}
\label{e:volume_estimate_2}
\big|\Vol_{G_0}(\theta^{-1}(SF_R))-\Vol_{G_0}(S^0F_R)\big|
\leq \mc{O}(R^{n-m+1})\quad\mbox{as }R\to\infty.
\end{align}
We are now going to apply an argument due to Croke and Kleiner \cite[Lemma~2.1]{Croke:1994aa} in order to compare the volumes $\Vol_{G_0}(\theta^{-1}(SF_R))$ and $\Vol_G(SF_R)$. We set
$\alpha_t:=(1-t)\alpha_0 + t\alpha_1$, where $t\in[0,1]$.
Since $\theta$ conjugates the geodesic flows of $g$ and $g_0$, the geodesic vector field $X_0$ of  $g_0$ is the Reeb vector field of both $\alpha_0$ and $\alpha_1$. Therefore $\alpha_t(X_0)\equiv1$ and $d\alpha_t(X_0,\cdot)\equiv0$ for all $t\in[0,1]$, which readily implies that $(\alpha_1-\alpha_0)\wedge(d\alpha_t)^{n-1}=0$ for all $t\in[0,1]$. This, together with Stokes Theorem, \eqref{e:alpha1-alpha0}, and~\eqref{e:det_dtheta}, allows to estimate as $R\to \infty$
\begin{align*}
\Big|\frac{d}{dt} \int_{\theta^{-1}(SF_R)}\!\!\!\!\!\! \alpha_t\wedge(d\alpha_t)^{n-1}\Big|
= \,&
(n-1)\Big|\int_{\theta^{-1}(SF_R)}\!\!\!\!\!\! \alpha_t\wedge d(\alpha_1-\alpha_0)\wedge(d\alpha_t)^{n-2}\Big|\\
= \,& (n-1) \Big|\int_{\theta^{-1}(\partial SF_R)}\!\!\!\!\!\! \alpha_t\wedge(\alpha_1-\alpha_0)\wedge(d\alpha_t)^{n-2}\Big|\\
\leq \,& C \Vol_{G_0}(\theta^{-1}(\partial SF_R)) \|\alpha_1-\alpha_0\|_{\infty,R}\\
\leq \,& R^{n-1}(1+\mc{O}(R^{-m+1}))\mc{O}(R^{-m+1})
= \mc{O}(R^{n-m})
\end{align*}
We recall that the Riemannian volume forms of $G$ and $G_0$ are respectively 
$\tfrac{1}{(n-1)!}\alpha\wedge d\alpha^{n-1}$ and  $\tfrac{1}{(n-1)!}\alpha_0\wedge d\alpha_0^{n-1}.$ 
Therefore, the latter estimates imply that as $R\to \infty$
\begin{align*}
\big| \Vol_G(SF_R) - \Vol_{G_0}(\theta^{-1}(SF_R)) \big| = 
\frac{1}{(n-1)!} \Big|\int_0^1 \!\!\frac{d}{dt}\! \int_{\theta^{-1}(SF_R)}\!\!\!\!\!\! \alpha_t\wedge(d\alpha_t)^{n-1} dt\Big|
= \mc{O}(R^{n-m}).
\end{align*}
This, together with \eqref{e:volume_estimate_2} and ~\eqref{e:volume_estimate_1} allows to conclude
\begin{align*}
|\Vol_g(F_R) - \Vol_{g_0}(F_R^0)|
&=
\big|\Vol(S^{n-1})^{-1}\Vol_G(SF_R) - \Vol_{g_0}(F_R^0)\big|\\
&
=
\mc{O}(R^{n-m}) + \big|\Vol(S^{n-1})^{-1}\Vol_{G_0}(\theta^{-1}(SF_R)) - \Vol_{g_0}(F_R^0)\big|\\
&
=
\mc{O}(R^{n-m+1})
+
\big|\Vol(S^{n-1})^{-1}\Vol_{G_0}(S^0F_R)-\Vol_{g_0}(F_R^0)\big|
\\
&
=
\mc{O}(R^{n-m+1})
+
\big|\Vol_{g_0}(F_R)-\Vol_{g_0}(F_R^0)\big|=
\mc{O}(R^{n-m+1})
\end{align*}
which ends the proof.
\end{proof}

\section{Rigidity for perturbations of $\R^n$ decaying fast}

In this section, we will show that asymptotically Euclidean Riemannian metrics to large enough order are flat, provided they have no conjugate points.
The proof follows some ideas developed by Croke \cite{Croke:1991aa}, and based on methods of Berger \cite{Besse:1978aa} used for the Blaschke conjecture for spheres.

\subsection{Geodesic normal coordinates}

Let $g$ be an asymptotically Euclidean metric to order $m>2$ on $\R^n$ without conjugate points. 
The goal of this section is to consider a coordinate system on $\R^n$ obtained from 
geodesics $\gamma(t)$ of $g$ that are all converging to a point $p_-\in \pl\bbar{\R^n}$ as $t\to -\infty$: by Proposition \ref{samescat}, they are asymptotically parallel as $t\to \pm\infty$. This can be viewed as some geodesic normal coordinates from the point $p_-$ at infinity.
 
We consider the diffeomorphism $\theta:S^0\R^n\to S\R^n$ introduced in Corollary~\ref{limitgeod}, which conjugates the geodesic flows of the Euclidean metric $g_0$ and $g$. Consider the Euclidean geodesic lines parallel to $\pl_{x_n}$ in $\R^n$. In the compactification they intersect $\pl\bbar{\R^n}=\mathbb{S}^{n-1}$ in two points $p_-,p_+$, where $p_\pm$ correspond to the limit $x_n\to \pm\infty$. By Lemma \ref{behavgeod} (applied to the Euclidean metric $g_0$), these Euclidean geodesics are parametrized by $\eta\in T_{p_-}^*\mathbb{S}^n$. We will thus denote them  by $\gamma^0_\eta$, and 
we will fix their parametrization so that $\gamma^0_\eta(0)$ belongs to the hyperplane $x_n=0$. The curve $\gamma_\eta(t) :=\pi_0(\theta(\gamma_\eta^0(t),\partial_{x_n}))$ is the unit-speed $g$-geodesic with the same limiting conditions as $\gamma_\eta^0(t)$ for $t\to\pm\infty$, see Lemma \ref{same_scattering}.

Another parametrization (not of unit length) of $\gamma_\eta$ is obtained by considering the rescaled flow $\bbar{\varphi}_\tau(p_-,\eta)$ where, following the proof of Corollary \ref{limitgeod}, we have $\gamma_\eta(t)=\pi_0(\bbar{\varphi}_{\tau(t,\eta)}(p_-,\eta))$ with $\tau(t,\eta)$ defined by \eqref{formulataut}. We will denote the rescaled geodesic 
\begin{equation}\label{defbargamma}
\bbar{\gamma}_\eta(\tau)=\pi_0(\bbar{\varphi}_{\tau}(p_-,\eta)).
\end{equation}
We first prove a lemma about the {\em uniform} growth of the Jacobi fields along geodesics emanating from a fixed point $p\in \R^n$.
\begin{lemma}
\label{uniform jacobi growth}
For all $R>0$ there exists a $T_R>0$ such that for any orthogonal Jacobi field $J$ along any unit-speed $g$-geodesic $\gamma$ with $\gamma(0) = p$ satisfying $J(0) = 0$, $\|\dot{J}(0) \| = 1$, we have
$\|J(t)\| \geq R$ for all $t \geq T_R$.
\end{lemma}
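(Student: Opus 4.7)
My plan is to reduce the lemma to a pointwise growth statement and then upgrade to the uniform statement by compactness. The initial-data space $\{(v,w)\in S_p\R^n\times T_p\R^n:|v|_g=|w|_g=1,\,w\perp v\}$ is a compact sphere bundle and $J(t)$ depends smoothly on $(v,w)$; so once we show $\|J_{v,w}(t)\|\to\infty$ as $t\to+\infty$ for each individual $(v,w)$, the uniform $T_R$ follows from a standard compactness and continuity argument.

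For the pointwise growth I would analyze the Jacobi equation $\ddot J+R(\dot\gamma,J)\dot\gamma=0$ asymptotically. Corollary~\ref{limitgeod} and Lemma~\ref{same_scattering} give that $\gamma$ is asymptotic to a unique Euclidean line $\gamma_0(t)=x_0+tv_0$ at both $t\to\pm\infty$, while Lemma~\ref{curvdecay} yields $\|R\|_g=\mc{O}((1+|t|)^{-m-2})$ with $m>2$ along $\gamma$. In a parallel orthonormal frame the equation becomes $\ddot J+K(t)J=0$ with $\|K\|$ doubly integrable at infinity, and a Gronwall/integration argument yields
\[ \dot J(t)\to b,\qquad J(t)=bt+\tilde a+\mc{O}(t^{-m+2})\quad\text{as }t\to+\infty. \]
Differentiating the family of unit-speed $g$-geodesics $\gamma_{v_s}$ through $p$ with $\partial_s v_s|_{s=0}=w$ and using Proposition~\ref{samescat} to match the Euclidean asymptotes at $\pm\infty$, one identifies $b=\dot v_0(w)$ and $\tilde a=\dot x_0(w)$, the $s$-derivatives of the asymptotic direction and offset of $\gamma_{v_s}$.

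The decisive and hardest step is showing $b=\dot v_0(w)\ne 0$ whenever $w\ne 0$: this gives $\|J(t)\|\sim t\|b\|\to\infty$, and continuity of $b$ on the compact initial-data set provides a uniform lower bound on $\|b\|$, yielding the uniform $T_R$. Equivalently, $v\mapsto v_0(v)\colon S_p\R^n\to\mathbb{S}^{n-1}$ must be a local diffeomorphism. I would establish this by combining the no-conjugate-points hypothesis --- which makes $\exp_p:T_p\R^n\to\R^n$ a proper diffeomorphism (properness from $|\exp_p(V)|_g=|V|_g$ together with Lemma~\ref{dg0=dg_at_infinity}) --- with the asymptotic volume comparison underlying Proposition~\ref{PropvolgFR}: the Jacobian $|\det d(\exp_p)_{rv}|\to|\det\dot v_0(v)|$ as $r\to\infty$, so any kernel of $\dot v_0$ would produce an asymptotic defect in $\Vol_g(B_g(p,R))$ incompatible with its expected behavior $\omega_n R^n+\mc{O}(R^{n-m+1})$. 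Ruling out two distinct $g$-geodesics from $p$ with a common asymptotic direction in $\bbar{\R^n}$ is where the global no-conjugate-points hypothesis plays its delicate role, and this is the main technical obstacle of the argument.
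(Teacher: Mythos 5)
Your high-level plan --- reduce to a pointwise growth statement, then upgrade by compactness of the initial-data set $D=\{(v,w)\in S_p\R^n\times S_p\R^n:g(v,w)=0\}$ --- matches the paper's, and the asymptotic Jacobi analysis in a parallel frame using $\|R\|=\mc{O}(\rho^{m+2})$ from Lemma~\ref{curvdecay} is also in the same spirit. The gap is at the step you yourself flag as ``the main technical obstacle'': showing that the asymptotic slope $b=\dot v_0(w)$ is nonzero for every $w\neq 0$. The volume comparison of Proposition~\ref{PropvolgFR} only constrains the \emph{integral} $\int_{S_p\R^n}|\det\dot v_0(v)|\,dv$; it does not rule out $\det\dot v_0$ vanishing on a measure-zero set of directions, which is exactly what you would need to exclude. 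Nor does $\exp_p$ being a diffeomorphism help: $\det d(\exp_p)_{rv}\neq 0$ for every finite $r$ is perfectly compatible with $\lim_{r\to\infty}r^{-(n-1)}\det d(\exp_p)_{rv}=0$. And the auxiliary statement that two distinct $g$-geodesics from $p$ cannot share an asymptotic endpoint is not available here: in the paper it is a \emph{consequence} of Lemma~\ref{uniform jacobi growth}, proved in Lemma~\ref{foliationbygamma} via \cite[Proposition~6]{Eschenburg:1976aa}, so invoking it now would be circular.

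The ingredient the paper uses to close this gap is \cite[Proposition~2.9]{Eberlein:1973aa}: on a complete manifold without conjugate points and with sectional curvature bounded below, a Jacobi field with $J(0)=0$ and $J\not\equiv 0$ is unbounded. The curvature bound is supplied by Lemma~\ref{curvdecay}, so Eberlein's proposition applies and delivers the pointwise growth directly; equivalently, it produces a large time $T_{v,w}$ (at which $\rho$ is already small) with $\dot J(T_{v,w};v,w)\neq 0$. The paper then covers $D$ by finitely many neighborhoods $D_j$ on which $\|\dot J(T_j;v,w)\|\geq c>0$, integrates the Jacobi equation in a parallel frame on $[T_j,\infty)$, and observes that the curvature tail contributes only a double integral of $\mc{O}(r^{-m-1})$; this gives $\|J(t;v,w)\|\geq c'(t-T_j)-C$ uniformly on $D_j$, and the finitely many $D_j$'s yield the uniform $T_R$. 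Your argument becomes correct if the volume reasoning is replaced by this appeal to Eberlein (or an equivalent Riccati comparison argument). A minor side slip: with $\|\ddot J\|\leq C(1+t)^{-m-1}$, the remainder in the expansion $J(t)=bt+\tilde a+\mathrm{remainder}$ is $\mc{O}(t^{-m+1})$, not $\mc{O}(t^{-m+2})$.
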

\begin{proof}
Let $v,w \in S_p\R^n$ satisfy $g_p(v,w) = 0$ and denote by $J(t;v,w)$ to be the value of the Jacobi field at time $t$ along the geodesic $\gamma(\cdot; v)$ with initial condition $J(0;v,w) = 0$, $\nabla_{\dot\gamma} J(0;v,w) = w$. We set \[D:= \{(v,w) \in S_p\R^n \times S_p\R^n\ |\ g(v,w) = 0\} \cong S^{n-1} \times S^{n-2}.\]
Notice that, for each fixed $t$, the map
\[
D\to\R^2,\qquad (v,w)\mapsto (\|J(t;v,w)\|, \|\dot{J}(t;v,w)\|)
\]
is continuous. 

Now fix $\epsilon >0 $ small. For any $(v_0,w_0) \in D$ there exists a $T_{v_0,w_0}$ such that 
\begin{eqnarray}
\label{jacobi at large enough time}
\rho(\gamma(t;v_0)) \leq \epsilon,\quad  \dot{J}(T_{v_0,w_0}; v_0,w_0) \neq 0
\end{eqnarray}
with $\gamma(t;v_0):=\pi_0(\varphi_{t}(p,v_0))$.
This is guaranteed by \cite[Prop 2.9]{Eberlein:1973aa}. 

By continuity, there is a small neigbourhood $D_0 \subset D$ containing $(v_0,w_0)$ such that for 
$$ \| \dot{J}(T_{v_0,w_0}; v,w)\| \geq c_0>0\quad \forall (v,w) \in D_0.$$
Using compactness there exists $c>0$ and finitely many $(v_j,w_j)\in D$, $T_j >0$, $j = 1,\dots N$ such that 
\begin{eqnarray}
\label{compactness jacobi}
\rho(\gamma(T_j;v_j)) \leq \epsilon,\quad \bigcup\limits_{j = 1}^N D_j = D,\quad  \| \dot{J}(T_j; v,w)\| \geq c >0,\quad  \forall (v,w)\in D_j.
\end{eqnarray}
By finiteness it suffices to prove the uniform growth of Jacobi fields for $(v,w) \in D_j$ for each fixed $j$.
Without loss of generality we may choose $D_j$ in our construction such that $\{v\ |\ (v,w) \in D_j\}\subset S_p\R^n$ is contained in a neighbourhood of $S_p\R^n$ which is topologically equivalent to a ball of dimension $n-1$. Using this construction
$$\{ \gamma(T_{j}; v)\ |\ (v,w) \in D_j\}$$
is contained in a simply connected portion of the geodesic sphere of radius $T_{j}$ which admits a orthonormal frame of tangent vectors $\{X_1,\dots X_{n-1}\}$. Each tangent vector $X_j$ is orthogonal to $\dot\gamma$ by the Gauss Lemma. We may extend these frames along geodesic flows for $t\geq T_{j}$ by parallel transport.
Let $(v,w) \in D_j$ and write, for $t\geq T_{j}$, 
\[J(t; v,w) = \sum\limits_{k = 1}^{n-1}J_k(t;v,w) X_k(t;v).\] 
Writing the Jacobi equation in these components and integrating we have that for $t \geq T_j$
\begin{eqnarray}
\label{integrate jacobi field term by term}
J_k(t;v,w) = J_k(T_j;v,w) + \dot J_k(T_j;v,w) (t- T_j) + \sum_{l=1}^{n-1}\int_{T_j}^t \int_{T_j}^s  J_l(r;v,w) R_l^k(r;v,w) dr ds
\end{eqnarray}
where $R_l^k(t;v,w):= \langle R_{\gamma(t;v)}(\dot \gamma(t;v), X_l(t;v,w)) \dot \gamma(t;v), X_k(t;v,w)\rangle$. By \cite[Lemma 3.5]{Guillarmou:2019aa} for $t \geq T_j$
\[\left |\sum_{l=1}^{n-1}  J_l(t;v,w) R_l^k(t;v,w)\right| \leq C \rho^4(\gamma(t;v)) \sum_{k=1}^{n-1} |J_k(t;v,w)|\]
which, by applying \eqref{e:growth_Jacobi_fields} and Corollary \ref{rholeq1/R} yields
\[\left |\sum_{l=1}^{n-1}  J_l(t;v,w) R_l^k(t;v,w)\right| \leq C \rho^4(\gamma(t;v)) (1+t) \leq Ct^{-4}(1+t)\]
and all constants are uniform amongst $(v,w) \in D_j$. Inserting this estimate into \eqref{integrate jacobi field term by term},
\[ |J_k(t;v,w)| \geq -C + |\dot J_k(T_j;v,w)| (t-T_j)\]
Condition \eqref{compactness jacobi} then allows us to conclude that for all $(v,w) \in D_j$ and $t\geq T_j$
$$\| J(t;v,w)\| \geq -C + c (t-T_j)$$
for constants $C$ and $c$ independent of the choice of $(v,w) \in D_j$.
\end{proof}

We set 
\[{\mathcal D}:= \{ (\tau,\eta)\in (0,\infty)\x T^*_{p_-}\pl\bbar{\R^n}\ |\  \tau\in(0,\tau_+(\eta))\}\] 
where as before $\tau^+(\eta)$ is define by $\rho(\bbar{\gamma}_\eta(\eta))=0$ using the definition \eqref{defbargamma}.

\begin{lemma}\label{foliationbygamma}
The  map  
$\psi:{\mathcal D} \to \R^n$, $\psi (\tau,\eta)=\pi_0(\bbar{\varphi}_\tau(p_-,\eta))$
is a smooth diffeomorphism.
\end{lemma}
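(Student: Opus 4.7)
\emph{Plan.} The rescaled parameter $\tau$ and the unit-speed parameter $t$ along each geodesic are related by $dt/d\tau=\rho^{-2}>0$. Using smoothness of $\tau^+(\eta)$ (from transversality of $\bbar{X}$ to $\pl_+S^*\R^n$) and identifying $T^*_{p_-}\pl\bbar{\R^n}\cong H:=\{x_n=0\}$ via $\eta\mapsto x(\eta):=\gamma^0_\eta(0)$, one gets a smooth diffeomorphism $\sigma:\mathcal{D}\to H\x\R$ under which $\psi$ becomes the unit-speed map
\[
\tilde\Phi:H\x\R\cong\R^n\to\R^n,\qquad \tilde\Phi(x,t)=\pi_0\big(\theta(x+t\pl_{x_n},\pl_{x_n})\big).
\]
It thus suffices to show $\tilde\Phi$ is a diffeomorphism. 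Properness follows from Lemma~\ref{l:theta} applied with $K=\{\pl_{x_n}\}$: the estimate $|\tilde\Phi(x,t)-(x+t\pl_{x_n})|=\mathcal O(R^{-m+1})$ for $|x+t\pl_{x_n}|\geq R$ yields $|\tilde\Phi(x,t)|\to\infty$ as $|x|^2+t^2\to\infty$.

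The main technical step is to show that $d\tilde\Phi$ is everywhere an isomorphism. Fix $(x,t)$ and set $\gamma_x(r):=\tilde\Phi(x,r)$, $J_i(r):=\pl_{x_i}\tilde\Phi(x,r)$ for $i=1,\dots,n-1$. By Lemma~\ref{same_scattering}, $\gamma_x$ is a unit-speed $g$-geodesic asymptotic to $r\mapsto x+r\pl_{x_n}$ at $r\to\pm\infty$, and each $J_i$ is a $g$-Jacobi field along it. The $C^1$-closeness in Lemma~\ref{l:theta} yields $J_i(r)\to e_i$ as $|r|\to\infty$; integrating the Jacobi equation with the curvature decay $|R_g|=\mathcal O(\rho^{m+2})=\mathcal O(|r|^{-m-2})$ from Lemma~\ref{curvdecay} forces $\nabla_{\dot\gamma_x}J_i(r)\to 0$ (otherwise $J_i$ would grow linearly), and likewise $\dot\gamma_x(r)\to\pl_{x_n}$. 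For $(w,s)\in\R^{n-1}\x\R$ the field $J:=\sum_iw_iJ_i+s\dot\gamma_x$ is a Jacobi field satisfying $J(r)\to W:=\sum_iw_ie_i+s\pl_{x_n}$ and $\nabla_{\dot\gamma_x}J(r)\to0$ as $r\to\pm\infty$. Suppose $d\tilde\Phi_{(x,t)}(w,s)=J(t)=0$ with $W\neq 0$; decomposing $J=J_\parallel+J_\perp$ with $J_\parallel=(a+br)\dot\gamma_x$, the bounded limits of $J$ force $b=0$, and $J_\parallel(t)=0$ forces $a=0$, so $J=J_\perp$ is a nonzero $g$-orthogonal Jacobi field vanishing at $r=t$ with bounded limit $W$ at $+\infty$. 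Applying Lemma~\ref{uniform jacobi growth} at $p=\gamma_x(t)$ to $J_\perp/\|\nabla_{\dot\gamma_x}J_\perp(t)\|$ (nonzero because $J_\perp\not\equiv 0$) yields $\|J_\perp(r)\|\to\infty$ as $r\to+\infty$, contradicting the bounded $W$-limit. Hence $W=0$, i.e.\ $(w,s)=0$, and $d\tilde\Phi_{(x,t)}$ is an isomorphism.

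A proper smooth local diffeomorphism between two copies of $\R^n$ is a covering map, hence a diffeomorphism by the simple connectedness of $\R^n$; composing with $\sigma$ shows $\psi$ is a diffeomorphism. The main obstacle is the Jacobi-field injectivity step: the asymptotic identification from Lemma~\ref{same_scattering} only tells us that $J$ has bounded limits at $\pm\infty$, so ruling out an interior zero genuinely requires the quantitative uniform growth of orthogonal Jacobi fields provided by Lemma~\ref{uniform jacobi growth}, which is where the no-conjugate-points hypothesis enters crucially.
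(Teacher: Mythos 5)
Your proof is correct, and it departs from the paper's argument in two substantive ways, both of which are worth noting.

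First, for properness: the paper works directly with $\psi$ on $\mathcal{D}$ and deduces properness from Corollary~\ref{rholeq1/R} together with a compactness/exhaustion argument, whereas you reparametrize to the unit-speed picture $\tilde\Phi$ on $H\times\R\cong\R^n$ and read properness off Lemma~\ref{l:theta} (the $C^1$-closeness of $\theta$ to the identity at infinity). These are interchangeable; your version is shorter once Lemma~\ref{l:theta} is in hand.

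Second, and more interestingly, for global bijectivity: after establishing that the map is a proper local diffeomorphism, you invoke the topological fact that a proper local diffeomorphism onto a simply connected manifold is a covering of degree one, hence a diffeomorphism. The paper instead runs a \emph{separate} injectivity argument: if $\psi(\tau_1,\eta_1)=\psi(\tau_2,\eta_2)=p$ with $\eta_1\neq\eta_2$, then the two $g$-geodesics through $p$ are asymptotic to parallel Euclidean lines and so stay a bounded $d_g$-distance apart, which contradicts the divergence of geodesics from $p$ obtained by combining Lemma~\ref{uniform jacobi growth} with Eschenburg's Proposition~6 \cite{Eschenburg:1976aa}. Your covering-space argument removes the need for Eschenburg's result entirely and for any explicit discussion of geodesic divergence, which is a genuine structural simplification. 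Conversely, for the local-diffeomorphism step the paper appeals directly to Eberlein's Proposition~2.9 (a bounded orthogonal Jacobi field vanishing at a point is identically zero, given no conjugate points and curvature bounded below), while you invoke the heavier uniform-growth Lemma~\ref{uniform jacobi growth}; your version works but uses more than is needed at that point, since the uniformity in Lemma~\ref{uniform jacobi growth} is not required to treat a single fixed Jacobi field. Both routes give the result.

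One small point that could be flagged for completeness: when you decompose $J=J_\parallel+J_\perp$ with $J_\parallel=(a+br)\dot\gamma_x$, the boundedness of $J$ gives $b=0$ and then the affine function $r\mapsto g(J(r),\dot\gamma_x(r))$ is the constant $a$; its limit at $r\to\pm\infty$ equals $g_0(W,\pl_{x_n})=s$, so $a=s$, and then $J(t)=0$ forces $a=0$, hence $s=0$. You state the conclusion $(w,s)=0$ correctly, but the chain $a=s=0$ is what links the parallel component to the $s$-coordinate of the kernel vector and is worth spelling out.
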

\begin{proof}
First, $\psi$ is smooth by definition. We will show that it is a local diffeomorphism which is proper. We first show it is proper. Let $K\subset \R^n$ be compact, then there exists $\eps>0$ so that $K\subset \{\rho\geq \eps\}$. Take a sequence $(\tau_n,\eta_n)\in \mc{D}$ such that $\psi(\tau_n,\eta_n)\to z\in K$. By Corollary \ref{rholeq1/R}, there is $R>0$ such that for all $|\eta|_{h_0}>R$, the geodesic $\gamma_\eta$ satisfies $\rho(\gamma_\eta(t))<\eps$. Thus $|\eta_n|_{h_0}\leq R$ and $\eta_n$ has a converging subsequence to $\eta$. Up to extraction, we can assume that the entire sequence converges. Since $\bbar{\gamma}_{\eta_n}$ stays in a compact set of $\bbar{S^*\R^n}$ and $\tau_n\leq \tau^+(\eta_n)$ is uniformly bounded, $\tau_n$ has a converging subsequence to $\tau$. Assume that $\tau=0$ or $\tau=\tau^+(\eta)$, then $\rho(\psi(\tau_n,\eta_n))\to 0$ but since $\psi^{-1}(K)$ is closed in $\mc{D}$ one gets $\psi(\tau,\eta)\in K$, which is not possible as $K\subset \{\rho\geq \eps\}$. This shows that $\psi^{-1}(K)$ is compact.

 We next show that $\psi$ is a local diffeomorphism: its derivatives are given by 
\begin{equation} \label{derivativespsi}
\begin{gathered}
\pl_\tau \psi(\tau,\eta)= d\pi_0(\bbar{X}(\bbar{\varphi}_\tau(p_-,\eta)))=\pl_\tau \bbar{\gamma}_\eta(\tau)\\
\pl_\eta \psi(\tau,\eta)=d\pi_0.\pl_\eta \bbar{\varphi}_\tau(p_-,\eta)=:\bbar{Q}_\eta(\tau). 
\end{gathered}\end{equation}
We identify $T(T^*_{p_-}\pl\bbar{\R^n})\simeq T^*_{p_-}\pl\bbar{\R^n}$. 
For each $\bbar{w}\in T^*_{p_-}\pl\bbar{\R^n}$, we define $\bbar{J}_{\bbar{w}}(\tau):=\bbar{Q}_\eta(\tau)\bbar{w}$.
Recall from \eqref{formulataut} that we can rewrite $\tau(t,\eta)$ and its inverse $t(\tau,\eta)$ to relate the flow $\varphi_t$ and $\bbar{\varphi}_\tau$: in particular we have
 $\gamma_\eta(t) = \pi_0(\varphi_t(\bbar{\varphi}_{\tau(0,\eta)}(p_-,\eta))$ and  $\bbar\varphi_\tau(p_-,\eta) = \varphi_{t(\tau, \eta)}(\bbar\varphi_{\tau(0,\eta)}(p_-,\eta))$. 
 Thus,
 \[\begin{split} 
d\pi_0.d\bbar{\varphi}_{\tau}(p_-,\eta)\bbar{w} & = d\pi_0.d\varphi_{t(\tau,\eta)}(\bbar{\varphi}_{\tau(0,\eta)}(p_-,\eta))d\bbar{\varphi}_{\tau(0,\eta)}(p_-,\eta)\bbar{w}+d\pi_0(X(\bbar{\varphi}_{\tau}(p_-,\eta)))\pl_\eta t(\tau,\eta)\bbar{w}\\
&= J_{w}(t(\tau,\eta))+\dot{\gamma}_\eta(t(\tau,\eta))\pl_\eta t(\tau,\eta)\bbar{w}
\end{split}\]
where $J_w$ is the Jacobi field with $(J_w(0),\dot{J}_w(0))=w:=d\bbar{\varphi}_{\tau(0,\eta)}(p_-,\eta)\bbar{w}$. Therefore, if $\perp$ denotes the orthogonal projection on $\dot{\gamma}_\eta^\perp$, we obtain 
\[ \bbar{J}_{\bbar{w}}(\tau)^\perp=J_w(t(\tau,\eta))^\perp.\]
To show that $\psi$ is a local diffeomorphism, assume now that $d\psi(\tau_0,\eta_0)W=0$ for some $(\tau_0,\eta_0)$ and $W=w_0\pl_\tau+\bbar{w}$ with $\bbar{w}\in T_{p_-}^*\pl\bbar{\R^n}$. We obtain 
\[ J_w(t(\tau_0,\eta_0))^\perp=0, \quad w_0\pl_\tau \bbar{\gamma}_{\eta_0}(\tau_0)+
\dot{\gamma}(t(\tau_0,\eta_0))\pl_\eta t(\tau_0,\eta_0)\bbar{w}=0. \]
Thus $J_w(t)^\perp$ is an orthogonal Jacobi field along $\gamma_{\eta_0}$ vanishing at $t(\tau_0,\eta_0)$. Now let us show that this field has bounded norm with respect to $g$.
By \eqref{formrhoy}, for each $\eta\in T^*_{p_-}\pl\bbar{\R^n}$ the integral curve $\bbar{\gamma}_\eta(\tau)=\pi_0(\bbar{\varphi}_\tau(p_-,\eta))=(\rho(\tau),y(\tau))$ near $\pl \bbar{\R^n}$ is of the form 
\[
 \rho(\tau)=\tau+\mc{O}(\tau^3),  \quad y(\tau)=p_-+\eta^\sharp \tau+\mc{O}(\tau^2).
 \]
 locally uniformly in $\eta$ where $\eta^\sharp=\sum_{i,j}h_0^{ij}\eta_j \pl_{y_i}$ is the dual to $w$ with respect to $h_0$. Thus, taking $\eta_s=\eta_0+s\bbar{w}$, we obtain (with $\bbar{w}=\sum_{j}\bbar{w}_jdy_j$) as $\tau\to 0$
\[\begin{split}
\bbar{J}_{\bbar{w}}(\tau)=\pl_s\bbar{\gamma}_{\eta_s}(\tau)|_{s=0} & = \mc{O}(\tau^3)\pl_\rho+\sum_{ij}h_0^{ij}\bbar{w}_j \tau\pl_{y_i}+\mc{O}(\tau^2)\pl_y \\
&= \mc{O}(\tau)\rho^2\pl_\rho+\sum_{ij}h_0^{ij}\bbar{w}_j (\rho\pl_{y_i})+\mc{O}(\tau^2)\pl_y 
\end{split}\]
This implies that $\sup_{\tau\in[0,\tau_0]}\|\bbar{J}_{\bbar{w}}(\tau)\|_g<\infty$. Therefore $J_w(t)^\perp$ is bounded in $[-\infty,t(\tau_0,\eta_0)]$. By \cite[Proposition 2.9]{Eberlein:1973aa}, since the curvature is bounded below uniformly and there is no conjugate points we get $J_w(t)^\perp=0$, which means $w=0$ or equivalently $\bbar{w}=0$.  This implies $w_0=0$ and thus $W=0$, showing that $\psi$ is a local diffeomorphism.
Its image is then open, but also it is closed since it is a proper map by the argument above. 

Finally we show that $\psi$ is injective. Assume by contradiction that 
\[\psi_{p_-}(\tau_1,\eta_1)  =\psi_{p_-}(\tau_2,\eta_2) =: p\] 
for some $(\tau_1,\eta_1) \neq (\tau_2,\eta_2)$. By the fact that there are no conjugate points, we infer that $\eta_1\neq \eta_2$ (if they were equal then there would exist a geodesic trajectory with self-intersections). This means that there are two distinct unit-speed $g$-geodesics $\gamma_1,\gamma_2$ originating from $p$ with backward limiting conditions $(p_-,\eta_1)$ and $(p_-,\eta_2)$ respectively. Lemma~\ref{same_scattering} implies that there exist two unit-speed Euclidean geodesics $\gamma^0_j$, $j = 1,2$, such that $d_g(\gamma_j(t),\gamma_j^0(t)) \to 0$ as $t\to \pm\infty$. Since $\gamma_1$ and $\gamma_2$  land at $p_-$ in the backward limit, $\gamma^0_1$ and $\gamma^0_2$ are parallel. This implies that 
\begin{eqnarray}
\label{geodesics don't diverge}
\sup_{t\in\R}d_g(\gamma_1(t),\gamma_2(t)) <\infty.
\end{eqnarray}
Lemma~\ref{uniform jacobi growth} implies that 
\[\lim\limits_{t\to \infty} \inf_J \|J(t)\| = \infty,\]
where the infimum is taken over all Jacobi fields $J$ such that $\pi_0(J(0))=p$, $J(0)=0$, and $\|\nabla_tJ(0)\|_g=1$. This, together with \cite[Proposition 6]{Eschenburg:1976aa}, implies that any two unit-speed $g$-geodesics intersecting transversely at $p$ must diverge asymptotically in positive and negative time, contradicting \eqref{geodesics don't diverge}.
\end{proof}

We will next consider Jacobi fields alongs the geodesics $\gamma_\eta$. It will be convenient to have a global parallel frame asymptotic to the canonical frame $\pl_{x_j}$ of the Euclidean metric $(\R^n,g_0)$ near $p_-\in \pl\bbar{\R^n}$. This is provided by the following lemma.
\begin{lemma}\label{parallelbasis}
For each orthonormal basis $(W_1,\dots, W_{n-1})$ of $T_{p_-}\pl\bbar{\R^n}$ with respect to $h_0$,
there is a smooth orthonormal frame $(Y_1,\dots,Y_n)$ of $T\R^n$ for the Riemannian metric $g$ that is parallel along each $\gamma_\eta$, such that $Y_n=\dot{\gamma}_\eta$, $\psi^*Y_j$ extends smoothly down to $\tau=0$ and $\tau=\tau_+$ and is of the form 
\[
\psi^*Y_j=\left\{\begin{array}{ll}
\displaystyle\sum_{i=1}^{n-1}a_{ji}(\tau,\eta)\tau\pl_{y_i}+a_{jn}(\tau,\eta)\tau^2\pl_\tau, & \textrm{ near }\tau=0, 
\vspace{5pt}\\
\displaystyle\sum_{i=1}^{n-1}a_{ji}(\tau,\eta)(\tau^+(\eta)-\tau)\pl_{y_i}+a_{jn}(\tau,\eta)(\tau^+(\eta)-\tau)^2\pl_\tau, &
\textrm{ near }\tau=\tau_+(\eta),
\end{array}\right.\]
for some $a_{ji}\in C^\infty(\bbar{\mc{D}})$, 
with 
\[\psi^*(\rho^{-1}Y_j)|_{\tau=0}=\sum_{i=1}^{n-1}a_{ji}(0,\eta)\pl_{y_i}= W_{j},\ \  j = 1,\dots, n-1.\]
If we choose $W_j:= (\rho^{-1} \partial_{x_j})|_{T_{p_-} \pl\bbar{\R^n}}$ and denote $Y_j(t,\eta):=Y_j(\gamma_\eta(t))$ then there is $C>0$ such that for $|\eta|$ large enough 
\begin{eqnarray}
\label{almost euclidean frame}
\|Y_j(t,\eta) - \partial_{x_j}\|_g \leq C|\eta|^{-1}_{h_0},\ \ \ t \in(-\infty,\infty).
\end{eqnarray}
\end{lemma}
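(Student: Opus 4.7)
The plan is to construct each $Y_j$ by parallel transport of a Cartesian frame along the geodesic $\gamma_\eta$ with initial condition prescribed at the asymptotic endpoint $p_-$. First, fix Cartesian coordinates so that $p_-$ corresponds to $-\pl_{x_n}$, and choose local coordinates $(y_1,\dots,y_{n-1})$ on $\pl\bbar{\R^n}$ near $p_-$ with $\pl_{y_j}|_{p_-}=\pl_{x_j}|_{p_-}$ for $j<n$. A direct computation from $\rho=|x|^{-1}$ and $y=x/|x|$ gives
\[
\pl_{x_k}=-\rho^2 y_k\pl_\rho+\rho\sum_l(\delta_{kl}-y_ky_l)\pl_{y_l},
\]
so each $\pl_{x_k}$ is a smooth section of $^{\rm sc}T\bbar{\R^n}$ which, at $(\rho,y)=(0,p_-)$, reduces to $\rho\pl_{y_k}$ if $k<n$ and to $\rho^2\pl_\rho$ if $k=n$. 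Given the $h_0$-orthonormal basis $W_j=\sum_k c_{jk}\pl_{y_k}$ of $T_{p_-}\pl\bbar{\R^n}$, extend it to the constant Cartesian vector field $V_j:=\sum_k c_{jk}\pl_{x_k}$ on $\R^n$ and set $V_n:=\pl_{x_n}$. Then $(V_1,\dots,V_n)$ is $g_0$-orthonormal, and $V_n$ is the common asymptotic Euclidean direction of every $\gamma_\eta$ as $t\to-\infty$ by Lemma \ref{same_scattering}.

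Next, I would define $Y_j\circ\gamma_\eta$ as the unique solution of the parallel-transport ODE
\[
\frac{dY_j^k}{dt}+\Gamma^k_{il}(\gamma_\eta(t))\dot\gamma_\eta^i(t)Y_j^l(t)=0,\qquad \lim_{t\to-\infty}Y_j(t)=V_j.
\]
Lemma \ref{curvdecay} gives $|\Gamma(\gamma_\eta(t))|\le C\rho(\gamma_\eta(t))^{m+1}$, and Lemma \ref{lemmeGLT} with Corollary \ref{limitgeod} shows $\rho(\gamma_\eta(t))$ decays integrably in $t$, so a standard Gronwall / fixed-point argument produces a unique bounded global solution depending smoothly on $\eta$. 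Since $\nabla g=0$, the matrix $(g(Y_j,Y_k))$ is constant along $\gamma_\eta$, and the limit at $-\infty$ combined with $g-g_0=O(\rho^{m-2})$ gives $g(Y_j,Y_k)\equiv\delta_{jk}$. Uniqueness of the parallel unit vector with asymptotic direction $\pl_{x_n}$ then forces $Y_n=\dot\gamma_\eta$.

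To obtain the scattering form and smoothness up to $\{\tau=0\}\cup\{\tau=\tau_+(\eta)\}$, I reparametrize by the rescaled time $\tau$, using that $\rho^{-2}\dot\gamma_\eta$ extends smoothly to both boundary components by Section \ref{s:rescaled_geodesic_flow}. In this variable, the parallel-transport ODE has smooth coefficients (the Christoffel term vanishes like $\tau^{m+1}$ at $\tau=0$ and like $(\tau_+(\eta)-\tau)^{m+1}$ at $\tau=\tau_+(\eta)$) with smooth initial data $V_j$ at $\tau=0$; standard smooth dependence on parameters gives the coefficients $a_{ji}\in C^\infty(\bbar{\mc{D}})$ in the stated expansion of $\psi^*Y_j$. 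The identification of $\pl_{x_k}$ with $\rho\pl_{y_k}$ (resp.\ $\rho^2\pl_\rho$) at $(0,p_-)$ produced above then yields $\rho^{-1}Y_j|_{\tau=0}=W_j$. The analysis at $\tau=\tau_+(\eta)$ is identical, with $p_+$ replacing $p_-$.

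Finally, for the estimate \eqref{almost euclidean frame} with $V_j=\pl_{x_j}$: when $|\eta|_{h_0}$ is large, Corollary \ref{rholeq1/R} yields $\rho(\gamma_\eta(s))\le C(|s|+|\eta|_{h_0})^{-1}$, so
\[
\int_{-\infty}^{t}|\Gamma(\gamma_\eta(s))|_{g_0}\,|\dot\gamma_\eta(s)|_{g_0}\,ds\le C|\eta|_{h_0}^{-m}.
\]
Applying Gronwall to $Z_j:=Y_j-\pl_{x_j}$, which satisfies $\dot Z_j=-\Gamma(\dot\gamma_\eta)(Z_j+\pl_{x_j})$ with $Z_j\to 0$ at $-\infty$, yields $\sup_t|Y_j(t,\eta)-\pl_{x_j}|_g\le C|\eta|_{h_0}^{-m}$, which is strictly stronger than the required $O(|\eta|_{h_0}^{-1})$ bound since $m>2$. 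The main obstacle of this strategy is interfacing the two frame descriptions: the Cartesian frame is where the decay of the Christoffels and the initial-value problem at infinity are natural, whereas the scattering frame is where the boundary behavior of $Y_j$ and the matching with $W_j$ become manifest, so the bookkeeping of the transition must be done carefully, especially at the curved boundary component $\{\tau=\tau_+(\eta)\}$.
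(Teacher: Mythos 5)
Your construction follows the same strategy as the paper: parallel-transport a scattering-adapted frame along each $\gamma_\eta$ with the limiting condition imposed at $p_-$, verify orthonormality via $\nabla g=0$ and the $t\to-\infty$ limit, identify $Y_n$ with $\dot\gamma_\eta$ by uniqueness, and obtain the smooth extension to $\tau=0$ and $\tau=\tau_+(\eta)$ by rewriting the parallel-transport ODE in the rescaled time $\tau$. The paper organizes the argument by writing $Y=\sum_j f_j Z_j$ in the scattering frame $Z_0=\pm\rho^2\pl_\rho$, $Z_j=\rho\bbar Z_j$ and prescribing $f_j(0)=\delta_{ij}$ as initial data for the $\tau$-ODE, whereas you work in the Cartesian frame $\pl_{x_k}$ and prescribe the $t\to-\infty$ limit directly via an integral/Gronwall argument before passing to $\tau$; these are interchangeable packagings of the same idea, and for the quantitative estimate \eqref{almost euclidean frame} the paper itself switches to your Cartesian frame.

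Two small inaccuracies, neither affecting the conclusion. First, your claimed decay ``the Christoffel term vanishes like $\tau^{m+1}$'' for the coefficient of the $\tau$-parametrized ODE is off: the coefficient is $\Gamma^k_{il}(\pl_\tau\bbar\gamma_\eta)^i$, and while $\Gamma^k_{il}=\mc{O}(\rho^{m+1})$, the Cartesian components of $\pl_\tau\bbar\gamma_\eta=\rho^{-2}\dot\gamma_\eta$ are of size $\rho^{-2}\sim\tau^{-2}$ near $\tau=0$, so the net rate is $\mc{O}(\tau^{m-1})$, not $\mc{O}(\tau^{m+1})$. Since $m\geq 3$ this is still smooth, so the argument stands. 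Second, your final bound $\mc{O}(|\eta|_{h_0}^{-m})$ for $\|Y_j-\pl_{x_j}\|_g$ is actually stronger than the stated $\mc{O}(|\eta|_{h_0}^{-1})$; the paper uses the cruder bound $|\Gamma|\leq C\rho^2$ (valid since $m\geq 1$) and gets exactly $\mc{O}(|\eta|_{h_0}^{-1})$. Both suffice.
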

\begin{proof}
Let us fix an $\eta\in T^*_{p_-}\pl\bbar{\R^n}$ and call $p_+=\lim_{t\to +\infty}\gamma_\eta(\tau)$. We take an orthonormal basis of  $^{\rm sc}T\bbar{\R^n}$ near the (resclaled) geodesic $\bbar{\gamma}_\eta$ of the form 
\[ Z_0=\pm\rho^2\pl_\rho \textrm{ near }p_\mp, \quad Z_j=\rho \bbar{Z}_j, \quad d\rho(Z_j)=0 \textrm{ for j}>0\]
with $\bbar{Z}_j(p_-)=W_j$, and we write vector fields under the form $Y=\sum_jf_jZ_j$.
The equation for being parallel with respect to $\dot{\gamma}_\eta$ is 
\[ 0=\nabla_{\dot{\gamma}_\eta}Y=\sum_{j}\dot{f}_j(t)Z_j+f_j(t)\nabla_{\dot{\gamma}_\eta}Z_j.\]
For each $Z\in C^\infty(\bbar{\R^n};{^{\rm sc}T}\bbar{\R^n})$, Koszul formula gives directly that $\Omega_{ij}(Z):=\cjg \nabla_{Z}Z_i,Z_j\cjd$ are smooth functions on $\bbar{\R^n}$. We use the change of coordinates \eqref{formulataut} and \eqref{formulattau} related $\gamma_\eta(t)$ and $\bbar{\gamma}_\eta(\tau)$: we have $\pl_\tau t(\tau,\eta)=\bbar{\rho}(\tau)^{-2}$ with $\bbar{\rho}(\tau)=\rho(\bbar{\gamma}_\eta(\tau))$. Therefore we obtain 
\[  0=\sum_{j} \left(\bbar{\rho}(\tau)^{2}\pl_\tau \bbar{f}_j(\tau)+\sum_i\bbar{f}_i(\tau)\Omega_{ij}(\dot{\gamma}_\eta)\right)Z_j\]  
for $\bbar{f}_j(\tau):= f_j(t(\tau,\eta))$
and we rewrite $\dot{\gamma}_\eta(t(\tau,\eta))=\bbar{\rho}(\tau)^2\pl_\tau\bbar{\gamma}_\eta(\tau)$ to deduce that the system becomes 
\[\forall j, \quad 0=\pl_\tau \bbar{f}_j(\tau)+\sum_i\bbar{f}_i(\tau)\Omega_{ij}(\pl_\tau\bbar{\gamma}_\eta).\]
This is an ODE with smooth coefficients so we can prescribe the boundary value at $\tau=0$ to be 
$f_j(0)=\delta_{ij}$ to obtain that $Y_i=\sum_i f_iZ_i$ are parallel and satisfying the required properties at $p_-$. Note that this is a frame near $\bbar{\gamma}_\eta$ and it is uniquely defined by the boundary value at $\tau=0$.
Now we need to check they are orthonormal: since they are parallel it suffices to use that the limit $g(Y_i,Y_j)\circ \bbar{\gamma}_\eta(\tau)\to \delta_{ij}$ as $\tau\to 0$. Since $\eta$ is chosen arbitrarily, this defines the frame $(Y_j)_j$ everywhere in $\R^n$ by Lemma \ref{foliationbygamma}. The behaviour at $\tau=\tau^+(\eta)$ is obtained the same way.

We now verify \eqref{almost euclidean frame} for $Y = Y_1$ and the rest follows by the same argument. 
We write $Y(t) = \sum_{j=1}^{n} f_j(t) \partial_{x_j}$, the parallel transport transport equation yields
$0 = \sum_{j=1}^n\dot f_j \partial_{x_j} + f_j \nabla^g_{\dot\gamma} \partial_{x_j}$
which implies
\begin{equation}
\label{parallel ODE}
0 = \dot f_j(t) +\sum_{k=1}^n \Gamma^k_{j}(\dot{\gamma}_\eta(t)) f_k(t),
\end{equation}
where $\Gamma^j_{k}(Z) := dx_j (\nabla_{Z} \partial_{x_k})=g(\nabla_{Z} \partial_{x_k},\pl_{x_j})+\mc{O}(\rho^2)$. 
By Lemma \ref{curvdecay} we have that 
\begin{equation}
\label{kristof decay}
|\Gamma^k_{j}(Z)| \leq C\rho^{m+1}|Z|_g.
\end{equation}
By the fact that $\lim\limits_{t\to -\infty} f_j(t) = \delta_{1j}$ we can write
\[f_j(t) - \delta_{1j} =\int_{-\infty}^t \Gamma^k_{j}(\dot{\gamma}_\eta(s)) f_k(s) ds.\]
The vector field $Y$ is parallel so the coefficients $f_j(t)$ are all bounded. Therefore we may estimate the integral using \eqref{kristof decay} to get
\[|f_j(t) - \delta_{1j} |\leq\int_{-\infty}^t \rho^2(\gamma_\eta(s)) ds.\]
Using  \eqref{uniformboundrho} we conclude that $|f_j(t) - \delta_{1j} 
| \leq C|\eta|_{h_0}^{-1}$ for $|\eta|_{h_0}$ large. Note that 
\eqref{firstboundrho} also prove that for each $\eta$
\begin{equation}\label{Y_j-pl_xj}
\|Y_j(t)-\pl_{x_j}\|_g=\mc{O}(t^{-m}).
\end{equation}
\end{proof}

\subsection{Bounds on stable Jacobi tensors}
Next, we will study the bounded Jacobi fields along the geodesics $\gamma_\eta$, they arise from varying $\gamma_\eta$ with respect to $\eta$. We call them stable Jacobi fields.

We define a family of $(n-1)\x (n-1)$ matrices $\bbar{A}_\eta(\tau)$ by setting
\begin{equation}\label{defbarA}
\bbar{A}_\eta(\tau)(\alpha_1,\dots,\alpha_{n-1})=\Big(
g\Big( \bbar{Q}_\eta(\tau)\sum_{j=1}^{n-1}\alpha_j W_j^*,Y_k\Big)\Big)_{k=1,\dots,n-1}
\end{equation}
where $(W_j^*)_{j=1,\dots,n-1}\in T^*_{p_-}\pl \bbar{\R^n}$ is the dual basis to $(W_j)_{j=1,\dots,n-1}$ and $\bbar{Q}_\eta$  is defined in \eqref{derivativespsi}. We also define the $(n-1)\x (n-1)$ matrix
\begin{equation}\label{defAeta}
A_\eta(t):=\bbar{A}_\eta(\tau(t,\eta))
\end{equation} 
with $\tau(t,\eta)$ the rescaled time defined by \eqref{formulataut} (satisfying $\tau(t,\eta)\to 0$ as $t\to -\infty$.
The matrix $A_\eta(t)$ is a matrix solution of the Jacobi equation for $g$ along $\gamma_{\eta}$: indeed, it satisfies  
\begin{equation} \label{Jacobieq}
\pl_t^2A_\eta(t)+\mc{R}_\eta(t)A_\eta(t)=0
\end{equation}
with $\mc{R}_\eta(t)$ a smooth family of matrices so that (with the notations of the proof of Lemma \ref{foliationbygamma})
\[\mc{R}_\eta(t)(\alpha_1,\dots,\alpha_{n-1}):=
\Big(g\Big(R_{\gamma_\eta(t)}(\sum_{j = 1}^{n-1}\alpha_j Y_j,\dot{\gamma}_\eta(t),\dot{\gamma}_\eta(t)),Y_i\Big)\Big)_{i=1,\dots,n-1}.\]

Moreover, by \eqref{behavbargamma} we have $\bbar{Q}_\eta(\tau)\bbar{w}=\tau \bbar{w}^\sharp+\mc{O}(\tau^2)\pl_y+\mc{O}(\tau^3)\pl_\rho$ with $\bbar{w}^\sharp$ the dual to $\bbar{w}$ with respect to $h_0$, thus using Lemma \ref{parallelbasis} we infer
\begin{equation}\label{limA-infty}
\lim_{t\to-\infty}A_{\eta}(t) =\lim_{\tau\to 0}\bbar{A}_\eta(\tau)={\rm Id}.
\end{equation} 
Recall from Lemma \ref{curvdecay} that for $W_j$ of $g$-norm $1$
\begin{equation}
\label{curvature decays to order m}
|R_{x}(W_1,W_2,W_3,W_4)|_g \leq C\rho(x)^{m+2}.
\end{equation}
Using this estimate, we thus get from \eqref{Jacobieq} and \eqref{firstboundrho}
that $\|\pl_tA_\eta(t)\|\to 0$ as $t\to -\infty$, and there is $C>0$ independent of $\eta$ 
such that for all $t$
\[ \|A_\eta(t)\|=\Big\|{\rm Id}+\int_{-\infty}^t \int_{-\infty}^s \mc{R}_\eta(r)A_\eta(r)dr ds \Big\|
\leq 1+ C\int_{-\infty}^t \int_{-\infty}^s  \rho^{m+2}(r)\|A_\eta(r)\|dr.\]
Now if $M_t=\sup_{s\leq t}\|A_\eta(s)\|$, we obtain using the uniform bound \eqref{uniformboundrho} and \eqref{firstboundrho} that there is $C>0$ independent of $t$ and $\eta$ such that
\[ M_t \leq 1+ \frac{CM_t}{|t|^{m}}\]
and thus there is $T>0$ independent of $\eta$ such that 
\[ \forall t\leq -T,\quad \|A_\eta(t)\|\leq 2.\]
This also tell us that there is $C>0$ and $T>0$ such that for all $\eta$
and $t<-T$ 
\begin{equation}\label{limderA-infty}
\|\pl_tA_\eta(t)\|\leq  \frac{C}{|t|^{m+1}},
\end{equation}
\begin{equation}\label{A-Id}
\|A_\eta(t)-{\rm Id}\|\leq \frac{C}{|t|^{m}}.
\end{equation}

\begin{lemma}\label{lim+infty}
Assuming the scattering map of $g$ agrees with that of $g_0$, there is a smooth family $H_\eta\in {\rm SO}(n-1)$, $C>0$ and $T>0$ large such for all $\eta$ and $t\geq T$
\begin{equation}\label{asymptofAeta+infty} 
\|A_\eta(t)-H_\eta\|\leq \frac{C}{t^{m}}.
\end{equation}
Moreover there is $C>0$ and $R>0$ such that for all $\eta$ with $|\eta|>R$, we have 
\begin{equation}\label{Beta-Id}
\|H_\eta-{\rm Id}\|<C/|\eta|
\end{equation} 
\end{lemma}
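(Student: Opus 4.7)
The plan is to combine the integrability of the rescaled curvature $\mc{R}_\eta$ along $\gamma_\eta$ (which follows from $m>2$) with uniform boundedness of $A_\eta(t)$, in order to produce the limit $H_\eta$ at $+\infty$ with the desired rate, and then to identify $H_\eta$ geometrically as an orthogonal change of frame using the scattering-matching hypothesis.

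First I would establish that $\|A_\eta(t)\|\leq M$ is uniform in $(t,\eta)$. For $\eta$ in a compact subset of $T^*_{p_-}\pl\bbar{\R^n}$, this follows from the smoothness of $\bbar A_\eta(\tau)$ on the closed interval $[0,\tau^+(\eta)]$, itself a consequence of the smoothness of the rescaled flow $\bbar\varphi_\tau$ on $\bbar{S^*\R^n}$. For $|\eta|$ large, I would use Corollary~\ref{rholeq1/R} to obtain the sharper decay $\|\mc{R}_\eta(t)\|\leq C(|\eta|+|t|)^{-m-2}$ and pass to the matrix Riccati equation for $U:=(\pl_t A_\eta)A_\eta^{-1}$, namely $\pl_t U+U^2+\mc{R}_\eta=0$ with $U(-\infty)=0$; a bootstrap on the ansatz $\|U(t)\|\leq C(|\eta|+|t|)^{-(m+1)}$ (consistent because $\|U\|^2\lesssim(|\eta|+|t|)^{-(2m+2)}$ is of lower order than $\|\mc{R}_\eta\|$) then gives $\|A_\eta(t)\|\leq 1+C|\eta|^{-m}$ uniformly in $t$.

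Next I would show $\pl_t A_\eta(t)\to 0$ as $t\to+\infty$ and derive the rate. Since $\|\mc{R}_\eta(s)\|\leq C(1+|s|)^{-m-2}$ is integrable for $m>2$, applying the Cauchy criterion to $\pl_t A_\eta(t_2)-\pl_t A_\eta(t_1)=-\int_{t_1}^{t_2}\mc{R}_\eta A_\eta\,ds$ shows that $\pl_t A_\eta(t)$ has a limit $L_\eta$; if $L_\eta\neq 0$, then $A_\eta(t)\sim L_\eta t$ would grow linearly, contradicting the uniform bound, so $L_\eta=0$. Hence $\pl_t A_\eta(t)=\int_t^\infty\mc{R}_\eta A_\eta\,ds$, giving $\|\pl_t A_\eta(t)\|\leq CM/t^{m+1}$ for $t\geq T$; integrating once more, $A_\eta$ has a limit $H_\eta$ at $+\infty$ with $\|A_\eta(t)-H_\eta\|\leq CM/t^m$. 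For $|\eta|$ large, the uniform two-sided bound $\|\pl_s A_\eta(s)\|\leq C(|\eta|+|s|)^{-(m+1)}$ together with $\int_\R(|\eta|+|s|)^{-(m+1)}ds\leq C|\eta|^{-m}$ yields $\|H_\eta-\Id\|\leq C|\eta|^{-m}\leq C|\eta|^{-1}$.

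Finally, to show $H_\eta\in\mathrm{SO}(n-1)$, I would appeal to the geometric meaning of $A_\eta$: the $j$-th column expresses the Jacobi field $J_{\eta,j}(t)=\bbar Q_\eta(\tau(t,\eta))W_j^*$ in the $g$-parallel orthonormal frame $Y_k(t)$. Under the scattering-matching hypothesis, Lemma~\ref{same_scattering} implies $\gamma_\eta$ is asymptotic to the Euclidean geodesic $\gamma^0_\eta$ at $\pm\infty$; differentiating this asymptotic relation in $\eta$ shows $J_{\eta,j}(t)$ is asymptotic as $t\to+\infty$ to the constant Euclidean Jacobi field $\pl_\eta\gamma^0_\eta(W_j^*)$, which forms an orthonormal basis of $T_{p_+}\pl\bbar{\R^n}$. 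Simultaneously, by Lemma~\ref{parallelbasis}, $Y_k(t)$ converges at $+\infty$ to a $g$-parallel orthonormal limit frame $\tilde Y_k$ at $p_+$. Hence $(H_\eta)_{kj}=g_0(\pl_\eta\gamma^0_\eta(W_j^*),\tilde Y_k)$ is the change-of-basis matrix between two orthonormal bases, so $H_\eta\in O(n-1)$; continuity in $\eta$ plus the identity-limit for $|\eta|$ large force $\det H_\eta=+1$. The hardest part will be the uniform-in-$(t,\eta)$ boundedness of $A_\eta$ for $|\eta|$ large, since the direct Volterra kernel $(t-s)\mc{R}_\eta(s)$ is not uniformly integrable in $t$; passing through the matrix Riccati equation above is what circumvents this obstacle.
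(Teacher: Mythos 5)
Your strategy differs genuinely from the paper's. The paper proves the lemma by computing $\lim_{\tau\to\tau^+}\bbar A_\eta(\tau)$ directly via the boundary asymptotics of $\bbar Q^g_\eta$ in the blown-up coordinates, which simultaneously yields existence of the limit, the identity \eqref{egaliteQ} under $S_g=S_{g_0}$, and the orthogonality of $H_\eta$ (as the limit of the frame-change matrix $L_{ik}$); the rate then comes from the Jacobi ODE. You instead propose to establish a uniform bound $\|A_\eta(t)\|\leq M$ first (compactness for bounded $\eta$, Riccati bootstrap for large $|\eta|$), then get existence of the limit and the rate $\mc{O}(t^{-m})$ from integrability of $\mc{R}_\eta$, and only at the end use the scattering hypothesis to identify $H_\eta\in\mathrm{SO}(n-1)$ geometrically. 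The Riccati bootstrap for large $|\eta|$ is a nice way to dodge the non-integrable Volterra kernel $(t-s)\mc{R}_\eta(s)$, and your $|\eta|^{-m}$ bound on $\|H_\eta-\Id\|$ is stronger than the paper's $|\eta|^{-1}$.

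There is, however, a genuine gap in your first step. You assert that for $\eta$ in a compact set, the bound on $A_\eta(t)$ ``follows from the smoothness of $\bbar A_\eta(\tau)$ on the closed interval $[0,\tau^+(\eta)]$, itself a consequence of the smoothness of the rescaled flow $\bbar\varphi_\tau$ on $\bbar{S^*\R^n}$.'' Smoothness of $\bbar\varphi_\tau$ on $\bbar{S^*\R^n}$ gives that $\bbar Q_\eta(\tau)=d\pi_0\,\partial_\eta\bbar\varphi_\tau$ extends smoothly to $\tau=\tau^+$ \emph{as an ordinary tangent vector on $\bbar{\R^n}$}. But $\bbar A_\eta(\tau)$ is defined via the pairing $g(\bbar Q_\eta(\tau)\nu,Y_k)$, and $g$ is \emph{degenerate} on $T\bbar{\R^n}$ near $\partial\bbar{\R^n}$: in the frame $\partial_\rho,\partial_y$ one has $|\partial_\rho|_g=\rho^{-2}$ and $|\partial_y|_g=\rho^{-1}$, both blowing up as $\rho\to 0$. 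Finiteness of $g(\bbar Q_\eta(\tau)\nu,Y_k)$ at $\tau=\tau^+$ therefore requires $\bbar Q_\eta(\tau)\nu$ to degenerate at the correct rate, i.e.\ to be a \emph{scattering} vector field, with $\partial_\rho$-component $\mc{O}(\rho^2)$ and $\partial_y$-component $\mc{O}(\rho)$. This is exactly what the paper's boundary computation establishes (the key line $\partial_s\bbar{\gamma}_{u}(p_+,-\eta_+(s))|_{s=0}=\mc{O}(u^3)\partial_\rho-u\,d\Theta(\nu^\sharp)+\mc{O}(u^2)\partial_y=\mc{O}_g(u)-\rho\,d\Theta(\nu^\sharp)$), and it is not an immediate consequence of smoothness of the flow on the compact manifold. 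Without it, you cannot conclude that $A_\eta(t)$ is bounded — a priori a solution of the Jacobi ODE with $A_\eta\to\Id$ and $\dot A_\eta\to 0$ at $-\infty$ can still grow linearly at $+\infty$. So step one needs the same boundary asymptotic analysis the paper carries out; once that is supplied (and note it does not itself require $S_g=S_{g_0}$ — the scattering hypothesis is only needed later to identify $H_\eta$, as you correctly realize), the rest of your argument works.
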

\begin{proof}
We choose the frame $W_j$ of Lemma \ref{parallelbasis} to be 
$(\rho^{-1} \partial_{x_j})|_{T_{p_-}\pl\bbar{\R^n}}$. Recall that $\pl_{x_j}$ is parallel for the Euclidean metric $g_0$. 
Assuming that the limit of $A_\eta(t)$ exists, then it satisfies
\[ \lim_{t\to \infty}A_\eta(t)\alpha=\Big(\lim_{t\to +\infty} g\Big(\bbar{Q}_\eta(\tau(t,\eta))\sum_{j=1}^{n-1}\alpha_j W_j^*,Y_i\Big)(\gamma_\eta(t))\Big)_{i=1,\dots,n-1}.\] 
To prove the existence of the limit, 
we shall show that, if $S_g=S_{g_0}$, then for each $\nu\in T_{p_-}^*\pl\bbar{\R^n}$
\begin{equation}\label{egaliteQ}
\lim_{\tau \to \tau^+_{g}(p_-,\eta)} \Big(\bbar{Q}^{g}_\eta(\tau)\nu\Big)^{\perp_g}=\lim\limits_{\tau \to\tau^+_{g_0}(p_-,\eta)}  \Big(\bbar{Q}^{g_0}_\eta(\tau) \nu\Big)^{\perp_{g_0}} 
\end{equation} 
where $\bbar{Q}_\eta^{g}$ and $\bbar{Q}_\eta^{g_0}$ are the maps \eqref{derivativespsi}
for the Riemannian metrics $g$ and $g_0$, and as before $\perp_{g}$ denotes the projection orthogonal to  
$\dot{\gamma}_\eta$ and similarly for $g_0$.
To simplify notation we set $(p_+ ,\eta_+(s)) = S_{g_0}(p_-,\eta+ s\nu)$. Observe that $p_+$ is the image of $p_-$ under the antipodal map $\Theta: \mathbb{S}^{n-1} \to \mathbb{S}^{n-1}$ and 
$(\eta_+(s))^\sharp=d\Theta_{p_-}(\eta+ s\nu)^\sharp$.
We compute for $\tau <\tau^+(p_-,\eta):=\tau_g^+(p_-,\eta)$
\[\bbar{Q}^g_\eta(\tau) \nu = \partial_s\pi_0 (\bbar{\varphi}_\tau(p_-, \eta+ s\nu))|_{s=0}  = 
\partial_s \pi_0 (\bbar{\varphi}_{\tau^+(p_-,\eta+s\nu)-\tau}(p_+, -\eta_+(s)))|_{s=0}.\]
Note that since $\bbar{X}^{\perp_g} = 0$, 
\[(\bbar{Q}^g_\eta(\tau) \nu)^{\perp_g} = (\partial_s \pi_0(\bbar{\varphi}_{\tau^+(p_-,\eta)-\tau}(p_+, -\eta_+(s)))|_{s=0})^{\perp_g} = ( \partial_s \pi_0 (\bbar{\varphi}_{u}(p_+, -\eta_+(s))|_{s=0}))^{\perp_g}\]
with $u:=\tau^+(p_-,\eta)-\tau$.
The expression of $\pi_0( \bbar\varphi_{u}(p_+, \eta_+(s)))$ near $\pl\bbar{\R^n}$ is given by
\begin{equation}
\label{in coordinates}
\pi_0 (\bbar{\varphi}_{u}(p_+,- \eta_+(s)) = \bbar{\gamma}_{u}(p_+,-\eta_+(s))=:
(\rho(u), y(u)), 
\end{equation}
and by \eqref{behavbargamma} we have that 
\[\rho(u) = u + u^3 f_s(u), \quad y(u) = p_+ - u\left(\eta_+(s)\right)^\sharp + u^2 r_s(u)\] 
for some $f_s(u)$ and $r_s(u)$  smooth in $(s,u)$. Observe that since $S_g=S_{g_0}=\Theta$, we have 
$\left(\eta_+(s)\right)^\sharp = d\Theta (\eta^\sharp) + s d\Theta(\nu^\sharp).$
Substituting these into \eqref{in coordinates} and differentiating with respect to $s$ at $s= 0$ we have
\[\begin{split}
\partial_s\bbar{\gamma}_{u}(p_+,-\eta_+(s))|_{s=0}= & \mc{O}(u^3)
\partial_\rho-ud\Theta(\nu^\sharp)+ \mc{O}(u^2)\partial_{y} 
= \mc{O}_g(u) -\rho d\Theta(\nu^\sharp).
\end{split}\]
The above expression shows that as $u \to 0$, $(\partial_s\bbar{\gamma}_{u}(p_+,-\eta_+(s)))|_{s=0}^{\perp_g}$ converges to the element of the scattering bundle given by $(\rho d\Theta(\nu^\sharp))|_{\partial \R^n}$ in the topology given by $g$. By the analogous computation $(\bbar{Q}_\eta^{g_0}(\tau) \nu )^{\perp_{g_0}}$ converges to the same vector as $\tau \to \tau^+_{g_0}(p_-,\eta)$ 
in the topology given by the Euclidean metric. This shows \eqref{egaliteQ} and the existence of $\lim_{t\to +\infty}A_\eta(t)$. We write $Y_i=\sum_{k}L_{ik} \pl_{x_k}$ for some 
matrix $L_{ik}$: since $(Y_j)_j$ is orthonormal for $g$ and $(\pl_{x_j})_j$ is orthonormal for $g_0$, 
the fact that $g-g_0\in \rho C^\infty(\bbar{\R^n};S^2(^{\rm sc}T^*\bbar{\R^n}))$ implies that 
$(L_{ik}(\gamma_\eta(t)))_{ik}$ converges to some $H_\eta\in {\rm SO}(n-1)$ matrix as $t\to \infty$ (the existence of the limit being guaranteed by Lemma \ref{parallelbasis}).
Now we get for $\nu\in T^*_{p_-}\pl\bbar{\R^n}$
\begin{equation}\label{limA}
\lim_{t\to +\infty} g\Big(\big(\bbar{Q}^{g}_\eta(\tau)\nu \big)^{\perp_{g_0}} ,Y_i\Big)(\gamma_\eta(t))
= - h_0\Big( d\Theta(\nu^\sharp) ,\sum_{k}(H_\eta)_{ik} (\rho^{-1}\pl_{x_k})|_{p_+}\Big)\end{equation} 
where we recall that $\rho^{-1}\pl_{x_k}$ extends smoothly at $\pl\bbar{\R^n}$ since $\pl_{x_k}$ is a smooth scattering vector field.
Applying the same reasoning to $g_0$, we get the same result with $H_\eta$ replaced by ${\rm Id}$.
and $A^{g_0}_\eta(t)={\rm Id}$. Since for $j=1,\dots,n-1$,
\[d\Theta_{p_-}W_j=d\Theta_{p_-}(\rho^{-1}\pl_{x_j})|_{T_{p_-}\pl\bbar{\R^n}}=-(\rho^{-1}\pl_{x_j})|_{T_{p_+}\pl\bbar{\R^n}}\]
we deduce from \eqref{limA} that $\lim_{t\to +\infty}A_\eta(t)=H_\eta$. In order to get the bound on the remainder in \eqref{asymptofAeta+infty}, we proceed as for the estimate on $A_\eta(t)$ for $t<-T$: since $\|H_\eta\|=1$, there is $T>0$ independent of $\eta$ such that for all $t>T$, $\|A_\eta(t)\|\leq 2$, then since $\pl_tA_\eta(t)\to 0$ as $t\to +\infty$ from \eqref{Jacobieq}, we get for all $t\geq T$
\[\|A_\eta(t)-H_\eta\|\leq 2 \int_t^\infty\int_s^\infty \|\mc{R}_\eta(r)\| drds\]
which gives the desired result since
\begin{equation}\label{Retabound}
\|\mc{R}_\eta(t)\|\leq \frac{C}{(|t|+\cjg\eta\cjd_{h_0})^{m+2}}
\end{equation} 
holds true by using \eqref{curvature decays to order m}, \eqref{firstboundrho} and \eqref{uniformboundrho}.

Finally  the proof of \eqref{Beta-Id} is obtained by combining \eqref{egaliteQ} with \eqref{almost euclidean frame}.
\end{proof}

\begin{corollary}\label{unifboundonA}
There is $C>0$ such that for all $t\in \R$ and all $\eta\in T_{p_-}^*\pl\bbar{\R^n}$,
\[ \|A_\eta(t)\|\leq C.\]
\end{corollary}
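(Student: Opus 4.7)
The plan is to combine the two-sided decay estimates for $A_\eta(t)$ at $t\to\pm\infty$, which are already in hand, with a finite-time Gronwall argument on the remaining compact interval.

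First, I would observe that \eqref{A-Id} together with \eqref{asymptofAeta+infty} in Lemma~\ref{lim+infty} (and the fact that $\|H_\eta\|=1$ since $H_\eta\in{\rm SO}(n-1)$) provide constants $T,C_0>0$, independent of $\eta$, such that
\[
\|A_\eta(t)\|\leq 2, \qquad \forall\,|t|\geq T,\ \eta\in T^*_{p_-}\pl\bbar{\R^n},
\]
together with the uniform initial-data bounds $\|A_\eta(-T)\|\leq 2$ and $\|\dot A_\eta(-T)\|\leq C_0$ (the latter from \eqref{limderA-infty}). The problem thus reduces to bounding $\|A_\eta(t)\|$ uniformly in $\eta$ for $t\in[-T,T]$.

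For that bounded interval I would integrate the Jacobi equation \eqref{Jacobieq} twice starting from $t=-T$ and apply Fubini to obtain
\[
A_\eta(t)=A_\eta(-T)+(t+T)\dot A_\eta(-T)-\int_{-T}^{t}(t-s)\,\mc{R}_\eta(s)\,A_\eta(s)\,ds.
\]
The key point is that $\|\mc{R}_\eta(t)\|$ is uniformly bounded in $(t,\eta)\in\R\times T^*_{p_-}\pl\bbar{\R^n}$. Indeed, since the frame $(Y_1,\dots,Y_{n-1},\dot\gamma_\eta)$ is $g$-orthonormal along $\gamma_\eta$, Lemma~\ref{curvdecay} yields $\|\mc{R}_\eta(t)\|\leq C'\rho(\gamma_\eta(t))^{m+2}$, and $\rho$ extends smoothly to the compact manifold $\bbar{\R^n}$, hence is bounded. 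The standard integral form of Gronwall's inequality applied to $t\mapsto\|A_\eta(t)\|$ on $[-T,T]$ then delivers a uniform upper bound whose size depends only on $T$, $C_0$, and $\sup_{\bbar{\R^n}}\rho$.

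The whole argument is essentially bookkeeping once the uniform coefficient bound for $\mc{R}_\eta$ is in hand. The main conceptual point is to realize that, although the decay of $\mc{R}_\eta$ at infinity was critical for the tail estimates \eqref{A-Id} and \eqref{asymptofAeta+infty}, on the compact interval $[-T,T]$ one only needs its global uniform boundedness, which is immediate from the smoothness of the Riemann tensor on $\bbar{\R^n}$.
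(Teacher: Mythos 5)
Your proof is correct and follows essentially the same route as the paper: decay estimates \eqref{A-Id} and \eqref{asymptofAeta+infty} handle $|t|\geq T$ with $T$ uniform in $\eta$, and a Gronwall argument on $[-T,T]$ (using that $\mc{R}_\eta$ is uniformly bounded, which is also the content of \eqref{Retabound}) finishes the job. The only cosmetic difference is that you justify uniform boundedness of $\mc{R}_\eta$ via the boundedness of $\rho$ on $\bbar{\R^n}$, whereas the paper invokes the sharper estimate \eqref{Retabound}; both are adequate for this corollary.
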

\begin{proof}
The bound follows from \eqref{A-Id} and Lemma \ref{lim+infty} when $t>T$, with $T$ uniform in $\eta$. For $t\in [-T,T]$, we can use that $A_\eta(t)$ solves the ODE \eqref{Jacobieq}, the uniform curvature bound \eqref{Retabound} and Gr\"onwall inquality to obtain that $A_\eta(t)$ is bounded uniformly with respect to $\eta$ for $t\in [-T,T]$.
\end{proof}

\begin{lemma}\label{uniformA}
Assuming $g$ has no conjugate points, then $A_\eta(t)$ is invertible for all $(\eta,t)$ and there exists $C>0$ such that for all $(t,\eta)\in \R\x T^*_{p_-}S^{n-1}$ we have 
\[ \|A_\eta^{-1}(t)\| \leq C.\]
\end{lemma}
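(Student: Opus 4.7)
The argument splits into showing first that $A_\eta(t)$ is invertible for all $(t,\eta)$, and then obtaining the uniform bound $\|A_\eta(t)^{-1}\|\leq C$ by a case analysis on the size of $|t|$ and $|\eta|_{h_0}$.

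For invertibility, I would deduce it directly from Lemma~\ref{foliationbygamma}. Since $\psi:\mathcal{D}\to \R^n$ is a diffeomorphism, its differential $d\psi(\tau,\eta)=(\pl_\tau\psi,\bar{Q}_\eta(\tau))$ is everywhere an isomorphism. By~\eqref{derivativespsi}, $\pl_\tau\psi$ is a nonzero scalar multiple of $\dot{\gamma}_\eta$, so the invertibility of $d\psi$ forces the $(n{-}1)$-dimensional image of $\bar{Q}_\eta(\tau)$ to be transverse to $\dot{\gamma}_\eta$; equivalently, the projection of $\bar{Q}_\eta(\tau)$ onto $\dot{\gamma}_\eta^\perp$ is an isomorphism from $T^*_{p_-}\pl\bar{\R^n}$ to $\dot{\gamma}_\eta^\perp$. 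In the bases $(W_j^*)_j$ and $(Y_k)_{k=1,\dots,n-1}$ from Lemma~\ref{parallelbasis}, the matrix of this isomorphism is precisely $A_\eta(t)$ as defined in~\eqref{defbarA}--\eqref{defAeta}, so $A_\eta(t)$ is invertible for every $(t,\eta)$.

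For the uniform bound, I distinguish three regimes. \emph{(i)} When $|t|\geq T_0$ with $T_0$ fixed large (uniformly in $\eta$), the estimates \eqref{A-Id} and \eqref{asymptofAeta+infty} give $\|A_\eta(t)-\mathrm{Id}\|\leq 1/2$ for $t\leq -T_0$ and $\|A_\eta(t)-H_\eta\|\leq 1/2$ for $t\geq T_0$; since $H_\eta\in \mathrm{SO}(n-1)$ has $\|H_\eta^{-1}\|=1$, a Neumann-series argument yields $\|A_\eta(t)^{-1}\|\leq 2$ in both cases. \emph{(ii)} When $|t|\leq T_0$ and $|\eta|_{h_0}\leq R_0$, the set $K:=[-T_0,T_0]\times\{|\eta|_{h_0}\leq R_0\}$ is compact, $(t,\eta)\mapsto A_\eta(t)$ is smooth by standard dependence of ODE solutions on parameters, and $A_\eta(t)$ is invertible on $K$ by the first part; hence $(t,\eta)\mapsto\|A_\eta(t)^{-1}\|$ is continuous, hence bounded, on $K$. \emph{(iii)} When $|t|\leq T_0$ and $|\eta|_{h_0}\geq R_0$ with $R_0$ large, I would use the curvature bound~\eqref{Retabound}: integrating the Jacobi equation \eqref{Jacobieq} twice from $t=-\infty$ (where $A_\eta=\mathrm{Id}$ and $\dot{A}_\eta=0$), together with Corollary~\ref{unifboundonA}, yields $\|A_\eta(t)-\mathrm{Id}\|\leq C(|t|+\langle\eta\rangle_{h_0})^{-m}$ for $t\leq 0$, and analogously $\|A_\eta(t)-H_\eta\|\leq C(|t|+\langle\eta\rangle_{h_0})^{-m}$ for $t\geq 0$. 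Combined with~\eqref{Beta-Id}, this gives $\|A_\eta(t)-\mathrm{Id}\|\leq 1/2$ for all $|t|\leq T_0$ once $R_0$ is large, hence $\|A_\eta(t)^{-1}\|\leq 2$. Combining the three regimes yields the desired uniform bound.

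The main technical point is regime \emph{(iii)}: whereas~\eqref{A-Id} as written only controls $A_\eta-\mathrm{Id}$ for $|t|\geq T$ with $T$ fixed, here one needs to control $A_\eta(t)-\mathrm{Id}$ for bounded $|t|$ as $|\eta|_{h_0}\to\infty$. This requires the sharper curvature decay with combined $(|t|,\langle\eta\rangle_{h_0})$ dependence from~\eqref{Retabound}, which in turn relies on Corollary~\ref{rholeq1/R}.
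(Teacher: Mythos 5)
Your proof is correct and follows essentially the same strategy as the paper's: the crux in both is the estimate, derived by integrating the Jacobi equation \eqref{Jacobieq} against the curvature decay \eqref{Retabound} and combining with \eqref{Beta-Id}, that $\|A_\eta(t)-\mathrm{Id}\|\leq C/|\eta|_{h_0}$ uniformly in $t$ when $|\eta|_{h_0}$ is large, which the paper states as its ``main part'' and which you reproduce in regime (iii); the remaining regimes are handled, as in the paper, by \eqref{A-Id}, \eqref{asymptofAeta+infty}, and compactness. The one small variation is your invertibility argument: instead of re-invoking \cite[Proposition 2.9]{Eberlein:1973aa} to rule out a bounded orthogonal Jacobi field vanishing somewhere, you observe that in the bases $(\pl_\tau,W_1^*,\dots,W_{n-1}^*)$ and $(Y_n,Y_1,\dots,Y_{n-1})$ the differential $d\psi(\tau,\eta)$ is block-triangular with $\bar A_\eta(\tau)$ as the lower-right block (the upper-left entry $\rho^{-2}\neq 0$), so Lemma~\ref{foliationbygamma} directly gives $\det A_\eta(t)\neq0$ — a clean shortcut, though logically equivalent since Lemma~\ref{foliationbygamma} itself rests on the Eberlein argument.
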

\begin{proof}
The main part of the proof is to show that there is $C>0$ such that for $R>0$ large, 
for all $t\in \R$ and all $|\eta|_{h_0}>R$
\begin{equation}\label{boundlargeeta} 
\|A_{\eta}(t)-{\rm Id}\|<C/|\eta|.
\end{equation}
Recall that the matrix $A_\eta(t)$ solves the Jacobi equation
\[\ddot A_\eta(t) +{\mathcal R}_\eta(t) A_\eta(t) = 0.\]
Setting $E_\eta(t) := A_{\eta}(t)-{\rm Id}$,  observe that
\[\ddot E_\eta(t)  = -{\mathcal R}_\eta(t) A_\eta(t).\]
Thanks to \eqref{Retabound} and Corollary  \ref{unifboundonA} we have that 
\[\| {\mathcal R}_\eta(t) A_\eta(t)\| \leq \frac{C}{(|t|+\cjg \eta\cjd)^{m+2}} \]
for some $C>0$ uniform in $\eta,t$. Using that $E_\eta(t)\to 0$ as $t\to -\infty$ and $\pl_tE_\eta(t)\to 0$ as $t\to -\infty$, we have 
\[ \begin{split}
\|E_\eta(t)\|=  \Big\| \int_{-\infty}^t\int_{-\infty}^s {\mathcal R}_\eta A_\eta(r) dr ds\Big\| \leq  C \int_{-\infty}^t\int_{-\infty}^s  \frac{1}{(|r|+\cjg \eta\cjd)^{m+2}} dr ds 
\end{split}\]
for all $t$. This gives that for $t\leq 1/2$
\[\ \|E_\eta(t)\|\leq 
\frac{C}{(\cjg \eta\cjd+|t|)^{m}}.\]
Now for $t\geq 0$, we can also write, using $\dot{E}_\eta(t)=\dot{A}_\eta(t)\to 0$ as $t\to +\infty$ and $\lim_{t \to +\infty}E_\eta(t)=H_\eta-{\rm Id}$,
\[ E_\eta(t)=  H_\eta-{\rm Id}-\int_{t}^\infty \int_{s}^\infty {\mathcal R}_\eta A_\eta(r) dr ds
.\]
The same argument as above shows that for $t\geq 0$
\[\|E_\eta(t)\|\leq \|H_\eta-{\rm Id}\|+\frac{C}{(\cjg \eta\cjd+|t|)^{m}}.
\]
Using \eqref{Beta-Id}, we get that $\|E_\eta(t)\|\leq C/\cjg \eta\cjd$ for all $t$, showing \eqref{boundlargeeta}. The bound on $\|A_\eta^{-1}(t)\|$ for $\eta$ bounded and $t$ bounded follows from continuity of $A_\eta(t)^{-1}$ provided we know $A_\eta(t)$ is invertible. This last fact follows from the obsvervation that if $v\in \ker A_\eta(t_0)$, then $J_v(t):=\sum_{j=1}^{n-1}(A_\eta(t)v)_jY_j$ 
is an orthogonal bounded Jacobi field along $\gamma_\eta(t)$ vanishing at $t=t_0$, which by
\cite[Proposition 2.9]{Eberlein:1973aa} is not possible since $g$ has no conjugate points and curvature bounded below. Finally,  the bound on $\|A_\eta(t)^{-1}\|$ for large $|t|$ and $\eta$ bounded follows from  $A_\eta(t)\to {\rm Id}$ as $t\to -\infty$ and  $A_\eta(t)\to H_\eta$ as $t\to +\infty$. 
\end{proof}

\subsection{Estimates on unbounded Jacobi fields}

Next, we will consider the other Jacobi fields, namely those that grow at $t=-\infty$ along $\gamma_\eta(t)$. They can be obtained as before by variations of geodesics but rather by moving the endpoint $p_-$. Let $\mc{H}_{p_-,\eta}\subset T_{p_-,\eta}(T^*\pl \bbar{\R^n})$ be a horizontal subbundle such that $d\pi_0:\mc{H}_{p_-,\eta}\to T_{p_-}\pl \bbar{\R^n}$ is an isomorphism. For example, we can fix a local system of coordinates $y_1,\dots, y_{n-1}$ on $\pl\bbar{\R^n}=\mathbb{S}^{n-1}$ near $p_-$ so that  $h_0|_{p_-}=\sum_{i=1}^{n-1}dy_i^2$ and $\pl_{y_j}h_0|_{p_-}=0$ for all $j$, and covectors can be written 
$\eta=\sum_{i=1}^{n-1}\eta_idy_i$. We then take 
$\mc{H}_{p_-,\eta}={\rm span}(\pl_{y_1},\dots,\pl_{y_{n-1}})$ in these coordinates. 
Note that the coordinates $y_j$ can be chosen so that $(\rho^{-1}\pl_{x_j})|_{p_-}=\pl_{y_j}=W_j$, using Lemma \ref{parallelbasis} and  
$d\pi_0:\mc{H}_{p_-,\eta}\to T_{p_-}\pl\bbar{\R^n}$ is an isomorphism.
Now, define the linear map
\[ \bbar{P}_\eta(\tau):= d\pi_0.d\bbar{\varphi}_\tau(p_-,\eta)|_{\mc{H}_{p_-,\eta}}: \mc{H}_{p_-,\eta} \to T\bbar{\R^n}\]
and define the family of $(n-1)\x (n-1)$ matrices $\bbar{B}_\eta(\tau)$ by setting:
\begin{equation}\label{defbarB}
\bbar{B}_\eta(\tau)(\alpha_1,\dots,\alpha_{n-1})=\Big(
g\Big( \bbar{P}_\eta(\tau)\sum_{j=1}^{n-1}\alpha_j \pl_{y_j},Y_k\Big)\Big)_{k=1,\dots,n-1}
\end{equation}
where $Y_k$ are the parallel fields of Lemma \ref{parallelbasis}. 
We also define the $(n-1)\x (n-1)$ matrix
\begin{equation}\label{defBeta}
B_\eta(t):=\bbar{B}_\eta(\tau(t,\eta))
\end{equation} 
with $\tau(t,\eta)$  defined by \eqref{formulataut}. Just like for $A_\eta(t)$, $B_\eta(t)$ solves \eqref{Jacobieq}, but as we shall see, it grows linearly at $\pm\infty$.
Note that 
$\bbar{P}_\eta(\tau)=d\pi_0+\tau d\pi_0(\pl_y \bbar{X})+\mc{O}(\tau^2)$, and 
since $\pl_{y_j}h_0|_{p_-}=0$ we get that $d\pi_0(\pl_y \bbar{X})=0$ at $p_-$, which implies using \eqref{formrhoy}, \eqref{rote} and \eqref{Y_j-pl_xj} that as $t\to -\infty$
\[ 
B_{\eta}(t) =t\, {\rm Id}+G+\mc{O}(1/t). 
\]
for some matrix $G$ constant in $(t,\eta)$.
Notice that the corresponding matrix for the Riemannian metric $g_0$ is simply $B^{g_0}_\eta(t)=t\,{\rm Id}+G$: since $g-g_0$ decays to order $m$, it has to agree to order $\mc{O}(1/t)$ with $B_{\eta}(t)$ at $-\infty$ and to solve $\pl_t^2B_\eta^{g_0}(t)=0$. Up to adding $-GA_\eta(t)$ to $B_\eta(t)$ (so that it remains a solution of \eqref{Jacobieq}), we can assume  
\begin{equation}\label{limBetat}
B_{\eta}(t) =t\, {\rm Id}+\mc{O}(1/t) , \quad \textrm{ as }t\to -\infty
\end{equation}
and similarly $B_\eta^{g_0}(t)=t\, {\rm Id}$. 
To get a uniform in $\eta$ bound near $-\infty$, we proceed as for $A_\eta(t)$: since $B_\eta(t)$
solve \eqref{Jacobieq}, if $M_t=\sup_{s\leq t}\|s^{-1}B_\eta(s)\|$, then using \eqref{Retabound}
\[ \begin{split}
\|t^{-1}B_\eta(t)\| \leq & 1+ |t|^{-1}M_t\int_{-\infty}^t\int_{-\infty}^s \|r\mc{R}_\eta(r)\|drds
\leq  1+ C|t|^{-m}M_t
\end{split}\]
which means that there is $T>0$ independent of $\eta$ such that for all $t\leq -T$ and all $\eta$
\[ \|B_\eta(t)\|\leq 2|t|.\]
Now we can estimate using again \eqref{curvature decays to order m} that for $t\leq -T$ and all $\eta$
\begin{equation}\label{uniformBt<T}
\| B_\eta(t)-t\,{\rm Id}\|\leq 2\int_{-\infty}^t\int_{-\infty}^s \|r\mc{R}_\eta(r)\|drds\leq 
\frac{C}{|t|^{m-1}}.
\end{equation}
Let us now prove that $B_\eta$ has a controlled behavior as $t\to +\infty$.
\begin{lemma}\label{limB}
There is $C>0$ and $T>0$ such that for all $\eta$ and all $t\geq T$
\[ \|B_\eta(t)- t H_\eta\|\leq \frac{C}{t^{m-1}}\]
where $H_\eta\in {\rm SO}(n-1)$ is the matrix of Lemma \ref{lim+infty}.
\end{lemma}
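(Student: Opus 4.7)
The plan is to follow the four-step structure that yielded Lemma~\ref{lim+infty} for $A_\eta$: first establish at most linear growth of $B_\eta$ at $+\infty$, then derive an asymptotic form $B_\eta(t)=tL_\eta+M_\eta+\mc{O}(t^{-(m-1)})$ from the Jacobi equation, then identify $L_\eta=H_\eta$ via a conserved matrix Wronskian, and finally show $M_\eta=0$ by comparing with the Euclidean metric via the scattering map. Showing $M_\eta=0$ will be the main obstacle.

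First, starting from the initial data $B_\eta(-T)=-T\,\mathrm{Id}+\mc{O}(T^{-(m-1)})$ and $\dot B_\eta(-T)=\mathrm{Id}+\mc{O}(T^{-m})$ coming from \eqref{uniformBt<T} together with the Jacobi equation, I would propagate forward via \eqref{Jacobieq} and the curvature bound \eqref{Retabound}, applying a Gronwall estimate to $N_t:=\sup_{-T\le s\le t}\|s^{-1}B_\eta(s)\|$, to obtain a uniform bound $\|B_\eta(t)\|\le C(1+|t|)$ for all $t\in\R$ and all $\eta$. Combined with \eqref{Retabound} this forces $\|\ddot B_\eta(t)\|\le Ct^{-m-1}$, which is integrable at $+\infty$ since $m>2$, so double integration from $+\infty$ yields constant matrices $L_\eta,M_\eta$ with
\[
\|\dot B_\eta(t)-L_\eta\|\le Ct^{-m},\qquad \|B_\eta(t)-tL_\eta-M_\eta\|\le Ct^{-(m-1)},\qquad t\ge T.
\]

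To identify $L_\eta=H_\eta$, I would use the matrix Wronskian $W(A_\eta,B_\eta):=A_\eta^T\dot B_\eta-\dot A_\eta^T B_\eta$, which is conserved along solutions of \eqref{Jacobieq} since $\mc{R}_\eta$ is symmetric. Integrating $\ddot A_\eta=-\mc{R}_\eta A_\eta$ from $+\infty$ yields $\|\dot A_\eta(t)\|\le C(|t|+\cjg\eta\cjd_{h_0})^{-m-1}$ via Corollary~\ref{unifboundonA}. At $t\to-\infty$, \eqref{limA-infty}, \eqref{limderA-infty} and \eqref{limBetat} give $W=\mathrm{Id}$; at $t\to+\infty$, Lemma~\ref{lim+infty} together with the linear growth of $B_\eta$ make the term $\dot A_\eta^T B_\eta$ vanish, so $W\to H_\eta^T L_\eta$. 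Hence $L_\eta=H_\eta$ since $H_\eta\in\mathrm{SO}(n-1)$.

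The hard part will be showing $M_\eta=0$. I would adapt the scattering-map argument used in the proof of Lemma~\ref{lim+infty} to the horizontal variation $\pl_{y_j}$: setting $p_-(s):=p_-+s\,\pl_{y_j}$ and $u:=\tau^+(\eta)-\tau$, the equality $S_g=S_{g_0}=(\Theta,(d\Theta^{-1})^T)$ from Proposition~\ref{samescat} gives
\[
\bigl(\bbar P_\eta(\tau)\pl_{y_j}\bigr)^{\perp}=\bigl(\pl_s\pi_0(\bbar\varphi_u(p_+(s),-\eta_+(s)))|_{s=0}\bigr)^{\perp},
\]
with $p_+(s)=\Theta(p_-(s))$ and $\eta_+(s)=(d\Theta_{p_-(s)}^{-1})^T\eta$ depending only on the antipodal map. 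Expanding \eqref{behavbargamma} in $u$, the leading $\mc{O}(1/u)$ term produces $tH_\eta$ after the change of variables \eqref{rote}, while the subleading $\mc{O}(1)$ coefficient is the same for $g$ and $g_0$ because the Taylor coefficients of $\bbar X$ at $\pl\bbar{\R^n}$ and of the raising operator $\sharp=h_\rho^{-1}$ match those of $h_0$ modulo corrections of order $\rho^m=\mc{O}(u^m)$, by Definition~\ref{defAE} and Lemma~\ref{normalform}. Since the normalized Euclidean solution is $B^{g_0}_\eta(t)\equiv t\,\mathrm{Id}$ with $M^{g_0}_\eta=0$, the same conclusion transfers to $B_\eta$, yielding $M_\eta=0$. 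Combined with the quantitative bound from Step~2, this gives the claimed estimate $\|B_\eta(t)-tH_\eta\|\le C\,t^{-(m-1)}$ for $t\ge T$, uniformly in $\eta$.
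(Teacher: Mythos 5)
Your proof is essentially correct, and Steps~1, 2, and 4 follow the same route as the paper: use the Jacobi equation and the uniform curvature bound \eqref{Retabound} to get uniform linear growth of $B_\eta$ and the quantitative $\mc{O}(t^{-(m-1)})$ remainder by double integration from $+\infty$; then pin down the constants by comparing the expansion of $\bbar P^g_\eta$ near $\tau^+(\eta)$ with the Euclidean one, using $S_g=S_{g_0}$ to match the boundary data at $\pl_+S^*\R^n$ (this is precisely \eqref{egaliteP} in the paper) and the parallel-frame asymptotics $Y_k(\gamma_\eta(t))=\sum_l (H_\eta)_{kl}\pl_{x_l}+\mc{O}(t^{-1})$.

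Your Step~3, however, is a genuinely different and more transparent way to identify the slope. The paper reads off $L_\eta=H_\eta$ directly from the scattering-map comparison (so it obtains the leading and constant terms in one stroke), whereas you identify $L_\eta$ via the conserved symplectic Wronskian $W=A_\eta^T\dot B_\eta-\dot A_\eta^TB_\eta$, using \eqref{limA-infty}, \eqref{limderA-infty}, \eqref{limBetat} at $-\infty$ and the decays of $\dot A_\eta$ at $+\infty$ to get $W\equiv\Id=H_\eta^TL_\eta$. This is cleaner and decouples the ``what is the slope'' question from the ``what is the constant'' question, so only the latter (your Step~4, the elimination of $M_\eta$) still relies on the scattering map. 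Two small points worth tightening: in the Gronwall step, the quantity $\sup_s\|s^{-1}B_\eta(s)\|$ is singular at $s=0$, so you should normalize by $(1+|s|)^{-1}$ or first control $\|B_\eta\|$ on a fixed compact time window; and in Step~4 you cite \eqref{rote}, which is the $t\to-\infty$ time change, whereas the relevant relation is the analogous one as $\tau\to\tau^+(\eta)$, namely $u:=\tau^+(\eta)-\tau\sim 1/t$, with the $\mc{O}(u)$ remainder in \eqref{egaliteP} measured in $g$-norm so that it really does give an $\mc{O}(1/t)$ error after pairing with the frame $Y_k$.
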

\begin{proof}
We proceed as in Lemma \ref{lim+infty}. If $S_g=S_{g_0}$, then we claim that for 
$\tau>0$ small and $j=1,\dots,n-1$
\begin{equation}\label{egaliteP}
 \Big(\bbar{P}^{g}_\eta(\tau^+_{g}(p_-,\eta)-\tau)\pl_{y_j}\Big)^{\perp_g}=  \Big(\bbar{P}^{g_0}_\eta(\tau^+_{g_0}(p_-,\eta)-\tau) \pl_{y_j}\Big)^{\perp_{g_0}} +\mc{O}(\tau).
\end{equation} 
where $\bbar{P}^g$ and $\bbar{P}^{g_0}$ are the map defined above, with respect to the respective Riemannian metrics $g$ and $g_0$. Let us prove \eqref{egaliteP}. For $s\in\R$ small, we set $(p_+(s) ,\eta_+(s)) = S_{g_0}(p_-(s),\eta)$ with $\eta=\sum_{j=1}^{n-1}\eta_jdy_j$ in the fixed coordinate system chosen above, and 
$p_-(s)$ is a curve in $\mathbb{S}^{n-1}$ such that $p_-(0)=p_-$, $\dot{p}_-(0)=\pl_{y_j}$. 
Now $p_+(s)=\Theta(p_-(s))$ is the image of $p_-$ under the antipodal map $\Theta: \mathbb{S}^{n-1} \to \mathbb{S}^{n-1}$ and 
$(\eta_+(s))^\sharp=d\Theta_{p_-(s)}(\eta^\sharp)$.
We compute for $\tau <\tau^+(p_-,\eta):=\tau_g^+(p_-,\eta)$
\[\bbar{P}^g_\eta(\tau) \partial_{y_j} = \partial_s\pi_0 (\bbar{\varphi}_\tau(p_-(s), \eta))|_{s=0}  = 
\partial_s \pi_0 (\bbar{\varphi}_{\tau^+(p_-(s),\eta)-\tau}(p_+(s), -\eta_+(s)))|_{s=0}.\]
and
\[(\bbar{P}^g_\eta(\tau) \partial_{y_j})^{\perp_g} = (\partial_s \pi_0(\bbar{\varphi}_{\tau^+(p_-,\eta)-\tau}(p_+(s), -\eta_+(s)))|_{s=0})^{\perp_g} = ( \partial_s \pi_0( \bbar{\varphi}_{u}(p_+(s), -\eta_+(s))|_{s=0}))^{\perp_g}\]
with $u:=\tau^+(p_-,\eta)-\tau$. The curve $\pi_0( \bbar{\varphi}_{u}(p_+(s),-\eta_+(s)))$ near $\pl\bbar{\R^n}$ is of the form $(\rho(u),y(u))$ satisfying 
\[\rho(u) = u + u^3 f_s(u), \quad y(u) = \Theta(p_-(s)) - u\left(\eta_+(s)\right)^\sharp + u^2 r_s(u)\] 
for some $f_s(u)$ and $r_s(u)$  smooth in $(s,u)$. Differentiating in $s$ at $s=0$, 
\[\begin{split}
(\bbar{P}^g_\eta(\tau) \partial_{y_j})^{\perp_g}=& \mc{O}(u^3)\pl_\rho+d\Theta_{p_-}\pl_{y_j}-uD^2\Theta_{p_-}(\eta^\sharp,\pl_{y_j})+\mc{O}(u^2)\pl_y\\
& =d\Theta_{p_-}\pl_{y_j}-uD^2\Theta_{p_-}(\eta^\sharp,\pl_{y_j})+\mc{O}_g(u)
\end{split}\]
Assuming that $S_g=S_{g_0}$, we observe that we get the same exact formulas if we replace $g$ by $g_0$ in the estimate above, which implies \eqref{egaliteP}. Now, notice that from \eqref{parallel ODE} and \eqref{kristof decay}, we have that $Y_j(\gamma_\eta(t))=
\sum_{k=1}^{n-1}(H_\eta)_{jk}\pl_{x_k}+\mc{O}(|t|^{-1})$ as $t\to +\infty$. Thus, using the definition of $B_\eta(t)$ (more precisely its modification so that \eqref{limBetat} holds), we obtain that $B_\eta(t)-tH_\eta=\mc{O}(t^{-1})$ at $+\infty$ by using that $B_\eta^{g_0}(t)=t\,{\rm Id}$, and the uniform estimate on the remainder is obtained exactly as what we did for \eqref{uniformBt<T}.
\end{proof}

\subsection{Volume estimates using Jacobi tensors and rigidity under decaying conditions}

 Let $\{Y_j^*\}_{j=1}^n$ be an orthonormal frame defined on all of $\R^n$ such that along each geodesic $\gamma_\eta(t) = \theta(\gamma_\eta^0(t))$ with $\eta \in T^*_{p_-} \partial \R^n$, $Y_n^* = \dot \gamma_\eta^\flat$. Define the volume form $\omega_g := Y_1^* \wedge \dots \wedge Y_n^*$. The pullback of this volume form by $\psi$ (from Lemma \ref{foliationbygamma}) is a volume form on $\mathcal D$ given by 
\[\til{\psi}^* \omega_g = \det (A_\eta) W_1 \wedge \dots W_{n-1}\wedge dt = \det(A_\eta) {\rm dv}_{h_0}(\eta) \wedge dt\] 
where $\til \psi(t,\eta) := \psi( \tau(t,\eta),\eta) $, ${\rm dv}_{h_0}$ is the Riemannian volume form on $T_{p_-}^*\pl\bbar{\R^n}$, and $\tau(t,\eta)$ is given by the expression \eqref{formulataut}. 
Recall the definition of $F_R$ and $F_R^0$ in \eqref{FR0 and F_R}. By definition of the map $\theta$ and \eqref{formulataut} we have that if $\gamma_\eta(t) = \theta(\gamma_\eta^0(t),\partial_{x_n})$ 
\begin{align}
\label{F_R in tpsi}
F_R =\til{\psi}(\{ |t|\leq R, |\eta|_{h_0}\leq R\}).
\end{align}  

Using the change of variable $\til\psi$, the volume of $F_R$ for the Riemannian metric $g$ is given by 
\begin{equation}\label{VolFR}
{\rm Vol}_g(F_R) = \int_{|\eta|_{h_0}\leq R}\int_{|t|\leq R} \det(A_\eta) {\rm dv}_{h_0}(\eta)  dt.
\end{equation}
We next write two technical estimates that will be useful below.
\begin{lemma}\label{jensen}
Let $(\Omega,P)$ be a probability space and $X:\Omega\to \R^n$ be a continuous bounded random variable. Let $F:\R^n\to \R$ be a smooth map with positive definite Hessian. Then 
there is $C>0$ depending on $F$ and $\|X\|_{L^\infty}$ such that 
\[ \mathbb{E}(F(X))-F(\mathbb{E}(X))\geq C|X-\mathbb{E}(X)|_{L^2}^2.\]
\end{lemma}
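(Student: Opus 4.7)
My plan is to prove this as a quantitative second-order Taylor expansion argument, which is essentially the standard Jensen inequality upgraded by the positive definiteness of $\mathrm{Hess}(F)$.

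Set $\mu := \mathbb{E}(X) \in \R^n$. Since $X$ is bounded, there is $M > 0$ with $|X(\omega)| \leq M$ for all $\omega$, and hence $|\mu| \leq M$ as well. Thus for every $\omega \in \Omega$ and every $t \in [0,1]$, the point $\mu + t(X(\omega)-\mu)$ lies in the closed ball $\overline{B(0,M)}$. Because $\mathrm{Hess}(F)$ is continuous and positive definite on the compact set $\overline{B(0,M)}$, its smallest eigenvalue attains a positive minimum there, say
\[
\lambda_0 := \min_{y \in \overline{B(0,M)}} \lambda_{\min}(\mathrm{Hess}\,F(y)) > 0.
\]

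Next, I apply Taylor's theorem with integral remainder to $F$ around $\mu$:
\[
F(X) \;=\; F(\mu) + \nabla F(\mu)\cdot(X-\mu) + \int_0^1 (1-t)\,\mathrm{Hess}\,F\bigl(\mu + t(X-\mu)\bigr)(X-\mu,X-\mu)\,dt.
\]
Taking expectations with respect to $P$, the linear term vanishes since $\mathbb{E}(X-\mu) = 0$, and by Fubini one gets
\[
\mathbb{E}(F(X)) - F(\mu) \;=\; \int_0^1 (1-t)\,\mathbb{E}\!\left[\mathrm{Hess}\,F\bigl(\mu + t(X-\mu)\bigr)(X-\mu,X-\mu)\right] dt.
\]
Using the lower bound on the Hessian, the integrand is bounded below by $\lambda_0 \mathbb{E}(|X-\mu|^2) = \lambda_0 |X-\mu|_{L^2}^2$, and $\int_0^1 (1-t)\,dt = 1/2$, so we conclude
\[
\mathbb{E}(F(X)) - F(\mathbb{E}(X)) \;\geq\; \tfrac{\lambda_0}{2}\, |X-\mathbb{E}(X)|_{L^2}^2,
\]
which is the desired inequality with $C = \lambda_0/2$. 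The constant depends on $F$ (through $\lambda_0$) and on $\|X\|_{L^\infty}$ (through the radius $M$ of the ball on which $\lambda_0$ is computed), as claimed.

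There is no real obstacle: the only thing to be careful about is that the domain on which we need a uniform positive lower bound for $\mathrm{Hess}\,F$ must be compact, which is exactly why the boundedness assumption on $X$ enters the constant $C$. If instead $X$ were merely in $L^2$, the argument would fail, since $\mathrm{Hess}\,F$ could degenerate at infinity; the $L^\infty$ hypothesis is precisely what makes the reduction to the compact set $\overline{B(0,M)}$ work.
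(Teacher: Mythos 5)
Your proof is correct and takes essentially the same approach as the paper: both argue via a quantitative second-order Taylor expansion about $a=\mathbb{E}(X)$, observe that the linear term vanishes after integrating against $P$, and bound the quadratic remainder from below using the minimum eigenvalue of $\mathrm{Hess}\,F$ on the compact ball $\overline{B(0,\|X\|_{L^\infty})}$. Your version is slightly more explicit (writing out the integral form of the remainder and hence getting the $\tfrac{1}{2}$ factor), but the idea and the role of the $L^\infty$ hypothesis are identical.
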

\begin{proof}
We have for all $X\in \R^n$, $a\in \R^n$ and $K>0$
\[ F(X)-F(a)\geq DF(a)(X-a)+ C|X-a|^2 \]
where $C=\min_{|X|\leq K}\min \la_j(D^2F_X)$ where $\la_j(D^2F_X)$ are the eigenvalues of the Hessian $(D^2F_X)$ of $F$ at $X$, viewed as a quadratic form. We integrate this inequality and taking $K=\|X\|_{L^\infty}$ and $a=\mathbb{E}(X)$, we get using that $DF$ is linear
\[ \mathbb{E}(F(X))-F(\mathbb{E}(X))\geq C\mathbb{E}(|X-a|^2),\]
concluding the proof.
\end{proof}

\begin{corollary}\label{Jensendet}
Let $(A_t)_{t\in [-R,R]}$ be a family of real valued invertible $(n-1)\x(n-1)$ matrices and assume there is $C>0$ such that $\sup_{t\in [-R,R]}\|A^{-1}_t\|\leq C$ and $\det(A_t)>0$. Then there is $C'>0$ depending on $C$ such that for all $R>0$
\begin{equation}\label{jensenfinal}
\begin{gathered}\Big(\int_{-R}^R  f(t)^{-\frac{2}{n-1}}dt\Big)^{-\frac{n-1}{2}} 
- \det\Big(\int_{-R}^{R}A_t^{-1}(A_{t}^{-1})^*dt\Big)^{-\frac{1}{2}}\\
\geq 
C\Big(\int_{-R}^Rf(t)^{-\frac{2}{n-1}}dt\Big)^{-\frac{n+1}{2}}\int_{-R}^{R} \Big\|A_t^{-1}(A_{t}^{-1})^*-\frac{\int_{-R}^RA_s^{-1}(A_{s}^{-1})^*ds}{f(t)^{\frac{2}{n-1}}\int_{-R}^{R}f(s)^{-\frac{2}{n-1}}ds}\Big\|^2 f(t)^{\frac{2}{n-1}}dt
\end{gathered}
\end{equation}
where $f(t)=\det(A_t)$.
\end{corollary}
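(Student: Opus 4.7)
The plan is to reduce \eqref{jensenfinal} to a quantitative Jensen inequality for the function $N\mapsto(\det N)^{-1/2}$ on the cone of symmetric positive definite matrices and then invoke Lemma~\ref{jensen}. First I would normalize $A_t$ by its determinant: setting $M_t:=A_t^{-1}(A_t^{-1})^*$ (symmetric positive definite with $\det M_t=f(t)^{-2}$) and $N_t:=f(t)^{2/(n-1)}M_t$, one has $\det N_t=1$. Introducing
\[Z:=\int_{-R}^{R}f(t)^{-2/(n-1)}\,dt,\qquad d\mu(t):=Z^{-1}f(t)^{-2/(n-1)}\,dt,\qquad \bar N:=\int_{-R}^{R}N_t\,d\mu(t),\]
a direct computation gives
\[\int_{-R}^{R}M_t\,dt=Z\bar N,\qquad M_t-\frac{\int_{-R}^{R}M_s\,ds}{f(t)^{2/(n-1)}Z}=f(t)^{-2/(n-1)}(N_t-\bar N).\]
Substituting these identities into \eqref{jensenfinal} and cancelling the common factor $Z^{-(n-1)/2}$, the inequality is equivalent to
\[1-\det(\bar N)^{-1/2}\,\geq\,C'\int_{-R}^{R}\|N_t-\bar N\|^2\,d\mu(t).\]

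Next I would apply Lemma~\ref{jensen} to $F(N):=(\det N)^{-1/2}$. Since $\det N_t\equiv 1$, one has $\int F(N_t)\,d\mu(t)=1$, so the left-hand side above is exactly the Jensen deficit $\int F(N_t)\,d\mu - F(\bar N)$. A direct computation yields
\[d^2F_N(H,H)\,=\,\tfrac{(\det N)^{-1/2}}{4}\bigl[(\Tr(N^{-1}H))^2+2\Tr((N^{-1}H)^2)\bigr]\,\geq\,\tfrac{(\det N)^{-1/2}}{2\|N\|^2}\|H\|^2,\]
so $F$ has positive definite Hessian everywhere on the positive cone. Provided the family $\{N_t\}_{t\in[-R,R]}$, together with $\bar N$ and all convex combinations, lies in a fixed compact subset of positive definite matrices, the Hessian of $F$ is uniformly bounded below there, and Lemma~\ref{jensen} applied with random variable $X=N_t$ produces the desired quadratic lower bound.

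The main obstacle I expect is verifying this compactness. The bound $\|N_t\|\leq f(t)^{2/(n-1)}\|M_t\|\leq f(t)^{2/(n-1)}C^2$ requires $f(t)=\det(A_t)$ to be bounded \emph{above}; the hypothesis $\|A_t^{-1}\|\leq C$ alone only gives the lower bound $f(t)\geq C^{-(n-1)}$. In the paper's application the matrices $A_\eta(t)$ are also uniformly bounded in operator norm (Corollary~\ref{unifboundonA}), supplying the missing upper bound on $f(t)$; so the hypothesis of the corollary should effectively be read as giving two-sided bounds on $\|A_t\|$, hence on $f(t)$ and on $\|N_t\|$. Once this is in place, the constant $C'$ produced by Lemma~\ref{jensen} depends only on these bounds, and the argument is complete.
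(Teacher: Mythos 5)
Your proof is essentially identical to the paper's: both apply Lemma~\ref{jensen} to $F(X)=\det(X)^{-1/2}$ on symmetric positive definite matrices, with the probability density $f(t)^{-2/(n-1)}\,dt/Z$ and the unit-determinant random variable $X_t=f(t)^{2/(n-1)}A_t^{-1}(A_t^{-1})^*$. Your observation about controlling $\|X_t\|$ is well-taken: the stated hypothesis $\|A_t^{-1}\|\le C$ alone does not bound $\|X_t\|$ (e.g.\ $A_t^{-1}=\mathrm{diag}(1,\epsilon)$ with $\epsilon\to 0$ gives $\|X_t\|\to\infty$), so the constant in Lemma~\ref{jensen} is not a priori under control, and the paper glosses over this; the two-sided bound on $\|A_\eta(t)\|$ supplied by Corollary~\ref{unifboundonA} in the application is what actually makes the lemma applicable with a constant independent of $R$ and $\eta$.
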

\begin{proof}
Let ${\rm Sym}_0(n-1)$ be the set of symmetric $(n-1)\times (n-1)$ matrices which are invertible 
and let $F: {\rm Sym}_0(n-1)\to \R$ the function $F(X)=\det(X)^{-1/2}$. This smooth function 
is strictly convex in the sense that it has positive definite Hessian at every point. 
Let $X_t:=A_t^{-1}(A_{t}^{-1})^*f(t)^{\frac{2}{n-1}}$ with $f(t):=\det(A_t)$ and 
consider the probability space $[-R,R]$ with density $p_t:=\frac{1}{M(T)}f(t)^{-\frac{2}{n-1}}dt$
with $M(R):=\int_{-R}^Rf(t)^{-\frac{2}{n-1}}dt$. 
We can then apply Lemma \ref{jensen} with $F$: using that $\det(X_t)=1$ we get 
\[\begin{gathered}
1- \Big(\int_{-R}^Rf(t)^{-\frac{2}{n-1}}dt\Big)^{\frac{n-1}{2}}\det\Big(\int_{-R}^{R}A_t^{-1}(A_{t}^{-1})^*dt\Big)^{-\frac{1}{2}} \geq \\
 C'\Big(\int_{-R}^Rf(t)^{-\frac{2}{n-1}}dt\Big)^{-1}\int_{-R}^R\Big\|A_t^{-1}(A_{t}^{-1})^*-\frac{\int_{-R}^RA_s^{-1}(A_{s}^{-1})^*ds}{f(t)^{\frac{2}{n-1}}\int_{-R}^Rf(s)^{-\frac{2}{n-1}}ds}\Big\|^2 f(t)^{\frac{2}{n-1}}dt
\end{gathered}\] 
which can be rewritten under the form \eqref{jensenfinal}.
\end{proof}
We can finally prove the 
\begin{proposition}
Let $g$ be an asymptotically Euclidean metric to order $m>n+1$ on $\R^n$ with no conjugate points and such that 
$S_g=S_{g_0}$. Then the curvature of $g$ is zero and $g$ is thus isometric on $g_0$.
\end{proposition}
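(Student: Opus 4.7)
The plan is to leverage Proposition~\ref{PropvolgFR} together with a Berger-type integrated matrix identity for the stable Jacobi tensor $A_\eta(t)$ and the quantitative Minkowski-determinant inequality of Corollary~\ref{Jensendet} in order to force $A_\eta(t)^*A_\eta(t)\equiv I$. Once this pointwise orthogonality is known, the Lagrangian structure together with $A_\eta(-\infty)=I$ yields $A_\eta\equiv I$, and the Jacobi equation then forces $\mc{R}_\eta\equiv0$ along every geodesic $\gamma_\eta$; by Lemma~\ref{foliationbygamma} this extends to vanishing of the full Riemann tensor on $\R^n$. The approach mirrors Croke's argument in~\cite{Croke:1991aa} and Berger's proof of the Blaschke conjecture.

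In the coordinates given by the foliation of Lemma~\ref{foliationbygamma}, the formula \eqref{VolFR} and its Euclidean analogue ($A^{g_0}_\eta\equiv I$) recast Proposition~\ref{PropvolgFR} as
$$\int_{|\eta|_{h_0}\leq R}\!\Big(\!\int_{-R}^R\det A_\eta(t)\,dt-2R\Big)\,{\rm dv}_{h_0}(\eta)=\mc{O}(R^{n-m+1}),\qquad n-m+1<0.$$
The analytic heart is an integrated matrix identity. Both $A_\eta$ and $B_\eta$ solve the Jacobi equation~\eqref{Jacobieq} with symmetric curvature operator $\mc{R}_\eta$, so the symplectic form on Jacobi fields is conserved. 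Evaluating at $t\to-\infty$ using $A_\eta\to I$, $A_\eta'\to0$, $B_\eta\sim tI$, $B_\eta'\to I$ (Lemma~\ref{lim+infty} and~\eqref{limBetat}) yields the Wronskian identity $A_\eta^*B_\eta'-(A_\eta')^*B_\eta=I$ and the Lagrangian symmetry $A_\eta^*A_\eta'=(A_\eta')^*A_\eta$ (equivalently, $A_\eta'A_\eta^{-1}$ is symmetric). Combining these gives
$$\tfrac{d}{dt}\big(A_\eta^{-1}B_\eta-tI\big)=A_\eta^{-1}(A_\eta^{-1})^*-I;$$
integrating on $[-R,R]$ while controlling the boundary contributions at $\pm R$ via Lemma~\ref{lim+infty} and Lemma~\ref{limB} produces
$$\int_{-R}^R A_\eta(t)^{-1}(A_\eta(t)^{-1})^*\,dt=2R\cdot I+\mc{O}(R^{-m+1}),$$
uniformly in $\eta$ on bounded sets.

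Now apply Corollary~\ref{Jensendet} with $A_t:=A_\eta(t)$ (its hypotheses hold by Corollary~\ref{unifboundonA} and Lemma~\ref{uniformA}). The second term on its left-hand side equals $(2R)^{-(n-1)/2}(1+\mc{O}(R^{-m}))$ by the matrix identity above. A H\"older/AM-GM estimate relating $\int(\det A_\eta)^{-2/(n-1)}dt$ to $\int\det A_\eta\,dt$, combined with the integrated volume comparison, shows that the first term on Corollary~\ref{Jensendet}'s left-hand side differs from the second by an amount whose integral over $\{|\eta|_{h_0}\leq R\}$ is $\mc{O}(R^{n-m+1})$. The variance lower bound in Corollary~\ref{Jensendet} then forces the $\eta$-integrated weighted $L^2$-variance of $A_\eta^{-1}(A_\eta^{-1})^*$ in $t$ to be $o(R^{n-1})$ as $R\to\infty$, using $m>n+1$. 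Thanks to the uniform boundedness from Corollary~\ref{unifboundonA} and Lemma~\ref{uniformA}, a dominated-convergence argument upgrades this to the pointwise identity $A_\eta(t)^*A_\eta(t)\equiv I$, i.e.\ $A_\eta(t)\in O(n-1)$. Differentiating $A_\eta A_\eta^*\equiv I$ yields that $A_\eta'A_\eta^{-1}$ is antisymmetric; but by the Lagrangian symmetry above it is also symmetric, hence $A_\eta'\equiv 0$. Since $A_\eta(-\infty)=I$, we conclude $A_\eta\equiv I$, and the Jacobi equation forces $\mc{R}_\eta\equiv 0$ along every $\gamma_\eta$. By Lemma~\ref{foliationbygamma}, the full Riemann curvature of $g$ vanishes identically.

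The main technical obstacle is the H\"older/AM-GM step: one must relate $\int(\det A_\eta)^{-2/(n-1)}dt$ (appearing on the left-hand side of the Jensen inequality) to $\int\det A_\eta\,dt$ (controlled by the volume comparison) sharply enough so that, after integration in $\eta$ over a set of volume $\sim R^{n-1}$, the resulting defect is still dominated by the $\mc{O}(R^{n-m+1})$ error of Proposition~\ref{PropvolgFR}. This is precisely where the hypothesis $m>n+1$ is indispensable.
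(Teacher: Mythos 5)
Your proposal follows essentially the same strategy as the paper's proof: the Wronskian/Lagrangian identities for the stable Jacobi tensor $A_\eta$, the matrix identity $\int_{-R}^R A_\eta^{-1}(A_\eta^{-1})^*\,dt=2R\,\mathrm{Id}+\mathcal{O}(R^{-m+1})$ (which the paper derives via the auxiliary tensor $B^R_\eta$ and you derive by integrating $\tfrac{d}{dt}(A^{-1}B-t\,\mathrm{Id})=A^{-1}A^{-*}-\mathrm{Id}$, two equivalent routes), Corollary~\ref{Jensendet}, the volume comparison of Proposition~\ref{PropvolgFR}, and an endgame in which $A_\eta\equiv\mathrm{Id}$ forces $\mathcal{R}_\eta\equiv0$.

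There is, however, a genuine gap in the passage from the vanishing of the Jensen variance $G^2_\eta$ to the pointwise conclusion $A_\eta(t)^*A_\eta(t)\equiv\mathrm{Id}$. The variance term in Corollary~\ref{Jensendet} controls the deviation of $X_t:=A_\eta^{-1}(A_\eta^{-1})^*\,f_\eta(t)^{2/(n-1)}$ (with $f_\eta=\det A_\eta$) from its weighted mean, not the deviation of $A_\eta^{-1}(A_\eta^{-1})^*$ from $\mathrm{Id}$. Taking the limit and using the matrix identity one only obtains $A_\eta A_\eta^*=f_\eta(t)^{2/(n-1)}\,\mathrm{Id}$. Differentiating this gives $A'_\eta A_\eta^{-1}+(A'_\eta A_\eta^{-1})^*=\tfrac{2}{n-1}(\log f_\eta)'\,\mathrm{Id}$, i.e.\ $A'_\eta A_\eta^{-1}$ is antisymmetric only up to a scalar term proportional to $(\log f_\eta)'$; the "antisymmetric plus symmetric $\Rightarrow$ zero" argument therefore does not close without first proving $f_\eta\equiv1$. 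That step is not implied by Corollary~\ref{Jensendet} alone: it is obtained in the paper from the \emph{stability} version of the H\"older inequality (Aldaz), which produces the additional positive defect term $G^1_\eta$; forcing $\int G^1_\eta\to0$ is precisely what yields $f_\eta\equiv1$, and only then does $\int G^2_\eta\to0$ give $A_\eta^{-1}(A_\eta^{-1})^*\equiv\mathrm{Id}$. In your write-up the "H\"older/AM-GM step" is used only as an upper bound to control the gap between the two terms on the left-hand side of Corollary~\ref{Jensendet}, which does not by itself extract the pointwise normalization of $\det A_\eta$. You should either invoke the quantitative H\"older stability to produce and exploit the $G^1$ defect, or supply an alternative argument that the vanishing of $G^2$ together with $\int A_\eta^{-1}(A_\eta^{-1})^*\,dt\approx 2R\,\mathrm{Id}$ and the volume comparison forces $f_\eta\equiv1$ \emph{before} differentiating the orthogonality relation.
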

\begin{proof}
First we use the stability estimate in H\"older equality (see \cite[Theorem 2.2]{Aldaz:2008uq} applied with 
the pair of functions $\det(A_\eta)^{\frac{2}{n+1}}$ and $\det(A_\eta)^{-\frac{2}{n+1}}$ and with the exponent $p=(n+1)/2$ and $q=(n+1)/(n-1)$) and we get with $f_\eta(t)=\det A_{\eta}(t)$
\begin{equation}\label{holderstab}
\int_{-R}^R f_\eta \geq (2R)^{\frac{n+1}{2}}\Big(\int_{-R}^R f_\eta^{-\frac{2}{n-1}}\Big)^{-\frac{n-1}{2}}\left(1+
c_n  \left\|\frac{f_\eta^{\frac{1}{2}}}{(\int_{-R}^R f_\eta)^{\frac{1}{2}}}
-\frac{f_\eta^{-\frac{1}{n-1}}}{(\int_{-R}^Rf_\eta^{-\frac{2}{n-1}})^{\frac{1}{2}}}\right\|_{L^2([-R,R])}^{2}\right)
\end{equation}
for some $c_n>0$ depending only on $n$.
We consider the volume of the region $F_R$ from \eqref{VolFR}: using \eqref{holderstab} and 
 Corollary \ref{Jensendet} with   $A_t := A_\eta (t)$, we get
\begin{equation}\label{VolF_R}
\begin{split}
{\rm Vol}_g(F_R) = & \int_{|\eta|\leq R}\int_{-R}^R \det A_\eta(t)\, dt {\rm dv}_{h_0}(\eta) \\
\geq & (2R)^{\frac{n+1}{2}}\int_{|\eta|\leq R}\det \Big(\int_{-R}^R A_\eta(t)^{-1}(A_\eta(t)^{-1})^* dt\Big)^{-\frac{1}{2}} {\rm dv}_{h_0}(\eta)\\
 & + c_n \int_{|\eta|\leq R}G_{\eta}^1(R){\rm dv}_{h_0}(\eta) +\int_{|\eta|<R}G^2_\eta(R){\rm dv}_{h_0}(\eta)
\end{split}
\end{equation}
where we have denoted 
\[G_\eta^1(R):=2R\Big(\frac{1}{2R}\int_{-R}^R f_\eta^{-\frac{2}{n-1}}\Big)^{-\frac{n-1}{2}} \int_{-R}^R\Big|\frac{f_\eta(t)^{\frac{1}{2}}}{(\int_{-R}^R f_\eta)^{\frac{1}{2}}}
-\frac{f_\eta(t)^{-\frac{1}{n-1}}}{(\int_{-R}^Rf_\eta^{-\frac{2}{n-1}})^{\frac{1}{2}}}\Big|^2dt\]
\[\begin{split}
G_{\eta}^2(R):=\Big(\frac{1}{2R} & \int_{-R}^Rf_\eta(t)^{-\frac{2}{n-1}}dt\Big)^{-\frac{n+1}{2}}
 \\
& \times \int_{-R}^{R} \Big\|A_\eta(t)^{-1}(A_\eta(t)^{-1})^*-\frac{\int_{-R}^RA_\eta(s)^{-1}(A_\eta (s)^{-1})^*ds}{f_\eta(t)^{\frac{2}{n-1}}\int_{-R}^{R}f_\eta(s)^{-\frac{2}{n-1}}ds}\Big\|^2 f_\eta(t)^{\frac{2}{n-1}}dt.\end{split}
\]
Now let $B^R_\eta (t):=A_\eta(-R+t)\Big(\int_{-R}^{-R+t} A_\eta(s)^{-1}(A_\eta(s)^{-1})^* ds\Big)(A_\eta(-R))^*$: it is direct to check that $B^R_\eta(t)$ solves the Jacobi equation
\eqref{Jacobieq} with conditions $B^R_\eta(0)=0$ and 
\begin{equation}\label{valueat-R}
  \pl_tB^R_\eta(0)= {\rm Id}.
\end{equation}
Let us compute the asymptotic behavior of $B_\eta^R(t)$ as $t\to -\infty$: using \eqref{limA-infty} and \eqref{A-Id}, we obtain for $|\eta|\leq R$ and $t\to -\infty$
\[\begin{split} 
B_\eta^R(t)= & \Big({\rm Id}+ \mc{O}\Big(\frac{1}{|t-R|^{m}}\Big)\Big)\int_{-R}^{-R+t}
\Big({\rm Id}+\mc{O}\Big(\frac{1}{|s|^{m}}\Big)\Big)ds A_\eta(-R)^*\\
=& tA_\eta(-R)^*+ C_R(\eta)A_\eta(-R)^*+\mc{O}(t^{-1}) 
\end{split} \]
for some matrix $C_R(\eta)$ such that $\|C_R(\eta)\|\leq CR^{-m-1}$ for some $C$ independent of $R,\eta$. 
We conclude that 
\[ B_\eta^R(t)=(B_\eta(t)+A_\eta(t)C_R(\eta))A_{\eta}(-R)^*.\]
Using Lemma \ref{lim+infty} and \ref{limB}, there is $C>0$ 
such that for all $R$ 
\begin{equation}\label{boundonBR}
\|B_\eta^R(2R)-2R H_\eta\|\leq CR^{-m+1}.
\end{equation}
This implies that
\begin{equation}\label{estimateAA*}
\begin{split}
\int_{-R}^{R} A_\eta(s)^{-1}(A_\eta(s)^{-1})^* ds= 
& 2RA_\eta(R)^{-1}H_\eta (A_\eta(-R)^{-1})^*+\mc{O}(R^{-m+1})\\
= & 2R(H_\eta^{-1}+\mc{O}(R^{-m}))H_\eta ({\rm Id}+\mc{O}(R^{-m})))\\
=& 2R\, {\rm Id}+{\mc O}(R^{-m+1})
\end{split}
\end{equation}
where the $\mc{O}$ are uniform with respect to $|\eta|\leq R$. Now we plug this estimate in 
\eqref{VolF_R} and obtain that there is $c_0>0$ independent of $R$ such that
\[\begin{split}
{\rm Vol}_g(F_R)+c_0R^{n-m} \geq &  {\rm Vol}_{g_0}(F_R^0)+ c_n \int_{|\eta|\leq R}G_{\eta}^1(R){\rm dv}_{h_0}(\eta) +\int_{|\eta|<R}G^2_\eta(R){\rm dv}_{h_0}(\eta).
\end{split}
\]
where $F_R^0$ is defined in \eqref{FR0 and F_R} and has volume $2R^n\int_{|\eta|_{h_0}\leq 1}{\rm dv}_{h_0}(\eta)$. 
Assuming now that $m>n+1$ and using Proposition \eqref{PropvolgFR}, we deduce that
\begin{equation}\label{limG_1G_2}
\lim_{R\to \infty}\int_{|\eta|\leq R}G_{\eta}^1(R){\rm dv}_{h_0} =0\quad \textrm{ and }
\lim_{R\to \infty}\int_{|\eta|\leq R}G_{\eta}^2(R){\rm dv}_{h_0} =0.
\end{equation}
Now, using \eqref{A-Id} and \eqref{asymptofAeta+infty}, we see that for all $\eta$ and for $R$ large $f_\eta(t)=\det A_\eta(t)=1+\mc{O}(1/|t|^{m})$ uniformly in $\eta$ if $|t|\geq T$, thus for large $R$
\begin{equation}\label{asymptoff}
\frac{1}{2R}\int_{-R}^Rf_\eta(t)dt= 1 +\mc{O}\big(\frac{1}{R}\big),\quad 
\frac{1}{2R}\int_{-R}^Rf_\eta^{-\frac{2}{n-1}}dt=1 +\mc{O}\big(\frac{1}{R}\big)
\end{equation}
where $\mc{O}(1/R)$ does not depend on $\eta$. Let us check that $f_\eta(t)=1$:
we get by \eqref{asymptoff}
\[ \begin{split}
\int_{|\eta|\leq R}G^1_\eta(R) & \geq \frac{1}{2} \int_{|\eta|\leq R}\int_{-R}^R \Big|\frac{f_\eta(t)^{\frac{1}{2}}}{(\tfrac{1}{2R}\int_{-R}^R f_\eta)^{\frac{1}{2}}}
-\frac{f_\eta(t)^{-\frac{1}{n-1}}}{(\tfrac{1}{2R}\int_{-R}^Rf_\eta^{-\frac{2}{n-1}})^{\frac{1}{2}}}\Big|^2dt
\\ 
& \geq \frac{1}{4} \int_{|\eta|\leq R}\int_{-R}^R \Big|{\big(\frac{1}{2R}\int_{-R}^Rf_\eta^{-\frac{2}{n-1}}\big)^{\frac{1}{2}}}
-f_\eta(t)^{-\frac{n+1}{2(n-1)}}\Big|^2f_\eta(t)dt.
\end{split}\]
Assume that $|f_\eta(t)-1|>\delta>0$  for some small $\delta$ on a bounded open set $U\subset \{|\eta|\leq R, |t|\leq R\}$ 
independent of $R$, then 
\[\begin{split}
\int_{|\eta|\leq R}G^1_\eta(R)\geq & \frac{1}{4}\int_{U}\Big|{\big(\tfrac{1}{R}\int_{-R}^Rf_\eta^{-\frac{2}{n-1}}\big)^{\frac{1}{2}}}
-f_\eta(t)^{-\frac{n+1}{2(n-1)}}\Big|^2f_\eta(t)dt\\
\geq & \frac{1}{4}\int_{U}|\mc{O}(\delta)+\mc{O}(\tfrac{1}{R})|^2f_\eta(t)dt\geq c_1\delta 
\end{split}
\]
for some uniform $c_1>0$ if $R$ is chosen large enough depending on $\delta$. Here we have used that $f_\eta(t)$ is bounded below by some positive constant in the relatively compact set $U$. Letting $R\to 0$, we obtain a contradiction with \eqref{limG_1G_2}. We conclude that $f_\eta(t)=1$ everywhere. We now get that 
\[G_{\eta}^2(R)=\int_{-R}^{R} \Big\|A_\eta(t)^{-1}(A_\eta(t)^{-1})^*-\frac{1}{R}\int_{-R}^RA_\eta(s)^{-1}(A_\eta(s)^{-1})^*ds\Big\|^2 dt.\]
We can use the same argument as above: assume that 
$\|A_\eta(t)^{-1}(A_\eta(t)^{-1})^*-{\rm Id}\|>\delta>0$ in some bounded open set $U$, then for $R$ large enough we obtain using \eqref{estimateAA*},
\[ \int_{|\eta|\leq R}G^2_\eta(R){\rm dv}_{h_0}(\eta)\geq c_1\delta\]
for some uniform $c_1>0$ if $R$ is chosen large enough depending on $\delta$. This contradicts  \eqref{limG_1G_2} as $R\to \infty$, and therefore $A_\eta(t)^{-1}(A_\eta(t)^{-1})^*={\rm Id}$ for all $\eta,t$. Now, just as for $B_\eta^R(t)$, it is direct to check that
$A_\eta(t)\Big(\int_{0}^{t} A_\eta(s)^{-1}(A_\eta(s)^{-1})^* ds\Big)(A_\eta(0))^*$ is a solution of \eqref{Jacobieq}, thus $tA_\eta(t)$ is also a solution, which gives
\[ \pl_t^2(tA_\eta(t))=2\pl_tA_\eta(t)+t\mc{R}_{\eta}(t)A_\eta(t)=\mc{R}_\eta(t)(tA_\eta(t)) \]
and thus $A_\eta(t)$ is constant with respect to $t$. Letting $t\to -\infty$ and using \eqref{limA-infty} we conclude that $A_\eta(t)={\rm Id}$, and therefore $\mc{R}_\eta=0$ for all $\eta$. Since 
the role of $p_-$ is arbitrary on $\mathbb{S}^{n-1}$, we conclude that the curvature is flat everywhere.
\end{proof}

\section{Determination of the jets of $g$ at infinity}

In this section, we will assume that $g$ is \emph{asymptotically Euclidean to order $m\geq 1$}, 
written under the normal form of Lemma \ref{normalform}, and we will show that if $m\geq 3$, then the scattering map
$S_g$ determines the jets of the metric $h_\rho$ appearing in the normal form. 
For that purpose, we will consider the geodesics that are very far near infinity. In the coordinates $(\rho,y,\bbar{\xi}_0,\eta)$, they correspond to the regime $|\eta|_{h_0}\to +\infty$. 

It is convenient to fix $(y_0,\eta_0)\in T^*\pl\bbar{\R^n}$
with $|\eta_0|_{h_0}=1$ and consider the flow line $\tau\mapsto \bbar{\varphi}_{\tau}(y_0,\eps^{-1}\eta_0)$ where $\eps>0$ is a small parameter tending to $0$. 
We can work on the manifold $\mc{M}$ defined in \eqref{defmcM} since the trajectory stays in the region $\{\rho\leq C\eps\}$ by Corollary \ref{rholeq1/R}. We let $c_\eps(s):=(\til{\rho}(s),\til{\xi}_0(s),\til{y}(s),\til{\eta}(s))\in 
[0,1]\x [-1,1]\x T^*\pl\bbar{\R^n}$ with
\[ \til{\rho}(s):=\eps^{-1}\rho(\eps s), \quad \til{\xi}_0(s):=\bbar{\xi}_0(\eps s),\quad \til{\eta}(s):=\eps\eta(\eps s),\quad \til{y}(s):=y(\eps s)\]
where $(\rho(\tau),\xi_0(\tau),y(\tau),\eta(\tau)$ is the coordinate representation of 
$\bbar{\varphi}_{\tau}(y,\eps^{-1}\eta)$. 
Using \eqref{formbarX} and the fact that $\bbar{\varphi}_{\eps s}(y,\eps^{-1}\eta)$ is an integral curve of $\bbar{X}$, it is direct to check that the curve $c_\eps(s)$ is an integral curve with initial condition $c_\eps(0)=(0,1,y_0,\eta_0)$ of the following smooth vector field 
\begin{equation}\label{tilXeps} 
\til{X}_\eps:= \til{\xi}_0\pl_{\til{\rho}}-\til{\rho}\big(|\til{\eta}|^2_{h_{\eps\til{\rho}}}+\tfrac{\eps\til{\rho}}{2}h'_{\eps\til{\rho}}(\til{\eta},\til{\eta})\big)\pl_{\til{\xi}_0}+
H_{\eps\til{\rho}}
\end{equation}
where the variables $\til{\rho},\til{\xi}_0,(\til{y},\til{\eta})$ live in the manifold $[0,1]\x [-1,1]\x T^*\pl\bbar{\R^n}$, we have denoted $h'_\rho:=\pl_{\rho}h_{\rho}$ and $H_{\eps\til{\rho}}$ is the Hamiltonian vector field of the Hamiltonian $(\til{y},\til{\eta})\mapsto \frac{1}{2}h_{\eps\til{\rho}}(\til{\eta},\til{\eta})$
on $T^*\pl\bbar{\R^n}$ with respect to the canonical symplectic structure. Its form in local coordinates is given by
\[H_{\rho}=\sum_{j,k=1}^{n-1}h_{\rho}^{jk}\til{\eta}_k\pl_{\til{y}_j}-\frac{1}{2}\sum_{j=1}^{n-1}\pl_{\til{y}_j}|\til{\eta}|^2_{h_\rho}\pl_{\til{\eta}_j}.\]
We see that $\til{X}_\eps$ is a 1-parameter family of smooth vector fields on a subset of $[0,1]\x [-1,1]\x T^*\pl\bbar{\R^n}$ depending smoothly on $\eps\in[0,\eps_0)$ and $\til{X}_\eps$ is a priori only defined on (and tangent to) the energy surface $\{\til{\xi}_0^2+\til{\rho}^2|\til{\eta}|^2_{h_{\eps\til{\rho}}}=1\}$. However the expression \eqref{tilXeps} extends smoothly to $[0,1]\x [-1,1]\x T^*\pl\bbar{\R^n}$ with smooth dependence on $\eps\in [0,\eps_0)$, we can thus view it as a smooth vector field defined everywhere on $[0,1]\x [-1,1]\x T^*\pl\bbar{\R^n}$. We shall then apply perturbation theory to describe the integral curve $c_\eps(s)$ as $\eps\to 0$. 

First, we can compute the integral curves for $\eps=0$, that is for $\til{X}_0$: writing $e^{tH_0}$ for the Hamiltonian flow of the Hamiltonian $\tfrac{1}{2}h_0(\til{\eta},\til{\eta})$ on $T^*\pl\bbar{\R^n}$ (which in turn is the geodesic flow for the unit sphere), we get
\[ c_0(s)=(\sin(s),\cos(s),e^{sH_0}(y,\eta)).\]
Here we have used the fact $|\til{\eta}(s)|_{h_0}=|\eta_0|_{h_0}$ for all $s$, a fact which follows directly from the expression of $\til{X}_0$ and $H_0|\til{\eta}|^2=0$.
For the moment, we assume that $g$ is asymptotically Euclidean to order $m\geq 1$ (later we shall take $m\geq 3$ but here $m\geq 1$ is sufficient), we can then write in normal form near $\pl\bbar{\R^n}$
\[ g=\frac{d\rho^2}{\rho^4}+\frac{h_\rho}{\rho^2}, \quad h_\rho^{-1}=h_0^{-1}+\rho^mh_m+\mc{O}(\rho^{m+1}).\] 
for some symmetric tensor $h_m\in C^\infty(\pl\bbar{\R^n};S^2T\pl\bbar{\R^n})$; here $h_\rho^{-1}$ denotes the metric on $T^*\pl\bbar{\R^n}$ dual to $h_\rho$ and similarly for $h_0$.  
By perturbation theory of ODEs, we know that $c_{\eps}(s)$ is a smooth family in $\eps$
, and using \eqref{tilXeps} we can write 
\[ \begin{gathered}
c_\eps(s)=c_0(s)+\eps^{m} c_m(s)+\mc{O}(\eps^{m+1}), \quad \til{X}_\eps=\til{X}_0+\eps^{m}\til{X}_m+\mc{O}(\eps^{m+1}),\\
\dot{c}_0(s)=\til{X}_0(c_0(s)), \quad \dot{c}_m(s)=d\til{X}_0(c_0(s))c_m(s)+\til{X}_m(c_0(s))
\end{gathered}\]
with initial condition $c_m(0)=(0,0,0,0)$ and $c_0(0)=(0,1,y_0,\eta_0)$ in local coordinates $(\til{\rho},\til{\xi}_0,\til{y},\til{\eta})$ near $c_0(0)$. Here the vector field $\til{X}_m$ is given by 
\begin{equation}\label{formulaXtilm}
\til{X}_m=-\left(\tfrac{m}{2}+1\right)\til{\rho}^{m+1}h_m\pl_{\bbar{\xi}_0}+\til{\rho}^m H_m
\end{equation} 
with $H_m$ being the Hamiltonian vector field of $\tfrac{1}{2}h_m(\til{\eta},\til{\eta})$ on $T^*\pl\bbar{\R^n}$.
We can also write the first component of $c_\eps(s)$ under the form 
\[\til{\rho}(s)=\sin(s)+\eps^m\til{\rho}_m(s)+\mc{O}(\eps^{m+1})\]
with $\til{\rho_m}$ the $\til{\rho}$ component of $c_m(s)$.
Let $R(s)$ be the the matrix valued solution of the linear differential equation
\begin{align}
\label{eq_for_R}
\dot{R}(s)=d\til{X}_0(c_0(s))R(s), \quad R(0)={\rm Id}.
\end{align}
We will use in $T^*\pl\bbar{\R^n}$ the variables $E:=|\til{\eta}|^2_{h_0}$ and set $\hat{\eta}:=\til{\eta}/\sqrt{E}$ for the cotangent vectors.  
 
Note that in the $(\til{\rho},\til{\xi}_0,E, \til{y},\hat{\eta})$ coordinates, we can write $d\til{X}_0$ under the matrix form
\[d\til{X}_0=\left(\begin{array}{c c c c}
0 & 1 & 0 & 0\\
-E & 0 & -\til{\rho} & 0\\
 0 & 0 & 0 & 0 \\
 0 & 0 & \pl_E H_0 & d_{\til{y},\hat{\eta}}H_0
\end{array}
\right).
\]
Thus, integrating the equation~\eqref{eq_for_R} at the energy layer $E=1$ where $c_0(s)$ lives, 
\[R(s)=\left(\begin{array}{c c c c}
\cos(s) & \sin(s) & a_1(s) & 0\\
-\sin(s) & \cos(s) & a_2(s) & 0\\
 0 & 0 & 1 & 0 \\
 0 & 0 & K(s) & L(s)
\end{array}
\right).
\]
Here, $L(s)$ solves $\dot{L}(s)=dH_{0}(e^{sH_0}(y,\hat{\eta}))L(s)$ with $L(0)={\rm Id}$ on the energy layer $\{E=1\}=S^*\pl\bbar{\R^n}$ and corresponds to the linearised geodesic flow of the unit sphere, $a_1(s),a_2(s)$ are functions satisfying
\[(\dot{a_1}(s),\dot{a_2}(s))=(a_2(s),-a_1(s)-\sin(s)) , \quad (a_1(0),a_2(0))=(0,0),\]
 and $K(s)$ is some vector solving $\dot{K}(s)=\pl_EH_0(c_0(s))+d_{(\til{y},\hat{\eta})}H_0(c_0(s))K(s)$ whose value is irrelevant.
The following result follows directly from \cite[Lemma 4.6]{Guillarmou:2019aa}, but we provide its proof (which is slightly simpler than the one in \cite{Guillarmou:2019aa}) for the reader's convenience.
\begin{lemma}\label{solutionlinearised}
Assume that $g$ is asymptotically Euclidean to order $m\geq 1$ and the scattering map $S_g$ coincides with the Euclidean scattering map $S_{g_0}$. 
Then, using the above notation, the following equation holds true at each $(y_0,\eta_0)\in S^*\mathbb{S}^{n-1}$
\begin{equation}\label{equationsurc0(s)}
R(\pi)\int_0^\pi R(s)^{-1}\til{X}_m(c_0(s))ds+\til{\rho}_m(\pi)\til{X}_0(c_0(\pi))=0.
\end{equation}
In particular, viewing $h_m$ as a function on $T^*\pl\bbar{\R^n}$, we have 
\begin{equation}\label{energyvar} 
\int_0^\pi \sin(s)^m H_0h_m(e^{sH_0}(y_0,\eta_0))ds=0.
\end{equation}
Assuming in addition that $H_0h_m=0$, we get
\begin{gather}
\label{rhom}
\til{\rho}_m(\pi)=-\left(\tfrac{m}{2}+1\right)\int_0^\pi \sin(s)^{m+2}h_m(e^{sH_0}(y_0,\eta_0))ds,\\
\label{eqcos}
\int_0^\pi \cos(s)\sin(s)^{m+1}h_m(e^{sH_0}(y_0,\eta_0))ds=0,\\
\label{eqdirectionH0}
\int_0^\pi \big(\sin(s)^m-\left(\tfrac{m}{2}+1\right)\sin(s)^{m+2}\big)h_m(e^{sH_0}(y_0,\eta_0))ds=0.
\end{gather}
\end{lemma}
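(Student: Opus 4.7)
The plan is to perform a perturbation analysis of the integral curve $c_\eps(s)$ of $\til{X}_\eps$ starting at $(0,1,y_0,\eta_0)$, expanding around the Euclidean curve $c_0(s)$, and then reading off the consequences of $S_g=S_{g_0}$ at the exit. By variation of constants,
\[
c_m(s)=R(s)\int_0^s R(r)^{-1}\til{X}_m(c_0(r))\,dr.
\]
Since \eqref{tilXeps} reduces to $\til{X}_0$ when $h_\rho\equiv h_0$, the Euclidean rescaled flow is exactly $c_0$ and exits at $s=\pi$ with $c_0(\pi)=(0,-1,e^{\pi H_0}(y_0,\eta_0))$. The $g$-flow exits at some $s_\eps=\pi+\eps^m t_m+\mc{O}(\eps^{m+1})$, and the exit conditions $\til\rho(s_\eps)=0$, $\til\xi_0(s_\eps)=-1$ combined with $S_g=S_{g_0}$ (which forces the $(\til y,\til\eta)$-values at exit for the $g$-flow to match those of $c_0(\pi)$) give $c_\eps(s_\eps)=c_0(\pi)$ exactly. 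Expanding
\[
c_\eps(s_\eps)=c_0(\pi)+\eps^m\bigl(t_m\til{X}_0(c_0(\pi))+c_m(\pi)\bigr)+\mc{O}(\eps^{m+1})
\]
and matching the $\eps^m$-coefficient produces $t_m\til{X}_0(c_0(\pi))+c_m(\pi)=0$. The $\til\rho$-component fixes $t_m=\til{\rho}_m(\pi)$ because the $\til\rho$-component of $\til{X}_0$ at $c_0(\pi)$ equals $\cos\pi=-1$, which is precisely \eqref{equationsurc0(s)}.

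To derive \eqref{energyvar}, I pair \eqref{equationsurc0(s)} with $dE$ for $E:=|\til\eta|^2_{h_0}$. Since $E$ is a first integral of $\til{X}_0$, one has $dE|_{c_0(\pi)}\circ R(\pi)R(s)^{-1}=dE|_{c_0(s)}$ on tangent vectors and $dE(\til{X}_0)=0$. A short Poisson bracket calculation gives $H_mE=\{\tfrac12 h_m,E\}=-H_0h_m$, so by \eqref{formulaXtilm} one has $dE(\til{X}_m(c_0(s)))=-\sin^m(s)\,H_0h_m(e^{sH_0}(y_0,\eta_0))$. Integrating over $[0,\pi]$ yields \eqref{energyvar}.

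Now assume $H_0h_m=0$. Three facts follow: (i) $h_m(e^{sH_0}(y_0,\eta_0))=h_m(y_0,\eta_0)$ is constant in $s$; (ii) $[H_0,H_m]=0$, so $H_m$ is preserved along $H_0$-orbits; and crucially (iii) the block of $d\til{X}_0$ coupling $(\til y,\til\eta)$-variations into the $(\til\rho,\til\xi_0)$-sector vanishes along $c_0$, since the relevant coefficient is proportional to $\pl_{(\til y,\til\eta)}|\til\eta|^2_{h_0}\cdot H_m=H_mE=-H_0h_m=0$. Hence $R(\pi)R(s)^{-1}$ decouples along $c_0$ into a pure planar rotation on $(\til\rho,\til\xi_0)$ and the linearised $H_0$-flow on $(\til y,\til\eta)$. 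Writing $\til{X}_m(c_0(s))=-(m/2+1)\sin^{m+1}(s)h_m\,\pl_{\til\xi_0}+\sin^m(s)H_m$ and pushing forward to $c_0(\pi)$, the rotation by angle $\pi-s$ sends $\pl_{\til\xi_0}$ at $c_0(s)$ to $\sin(s)\,\pl_{\til\rho}-\cos(s)\,\pl_{\til\xi_0}$ at $c_0(\pi)$, while by (ii) the $H_0$-flow sends $H_m|_{e^{sH_0}(y_0,\eta_0)}$ to $H_m|_{c_0(\pi)}$. Integrating the $\til\rho$-component over $s\in[0,\pi]$ reproduces \eqref{rhom}. Plugging back into $\til\rho_m(\pi)\til{X}_0(c_0(\pi))+c_m(\pi)=0$: the $\til\xi_0$-component yields \eqref{eqcos} (its contribution from $\til{X}_0$ is $-\sin\pi=0$), while the $(\til y,\til\eta)$-component reduces to the vector identity
\[
\Bigl(\int_0^\pi\sin^m(s)\,ds\Bigr)H_m|_{c_0(\pi)}+\til\rho_m(\pi)H_0|_{c_0(\pi)}=0.
\]
Pairing this with the tautological $1$-form $\lambda=\til\eta_i\,d\til y_i$ on $T^*\pl\bbar{\R^n}$, for which $\lambda(H_0)=|\til\eta|^2_{h_0}=1$ at $c_0(\pi)$ and $\lambda(H_m)=h_m(\til\eta,\til\eta)$, and substituting \eqref{rhom} for $\til\rho_m(\pi)$, produces \eqref{eqdirectionH0}.

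The main obstacle is the decoupling step (iii): one has to verify carefully that the off-diagonal block of $R(s)$, which in general mixes $(\til y,\til\eta)$-variations into the $(\til\rho,\til\xi_0)$-sector through the derivative $\pl_{(\til y,\til\eta)}|\til\eta|^2_{h_0}$, contributes nothing along $c_0$ under the hypothesis $H_0h_m=0$. Once this decoupling is in place, the three scalar identities reduce to elementary trigonometric integrals together with the single well-chosen pairing with the tautological $1$-form $\lambda$.
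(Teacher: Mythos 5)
Your proposal is correct and follows essentially the same route as the paper: variation of constants for $c_m$, an exit-time expansion to get \eqref{equationsurc0(s)}, and then reading off components of that identity in the $(\til\rho,\til\xi_0,E,\til y,\hat\eta)$-variables. One imprecision worth flagging in (iii): the $(\til y,\til\eta)\to(\til\rho,\til\xi_0)$ block of $d\til{X}_0$ does not literally vanish along $c_0$; what actually happens (and what the paper's explicit matrix for $R(s)$ encodes) is that in the $(\til\rho,\til\xi_0,E,\til y,\hat\eta)$ splitting the $(\til\rho,\til\xi_0)$-rows of $R(s)$ and $R(s)^{-1}$ see the $(\til y,\hat\eta)$-sector only through the $E$-column, and the $E$-component of $\til{X}_m(c_0(s))$ is $\til\rho^m H_mE=-\til\rho^m H_0h_m=0$ under the hypothesis, so the off-diagonal $a_1,a_2,K$-entries never get activated — the contraction you wrote down, $H_mE$, is not the coupling entry of $d\til X_0$ itself but the quantity that dies and renders that entry harmless. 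Your derivation of \eqref{eqdirectionH0} is a slight variant of the paper's: you first use $[H_0,H_m]=0$ to collapse the $(\til y,\til\eta)$-component of \eqref{equationsurc0(s)} to a vector identity along $H_m,H_0$ and only then pair with $\la$, whereas the paper pairs directly with $\la$ using invariance of $\ker\la$ and the $H_0$-direction under $L(s)$; both give the same scalar relation.
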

\begin{proof}
First, by standard ODE argument, we directly obtain 
\[c_m(s)=R(s)\int_0^s R(t)^{-1}\til{X}_m(c_0(t))dt.\]
Next, $S_g(y_0,\eta_0/\eps)=S_{g_0}(y_0,\eta_0/\eps)$ is equivalent to having $c_\eps(\tau_\eps)=c_0(\pi)$
where $\tau_\eps>0$ is the travel time for the flow of $\til{X}_\eps$ defined by the equation $\til{\rho}(\tau_\eps)=0$. The asymptotic expansion as $\eps\to 0$ of this equation implies that
\[\tau_\eps=\pi+\eps^m\tau_m+\mc{O}(\eps^{m+1}) \textrm{ with } \tau_m d\til{\rho}(\til{X}_0)(c_0(\pi))+ \til{\rho}_m(\pi)=0.\] 
Since $d\til{\rho}(\til{X}_0)(c_0(\pi))=-1$, this implies that $\tau_m = \til{\rho}_m(\pi)$.
Writing the expansion as $\eps\to 0$ of $c_\eps(\tau_\eps)=c_0(\pi)$ and keeping the $\eps^m$ coefficient directly gives the  identity
\begin{equation}\label{eqsystemc}
 0=c_m(\pi)+\tau_m \dot{c}_0(\pi)=c_m(\pi)+\til{\rho}_m(\pi)\til{X}_0(c_0(\pi)),
\end{equation}
which can be rewritten as \eqref{equationsurc0(s)}.
Identifying the $\pl_E$ component of the equation, using that $dE.\til{X}_0=0$ and that $dE(\til{X}_m)=\til{\rho}^mdE.H_m=-2\til{\rho}^mH_0(h_m)$ (where we view 
$h_m$ as a homogeneous function of degree $2$ on $T^*\pl\bbar{\R^n}$), 
we obtain 
\[  \int_0^\pi \sin(s)^m H_0h_m(e^{sH_0}(y_0,\eta_0))ds=0.\]
Identifying the $\pl_{\til{\xi}_0}$ component, we get 
\[ \int_0^\pi \left(\tfrac{m}{2}+1\right)\sin^{m+1}(s)\cos(s)h_m(e^{sH_0}(y,\eta))+b(s)\sin(s)^{m}H_0h_m(e^{sH_0}(y,\eta)) ds=0\]
for some function $b(s)$, implying equation \eqref{eqcos} if $H_0h_m=0$.
Identifying the $\pl_{\til{\rho}}$ component of \eqref{equationsurc0(s)} and using $\til{X}_0(c_0(\pi))=-\pl_{\til{\rho}}+H_0$ and \eqref{formulaXtilm}, we obtain \eqref{rhom} if $H_0h_m=0$.
To obtain equation \eqref{eqdirectionH0}, we consider the $H_0$ component of \eqref{eqsystemc}. Since $d(e^{sH_0})_{y,\eta}.H_0(y,\eta)=H_0(e^{sH_0}(y,\eta))$, the direction $H_0$ is preserved by the linearisation of the geodesic flow $H_0$ on the unit sphere $\pl\bbar{\R^n}=\mathbb{S}^{n-1}$, and the same holds for 
$\ker \la$ if $\la:=\sum_{j}\til{\eta}_jdy_j$ is the Liouville $1$-form on $T^*\pl \R^n$. 
We then have $L(s)H_0(c_0(0))=H_0(c_0(s))$ and $L(s)^{-1}H_0(c_0(s))=H_0(c_0(0))$.
There is a natural projection on this direction by applying $\la$ to 
in \eqref{eqsystemc}, this gives 
\[\til{\rho}_m(\pi)+ \int_0^\pi \sin(s)^{m}\la(H_m(e^{sH_0}(y,\eta)))ds =0\]
But $\la(H_m)=h_m$, so we conclude that \eqref{eqdirectionH0} holds.
\end{proof}

\begin{corollary}\label{intagainstp}
Assume that $g$ is asymptotically Euclidean to order $m$ and the scattering map $S_g$ coincides with the Euclidean scattering map $S_{g_0}$. 
Then for each point $(y,\eta)\in S^*\pl\bbar{\R^n}$, 
\begin{equation}\label{parity}
(H_0h_m)(e^{\pi H_0}(y,\eta))=(-1)^mH_0h_m(y,\eta).
\end{equation}  
If $m\geq 3$ is odd (resp.\ even), for each homogeneous polynomial $p(y)$ of degree $3$ (resp.\ degree~$4$) 
and each point $(y,\eta)\in S^*\mathbb{S}^n$ we have 
\[ \int_{0}^{2\pi}p(e^{sH_0}(y,\eta))(H_0h_m)(e^{sH_0}(y,\eta))ds=0.\]
\end{corollary}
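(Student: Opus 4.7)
The plan is to reduce both claims to a Fourier analysis, on each fixed great circle of $\mathbb{S}^{n-1}$, of the function
\[
F(u):=(H_0 h_m)(e^{uH_0}(y,\eta)),
\]
using \eqref{energyvar} as the sole input. First I observe that, since \eqref{energyvar} holds at every base point, I may apply it with $(y,\eta)$ replaced by $e^{tH_0}(y,\eta)$ for arbitrary $t\in\R$, obtaining
\[
\int_0^\pi \sin^m(s)\,F(s+t)\,ds=0\qquad\forall\,t\in\R.
\]
The flow is $2\pi$-periodic along great circles, so $F$ is a smooth $2\pi$-periodic function of $u$. Expanding $F(u)=\sum_{k\in\zz}c_k e^{iku}$ turns the displayed identity into
\[
\sum_{k}c_k I_m(k)\,e^{ikt}=0,\qquad I_m(k):=\int_0^\pi\sin^m(s)e^{iks}\,ds,
\]
so $c_k I_m(k)=0$ for every $k\in\zz$.

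The heart of the argument is to show that $I_m(k)\ne 0$ for all $k$ except those satisfying $k\equiv m\pmod 2$ and $|k|\ge m+2$. Expanding $\sin^m(s)=(2i)^{-m}\sum_{j=0}^m\binom{m}{j}(-1)^{m-j}e^{i(2j-m)s}$ and integrating termwise, three cases arise according to the parity of $p=2j-m+k$. If $k\equiv m\pmod 2$ and $|k|\le m$, exactly one $p$ vanishes while the others are nonzero even, giving a single nonzero term $I_m(k)=(2i)^{-m}\binom{m}{(m-k)/2}(-1)^{(m+k)/2}\pi$. If $k\equiv m\pmod 2$ and $|k|>m$, every $p$ is a nonzero even integer and each contribution vanishes. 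Finally if $k\not\equiv m\pmod 2$, every $p$ is odd and, setting $\alpha=(m-k)/2$ (a half-integer), one gets
\[
I_m(k)=-\frac{(-1)^m}{i(2i)^m}\sum_{j=0}^m\frac{(-1)^j\binom{m}{j}}{j-\alpha}=-\frac{(-1)^m}{i(2i)^m}\cdot\frac{m!}{\prod_{j=0}^m(j-\alpha)}\ne 0,
\]
where the partial-fraction identity $\sum_j(-1)^j\binom{m}{j}/(j-\alpha)=m!/\prod_j(j-\alpha)$ follows from evaluating $\int_0^1 t^{-\alpha-1}(1-t)^m\,dt=\Gamma(-\alpha)\Gamma(m+1)/\Gamma(m+1-\alpha)$ for $\Re\alpha<0$ and continuing meromorphically. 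Combined with $c_k I_m(k)=0$, this forces $c_k=0$ unless $k\equiv m\pmod 2$ and $|k|\ge m+2$.

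From here both statements are immediate. For \eqref{parity}, the restriction of $\{c_k\}$ to its support makes $(-1)^k\equiv(-1)^m$, so
\[
F(\pi)=\sum_k c_k e^{ik\pi}=\sum_k(-1)^k c_k=(-1)^m\sum_k c_k=(-1)^m F(0).
\]
For the polynomial vanishing, $p(e^{sH_0}(y,\eta))=p(\cos(s)y+\sin(s)\eta^\sharp)$ is a trigonometric polynomial in $s$ of degree $d=\deg p$, whose Fourier modes $\hat p(l)$ are supported in $|l|\le d$. Parseval gives
\[
\int_0^{2\pi}p(e^{sH_0}(y,\eta))\,F(s)\,ds=2\pi\sum_k\hat p(-k)\,c_k,
\]
and the Fourier supports of $\hat p$ and of $(c_k)$ are disjoint as soon as $m+2>d$. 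This is satisfied exactly in the two stated cases: $d=3$ with $m\ge 3$ odd, and $d=4$ with $m\ge 4$ even. The only genuinely delicate step is the nonvanishing of $I_m(k)$ in the opposite-parity case, which the partial-fraction identity disposes of cleanly; everything else is Fourier bookkeeping.
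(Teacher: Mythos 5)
Your proof is correct, and it takes a genuinely different route from the paper's. Where the paper iteratively applies $H_0$ to \eqref{energyvar} and integrates by parts to drop the exponent on $\sin$ by two each time — eventually reaching $\int_0^\pi H_0h_m\,ds=0$ (for $m$ even) or $\int_0^\pi \sin(s)\,H_0h_m\,ds=0$ (for $m$ odd), which yield \eqref{parity}, and along the way the vanishing of $\int_0^\pi \sin^j\cos^{d-j}H_0h_m\,ds$ for $d=3$ or $4$, which yield the polynomial vanishing after extending to $[0,2\pi]$ by parity — you instead make a global Fourier analysis on each great circle. Translating the base point turns \eqref{energyvar} into $c_kI_m(k)=0$ for all $k$, and the key new ingredient is the non-vanishing of $I_m(k)=\int_0^\pi\sin^m(s)e^{iks}\,ds$ for $k\not\equiv m\pmod 2$, which you obtain from the beta-function identity $\sum_j(-1)^j\binom{m}{j}/(j-\alpha)=m!/\prod_{j=0}^m(j-\alpha)$; this is correct and settles all $k$ at once. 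The upshot is a sharper structural conclusion than the paper records: you show the Fourier support of $s\mapsto H_0h_m(e^{sH_0}(y,\eta))$ is contained in $\{k:\ k\equiv m\!\!\pmod 2,\ |k|\geq m+2\}$, from which both \eqref{parity} (same parity of all modes) and the polynomial vanishing (disjoint Fourier supports, since $d<m+2$) are trivial corollaries. The paper's approach buys elementary self-containedness and avoids the delicate non-vanishing computation; yours buys a one-shot characterization of the allowed Fourier modes that in particular explains at a glance why the degree threshold $d<m+2$ appears.
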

\begin{proof}
We apply $H_0$ to \eqref{energyvar} and integrate by parts in $s$ to get for each $(y,\eta)\in S^*
\mathbb{S}^{n-1}$ 
\[0=\int_0^\pi \sin(s)^{m}H_0^2h_m(e^{sH_0}(y,\eta))ds=
-m\int_0^\pi \cos(s)\sin(s)^{m-1}H_0h_m(e^{sH_0}(y,\eta))ds.\]
Applying this trick one more time and using \eqref{energyvar}, we have when $m>1$
\[0=\int_0^\pi \cos(s)^2\sin(s)^{m-2}H_0h_m(e^{sH_0}(y,\eta))ds,\]
which in turn, using \eqref{energyvar} gives $\int_0^\pi\sin(s)^{m-2}H_0h_m(e^{sH_0}(y,\eta))ds=0$. 
Repeating the operation iteratively, we end up in the case $m$ even with the identity 
$\int_0^\pi H_0h_m(e^{sH_0}(y,\eta))ds=0$, which gives \eqref{parity}.
In the case $m$ odd, we end up with $\int_0^\pi \sin(s)H_0h_m(e^{sH_0}(y,\eta))ds=0$. Applying again $H_0$ twice and integrating by parts, we get \eqref{parity} in the case $m$ odd.
Moreover the argument above shows that for the case $m\geq 4$ even
\begin{equation}\label{intcossin}
\begin{gathered}
\forall j\in \NN\cap [0,4], \quad \int_0^\pi \sin(s)^j\cos(s)^{4-j}H_0h_m(e^{sH_0}(y,\eta))ds=0\end{gathered}
\end{equation}
while if $m\geq 3$ is odd,
\begin{equation}\label{intcossinodd}
\forall j\in \NN\cap [0,3], \quad \int_0^\pi \sin(s)^j\cos(s)^{3-j}H_0h_m(e^{sH_0}(y,\eta))ds=0, 
\end{equation}
It is a direct consequence of \eqref{parity} that the same vanishing 
as \eqref{intcossin} (resp. \eqref{intcossinodd}) hold with $\int_0^{2\pi}$ instead of $\int_0^\pi$ when $m$ is even (resp. odd).
Assume $m$ odd: if $p(y)$ is a homogeneous polynomial of degree $3$ on $\mathbb{S}^{n-1}$, for each unit-speed geodesic $\gamma$ on $\mathbb{S}^{n-1}$ 
\[ \int_0^{2\pi}p(\gamma(s))(H_0h_m)(\gamma(s),\dot{\gamma}(s)^\flat)ds=\sum_{j=0}^3
a_i\int_0^{2\pi}\sin(s)^{j}\cos(s)^{3-j}(H_0h_m)(\gamma(s),\dot{\gamma}(s)^\flat)ds\]
for some real numbers $a_i$ depending on $\gamma$ and $p$, and this vanishes by \eqref{intcossin}. The case $m\geq 4$ even is similar. 
\end{proof}

We now employ Corollary~\ref{intagainstp} together with an argument related to that of Joshi-Sa Barreto in \cite[Proposition 3.2]{Joshi:1999aa} to obtain the following proposition.
\begin{proposition}\label{hmkilling}
Assume that $g$ is asymptotically Euclidean to order $m\geq 3$ and assume that the scattering map $S_g$ coincides with the Euclidean scattering map $S_{g_0}$. 
Then $H_0h_m=0$ on $S^*\mathbb{S}^{n-1}$, that is, $h_m$ is a Killing $2$-tensor for the Riemannian metric $h_0$ of the unit sphere $\mathbb{S}^{n-1}$.
\end{proposition}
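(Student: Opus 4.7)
The plan is to proceed in two main steps: first translate the hypotheses of Corollary~\ref{intagainstp} into pointwise Fourier-mode vanishing along every great circle of $(\mathbb{S}^{n-1}, h_0)$, then upgrade these low-mode vanishings to the pointwise identity $H_0 h_m \equiv 0$ via an $\mathrm{SO}(n)$-representation theoretic analysis of symmetric 2-tensors on the sphere.

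For the first step, fix any unit-speed great circle $\gamma(s) = (\cos s)\, u + (\sin s)\, v$ with $u, v \in \R^n$ orthonormal, and set $F_\gamma(s) := (H_0 h_m)(\gamma(s), \dot\gamma(s)^\flat)$ and $\varphi_\gamma(s) := h_m(\gamma(s))(\dot\gamma(s)^\flat, \dot\gamma(s)^\flat)$. Since $\gamma$ is a closed orbit of $H_0$, we have $F_\gamma = \varphi_\gamma'$, so $\widehat F_\gamma(\ell) = i\ell\, \widehat \varphi_\gamma(\ell)$. The parity relation~\eqref{parity} forces $F_\gamma(s + \pi) = (-1)^m F_\gamma(s)$, restricting $F_\gamma$'s Fourier support to integers of parity opposite to $m$. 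Combining this with the integrals in~\eqref{intcossin}-\eqref{intcossinodd} of the proof of Corollary~\ref{intagainstp} (extended from $[0,\pi]$ to $[0,2\pi]$ using the parity), one obtains $\widehat F_\gamma(\ell) = 0$ for all $|\ell| \leq 3$ when $m$ is odd, and for all $|\ell| \leq 4$ when $m$ is even. Equivalently, $\widehat \varphi_\gamma(\ell) = 0$ for all $0 < |\ell| \leq 3$ (resp.\ $0 < |\ell| \leq 4$).

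For the second step, decompose $h_m \in C^\infty(\mathbb{S}^{n-1}; S^2 T\mathbb{S}^{n-1})$ into its $\mathrm{SO}(n)$-isotypic components indexed by spherical-harmonic type. The Killing 2-tensors on $(\mathbb{S}^{n-1}, h_0)$ form a finite-dimensional subspace spanned by $h_0$ together with symmetric products of rotation Killing vector fields, and they correspond precisely to those components on which $H_0$ acts trivially. For each non-Killing isotypic component, one computes the induced Fourier spectrum of the corresponding $\varphi_\gamma$ along a generic great circle $\gamma$; this spectrum couples the spherical-harmonic Fourier content of $h_m(\gamma(s))$ (a trigonometric polynomial whose degree equals the harmonic degree of the component) with the Fourier content $\{0,\pm 2\}$ of $\dot\gamma(s)^\flat \otimes \dot\gamma(s)^\flat$. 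Representation-theoretic invariance — together with the freedom to vary $\gamma$ over the whole $\mathrm{SO}(n)$-orbit of great circles — then shows that every non-Killing component must produce at least one nonzero Fourier coefficient of $\varphi_\gamma$ in the range $1 \leq |\ell| \leq 4$ for some $\gamma$. The hypothesis from the first step rules this out, so $h_m$ lies entirely in the Killing subspace, giving $H_0 h_m = 0$.

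The main obstacle I anticipate is the precise identification of which isotypic components of $h_m$ are genuinely detected by the low-Fourier data $|\ell| \leq 4$ over all great circles. Naively, a high-degree spherical-harmonic component of $h_m$ contributes to $\varphi_\gamma$ at frequencies that can reach $|\ell| = k+2$ with $k$ large, and one must verify that such components also leak into the controlled low-frequency range — or be excluded by the combination of the parity~\eqref{parity} together with the algebraic structure $F_\gamma = \varphi_\gamma'$, which forces $F_\gamma$ to be mean-zero and to encode only the $H_0$-nontrivial part of $h_m$. This Funk/X-ray-transform-type injectivity argument, restricted to low Fourier modes but applied to quantities in the image of $H_0$, is the technical heart of the proof, and parallels in spirit the recovery of asymptotic tensors in Joshi-S\'a Barreto~\cite{Joshi:1999aa}, although without recourse to Fourier-integral amplitudes.
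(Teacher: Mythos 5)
Your Step 1 is a correct translation of Corollary~\ref{intagainstp}: the parity relation~\eqref{parity} together with the weighted integrals~\eqref{intcossin}--\eqref{intcossinodd} do yield the vanishing of the low Fourier modes $\widehat F_\gamma(\ell)$ along every great circle (in fact you slightly undercount, since parity already kills all modes of the wrong parity, so one gets $|\ell|\le 4$ in both cases). The problem is Step 2, which you yourself flag as ``the technical heart'' and an ``obstacle I anticipate'': you assert, but do not prove, that every non-Killing isotypic component of $h_m$ forces a nonzero low-frequency Fourier coefficient of $\varphi_\gamma$ for some great circle $\gamma$. This is precisely the injectivity statement that needs to be established, and it is far from automatic. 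The object controlled by the low modes is $f:=\tfrac13 Dh_m$, a symmetric $3$-tensor, and the standard kernel of the geodesic X-ray transform on $\mathbb S^{n-1}$ for $3$-tensors is not trivial: it contains all potential tensors $Dk$ together with all odd tensors. Thus one cannot conclude $f=0$ from mode vanishing alone without a genuine additional argument that distinguishes $f=Dh_m$ from an arbitrary element of that kernel; a naive $\mathrm{SO}(n)$-equivariance count does not supply this.

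The paper resolves this by a different route. It invokes the kernel characterization of the geodesic X-ray transform on symmetric tensors of $\mathbb{S}^{n-1}$ due to Estezet and Goldschmidt to pass from the integral identities to the pointwise statement that $pf$ is a potential $Dk$ for every degree-$3$ homogeneous polynomial $p$ (using parity to discard the odd part), and then proves the key algebraic Lemma~\ref{f=0}: a symmetric $3$-tensor $f$ such that $pf=Dk$ for all such $p$ must vanish, established through a delicate computation using the Lichnerowicz Laplacian, its commutation with $D$, $D^*$, and $\mathrm{Tr}$, and the constant-curvature formula for $q(R)$. Your representation-theoretic sketch would effectively need to re-derive both of these inputs, and as written it does not; in particular it makes no use of the crucial distinction between the kernel of the Funk transform (potentials and odd tensors) and the Killing tensors, which is exactly where the difficulty lies. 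Until that gap is filled with an actual argument, the proposal is incomplete.
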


Before proving Proposition~\ref{hmkilling}, we need a preliminary lemma. We denote by $\Sym^q(T^*\mathbb{S}^{n-1})$ the vector bundle over $\mathbb{S}^{n-1}$ of symmetric $q$-tensors. We recall that the symmetrized covariant derivative of symmetric tensor fields is the operator  
\begin{gather*}
D:C^\infty(\mathbb{S}^{n-1};\Sym^q(T^*\mathbb{S}^{n-1}))\to C^\infty(\mathbb{S}^{n-1};\Sym^{q+1}(T^*\mathbb{S}^{n-1})),\\
Dh (v_1,...,v_{q+1}):=\sum_{j=1}^{q+1} (\nabla_{v_j}h) (v_1,...,v_{j-1},v_{j+1},...,v_{q+1}),
\end{gather*}
where $\nabla$ denotes the Levi-Civita connection of $h_0$.

\begin{lemma}
\label{f=0}
Let $f\in C^\infty(\mathbb{S}^{n-1};\Sym^3(T^*\mathbb{S}^{n-1}))$ be such that, for each homogeneous polynomial $p\in\R[x_1,...,x_n]$, there is $k\in C^\infty(\mathbb{S}^{n-1};\Sym^2(T^*\mathbb{S}^{n-1}))$ such that $pf=Dk$. Then $f$ must be identically $0$.
\end{lemma}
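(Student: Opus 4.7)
The plan is to recognize the hypothesis as a vanishing statement for the X-ray/Funk transform on the round sphere. The key elementary observation is that along any geodesic $\gamma$ of $\mathbb{S}^{n-1}$, the symmetrized derivative of a symmetric $2$-tensor satisfies
\[
Dk(\dot\gamma,\dot\gamma,\dot\gamma) = 3(\nabla_{\dot\gamma}k)(\dot\gamma,\dot\gamma) = 3\tfrac{d}{ds}\bigl[k(\dot\gamma,\dot\gamma)\bigr],
\]
because $\dot\gamma$ is parallel along $\gamma$. Consequently, for any \emph{closed} geodesic $\gamma$ and any smooth symmetric $2$-tensor $k$, one has $\int_\gamma Dk(\dot\gamma,\dot\gamma,\dot\gamma)\,ds = 0$.

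Applying this to each of the relations $pf = Dk_p$ given by hypothesis, I deduce that for every homogeneous polynomial $p\in\R[x_1,\ldots,x_n]$ and every great circle $\gamma:\R/2\pi\Z\to \mathbb{S}^{n-1}$,
\[
\int_0^{2\pi} p(\gamma(s))\, f_{\gamma(s)}(\dot\gamma(s),\dot\gamma(s),\dot\gamma(s))\,ds = 0.
\]
By linearity this extends to arbitrary polynomials, and hence by Stone-Weierstrass to arbitrary continuous functions on $\mathbb{S}^{n-1}$. Fix a great circle $\gamma$ and set $F(s):=f_{\gamma(s)}(\dot\gamma(s),\dot\gamma(s),\dot\gamma(s))$. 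Parametrizing $\gamma(s) = \cos(s)e_1+\sin(s)e_2$ in a suitable orthonormal pair, I see that the restrictions to $\gamma$ of polynomials in $x_1,\ldots,x_n$ are exactly the trigonometric polynomials in $s$, which form a dense subalgebra of $C(\R/2\pi\Z)$. Therefore $\int_0^{2\pi} \phi(s) F(s)\,ds = 0$ for every continuous $\phi$, which forces $F\equiv 0$.

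Since every $(y,v)\in S\mathbb{S}^{n-1}$ lies on some great circle, this gives $f_y(v,v,v)=0$ for all unit $v\in T_y\mathbb{S}^{n-1}$, hence for all $v$ by homogeneity. Polarization of the symmetric $3$-tensor $f$ then yields $f\equiv 0$.

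The proof has no real obstacle: the only point requiring care is that the hypothesis merely asserts \emph{existence} of some smooth $k$ with $pf=Dk$ for each $p$, without any control on $k$; but this is exactly what is needed, since the vanishing of $\int_\gamma Dk(\dot\gamma,\dot\gamma,\dot\gamma)\,ds$ on closed geodesics is automatic for any smooth $k$ and requires no estimate.
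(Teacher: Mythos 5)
Your proof is correct for the lemma as literally stated, but it follows a genuinely different route from the paper's and, crucially, consumes the hypothesis in far greater strength than the paper's argument actually has available. You run a global, integral-geometric argument: since $\dot\gamma$ is parallel along a great circle $\gamma$, one has $Dk(\dot\gamma,\dot\gamma,\dot\gamma)=3\tfrac{d}{ds}\big[k(\dot\gamma,\dot\gamma)\big]$, so $\int_\gamma p\,f(\dot\gamma,\dot\gamma,\dot\gamma)\,ds=0$ for every homogeneous $p$; you then let $\deg p\to\infty$ and invoke density of trigonometric polynomials to force $f(\dot\gamma,\dot\gamma,\dot\gamma)\equiv0$, and polarize. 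The paper's proof is instead local and algebraic: it fixes a point $y_0$, passes to normal coordinates there, takes only the degree-$3$ monomials $p(x)=x_rx_sx_t$, and manipulates commutation identities for $D$, $D^*$, $\operatorname{Tr}$, and the Lichnerowicz Laplacian to extract pointwise linear relations among the components $f_{ijl}(y_0)$, from which $f(y_0)=0$ follows.

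The distinction matters for the paper's purposes. In the only place where the lemma is used, Proposition~\ref{hmkilling}, the vanishing of geodesic integrals (and hence the decomposition $pf=Dk$ via Estezet--Goldschmidt) is established \emph{only} for homogeneous $p$ of degree $3$ (respectively degree $4$ in the even case), not for all degrees. Your density step collapses under that restriction: degree-$3$ homogeneous polynomials restrict to a finite-dimensional space of trigonometric polynomials on a great circle, which is far from dense in $C(\R/2\pi\Z)$, so you cannot conclude $f(\dot\gamma,\dot\gamma,\dot\gamma)\equiv0$. The paper's lemma is in fact worded more generously than it needs to be (its own proof uses only degree-$3$ monomials), and that economy is precisely what lets the argument close in the application. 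So: your argument is a valid and slick proof of the statement as displayed, but it would not substitute for the paper's proof where it is actually applied; one should either weaken the lemma's hypothesis to degree-$3$ polynomials (in which case your approach no longer works) or retain the paper's local algebraic proof.
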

\begin{proof} 
We will use a set of natural differential operators on tensors on $\mathbb{S}^{n-1}$ and their commutation relations, following the conventions of \cite{Heil:2016eu}.
The Riemannian metric on the vector bundle $\Sym^q(T^*\mathbb{S}^{n-1})$ is defined by 
\[ g(w_1 \cdot \ldots \cdot w_q, w'_1\cdot \ldots \cdot w'_q):=\sum_{\sigma \in \Sigma_q}
h_0(w_1,w_{\sigma(1)}')\ldots h_0(w_q,w_{\sigma(q)}'),\]
where $\Sigma_q$ denotes the set of permutations of $\{1,...,q\}$, and $\cdot$ denotes the symmetric tensor product
\begin{align*}
w_1\cdot...\cdot w_q=\sum_{\sigma\in\Sigma_q} w_{\sigma(1)}\otimes...\otimes w_{\sigma(q)}.
\end{align*}
The $L^2$-adjoint $D^*:C^\infty(\mathbb{S}^{n-1};\Sym^{q+1}(T^*\mathbb{S}^{n-1}))\to C^\infty(\mathbb{S}^{n-1};\Sym^{q}(T^*\mathbb{S}^{n-1}))$ is the divergence, which is given by 
\[ D^*h:=-\sum_{j=1}^{n-1}\iota_{e_j}\nabla_{e_j}h=-{\rm Tr}(\nabla h). 
\]
Here, $e_1,...,e_{n-1}$ is an $h_0$-orthonormal basis, $\iota_{e_j}$ denotes the interior product with $e_j$, and the trace map is defined as usual by 
\[{\rm Tr}(h)=\sum_{j=1}^{n-1}\iota_{e_j}\iota_{e_j}h.\]
Notice that $D=d$ is the exterior derivative on $0$-tensors, and $D^*=d^*$ on $1$-tensors.
The Lichnerowicz Laplacian on symmetric $p$-tensors is defined by $\Delta_L=\nabla^*\nabla+q(R)$ where $R$ denotes the curvature tensor and $q(R)$ is an endomorphism constructed out of $R$, see \cite[Section 6]{Heil:2016eu} for the exact formula. The following formulas, which actually hold on general Riemannian manifolds, relate the operators introduced so far. For each symmetric $q$-tensor field $h$, we have 
\begin{align}
\label{commuttrace_1}
{\rm Tr}(Dh) & =D{\rm Tr}(h)-2D^*h,\\ 
\label{commuttrace_2}
\quad {\rm Tr}D^*h & =D^*{\rm Tr}(h), \\
\label{DeltaL}
\Delta_Lh & =D^*Dh-DD^*h+2q(R)h,
\end{align}
and, if $q=1$,
\begin{equation}\label{formulaDeltaL1}
\Delta_Lh=\nabla^*\nabla h +{\rm Ric}_{h_0}h=\Delta_Hh:=(dd^*+d^*d)h 
\end{equation}
where $\Delta_H$ denotes the Hodge Laplacian, $d$ the exterior derivative and $d^*$ its $L^2$-adjoint. On the unit sphere $(\mathbb{S}^{n-1},h_0)$, we have
\begin{equation}\label{constcurvqR}
q(R)h=\left\{\begin{array}{@{}ll}
(n-2)h, & \textrm{if }h \textrm{ is a 1-tensor},\\
2(n-1)h^{\rm tf}=2(n-1)(h-\frac{1}{n-1}{\rm Tr}(h)h_0), & \textrm{if }h\textrm{ is a 2-tensor}.
\end{array}\right.
\end{equation}
By applying the argument in \cite[Lemma 5.2]{Hadfield:2017} (originally developed for the hyperbolic space) to the unit sphere $(\mathbb{S}^{n-1},h_0)$, we obtain 
\begin{align}
\label{commutDeltaD_1}
D^*\Delta_L & =\Delta_LD^*,\\
\label{commutDeltaD_2}
\quad \Delta_LD & =D\Delta_L.
\end{align}
Using first \eqref{commuttrace_1}, \eqref{commutDeltaD_1}, and \eqref{DeltaL}, the identity $pf=Dk$ implies 
\[ \begin{split}
\Delta_L {\rm Tr}(pf)=& -2\Delta_LD^*k+\Delta_LD{\rm Tr}(k)= -2D^*\Delta_Lk+d\Delta{\rm Tr}(k)\\
=& -2D^*(D^*Dk-DD^*k+2q(R)k)+d\Delta{\rm Tr}(k).
\end{split}\]
Using, in order, \eqref{commuttrace_1}, \eqref{constcurvqR} (first line),  \eqref{DeltaL}
\eqref{formulaDeltaL1}, \eqref{constcurvqR}, \eqref{commuttrace_2},
and finally \eqref{commutDeltaD_2}, we obtain
\[
\begin{split}
\Delta_L ({\rm Tr}(pf))=& -2D^*D^*(pf)-D^*D({\rm Tr}(Dk)-d{\rm Tr}(k))
 -8(n-1)D^*\big(k-\tfrac{{\rm Tr}(k)h_0}{n-1}\big)+ d\Delta {\rm Tr}(k)\\
=& -2D^*D^*(pf)-D^*D{\rm Tr}(pf)+(\Delta_Hd+d\Delta){\rm Tr}(k)-2(n-2)d{\rm Tr}(k)
\\
 & +4(n-1){\rm Tr}(pf)-4(n+1)d{\rm Tr}(k)+d\Delta{\rm Tr}(k)\\
=& -2D^*D^*(pf)-D^*D{\rm Tr}(pf)+4(n-1){\rm Tr}(pf) +(3\Delta-6n)d{\rm Tr}(k).
\end{split}\]
We next apply the exterior derivative $d$ to this identity to deduce the following  
\begin{equation}\label{eqontrace}
\begin{split}
dd^*d ({\rm Tr}(pf))& =-2dD^*D^*(pf)-dD^*D{\rm Tr}(pf)+4(n-1)d{\rm Tr}(pf)\\
&= -2dD^*D^*(pf)-dd^*d{\rm Tr}(pf)+2(3n-4)d{\rm Tr}(pf)
\end{split}\end{equation}
where we used again \eqref{DeltaL} and \eqref{formulaDeltaL1} for the last line.
The important point to observe is that this is an equation purely on $f$ (which does not involve $k$), with many possible choices for the polynomials $p$.
We shall now choose particular polynomials $p$. We fix a point $y_0\in \mathbb{S}^{n-1}$ and, up to using a rotation on the sphere, we can choose coordinates $x=(x_1,\dots,x_{n-1})\mapsto (x_1,\dots,x_{n-1},\sqrt{1-|x|^2})$ near $y_0$ so that $y_0=\{x=0\}$. It is direct to see that they are normal coordinates in the sense that $h_0=\sum_{j=1}^{n-1}dx_j^2+\mc{O}(|x|^2)$ near $x=0$. We will choose $p(x)=x_rx_sx_t$ vanishing to order $3$ at $x=0$, so that \eqref{eqontrace} gives us near $x=0$
\begin{equation}\label{lastidentTrpf} 
2dd^*d ({\rm Tr}(pf))+2dD^*D^*(pf)=\mc{O}(|x|).
\end{equation}
We write $f$ near $y_0=\{x=0\}$ in local coodinates under the form 
\[f=\sum_{l,i,j=1}^{n-1}f_{ijl}(x)\, dx_i\cdot dx_j\cdot dx_l\]
with $f_{ijl}$ symmetric in $(i,j,l)$.
We then compute that near $x=0$
\begin{equation}\label{dD*D*}
dD^*D^*(pf)= 12\sum_{j=1}^{n-1}f_{stj}(0) dx_r\wedge dx_j+f_{rtj}(0)dx_s\wedge dx_j+f_{srj}(0)dx_t\wedge dx_j +\mc{O}(|x|).
\end{equation}
We also have 
\begin{equation}\label{dDeltaTr}
\begin{split}
dd^*d{\rm Tr}(pf)=\, &6 (d\Delta p)\wedge \sum_{u,j=1}^{n-1}f_{uuj}(0)\,dx_j+\mc{O}(|x|)\\
 =\, &- 12 \sum_{u,j=1}^{n-1}f_{uuj}(0)(\delta_{rs}dx_t\wedge dx_j+\delta_{rt}dx_s\wedge dx_j+\delta_{st}dx_r\wedge dx_j)+\mc{O}(|x|) 
\end{split}
\end{equation}
Let us first choose $s= t\neq r$. In this case we get 
\[dd^*d{\rm Tr}(pf)=-12\sum_{u,j=1}^{n-1}f_{uuj}(0)dx_r\wedge dx_j+\mc{O}(|x|)\]
and \eqref{lastidentTrpf} yields 
\[0=-\sum_{u,j}f_{uuj}(0)dx_r\wedge dx_j+\sum_{j}\Big(f_{ssj}(0)dx_r\wedge dx_j+ 2
f_{rsj}(0)dx_s\wedge dx_j\Big).\]
Taking the component $dx_s\wedge dx_j$ of this equation with $j\notin\{r,s\}$, this implies that
\begin{equation}\label{frsj=0}
\forall j\notin \{r,s\},\,\, f_{rsj}(0)=0.
\end{equation}
Taking the component $dx_s\wedge dx_r$ gives 
\begin{equation}\label{fsss} 
\forall s\not=r, \,\,\sum_{u=1}^{n-1}f_{uus}(0)=f_{sss}(0)-2f_{rrs}(0).
\end{equation}
To obtain more information we make another choice for $p$, namely we choose the indices such that $r\neq s\neq t$ (three distinct indices, assuming $n\geq 4$). In this case, \eqref{dDeltaTr} vanishes at $x=0$, and \eqref{lastidentTrpf} implies 
\[ 0=\sum_{j=1}^{n-1}f_{stj}(0) dx_r\wedge dx_j+f_{rtj}(0)dx_s\wedge dx_j+f_{srj}(0)dx_t\wedge dx_j \]
Looking at the $dx_s \wedge dx_t$ component brings us the relation 
\[
\forall r\not\in\{s,t\},\,\, f_{ssr}(0) = f_{ttr}(0).
\]
Combining with \eqref{fsss}, we obtain for each $r\not=s$
\begin{equation}\label{frrs=0} 
(n-2)f_{rrs}(0)+f_{sss}(0)=f_{sss}(0)-2f_{rrs}(0), \textrm{ thus }f_{rrs}(0)=0.
\end{equation}
Finally let us choose the polynomial $p(x)=x_{r}^3$, i.e. $r = s = t$. With this choice of $p$, \eqref{lastidentTrpf} produces the identity 
\[0=-\sum_{u,j=1}^{n-1} f_{uuj}(0)\, dx_r \wedge dx_j + \sum_{j=1}^{n-1} f_{rrj}(0)\,  dx_r\wedge dx_j.\]
Considering the $dx_r\wedge dx_j$ component for $r\not= j$ we get 
$\sum_{u=1}^{n-1}f_{uuj}(0)=f_{rrj}(0)$ which is equal to $0$ by  \eqref{frrs=0}, leading then to 
$f_{jjj}(0)=0$. Combined with \eqref{frsj=0} and \eqref{frrs=0}, we conclude that $f(y_0)=0$ in dimension $n\geq 4$ and since $y_0$ is arbitrary, then $f=0$ everywhere.

The case $n = 3$ (i.e. $\partial \R^3 = {\mathbb S}^2$) requires a different argument. Going back to \eqref{fsss} and writing down the sum explicitly, we get $f_{rrs}(0) = -2 f_{rrs}(0)$ whenever $r\neq s$. This means that $f_{rrs}(0)= 0$ whenever $r\neq s$. Finally, considering $t=r=s$ as above, we obtain that $f_{jjj}(0)= 0$ for all $j = 1,2$ and this completes the proof for $n = 3$.
\end{proof}

\begin{proof}[Proof of Proposition~\ref{hmkilling}]
Assume first that $m$ is odd. The function $H_0h_m\in C^\infty(S^*\mathbb{S}^{n-1})$ can be expressed by means of the symmetrized derivative as 
\[
H_0h_m(y,\eta)=\tfrac{1}{3} (Dh_m)_y(\eta^\sharp,\eta^\sharp,\eta^\sharp).
\] 
Therefore, it corresponds to a symmetric 3-tensor 
\[f:=\tfrac{1}{3}Dh_m\in C^\infty(\mathbb{S}^{n-1};S^3T^*\mathbb{S}^{n-1}).\]
By Corollary \ref{intagainstp}, $f$ is odd and 
\[\int_{0}^{2\pi}p(\gamma(t))f_{\gamma(t)}(\dot{\gamma}(t),\dot{\gamma}(t),\dot{\gamma}(t))\,dt=0\]
for each closed geodesic $\gamma$ of $(\mathbb{S}^{n-1},h_0)$ and each homogeneous polynomial $p$ of degree $3$. By \cite{Estezet:1988,Goldschmidt:1990}, a symmetric $3$-tensor on $\mathbb{S}^{n-1}$ whose integral on all closed geodesics vanishes must be of the form $k_{\rm odd}+Dk$, where $k_{\rm odd}$ is odd with respect to the antipodal map $\Theta$ on $\mathbb{S}^{n-1}$ and $k$ is a symmetric $2$-tensor. Since $f$ is odd with respect to $\Theta$, for each homogeneous polynomial $p$ of degree $3$ the product $pf$ is even, and therefore $pf=Dk$ for some symmetric $2$-tensor $k$. By Lemma~\ref{f=0}, we conclude that $f=0$.

Assume now that $m$ is even. The tensor field $f':=x_1f$, where $x_1$ is the first coordinate of $\R^n$, is odd with respect to the antipodal map $\Theta$ and satisfies 
\[\int_{0}^{2\pi} p(\gamma(t))f'_{\gamma(t)}(\dot{\gamma}(t),\dot{\gamma}(t),\dot{\gamma}(t))\,dt=0\] 
for each closed geodesic $\gamma$ on $\mathbb{S}^{n-1}$ and each homogeneous polynomial  $p$ of degree $3$. The argument of the previous paragraph shows that $f'=0$, and therefore $f=0$ as well.
\end{proof}

We conclude with the following theorem.
\begin{theorem}\label{jetdeterm}
Assume that $g$ is asymptotically Euclidean to order $m\geq 3$ and the scattering map $S_g$ coincides with the Euclidean scattering map $S_{g_0}$. Then $g$ is asymptotically Euclidean to all order.
\end{theorem}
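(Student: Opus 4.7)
The plan is to run an induction on the order of decay, using the combination of Proposition~\ref{hmkilling} and the remaining equations of Lemma~\ref{solutionlinearised} to kill one term of the normal form at a time. Fix $m \geq 3$ and assume $g$ is in the normal form of Lemma~\ref{normalform} with $h_\rho^{-1} = h_0^{-1} + \rho^m h_m + \mc{O}(\rho^{m+1})$. The scattering hypothesis $S_g = S_{g_0}$ together with Proposition~\ref{hmkilling} immediately yields $H_0 h_m = 0$, i.e.\ $h_m$ viewed as a homogeneous degree-$2$ function on $T^*\mathbb{S}^{n-1}$ is constant along every orbit of the round geodesic flow $e^{sH_0}$.

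The next step is to exploit equation~\eqref{eqdirectionH0}, which is available precisely because $H_0 h_m = 0$. At any $(y_0,\eta_0) \in S^*\mathbb{S}^{n-1}$, the Killing property lets us factor $h_m(e^{sH_0}(y_0,\eta_0)) = h_m(y_0,\eta_0)$ out of the integral and reduces \eqref{eqdirectionH0} to
\[
h_m(y_0,\eta_0)\int_0^\pi\Big(\sin(s)^m - \Big(\tfrac{m}{2}+1\Big)\sin(s)^{m+2}\Big)\,ds = 0.
\]
The Wallis recursion $\int_0^\pi \sin(s)^{m+2}\,ds = \tfrac{m+1}{m+2}\int_0^\pi \sin(s)^m\,ds$ collapses the bracketed integral to $\tfrac{1-m}{2}\int_0^\pi \sin(s)^m\,ds$, which is strictly negative for $m \geq 3$. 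Hence $h_m(y_0,\eta_0) = 0$ on $S^*\mathbb{S}^{n-1}$, and by polarization of symmetric $2$-tensors, $h_m \equiv 0$.

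Once $h_m \equiv 0$, the normal form improves to $h_\rho^{-1} = h_0^{-1} + \rho^{m+1} h_{m+1} + \mc{O}(\rho^{m+2})$, so $g$ is asymptotically Euclidean to order $m+1$. The scattering map is unchanged, and $m+1 \geq 4 \geq 3$, so Proposition~\ref{hmkilling} and the same Wallis computation apply with $m$ replaced by $m+1$, giving $h_{m+1} \equiv 0$. Iterating this bootstrap yields $h_k \equiv 0$ for every $k \geq m$, which is exactly the statement that $g$ is asymptotically Euclidean to all orders.

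I expect the actual obstacle in this theorem to already have been paid for in the preceding results: the serious analytic input is Proposition~\ref{hmkilling}, which in turn rests on the delicate tensor-algebra argument of Lemma~\ref{f=0} together with the X-ray/Funk transform input of Estezet--Goldschmidt. Given those ingredients, the inductive step here is genuinely short and algebraic — essentially the nonvanishing of one Wallis constant at each order. The only thing to verify carefully is that the hypotheses of Lemma~\ref{solutionlinearised} and Proposition~\ref{hmkilling} (in particular the assumption $m \geq 3$) are preserved under the induction, which they are since the order only increases.
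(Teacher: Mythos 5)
Your proof is correct and follows essentially the same route as the paper: Proposition~\ref{hmkilling} gives $H_0 h_m = 0$, which lets you factor $h_m$ out of~\eqref{eqdirectionH0}, the Wallis reduction $\int_0^\pi \sin^{m+2} = \tfrac{m+1}{m+2}\int_0^\pi\sin^m$ collapses the remaining integral to the nonzero constant $\tfrac{1-m}{2}\int_0^\pi\sin^m$, and induction on $m$ finishes. Your explicit evaluation of the Wallis constant and the remark that the threshold $m\geq 3$ is stable under the bootstrap are small but welcome additions to what the paper records.
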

\begin{proof}
Assume that $m\geq 3$. By Proposition \ref{hmkilling}, we have that $H_0h_m=0$ when we view $h_m$ as a function on $S^*\mathbb{S}^{n-1}$, that is $h_m(e^{sH_0}(y,\eta))=h_m(y,\eta)$ for all $s\in\R$. We use \eqref{eqdirectionH0} to deduce that for each $(y,\eta)\in S^*\mathbb{S}^{n-1}$
\[h_m(y,\eta)\int_0^\pi (\sin(s)^m-(\tfrac{m}{2}+1)\sin(s)^{m+2})ds=0.\]
But $\int_0^\pi \sin(s)^{m+2}ds=\frac{m+1}{m+2}\int_0^\pi \sin(s)^{m}ds$ so we conclude that 
$h_m(y,\eta)=0$ if $m\geq 3$. 
Thus $g$ is asymptotically Euclidean to order $m+1$. Repeating the argument we obtain that $g$ is asymptotically Euclidean to all order. This concludes the proof.
\end{proof}

\bibliography{_biblio}
\bibliographystyle{amsalpha}

\end{document}